\documentclass{amsart}
\usepackage[utf8]{inputenc}\usepackage{amsmath}
\usepackage{amsfonts}
\usepackage{comment}
\usepackage[dvipsnames]{xcolor}
\usepackage{amssymb}
\usepackage[english]{babel}
\usepackage{amsmath,amsthm}
\usepackage{mathtools}
\usepackage{amsfonts}
\usepackage{fancyhdr}
\usepackage{setspace}
\usepackage{color}
\usepackage[percent]{overpic}
\usepackage{fix-cm}
\usepackage{graphicx}
\usepackage[all]{xy}
\usepackage{mathrsfs}
\usepackage{mathdots}
\usepackage{amsbsy}
\usepackage{leftidx}
\usepackage[hidelinks]{hyperref}
\usepackage{xcolor}
\usepackage{todonotes}
\usepackage{tikz-cd}
\hypersetup{
    linktocpage,
    colorlinks,
    linkcolor={magenta!100!black},
    citecolor={cyan!100!black},
    urlcolor={yellow!100!black}
}

\usepackage{tikz}

\theoremstyle{plain}
\newtheorem{thm}{Theorem}[section]
\newtheorem*{thm*}{Theorem}
\newtheorem{lem}[thm]{Lemma}
\newtheorem{prop}[thm]{Proposition}
\newtheorem{cor}[thm]{Corollary}
\newtheorem{cla}[thm]{Claim}

\theoremstyle{definition}
\newtheorem{eg}[thm]{Example}

\newtheorem{rem}[thm]{Remark}

\numberwithin{equation}{section}

\definecolor{bluepic}{RGB}{15,109,237}
\definecolor{magentapic}{RGB}{227,66,166}
\definecolor{greenpic}{RGB}{83,214,63}
\definecolor{brownpic}{RGB}{150,110,55}
\definecolor{orangepic}{RGB}{244,176,59}

\newcommand{\liea}{\mathfrak{a}}

\newcommand{\calb}{\mathcal{B}}

\newcommand{\calt}{\mathcal{T}}
\newcommand{\cale}{\mathcal{E}}
\newcommand{\caleor}{\cale^{\tn{or}}}

\newcommand{\folo}{\mathscr{O}}
\newcommand{\folc}{\mathscr{C}}

\newcommand{\ttt}{\mathtt{t}}
\newcommand{\ttc}{\mathtt{c}}
\newcommand{\tts}{\mathtt{s}}
\newcommand{\parinv}{\mathtt{p}}

\newcommand{\g}{\mathsf{G}}
\newcommand{\PGL}{\mathsf{PGL}_3(\rr)}
\newcommand{\aut}{\mathsf{Aut}}

\newcommand{\flags}{\mathfrak{F}}
\newcommand{\T}{\textnormal{\textbf{T}}}
\newcommand{\Text}{\T^{\tn{ext}}}
\newcommand{\Teq}{\T^{\tn{eq}}}
\newcommand{\Ti}{\T^{\tn{id}}}

\newcommand{\Pol}{\textnormal{\textbf{P}}}
\newcommand{\Poli}{\Pol^{\tn{id}}}
\newcommand{\Hil}{\textnormal{\textbf{H}}}
\newcommand{\HP}{\textnormal{\textbf{HP}}}
\newcommand{\Graph}{\textnormal{\textbf{G}}}

\newcommand{\rr}{\mathbb{R}}
\newcommand{\bba}{\mathbb{A}}
\newcommand{\rp}{\mathbb{P}(\rr^3)}
\newcommand{\rpd}{\mathbb{P}((\rr^3)^*)}
\newcommand{\hh}{\mathbb{H}}

\newcommand{\zz}{\mathbb{Z}}

\newcommand{\cp}{\mathfrak{C}(S)}
\newcommand{\cpf}{\mathfrak{C}^{\tn{fr}}(S)}
\newcommand{\shif}{\mathfrak{S}(S)}
\newcommand{\teich}{\mathfrak{T}(S)}

\newcommand{\od}{\operatorname{d}}
\newcommand\norm[1]{\lVert#1\rVert}

\newcommand{\bc}{\begin{center}}
\newcommand{\ec}{\end{center}}

\newcommand{\dev}{\tn{dev}}

\newcommand{\tn}{\textnormal}
\newcommand{\tc}{\textcolor}

\newcommand{\psieq}{\psi^{\tn{eq}}}

\begin{document}
\title[Triangular metric]{The triangular metric on projective surfaces}

\author{Le\'on Carvajales}
\address{
Le\'on Carvajales \newline Universidad de la Rep\'ublica \newline 
Facultad de Ciencias Econ\'omicas y de Administraci\'on \newline
Instituto de Estad\'istica \newline
CNRS IRL IFUMI}
\email{leon.carvajales@fcea.edu.uy}

\author{Xian Dai}
\address{
Xian Dai \newline Universit\'e Côte d'Azur \newline Laboratoire de Mathématiques
}
\email{xian.dai@univ-cotedazur.fr}

\author{Beatrice Pozzetti}
\address{
Beatrice Pozzetti \newline
Dipartimento di Matematica \newline
Universit\`a di Bologna \newline
IT
}
\email{beatrice.pozzetti@unibo.it}

\author{Anna Wienhard}
\address{
Anna Wienhard \newline
Max Planck Institute for Mathematics in the Sciences\\
Inselstr. 22\\04103 Leipzig, 
Germany
}
\email{wienhard@mis.mpg.de}

\thanks{L.C. was partially funded by  Agencia Nacional de Investigación e Innovaci\'on--FCE$\_3\_2020\_1\_162840$, Ministerio de Educación y Cultura--FVF$\_2023\_436$, and CSIC--Grupos I$+$D 2022 ``Geometría y Acciones de Grupos". X.D. acknowledges funding by the European Research Council under ERC-Advanced grant 101095722 and the Swiss State Secretariat for Education, Research and Innovation (SERI): MB-2500031.
The authors acknowledge support of the Institut Henri Poincaré (UAR $839$ CNRS-Sorbonne Universit\'e), and LabEx CARMIN (ANR--$10-\tn{LABX}-59-01$).
A.W. was supported by the European Research Council under ERC-Advanced Grant 101018839. She thanks the Hector Fellow Academy for support.
}

\begin{abstract}
In \cite{ThurstonStretch} Thurston introduced stretch paths and stretch maps between hyperbolic surfaces. In this paper we extend similar constructions to the space of convex real projective structures on surfaces. Using Fock-Goncharov-like parameters, we introduce stretch paths and construct explicit stretch maps between the projective surface. We show that stretch maps are $e^t$--Lipschitz with respect to a triangular metric we introduce on the projective surface. 
This allows us to gain control on several length functions on an open set of the space of convex real projective structures (those where there is no ``local back--tracking"). 
As an application we obtain pointwise upper bounds for the Hilbert entropy of some convex real projective structures in terms of Fock-Goncharov-like parameters.

\end{abstract}

\maketitle

\setcounter{tocdepth}{1}
\tableofcontents

\section{Introduction}

Let $S$ be a compact orientable surface with non--empty boundary and negative Euler characteristic. 
In this paper we consider geometric structures on $S$ locally modeled on $(\PGL,\rp)$ which are moreover \textit{convex}, which means that the developing map is an embedding into a convex subset of an affine chart of $\rp$.
The associated deformation space $\cp$ was originally studied by Goldman \cite{GoldmanProj}, who in particular showed that $\cp$ is homeomorphic to an open cell on a finite dimensional Euclidean space. Later, Fock--Goncharov \cite{FGProjective} introduced a different parametrization. 

The deformation space of convex real projective structures on $S$ contains the deformation space of hyperbolic structures on $S$, that is a realization of the Teichm\"uller space $\teich$. The space $\cp$ behaves in many aspects similar to $\teich$, but also exhibits interesting new features.  It is one example of a higher Teichm\"uller space, and arguably one of the simplest ones. 

The goal of this paper is to study $\cp$ and explore some of the similarities and differences to $\teich$. Our focus lies on the investigation of special paths, so called \emph{stretch-paths} in the deformation space of convex projective structures $\cp$, and corresponding Lipschitz maps between convex projective surfaces. As an application, we obtain precise (as well as asymptotic) control of the length function with respect to the Hilbert metric on a convex real projective structure. 

We now describe our results in more detail. 

\subsection{Stretch paths in $\teich$}
We let $\teich$ be the space of isotopy classes of hyperbolic metrics on $S$ with geodesic boundary, and often identify a point in $\teich$ with its holonomy representation and denote both by $\rho$. 

The Teichmüller space can be parametrized by shear coordinates \cite{ThurstonStretch,BONShear}. For this we fix an ideal triangulation $\calt_S$ of $S$. Each point $\rho\in\teich$ then induces an identification between $\widetilde{S}$ and a geodesically convex subset of $\hh^2$ which is moreover decomposed as a union of ideal triangles. 

Ideal triangles are rigid pieces in hyperbolic geometry: they are all equivalent up to isometries of $\hh^2$. 
For ideal quadrilaterals there is a unique invariant, the cross ratio of the four vertices at infinity. We interpret this cross ratio as a gluing parameter of gluing two ideal triangles (the rigid pieces) along an edge. 
Given an edge of the triangulation we call the corresponding cross ratio associated to the quadrilateral containing this edge the \textit{shear invariant}. 
In this way, if we let $\cale_S$ be the set of edges of $\calt_S$,  we may associate to each $\rho\in\teich$ a function $\cale_S\to\rr_{>0}$. 
For $E\in\cale_S$ we will let $\tts_E(\rho)$ be the associated shear invariant. This procedure induces a parametrization of $\teich$. 

These shear invariants allow us to define stretch paths in $\teich$. Given an ideal triangulation and a point $\rho_0 \in \teich$, the stretch path $\rho_t\in\teich$ is characterized by $\tts_E(\rho_t) = \tts_E(\rho_0)^{e^t}$.  
Thurston showed that for any stretch path, there is an associated family of \emph{stretch maps}. For each $t$, it is given by an $e^t$-Lipschitz homeomorphism 
\begin{equation}\label{eq: stretch map teich intro}
\psi_t:(S,\rho_0)\to(S,\rho_t)    
\end{equation}\noindent isotopic to the identity. This map is defined locally on the rigid pieces, the ideal triangles, using the horocyclic foliation. It is $e^t$-Lipschitz on each rigid piece and globally extends to an $e^t$-Lipschitz homeomorphism on $S$. 

In this article we extend the construction of stretch paths and stretch maps in the context of convex real projective structures. However, the situation there is more complicated: ideal triangles are not rigid and already have an invariant. We have to assemble triangles from smaller rigid pieces. Moreover, the horocyclic foliation in a convex projective surface is not local, but depends on the global structure. We instead consider a different foliation, which in the hyperbolic case gives an \emph{ortho-geodesic foliation}, as in \cite{CalFarr}. In the Appendix \ref{appendix: maps H2} we provide a description of \emph{ortho-geodesic stretch maps} associated to these ortho-geodesic foliations in the hyperbolic case and prove that they are $e^t$-Lipschitz. They also yield Thurston's stretch paths in the hyperbolic case.

Thurston's Lipschitz map can be used to control the geodesic length functions $L_\rho(\gamma)$ by the length function on a metric graph associated to the shear coordinates on a large subset of $\teich$. 
The subset is the set of 
\textit{shifting} structures \begin{equation}\label{eq: shifting teich intro}
    \shif\coloneqq \{\rho\in\teich: \tts_E(\rho)> 1 \tn{ for all } E\in\cale_S\}.
\end{equation}
For any $\rho \in \shif$ we consider the metric graph which is given by considering the tree $\omega_\rho$ dual to the lift of the triangulation to $\widetilde{S}$, with edge weight $\log(\tts_E(\rho))>0$ on the edge associated to $E\in\cale_{\widetilde{S}}$. We denote by $L_\rho^\Graph$ the corresponding length function. 

Then one has
\begin{equation}\label{eq: comparison lenghts geodesic and graph intro}
    L_\rho^\Graph(\gamma)-D\cdot n_\gamma\leq L_\rho(\gamma)\leq L_\rho^\Graph(\gamma)+D\cdot n_\gamma,
\end{equation}\noindent where $L_\rho(\gamma)$ is the length of the closed geodesic associated to $\rho$ and $\gamma$, $n_\gamma\in\zz_{>0}$ is the translation length of $\gamma$ acting on the tree $\omega_{\calt_{\widetilde{S}}}$ dual to $\calt_{\widetilde{S}}$ (equipped with the graph metric), and $L_\rho^\Graph(\gamma)$ is the translation length of $\gamma$ acting on the tree $\omega_\rho$. 
As $n_\gamma$ and $D$ are independent of $\rho$, we conclude that along the  stretch path $\rho_t$ one has \begin{equation}\label{eq: degeneration teich intro}
\displaystyle\lim_{t\to\infty} \frac{e^{-t}\cdot L_{\rho_t}(\gamma)}{L_\rho^\Graph(\gamma)}=1.    
\end{equation}
Using Thurston's stretch map we conclude that $L_\rho(\gamma)\geq L_\rho^\Graph(\gamma)$.

In the context of convex real projective structures, the stretch paths and stretch maps we introduce will allow us to similarly control the Hilbert length metric in a large subset of $\cp$ as well as its asymptotics. 

\subsection{Coordinates and stretch paths in $\cp$}
We now turn to the case of convex real projective structures on $S$. Again we fix an ideal triangulation of $S$. Associated to such an ideal triangulation, Fock and Goncharov \cite{FG,FGProjective} introduced a generalization of shear coordinates (these are more generally defined cluster coordinates, but this is not of importance to us here). 
These coordinates consist of one coordinate for each triangle and two coordinates for each edge. 
Let's first consider the invariants associated to triangles. Any triangle $T\in\calt_S$ gives an ideal triangle in a convex real projective structure $\rho \in \cp$, which determines, and is uniquely determined by 
a triple of pairwise transverse flags (up to the action of $\PGL$).
There is a unique real valued invariant for the $\PGL$--action on the space of such triples called the \textit{triple ratio}, this number $\ttt_T(\rho)$ is the invariant associated to the triangle $T$. The convexity of the projective structure ensures that $\ttt_T(\rho)\in\rr_{>0}$. 

In fact, this triple ratio can be recast as a gluing parameter \cite{wienahrdzhang}. Given a triple of flags with positive triple ratio, we have two triangles  $\Ti_T(\rho)\subset\Text_T(\rho)\subset\rp$. Each flag consists of a point in $\rp$ inside a line in $\rp$. The interior triangle $\Ti_T(\rho)$ is a triangle with vertices the points, and the external triangle $\Text_T(\rho)$ is the one formed by the three lines, see Figure \ref{fig: distinguished points triple}. 
The nesting $\Ti_T(\rho)\subset\Text_T(\rho)$ allows us to endow $\Ti_T(\rho)$ with the Hilbert metric on $\Text_T(\rho)$. 
The triangle $\Ti_T(\rho)$ can then be cut into three rigid pieces (see Figure \ref{fig: distinguished points triple}), which are cut along a small triangle $\Teq_T(\rho) \subset \Ti_T(\rho)$, which is equilateral with respect to this Hilbert metric. The side length of $\Teq_T(\rho)$, which is precisely the triple ratio, determines the gluing of the rigid pieces completely. We will use this picture heavily in the construction of our stretch maps. 

The coordinates associated to the edges of the triangulation govern how such triangles can be glued together. They are given by appropriate cross ratio functions that come in pairs and are associated  to oriented edges. 
For $E\in\caleor_S$, we let $\overline{E}$ be the edge with the same support as $E$, but equipped with the opposite orientation. 
The corresponding \textit{edge invariant} is denoted by $ (\ttc_E(\rho),\ttc_{\overline{E}}(\rho))$.

Triangle and edge invariants define a parametrization of $\cp$ \cite[Theorem 2.5]{FGProjective}. 
If $S_0$ denotes the interior of $S$ and $\widetilde{S}_0$ denotes its universal cover, then the developing image of $\widetilde{S}_0$ is an open convex set $\Omega_\rho\subset\rp$ which is the union of the ideal triangles $\Ti_T(\rho)$, for $T\in\calt_{\widetilde{S}}$.
We have in fact \begin{equation}\label{eq: Omegarho in intro}
\Omega_\rho = \bigcup_{T\in\calt_{\widetilde{S}}} \Ti_T(\rho)\subset\displaystyle\bigcap_{T\in\calt_{\widetilde{S}}}\Text_T(\rho).    
\end{equation}

Given the triangulation $\calt_S$ and a convex projective structure $\rho\in \cp$ we can define the set of stretch paths. A path $\{\rho_t\} \subset \cp$ is a \textit{stretch path} starting at $\rho$if $\rho_0=\rho$, $\ttt_T(\rho_t) = \ttt_T(\rho)^{e^t}$ and $\ttc_E(\rho_t) \cdot \ttc_{\overline{E}}(\rho_t) = 
(\ttc_E(\rho) \cdot \ttc_{\overline{E}}(\rho))^{e^t}$ for every $T$ and $E$.
Note that this condition gives a whole family of stretch paths as we only require the product of the edge invariants to be stretched. 

For any stretch path $\rho_t$ we construct an explicit family of \textit{stretch maps} $\psi_t$ which, similarly to Thurston's stretch maps, are defined first locally on the rigid pieces of a triangle, then further extended to the full triangle and finally defined globally on the entire surface $S$. 

The definition on each rigid piece resembles Thurston's construction \cite[Proposition 2.2]{ThurstonStretch}: we define two transverse foliations $\folo$ and $\folc$ of these pieces and perform an ``$e^t$--stretch of $\folc$ along $\folo$" to define the map. 
The foliation $\folc$ is the same as used in \cite[Proposition 2.2]{ThurstonStretch}. But the transverse foliation $\folo$ is different: while Thurston uses the horocyclic foliation, we  use a different foliation as horocycles are not a local object in projective geometry.
The leaves of our foliation $\folo$ are given by level sets of the nearest point projection on the corresponding side with respect to the Hilbert metric on $\Ti_T(\rho)$ induced by $\Text_T(\rho)$  (c.f. Figure \ref{fig: foliations}).
In other words, $\folo$ is given by geodesics ``orthogonal" to that side. 

Since the construction depends only on the information of the triangle invariant $\ttt_T(\rho)=\ttt_T(\rho_t)^{\frac{1}{e^t}}$, we may define our map  on the equilateral piece $\Teq_T(\rho)$ as an $e^t$--homothety. 

The construction is compatible with the gluing instruction along edges $E\in\cale_{\widetilde{S}}$, thus inducing a natural $(\rho,\rho_t)$--equivariant homeomorphism $$\psi_t:\Omega_\rho\to\Omega_{\rho_t},$$ which descends to a map $\psi_t: (S,\rho) \to (S,\rho_t)$. We call this map the \textit{ortho--geodesic stretch map} (see Section \ref{sec: stretch map}).

A key property of the stretch map is the following. 

\begin{thm}[c.f. Theorem \ref{thm: Klip}]\label{thm_intro:Lipschitz}
 Let $\{\rho_t\} \subset \cp$ be a stretch path. For every triangle $T \in \calt_S$ and every $t\geq 0$, the ortho--geodesic stretch map $\psi_t: S \to S$ restricted to $\Ti_T(\rho)$ is a Lipschitz map with respect to the Hilbert metric of $\Text_T(\rho)$, with Lipschitz constant $e^t$. 
\end{thm}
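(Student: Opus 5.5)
The plan is to reduce everything to a pointwise estimate on the differential of $\psi_t$, computed in explicit coordinates for the Hilbert metric of a projective triangle. The first step is a normalization. The pair $\Ti_T(\rho)\subset\Text_T(\rho)$ is determined up to $\PGL$ by the triple ratio $\ttt_T(\rho)$, and the same holds at time $t$ with $\ttt_T(\rho_t)=\ttt_T(\rho)^{e^t}$. So I would fix the external triangle as the standard simplex in an affine chart, and use the vertices of $\Text_T$ together with the points of the flags to fix the remaining normalization.

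On the open triangle the Hilbert metric is isometric to $\rr^2$ with the hexagonal norm (de la Harpe's description). An explicit isometry is given by the logarithms of barycentric coordinates modulo the diagonal, $[x_1:x_2:x_3]\mapsto(\log x_1-\log x_2,\log x_2-\log x_3)$. Since both $\Text_T(\rho)$ and $\Text_T(\rho_t)$ become the same normed plane $(\rr^2,\norm{\cdot}_{\mathrm{hex}})$, the map $\psi_t|_{\Ti_T(\rho)}$ becomes a map between two explicit regions of one normed plane. In these coordinates:
\begin{itemize}
\item the equilateral triangle $\Teq_T(\rho)$ is a genuine equilateral triangle for the hexagonal norm, of side length governed by $\log\ttt_T(\rho)$;
\item projective lines through a vertex of $\Text_T$ become straight lines;
\item the $e^t$--homothety on $\Teq_T$ is literally the linear map $v\mapsto e^t v$, which is trivially $e^t$--Lipschitz.
\end{itemize}

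The second step is the local-to-global reduction. The Hilbert metric of $\Text_T(\rho)$ is a length (Finsler) metric, and straight segments are geodesics. Since $\Ti_T(\rho)$ is convex, any two of its points are joined by a geodesic staying in $\Ti_T(\rho)$. This segment meets $\Teq_T(\rho)$ and the three rigid pieces in finitely many subsegments. Because $\psi_t$ is continuous across the common boundaries, it suffices to show that on the interior of each piece $\psi_t$ is $C^1$ (or at least locally Lipschitz) with $\norm{d\psi_t(v)}_{\mathrm{hex}}\le e^t\norm{v}_{\mathrm{hex}}$ for every tangent vector $v$. Integrating along the segment and summing over the subsegments then gives the global constant $e^t$.

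The third step, which I expect to be the main obstacle, is this pointwise estimate on a rigid piece. I would parametrize the piece by the two foliations. A point is given by the foot $s$ of its $\folo$--leaf on the side of $\Ti_T$, via nearest-point projection for the Hilbert metric of $\Text_T$, and by the distance $u$ along that ortho-geodesic leaf. The leaves of $\folc$ are then described in the same coordinates, and $\psi_t$ acts as Thurston's stretch: it maps $\folo$--leaves to $\folo$--leaves and rescales the transverse parameter by $e^t$. The computation then splits into two parts.
\begin{enumerate}
\item Compute $d\psi_t$ in the frame of tangent vectors to $\folo$ and to $\folc$.
\item Prove a Pythagorean-type inequality for the hexagonal norm expressing the ``orthogonality'' of $\folo$ to the side. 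Namely, because the $\folo$--leaf realizes the nearest-point projection, the norm of $v=a\,e_{\folo}+b\,e_{\folc}$ should dominate each component. The aim is the stronger inequality $\norm{d\psi_t v}\le e^t\max$ (or a suitable convex combination) of these components.
\end{enumerate}
I would first try checking this directly in the explicit log-coordinates. There, nearest-point projections onto a side of a hexagonal-norm triangle have piecewise-linear structure, so $\folo$ consists of straight rays in controlled directions, and the estimate becomes a finite case analysis according to which face of the unit hexagon is active. As a fallback, I would compare with the hyperbolic ortho-geodesic stretch maps of Appendix \ref{appendix: maps H2}, where the analogous derivative estimate is proved, and adapt that argument to the hexagonal norm.
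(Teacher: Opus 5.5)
Your Steps 1 and 2 are sound and match the paper's reductions. After normalizing, it suffices to bound $\od\psi$ pointwise on the interiors of $\Teq$, $\T_0^p$ and $\T_0^{p'}$. The rigid piece must be split along $[r,t]$, where $\psi^0$ has a crease, which your ``locally Lipschitz'' allows. On $\Teq$ you are right: each $\log C(l_i,t_{i+1},\cdot,l_{i+1})$ is affine in logarithmic coordinates, so $\psieq$ is a homothety of ratio $K=e^t$.

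\textbf{The gap.} The gap is Step~3, which is the whole content of the theorem, and the mechanism you propose there cannot give the constant $K$. Normalize $l=\{x_a=0\}$, $l'=\{x_b=0\}$, $l_0=\{x_c=0\}$, so that $p_0=[0:0:1]$. Work in the chart $x_a=1$ with $(X,Y)=(x_b,x_c)$, and use the logarithmic components $(u,w)=(\dot X/X,\dot Y/Y)$; there the Hilbert norm is $\max(0,u,w)-\min(0,u,w)$. The leaves of $\folo$ are straight, being the pencil through the vertex $p_0$; define $\folo$ that way rather than by nearest-point projection, which for this non-strictly convex norm needs justification. The side $(p,p')$ and the leaves of $\folc$ other than $[t,p]$ are curved, because $p,p'$ are interior points of sides of $\Text$. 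So the $\folc$-direction at $\psi^0(q)$ is not the one at $q$, and no inequality in the components $(a,b)$ of $v=a\,e_{\folo}+b\,e_{\folc}$ alone can control $\Vert\od\psi^0(v)\Vert$.

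Concretely, near $t$ one has $e_{\folo}=(0,1)$, $e_{\folc}\approx(1,1)$, $\od\psi^0(e_{\folo})\approx(0,1)$ and $\od\psi^0(e_{\folc})\approx(K,K)$.
\begin{itemize}
\item For $v=e_{\folo}+e_{\folc}$ you get $\Vert\od\psi^0(v)\Vert\approx K+1>K\max\{|a|,|b|\}$, so your target inequality is false.
\item For $v=e_{\folc}-e_{\folo}\approx(1,0)$ the triangle inequality yields only $K+1$, although the true value is $K$.
\end{itemize}
The Riemannian argument of the appendix does not transfer to this polygonal norm either.

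\textbf{The missing idea.} You need to test $\od_q\psi^0$ only on the vertices of the unit ball. By Example~\ref{ex: finsler norm ball for polygons}, the unit ball at $q$ is the hexagon spanned by $\pm v_0,\pm v,\pm v'$, the unit vectors pointing to $p_0$, $l\cap l_0$ and $l'\cap l_0$. By linearity, $\Vert\od_q\psi^0\Vert$ is the largest of the three image norms, and no orthogonality statement is needed.
\begin{itemize}
\item The vector $v_0$ is tangent to $\folo$ and is contracted (Lemma~\ref{lem: properties psi0}).
\item For $v,v'$ the paper uses coordinates $(g,o)$ with $o=\log C(l,\cdot,r,l')$. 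In these, $\psi^0(g,o)=(g,Ko)$ and $\Vert v\Vert=|\od o(v)|$.
\item The real work is Claim~\ref{claim: stretch psi0}: the image segment still exits $\Text$ through $l$ and $l'$. Hence its length is again computed by $o$, and $\Vert\od\psi^0(v)\Vert=K$ (Proposition~\ref{prop: stretching special vector psi0}).
\end{itemize}
In your coordinates this says that the $p_0$-component of $\od\psi^0(v)$ lies between the other two, and it can be checked directly. Take $p=[0:1:1]$ and $p'=[1:0:1]$. Then $t=(1,1)$, $\T_0^p=\{X\ge 1,\ X\le Y\le X+1\}$, and $\psi^0(X,Y)=(X^K,Y')$ with $Y'=Y+X^K-X$. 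The images of $(0,Y)$, $(X,0)$ and $(X,Y)$ have components $(0,Y/Y')$, $(K,(KX^K-X)/Y')$ and $(K,(KX^K-X+Y)/Y')$. Their second entries lie in $[0,1]$, $[0,K]$ and $[0,K]$ exactly because $X\ge 1$, $Y\ge X$ and $K>1$. Without this vertex reduction and this position check, Step~3 remains unproved.
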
 

Unfortunately, we cannot control the Lipschitz property of $\psi_t$ with respect to the Hilbert distance on the convex domains $\Omega_\rho$ and $\Omega_{\rho_t}$. 
However, we can use  Theorem \ref{thm_intro:Lipschitz} to get control of the length function with respect to the Hilbert metrics in terms of the length function of a particular metric graph. For this we introduce a new metric, the triangular metric on a convex projective surface. 

\subsection{The triangular metric and metric graphs}
For a general convex projective structure $\rho\in \cp$, the main difficulty is that the Hilbert metric of $\Omega_\rho$ depends drastically on the full information of $\rho$, and it is very hard to relate this with a ``local piece of data" on our rigid pieces. 
In the hyperbolic case this is easier, as the convex domain $\Omega_\rho$ which is the hyperbolic disk does not depend on $\rho$. 
To circumvent this problem for general convex projective structures, we introduce a different distance, the \textit{triangular distance} $d_\T(\cdot,\cdot)$, on $S$ associated to $\rho$. 

We explain the triangle distance. For $T\in\calt_{\widetilde{S}}$, we endow each $\Ti_T(\rho)$ with the Hilbert metric induced by the convex set $\Text_T(\rho)$.  
The associated distance on $\Omega_\rho$, by gluing together all these metric spaces, is the triangle distance $d_\T(\cdot,\cdot)$ (see Section \ref{subsec: triangular metric tuples} for precise definition). It is $\rho$--invariant and the corresponding length function is denoted by $L_\rho^\T$.  
We note that (\ref{eq: Omegarho in intro}) crucially implies \begin{equation}\label{eq: domination hilbert triangular intro}
    L_\rho^\T(\gamma)\leq L_\rho^\Hil(\gamma)
\end{equation} for every $\gamma\in\Gamma$.

The stretch map we constructed gives a natural homeomorphism $$\psi_t:(\Ti_T(\rho),d_\T)\to(\Ti_T(\rho_t),d_\T)$$
\noindent for each triangle $T$ of $\calt_S$. We are able to show the following thanks to local properties of $d_\T(\cdot,\cdot)$. 

\begin{thm}[c.f. Theorem \ref{thm: Klip}]\label{thm: Klip intro}
    Let $\rho\in\cp$ be arbitrary and let $t\mapsto \rho_t$ be a stretch path starting at $\rho$. 
    Then for every $t\geq 0$ the corresponding ortho--geodesic stretch map $\psi_t: (S,\rho) \to (S, \rho_t)  $ is Lipschitz for the underlying triangular distances, with Lipschitz constant $e^t$. 
\end{thm}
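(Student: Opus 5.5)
The plan is to reduce the global statement to Theorem \ref{thm_intro:Lipschitz}, using the fact that the triangular distance $d_\T$ is a length-type distance obtained by gluing the pieces $(\Ti_T(\rho),d_{\Text_T(\rho)})$, $T\in\calt_{\widetilde S}$. We work equivariantly on $\Omega_\rho$ with the lifted map $\psi_t:\Omega_\rho\to\Omega_{\rho_t}$. This lift is a $(\rho,\rho_t)$--equivariant homeomorphism sending $\Ti_T(\rho)$ onto $\Ti_T(\rho_t)$ for every $T$. Bounding $d_\T(\psi_t x,\psi_t y)\le e^t d_\T(x,y)$ on $\Omega_\rho$ then gives the statement on $S$, because the length function and the quotient distance are defined through the lifts.

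\textbf{Step 1: description of $d_\T$.} I would use the definition of $d_\T$ as an infimum over finite chains. For $x,y\in\Omega_\rho$ we have
\[
d_\T(x,y)=\inf\sum_{i=0}^{k-1} d_{\Text_{T_i}(\rho)}(x_i,x_{i+1}),
\]
where the infimum runs over chains $x=x_0,x_1,\dots,x_k=y$ in which each consecutive pair $x_i,x_{i+1}$ lies in a common closed triangle $\Ti_{T_i}(\rho)$. Equivalently, one can take the infimum of lengths of paths that are piecewise contained in single triangles. If the paper's definition is phrased through paths rather than chains, I would first check that the two descriptions agree; this is routine, since each $\Ti_T$ is geodesically convex for $d_{\Text_T}$ (straight segments).

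\textbf{Step 2: apply the local estimate.} Take any chain as in Step 1. The image points $\psi_t(x_i),\psi_t(x_{i+1})$ lie in the common triangle $\Ti_{T_i}(\rho_t)$, so $\psi_t$ of the chain is an admissible chain for $\rho_t$. By Theorem \ref{thm_intro:Lipschitz} applied to each $T_i$,
\[
d_{\Text_{T_i}(\rho_t)}(\psi_t x_i,\psi_t x_{i+1})\le e^t\, d_{\Text_{T_i}(\rho)}(x_i,x_{i+1}).
\]
Summing over $i$ and taking the infimum over chains gives $d_\T(\psi_t x,\psi_t y)\le e^t d_\T(x,y)$.

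\textbf{Main obstacle: points on shared edges.} A point $x_i$ on an edge $E$ shared by $T$ and $T'$ has to be treated consistently from both sides. We need $\psi_t$ to be a single well-defined map on $E$, so that the pieces defined triangle by triangle agree. This is exactly the compatibility of the construction with the gluing along edges, and it is where the stretch condition on the product $\ttc_E\ttc_{\overline E}$ is used. We also need Theorem \ref{thm_intro:Lipschitz} to hold on the closed triangle, including its sides, with respect to $d_{\Text_T}$. Both facts should follow from the construction of $\psi_t$ in Section \ref{sec: stretch map}.

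A second subtlety is that $d_{\Text_T(\rho)}$ and $d_{\Text_{T'}(\rho)}$ may differ on $E$. This does no harm in the argument above: each step of a chain is measured in the metric of the one triangle $T_i$ it is assigned to, and the estimate is applied in that same triangle. Finally, ideal vertices lie outside $\Omega_\rho$, so no boundary issues arise there.
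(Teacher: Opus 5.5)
\textbf{Verdict: genuine gap.} Your reduction is correct, but the result you rely on is the statement itself in disguise.

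Reducing the global statement to a single-triangle estimate is exactly the first step of the paper's proof of Theorem~\ref{thm: Klip}: Lemma~\ref{lem: sufficient condtion for lipshictz triangular} reduces to $n=3$. Your chain description of $d_\T$ agrees with the path definition, since segments in $\Ti_T$ are $d_{\Text_T}$--geodesics. It even spares you from checking that $\psi_t\circ\alpha$ is piecewise smooth.

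The problem is that everything then rests on Theorem~\ref{thm_intro:Lipschitz}, which is not an independent input. On a single triangle $d_\T=d_{\Text}$ (Lemma~\ref{lem: triangular distance separates and extends}). So that theorem is just the case $n=3$ of the same Theorem~\ref{thm: Klip} your statement refers to, and the paper proves it only as the core of that theorem. Citing it makes the argument circular and leaves out the entire content.

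You have also put the difficulty in the wrong place:
\begin{itemize}
\item Edge compatibility is a short cross-ratio check (Lemma~\ref{lem: gluning maps of triples}). The relative position of $r_{\overline{E}}$ and $r_E$ is encoded by $C(l_{i_E},r_E,r_{\overline{E}},l_{k_E})=\ttt_E\ttt_{\overline{E}}\ttc_E\ttc_{\overline{E}}$, and the $\T$--stretch simply raises this to the power $K$.
\item Your ``second subtlety'' does not arise. On a shared edge, the line $p_{i_E}\oplus p_{k_E}$ meets both $\partial\Text_{T_E}$ and $\partial\Text_{T_{\overline{E}}}$ exactly at $p_{i_E}$ and $p_{k_E}$. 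Hence the two Hilbert distances coincide there, which is why $\ell_\T$ is well defined.
\end{itemize}

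What you actually have to prove is $\ell_{(\Text)'}(\psi\circ\alpha)\le K\,\ell_{\Text}(\alpha)$ for segments $\alpha\subset\Ti$. The paper does this infinitesimally on the smooth pieces $\Teq$, $\T_0^p$, $\T_0^{p'}$. The $g_i$ are isometries, so on the rigid pieces one may work with $\psi^0$.
\begin{itemize}
\item The Hilbert unit ball of the triangle $\Text$ at $q$ is the convex hull of $\pm v_1,\pm v_2,\pm v_3$, where $v_i$ points to the vertices of $\Text$ (Example~\ref{ex: finsler norm ball for polygons}). So it suffices to show $\|\od_q\psi(v_i)\|\le K$ for these three vectors.
\item On $\Teq$ all three are stretched by exactly $K$ (Corollary~\ref{cor: properties psieq}).
\item On $\T_0^p$, the vector pointing to $p_0$ is tangent to an ortho--geodesic leaf and is not stretched at all. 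This is Lemma~\ref{lem: properties psi0}, which relies on Lemma~\ref{lem: bound triangular pencil} and crucially on $K>1$.
\item On $\T_0^p$, the vectors pointing to $l\cap l_0$ and $l'\cap l_0$ are stretched by exactly $K$ (Proposition~\ref{prop: stretching special vector psi0}). In the coordinates $(g,o)$ with $o=\log C(l,\cdot\,,r,l')$, the map reads $(g,o)\mapsto(g,Ko)$. One must then show (Claim~\ref{claim: stretch psi0}) that the Hilbert distance between $\psi^0(q)$ and $\psi^0(\alpha(t))$ is still realized by the boundary lines $l$ and $l'$. This is a nontrivial projective-geometry argument.
\end{itemize}
None of this follows formally from the gluing; it is the substance of the theorem, and your proposal does not address it.
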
 

We consider a metric graph  $\omega_\rho$ dual to the triangulation $\calt_{\widetilde S}$. The graph consists of a small triangle for each $T \in \calt_{\widetilde{S}}$, whose vertices correspond to the edges of $T$, and all of whose edges are weighted by $\ttt_T(\rho)$. 
For any two adjacent triangles $T$ and $T'$ glued along an edge $E$ we connect the corresponding vertices of the small triangles by an edge, which is weighted by a \textit{shear invariant} $\tts_E(\rho)$ taking into account the product $\ttc_E(\rho)\cdot \ttc_{\overline{E}}(\rho)$ of the shear invariants, as well as the triple ratios $\ttt_T(\rho)$ and $\ttt_{T'}(\rho)$. 

We define an  open set $\shif \subset \cp$ of \textit{$\T$-shifting representations}, precisely by declaring that the weights of $\omega_\rho$ associated to each $E$ are positive. 
Geometrically, this still means that we ``always shift in the same direction" each time we travel from one ideal triangle to the next one (c.f. Figure \ref{fig: Tshifting}).

We denote by $L_\rho^\Graph$ the length function associated to $\omega_\rho$. 
Inspired by the work of Parreau \cite{ParreauInv} we prove the following.

\begin{thm}[see Proposition \ref{prop: degeneration triangular}]\label{thm: degeneration triangular intro}
    Let $\rho\in\cp$ be $\T$--shifting and $t\mapsto\rho_t$ be a stretch path. 
    Then for every $\gamma\in\Gamma$ we have $$\displaystyle\lim_{t\to\infty} \frac{e^{-t}\cdot L_{\rho_t}^\T(\gamma)}{L_\rho^\Graph(\gamma)}=1.$$
\end{thm}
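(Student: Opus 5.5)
The strategy is to prove a coarse comparison
\[
L^\Graph_{\rho_t}(\gamma)-D\, n_\gamma\;\le\; L^\T_{\rho_t}(\gamma)\;\le\; L^\Graph_{\rho_t}(\gamma)+D\, n_\gamma,
\]
with $D$ a universal constant independent of $t$, and then to show that $L^\Graph_{\rho_t}(\gamma)=e^t L^\Graph_\rho(\gamma)$. Dividing by $e^t L^\Graph_\rho(\gamma)$ and letting $t\to\infty$ gives the limit, because $n_\gamma$ depends only on $\gamma$ and the combinatorics of $\calt_{\widetilde S}$.

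The scaling claim comes from the definition of stretch paths. The triangle weights scale because $\ttt_T(\rho_t)=\ttt_T(\rho)^{e^t}$, and the weights are logarithmic quantities (the side length of $\Teq_T$, which is $\log$ of the triple ratio up to normalization). The edge weight $\log\tts_E$ is built from $\log(\ttc_E\ttc_{\overline E})$ together with $\log\ttt_T$ and $\log\ttt_{T'}$; I expect it to be linear in these quantities, so it also scales by $e^t$. In particular $\T$-shifting is preserved along the path, and $\omega_{\rho_t}$ is $\omega_\rho$ with its metric multiplied by $e^t$. Since $\gamma$ acts on the fixed tree-like graph $\omega$ with the same combinatorial axis, its translation length scales by $e^t$. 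One point needs care: $\omega_\rho$ has small triangles, so it is not a tree. However, the axis of $\gamma$ in the dual tree determines, in each small triangle it crosses, which pair of vertices it traverses, so $L^\Graph$ is a sum of weights along a combinatorially fixed cycle.

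For the comparison, fix a fundamental segment of the axis of $\gamma$ in the dual tree, crossing triangles $T_1,\dots,T_{n_\gamma}$ through edges $E_i=T_i\cap T_{i+1}$. In each triangle $\Ti_T$ with the Hilbert metric of $\Text_T$, I would identify distinguished points $p_{T,E}$ on each edge $E$: the feet of the equilateral piece $\Teq_T$, meaning the points where the rigid pieces meet the sides. Two facts are needed.

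\begin{enumerate}
\item Within a triangle, $d_{\Text_T}(p_{T,E},p_{T,E'})$ equals the weight $\log\ttt_T$ up to a bounded error. This should follow from the explicit normal form of triples of flags.
\item Across an edge $E$, the points $p_{T,E}$ and $p_{T',E}$ lie on the common geodesic side $E$, and their distance along $E$ equals $\log\tts_E$ up to a bounded error. This is the computation that defines $\tts_E$ through $\ttc_E\ttc_{\overline E}$ and the two triple ratios. Since the Hilbert metrics of $\Text_T$ and $\Text_{T'}$ restricted to the line $E$ both compare with cross ratios of the endpoints of $E$ and the hitting points of the external lines, this distance should be explicit.
\end{enumerate}

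The upper bound then follows by concatenating these segments into a $\gamma$-invariant piecewise path in $(\Omega_{\rho_t},d_\T)$ and applying the triangle inequality.

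The lower bound is the main obstacle. Any path from $x$ to $\gamma x$ must cross every edge $E_i$. The shifting hypothesis (positive edge weights) should force a $d_\T$-geodesic to pass close to the points $p_{T,E}$, or at least to pay the full shift, up to bounded error per crossing. I would argue via nearest-point projections onto the sides, using the foliation $\folo$. Projecting the crossing points on consecutive edges of a triangle $T$ onto the sides, the Hilbert distance in $\Text_T$ between points on two different sides is at least the distance between their projections to the ``corner region'' minus a constant. Positivity of the shears ensures consecutive displacements add rather than cancel; this is the no-backtracking condition, in the spirit of Parreau's argument. The bounded constants must be uniform in $t$, and this is where I expect the real work.

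A cleaner alternative, if it is available, is to use the stretch map directly. By Theorem \ref{thm: Klip}, $L^\T_{\rho_t}(\gamma)\le e^tL^\T_\rho(\gamma)$. Applying the same reasoning to the inverse path, if stretch maps for negative parameters make sense, would give the matching bound $L^\T_{\rho}(\gamma)\le e^{-t}L^\T_{\rho_t}(\gamma)$ only with $e^{-t}$ Lipschitz behaviour, so this approach does not by itself give the limit. I would therefore rely on the coarse comparison above.
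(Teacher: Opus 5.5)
Your architecture matches the paper's. You prove a two--sided estimate $\vert L^\T_{\rho_t}(\gamma)-L^\Graph_{\rho_t}(\gamma)\vert\le D\, n_\gamma$ with $D$ universal, combine it with $L^\Graph_{\rho_t}=e^tL^\Graph_\rho$, and get the upper bound by concatenating segments between distinguished points on the edges crossed by the axis. Two small corrections there:
\begin{enumerate}
\item $\log\tts_E=-\log(\ttc_E\ttc_{\overline E})-\max\{0,\log\ttt_E\}-\max\{0,\log\ttt_{\overline E}\}$ is positively homogeneous of degree one rather than linear. This is all the scaling needs.
\item The points on $E$ cannot be an unspecified choice of ``feet'' of $\Teq$. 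They must be the minimizers $q_E\in\{r_E,s_E\}$, $q_{\overline E}\in\{r_{\overline E},s_{\overline E}\}$ of (\ref{eq: shear quadruple}), and whether $r$ or $s$ occurs depends on the signs of $\log\ttt_E$ and $\log\ttt_{\overline E}$. For instance, using $r_E,r_{\overline E}$ when both triple ratios are $<1$ makes your fact (2) wrong by exactly $\vert\log\ttt_E\vert+\vert\log\ttt_{\overline E}\vert$.
\end{enumerate}
With the correct choice, $d_\T(q_E,q_{\overline E})=\log\tts_E$ exactly. Any two distinguished points of one triangle are within $\vert\log\ttt_T\vert+2D$ by Lemmas \ref{lem: Teq with cross ratios} and \ref{lem: bound triangular pencil}, and the upper bound follows. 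Uniformity in $t$ is automatic, since the constants come from the rigid pieces; e.g.\ $d_\T(r_i,t_i)$ is a universal constant by (\ref{eq: rigid configuration}).

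The genuine gap is the lower bound, which is the real content of the statement and which you only describe heuristically. A near--minimizing path from $q$ to $\rho_t(\gamma)\cdot q$ need not pass near the $q_E$'s. It crosses each edge at an unknown point, possibly on the ``wrong'' side of the distinguished points, so per--crossing projection estimates do not simply add up.

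The paper handles this with Proposition \ref{prop: additve estimate triangular distance refined}. Its key case is Lemma \ref{lem: additve estimate triangular distance refined case M2}: for consecutive edges $E_1,E_2$ of $T_1$, any $q_1\in E_1$ between $q_{\overline E_1}$ and $p_{k_{E_1}}$ and any $\overline q_2\in E_2$ between $p_{i_{E_2}}$ and $q_{E_2}$ satisfy $d_\T(q_1,\overline q_2)\ge \vert\log\ttt_1\vert+d_\T(q_1,q_{\overline E_1})+d_\T(q_{E_2},\overline q_2)-D$. For $\ttt_1\ge 1$, where the relevant points are $r_2,r_3$, this is proved as follows. One shows that $[q_1,\overline q_2]$ meets $[r_3,t_3]$ and that $[q_1,t_3]$ meets $[r_2,t_2]$, splits at these points, and uses $d_\T(r_i,t_i)\le D$ and $d_\T(t_2,t_3)=\log\ttt_1$.

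The general case is an induction on the number of edges, cutting a near--geodesic at its crossing point $q$ with $E_{M-1}$:
\begin{enumerate}
\item If $q$ lies between $p_{i_{E_{M-1}}}$ and $q_{\overline E_{M-1}}$, then $\tts_{M-1}>1$ forces it to lie between $p_{i_{E_{M-1}}}$ and $q_{E_{M-1}}$ too. So the induction hypothesis (ending at $q$) and the two--edge lemma (starting at $q_{E_{M-1}}$) combine via the triangle inequality.
\item Otherwise the two--edge lemma applies to $q$ itself, and $d_\T(q_1,q)+d_\T(q,q_{\overline E_{M-1}})\ge d_\T(q_1,q_{\overline E_{M-1}})$ reduces to the induction hypothesis.
\end{enumerate}
This case split is exactly where positivity of the shears prevents cancellation.

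Finally, your ``any path from $x$ to $\gamma x$'' formulation would need an extra argument for basepoints far from the axis. The paper avoids this: it uses $L^\T_{\rho_t}(\gamma)\ge\lim_k d_\T(q,\rho_t(\gamma^k)\cdot q)/k$ with $q=q_{E_1}$, so the error $D(kn_\gamma+1)$ averages to $Dn_\gamma$.
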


\subsection{The Hilbert length function and its entropy}
The Hilbert metric is the most natural metric on a convex real projective surface, however its length function  $L_\rho^\Hil$ is difficult to control. 
The stretch maps and their Lipschitz property with respect to the triangular metric allow us to control the Hilbert length function in terms of the graph length function $L_\rho^\Graph$, which is explicitly described in terms of Fock-Goncharov coordinates. 

\begin{thm}[see Corollary \ref{cor: domination}]\label{cor: domination intro}
    Let $\rho\in\cp$ be $\T$--shifting. Then 
    $$L_\rho^\Graph(\gamma)\leq L_\rho^\Hil(\gamma)$$ \noindent for every $\gamma\in\Gamma$.
\end{thm}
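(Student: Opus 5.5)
The inequality will follow by chaining three earlier results: the comparison \eqref{eq: domination hilbert triangular intro}, the Lipschitz property of the stretch maps (Theorem \ref{thm: Klip intro}), and the asymptotics along a stretch path (Theorem \ref{thm: degeneration triangular intro}). Since $L_\rho^\T(\gamma)\le L_\rho^\Hil(\gamma)$ for every $\gamma\in\Gamma$, it suffices to prove the stronger statement
$$L_\rho^\Graph(\gamma)\le L_\rho^\T(\gamma)$$
for every $\T$--shifting $\rho$ and every $\gamma\in\Gamma$.

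First I fix a $\T$--shifting $\rho$ and pick any stretch path $t\mapsto\rho_t$ starting at $\rho$. Such a path exists: for instance, keep each ratio $\ttc_E/\ttc_{\overline{E}}$ fixed and raise the triple ratios and the products $\ttc_E\ttc_{\overline{E}}$ to the power $e^t$. This is legitimate because the Fock--Goncharov parametrization identifies $\cp$ with all positive parameter tuples. Next, Theorem \ref{thm: Klip intro} gives a $(\rho,\rho_t)$--equivariant homeomorphism $\psi_t:\Omega_\rho\to\Omega_{\rho_t}$ that is $e^t$--Lipschitz for the triangular distances. The standard argument then bounds lengths: for $x\in\Omega_\rho$,
$$d_\T\bigl(\psi_t(x),\rho_t(\gamma)\psi_t(x)\bigr)=d_\T\bigl(\psi_t(x),\psi_t(\rho(\gamma)x)\bigr)\le e^t\, d_\T\bigl(x,\rho(\gamma)x\bigr).$$
Taking the infimum over $x$ (or using stable translation length, $\lim_n d_\T(x,\gamma^n x)/n$, if $L^\T$ is defined that way) gives
$$L_{\rho_t}^\T(\gamma)\le e^t L_\rho^\T(\gamma).$$

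Finally, I divide by $e^t L_\rho^\Graph(\gamma)$ and let $t\to\infty$. Theorem \ref{thm: degeneration triangular intro} says the left-hand side tends to $1$, so
$$1\le \frac{L_\rho^\T(\gamma)}{L_\rho^\Graph(\gamma)},$$
which is the claim. This step needs $L_\rho^\Graph(\gamma)>0$. That holds for nontrivial $\gamma$ because the $\T$--shifting weights make $\omega_\rho$ a metric tree with positive edge lengths on which $\gamma$ acts with positive translation length. If $L_\rho^\Graph(\gamma)=0$, the inequality is trivial anyway.

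The main obstacle is making sure the length function $L^\T$ is compatible with the equivariant Lipschitz bound. The triangular distance is a glued, path-type metric on $\Omega_\rho$, and I must check that $L_\rho^\T(\gamma)$ is indeed the translation length $\inf_x d_\T(x,\rho(\gamma)x)$, or its stable version. I must also check that the global Lipschitz statement for $\psi_t$, not only its restriction to single triangles, is what Theorem \ref{thm: Klip intro} provides. Granting those, the rest is formal.
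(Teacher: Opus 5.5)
Your proposal is correct and is the paper's own argument: reduce to $L_\rho^\Graph\le L_\rho^\T$ using $L_\rho^\T\le L_\rho^\Hil$, take a $\T$--stretch $\rho_t$ of size $e^t$, use the equivariant $e^t$--Lipschitz stretch map of Theorem \ref{thm: Klip} to get $L_{\rho_t}^\T(\gamma)\le e^t L_\rho^\T(\gamma)$, and let $t\to\infty$ using Proposition \ref{prop: degeneration triangular}. The two points you flag are settled in the paper: translation length is defined as $\inf_x d(x,g\cdot x)$, and the Lipschitz bound is global. The only slip is that the Fock--Goncharov coordinates parametrize the framed space $\cpf$ rather than $\cp$, which is harmless here, since for $\rho_t$ the argument only uses the triangular distance on the union of the ideal triangles.
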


This allows us further to study the entropy. 
For $\mathbf{X}=\Hil,\Graph$ we define the \textit{entropy}
$$h_\rho^\mathbf{X}\coloneqq \displaystyle\limsup_{t\to\infty} \frac{1}{t}\cdot\log(\#\{[\gamma]\in[\Gamma]: L_\rho^\mathbf{X}(\gamma)\leq t\})\in\rr_{>0}$$ \noindent (here, $[\Gamma]$ denotes the set of conjugacy classes in $\Gamma$). 
Entropies are important invariants of geometric structures and more generally proper actions on metric spaces, see e.g. \cite{PozBourbaki,WieICM} and references therein.
Theorem~\ref{cor: domination intro} implies \begin{equation}\label{eq: entropy inequality intro}
    h_\rho^\Hil\leq h_\rho^\Graph
\end{equation} \noindent for every $\rho\in\shif$.
The upper bound can be explicitly estimated in terms of the Fock--Goncharov parameters of $\rho$.

To the best of our knowledge, (\ref{eq: entropy inequality intro}) provides the first pointwise estimates for the Hilbert entropy in terms of an appropriate parametrization of $\cp$.
Still, there exist related results in the literature that we now discuss (we recall that $\cp$ can be identified with a subset of the space of conjugacy classes of representations $\Gamma\to\PGL$).

First we note that Martone--Zhang \cite[Theorems 1.4 and 6.8]{MZPos} find, in a setting much more general than ours and for more general length functions, pointwise bounds for the entropy in terms of the \textit{systole} and the \textit{pants--systole}.
It remains unclear to us whether these can be explicitly estimated in terms of an appropriate parametrization of the underlying representation space.

In another direction, a number of existing results provide asymptotic results for entropies. 
Notably, \cite{ZhangDeg,NIEEntropy,NIESimplicial,MZPos} give sufficient conditions on families of diverging sequences in some representation spaces along which certain entropies converge to $0$.
We may apply (\ref{eq: entropy inequality intro}) to obtain similar results. Recall $\shif$ consist of $\T$-shifting representations.

\begin{cor}\label{cor: decay entropy intro}
Let $\rho\in\shif$ and let $t\mapsto\rho_t$ be a  stretch path starting at $\rho$.
Then for every $t\geq 0$ one has $$ h_{\rho_t}^\Hil\leq e^{-t}\cdot h_\rho^\Graph.$$
\end{cor}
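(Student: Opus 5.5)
We chain three comparisons: the domination of Theorem~\ref{cor: domination intro}, the Lipschitz property of Theorem~\ref{thm: Klip intro}, and (\ref{eq: domination hilbert triangular intro}). The aim is the pointwise bound
$$L_{\rho_t}^\Hil(\gamma)\geq e^{t}\cdot L_\rho^\Graph(\gamma)\quad\text{for all }\gamma\in\Gamma.$$
Once this holds, we have the inclusion
$$\{[\gamma]:L_{\rho_t}^\Hil(\gamma)\leq s\}\subset\{[\gamma]:L_\rho^\Graph(\gamma)\leq e^{-t}s\}.$$
Taking $\frac1s\log$ and $\limsup_{s\to\infty}$, and substituting $s'=e^{-t}s$, gives $h_{\rho_t}^\Hil\leq e^{-t}h_\rho^\Graph$.

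To get the pointwise bound, the natural route is to show that $\rho_t$ is itself $\T$-shifting and that its graph scales, $L_{\rho_t}^\Graph=e^tL_\rho^\Graph$. We would then apply Theorem~\ref{cor: domination intro} at $\rho_t$.

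For the triangle weights the scaling is immediate. They are $\ttt_T(\rho_t)$, with $\log\ttt_T(\rho_t)=e^t\log\ttt_T(\rho)$, so we read the weights as logarithms of the triple ratios.

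For the edge weights we need the explicit formula for $\tts_E$ from the section defining $\omega_\rho$. We expect $\log\tts_E$ to be a linear combination of $\log(\ttc_E\ttc_{\overline E})$, $\log\ttt_T$ and $\log\ttt_{T'}$. All three are multiplied by $e^t$ along a stretch path, so $\log\tts_E(\rho_t)=e^t\log\tts_E(\rho)>0$. This gives both that $\rho_t$ is $\T$-shifting and that every edge of the weighted graph $\omega_{\rho_t}$ equals $e^t$ times the corresponding edge of $\omega_\rho$. The underlying combinatorial tree and the $\Gamma$-action on it are unchanged, so translation lengths scale: $L_{\rho_t}^\Graph=e^tL_\rho^\Graph$.

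Theorem~\ref{cor: domination intro} applied at $\rho_t$ then gives $L_{\rho_t}^\Hil\geq e^tL_\rho^\Graph$, and the entropy bound follows as in the first paragraph.

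The main obstacle is the linearity claim for $\log\tts_E$. If the shear weight involves the coordinates non-homogeneously, for instance through an additive constant or a term like $\log(1+\ttt)$, the exact scaling fails. In that case we would fall back on Theorem~\ref{thm: degeneration triangular intro} and the Lipschitz map, which give only asymptotic information in $t$. That would not yield the clean inequality for every $t\geq0$, so we expect the paper's definition of $\tts_E$ to be homogeneous in the logarithms of the Fock--Goncharov parameters.
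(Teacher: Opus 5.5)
Your argument is correct and is essentially the paper's: $\rho_t$ is again $\T$--shifting with $L^\Graph_{\rho_t}=e^t L^\Graph_\rho$ (Example~\ref{ex: stretching Tshifting parameters} and (\ref{eq: grpah lengths multiples})), so Theorem~\ref{cor: domination intro} at $\rho_t$ combined with your counting inclusion gives the bound; the Lipschitz map enters only through that theorem. One small correction to your guess: by (\ref{eq:shear_invariant}), $\log\tts_E$ is the minimum of four linear forms in $\log(\ttc_E\ttc_{\overline{E}})$, $\log\ttt_E$, $\log\ttt_{\overline{E}}$ rather than a single linear combination, but it is still positively homogeneous of degree one, so it scales exactly by $e^t>0$, as do the triangle weights $\vert\log\ttt_T\vert$.
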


In view of Corollary \ref{cor: decay entropy intro} one may wonder whether $h_{\rho_t}^\Hil \sim e^{-t}\cdot h_\rho^\Graph$ holds as $t\to\infty$.
We cannot prove this with our method. However, work by Parreau \cite{ParreauInv} suggests that the answer may be affirmative in some cases. 
In fact, \cite[Theorem 3]{ParreauInv} proves that, under some different assumptions on $\rho$,  $$\displaystyle\lim_{t\to \infty} \frac{e^{-t}\cdot L_{\rho_t}^\Hil(\gamma)}{L_{\rho}^\Graph(\gamma)}=1$$ 
\noindent holds for every $\gamma\in\Gamma$, see Subsections \ref{subsec: parreau intro} and \ref{subsec: parreau} for more details. 
Showing that this convergence is uniform in $\gamma$ would imply the claimed asymptotics.

In recent work Lejtreger--Lema \cite[Theorems 1.1 and 1.3]{LejLem} find asymptotics in terms of the parameters along certain sequences $\{\rho_n\}_n$ of representations. 
When restricted to $\cp$, 
they need to assume that the triangle invariants remain bounded along the sequence $\{\rho_n\}_n$, an hypothesis complementary to ours. 
In either case we insist on the fact that Corollary~\ref{cor: decay entropy intro} is a pointwise statement, not an asymptotic one. 

\subsection{Link with Parreau's work}\label{subsec: parreau intro}

Finally, we elaborate on the link between Theorem \ref{thm: degeneration triangular intro} and Parreau \cite[Theorem 3]{ParreauInv}.

In \cite{ParreauInv} the author introduces two conditions on a representation $\rho\in\cp$ called \textit{left--shifting} and \textit{edge--separation}. 
These conditions guarantee that, for a special stretch path $t\mapsto\rho_t$, the re--scaled Hilbert lengths $e^{-t}\cdot L_{\rho_t}^\Hil(\gamma)$ converge to some length function which we denote as $L_{\rho}^\HP(\gamma)$. The new length function $L_{\rho}^\HP(\gamma)$ is explicit in terms of the Fock-Goncharov parameters of $\rho$, similarly to $L_\rho^\Graph(\gamma)$.

The length spectrum $L_{\rho}^\HP:\Gamma\to\rr_{>0}$ can be described as the one associated to the $\Gamma$--action on a $2$--dimensional complex built by Parreau in a way similar to the graph $\omega_\rho$.
This complex is endowed with a geometric structure locally modeled on a Cartan subspace $\liea$ of $\PGL$. Hence it is truly a higher rank object: it is designed to capture the asymptotic behavior of more general notions of lengths (in particular, $\liea$--\textit{lengths}). 
On the other hand, if one is only interested in Hilbert lengths, one may endow this complex with the natural Finsler metric induced by the Hilbert length functional. 
The associated length spectrum is what we have denoted as $L_\rho^\HP$ in the previous paragraph.
As the following corollary suggests, the choice of this Finsler metric makes the underlying geometric object closer to a ``rank one space".

\begin{cor}[see Corollary \ref{cor: degeneration triangular and hilbert}]\label{cor: degeneration triangular and hilbert intro}
    Let $\rho$ be $\T$--shifting, left--shifting and edge--separating. 
    Let $t\mapsto\rho_t$ be the associated stretch path. 
    Then for every $\gamma\in\Gamma$ one has $L_\rho^\Graph(\gamma)=L_\rho^\HP(\gamma)$ and $$ \displaystyle\lim_{t\to\infty}\frac{L_{\rho_t}^\Hil(\gamma)}{L_{\rho_t}^\T(\gamma)}=1.$$
\end{cor}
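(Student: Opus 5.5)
Both claims come from sandwiching the three length functions between quantities that are already controlled. For every $\gamma\in\Gamma$ and every $t$ we have the domination (\ref{eq: domination hilbert triangular intro}), $L_{\rho_t}^\T(\gamma)\le L_{\rho_t}^\Hil(\gamma)$. We then bring in two asymptotic statements along the stretch path. The first is Theorem \ref{thm: degeneration triangular intro}, which uses $\T$--shifting: $e^{-t}L_{\rho_t}^\T(\gamma)\to L_\rho^\Graph(\gamma)$. The second is Parreau's result \cite[Theorem 3]{ParreauInv}, which uses left--shifting and edge--separation: $e^{-t}L_{\rho_t}^\Hil(\gamma)\to L_\rho^\HP(\gamma)$. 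Here $\rho_t$ must be the specific stretch path singled out by Parreau, and we need to check that it is one of our stretch paths, i.e. that it stretches the triangle invariants and the products $\ttc_E\ttc_{\overline E}$ by the exponent $e^t$. This compatibility is a bookkeeping check of Parreau's deformation in Fock--Goncharov coordinates.

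\textbf{Step 1.} Multiply the domination by $e^{-t}$ and let $t\to\infty$. This gives $L_\rho^\Graph(\gamma)\le L_\rho^\HP(\gamma)$ for all $\gamma$.

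\textbf{Step 2 (main obstacle).} Prove the reverse inequality $L_\rho^\HP(\gamma)\le L_\rho^\Graph(\gamma)$. Domination gives no information in this direction, so it must come from the explicit descriptions. Parreau's $L_\rho^\HP$ is the translation length on a $2$--complex built from flat pieces modeled on $\liea$: one piece per triangle, with side lengths governed by $\log\ttt_T$, glued by strips governed by the edge parameters. This complex carries the Hilbert Finsler norm, whose unit ball in $\liea$ is a hexagon.

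I would build an explicit $\gamma$--equivariant $1$--Lipschitz map from the Finsler complex to $\omega_\rho$; alternatively, a map in the opposite direction would exhibit, for each $\gamma$, a path in the complex of length at most $L_\rho^\Graph(\gamma)$. A concrete way to do this is to realize $\omega_\rho$ inside the complex. The small triangle of weight $\log\ttt_T$ becomes a Hilbert--equilateral triangle in the flat piece for $T$, exactly as $\Teq_T$ sits in $\Ti_T$. The edge of $\omega_\rho$ dual to $E$ becomes a segment crossing the gluing strip, whose Finsler length equals $\tts_E(\rho)$.

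Under edge--separation and left--shifting, the combinatorial axis of $\gamma$ in the complex follows the dual tree. The cycle in $\omega_\rho$ realizing $L_\rho^\Graph(\gamma)$ then gives a closed path of the same length in the quotient complex, so $L_\rho^\HP(\gamma)\le L_\rho^\Graph(\gamma)$. Matching the hexagonal norm of each strip crossing with the formula for $\tts_E$ is where I expect the real computation to be. It requires identifying Parreau's strip parameters with $\ttc_E\ttc_{\overline E}$ and the two adjacent triple ratios, and confirming that $\T$--shifting makes the straight crossing realize that norm.

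\textbf{Step 3.} With $L_\rho^\Graph=L_\rho^\HP$ established, the two limits coincide and are positive, since $L_\rho^\Graph(\gamma)>0$ by $\T$--shifting. Therefore
\[
\frac{L_{\rho_t}^\Hil(\gamma)}{L_{\rho_t}^\T(\gamma)}=\frac{e^{-t}L_{\rho_t}^\Hil(\gamma)}{e^{-t}L_{\rho_t}^\T(\gamma)}\longrightarrow \frac{L_\rho^\HP(\gamma)}{L_\rho^\Graph(\gamma)}=1 .
\]
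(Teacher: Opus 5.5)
Your architecture is the paper's. Rescaling the domination $L^{\T}_{\rho_t}\le L^{\Hil}_{\rho_t}$ gives $L_\rho^{\Graph}\le L_\rho^{\HP}$, using Proposition \ref{prop: degeneration triangular} and Parreau's theorem (the Parreau path is a $\T$--stretch). The reverse inequality is meant to come from a $\Gamma$--equivariant map $\tau_\rho$ from $\omega_\rho$ into the Hilbert--Parreau complex that is isometric on edges; this is Proposition \ref{prop: hilbert parreau length spectrum}. The ratio statement then follows. Three small points:
\begin{itemize}
\item A $1$--Lipschitz equivariant map \emph{from} the complex \emph{to} $\omega_\rho$ would only reprove $L^{\Graph}_\rho\le L^{\HP}_\rho$. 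Only the direction $\omega_\rho\to$ complex helps.
\item The crossing should have length $\log\tts_E(\rho)$, not $\tts_E(\rho)$.
\item An upper bound on a translation length needs no information about the axis of $\gamma$, and no edge--separation, since $L^{\HP}_\rho(\gamma)\le d_{\HP}(\tau_\rho(v),\gamma\cdot\tau_\rho(v))$ for every vertex $v$.
\end{itemize}

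The genuine gap is the computation you deferred, and it fails in one of the two cases. Write $x_E=\log\parinv_E(\rho_\infty)$ and $x_{\overline E}=\log\parinv_{\overline E}(\rho_\infty)$, so that $x_E+x_{\overline E}=\log\tts_E(\rho)$ by Remark \ref{rem: Tshifting with parreau}. In simple--root coordinates the Weyl--invariant Hilbert norm on $\liea$ is $\Vert(x_1,x_2)\Vert=\max\{|x_1|,|x_2|,|x_1+x_2|\}$.
\begin{itemize}
\item If $x_E,x_{\overline E}>0$, the crossing segment from $(0,0)$ to $(x_{\overline E},x_E)$ lies in $\liea^+$ and has length $x_E+x_{\overline E}$, as needed.
\item If instead $\parinv_E(\rho_\infty)<1$, left--shifting gives $x_E<0<x_{\overline E}$ and $\ttt_{\overline E}(\rho)<1<\ttt_E(\rho)$, and $\T$--shifting gives $|x_E|<x_{\overline E}$. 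The diagonal of the parallelogram $P^E$ then has length $x_{\overline E}>\log\tts_E(\rho)$, so the edge is stretched.
\end{itemize}
Rerouting does not help. Put $a=|\log\ttt_{\overline E}(\rho)|$ and $b=\log\ttt_E(\rho)$, both larger than $|x_E|$. In the chart of $P^E$, the vertex $(-a,0)$ of $P^{T_{\overline E}}$ and the vertex $(x_{\overline E}+b,x_E)$ of $P^{T_E}$ differ by a vector of norm $x_{\overline E}+a+b$. Path lengths in the complex are at least the norm of the developed displacement, so their $d_{\HP}$--distance is at least $x_{\overline E}+a+b$. The corresponding vertices of $\omega_\rho$ are at graph distance $a+b+\log\tts_E(\rho)$, which is smaller by $|x_E|$. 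Hence no equivariant map sending the vertices of each small triangle to those of $P^T$ can be $1$--Lipschitz.

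The paper's proof of Proposition \ref{prop: hilbert parreau length spectrum} takes the same diagonal in this case without computing its length, so it does not close the gap either. As it stands, the argument (yours and the paper's) proves the statement when $\parinv_E(\rho_\infty)>1$ for every $E\in\caleor_S$. When some $P^E$ is a parallelogram, a genuinely different argument is needed, and it is not clear that $L_\rho^{\Graph}=L_\rho^{\HP}$ survives there.
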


\subsection{Organization of the paper}
Sections \ref{sec: projective and hilbert} and \ref{sec: positive tuples} cover standard background material on projective geometry and positive $n$--tuples of flags, and are intended  to fix notations and terminology. 
In Section \ref{sec: triangular distance} we introduce the triangular distance and prove some basic properties. 
In Section \ref{sec: estimates triangular} we introduce the $\T$--shifting condition and prove an estimate (Proposition \ref{prop: additve estimate triangular distance}) which is crucial for the proof of Theorem \ref{thm: degeneration triangular intro}.
In Section \ref{sec: stretch map} we introduce the ortho--geodesic stretch map and prove Theorem \ref{thm: Klip intro}.
We then apply these results in Section \ref{sec: applications} to the study of convex projective structures on surfaces with boundary, in particular we prove Theorems \ref{thm: degeneration triangular intro} and \ref{cor: domination intro}, and Corollary \ref{cor: degeneration triangular and hilbert intro}.
Finally, we include Appendix \ref{appendix: maps H2} where we show that the ortho--geodesic stretch map defines an extremal Lipschitz map in the setting of hyperbolic surfaces. 

\subsection{Acknowledgements} We are grateful to Joaqu\'in Lejtreger and Joaqu\'in Lema for very interesting discussions that led to the statement of Theorem~\ref{cor: domination intro}. 
\subsection{AI disclosure} Generative AI tools were not at all used for the work; all mathematical content was
written and verified by the authors.

\section{Projective invariants and Hilbert geometry}\label{sec: projective and hilbert}

We start with some preliminaries, fixing some terminology and reviewing basic facts of real projective geometry.

In the whole paper we let $\g\coloneqq\PGL$.
We identify the dual projective plane $\rpd$ with the space of projective lines $ l\subset \rp$.
We let $p\oplus p'\in\rpd$ be the element spanned by two different points $p,p'\in\rp$.
Similarly, for different $l,l'\in\rpd$ we denote by $l\cap l'\in\rp$ its intersection point.
A point $p\in\rp$ is \textit{transverse} to $l\in\rpd$ if $p\notin l$.

\subsection{Cross ratios}

The basic invariant we will be dealing with is the cross ratio, whose definition and main properties we now recall. 

The \textit{cross ratio} between two different lines $l,l'\in(\rp)^*$ and two points $p,p'\in\rp$ transverse to them is defined by $$C(l,p,p',l')\coloneqq\frac{l(p')}{l(p)}\cdot\frac{l'(p)}{l'(p')}\in\rr_{\neq 0}.$$ \noindent Formally, to make sense of the expression above we should pick non-zero vectors (resp. co--vectors) $w\in p$ and $w'\in p'$ (resp. $\theta\in l$ and $\theta'\in l'$) and compute the quotient $$\frac{\theta(w')}{\theta(w)}\cdot\frac{\theta'(w)}{\theta'(w')},$$
\noindent which is independent of choices.
In order to facilitate the reading, we will usually avoid emphasizing these choices when working with expressions of this type. 

For the purpose of better expressing certain symmetries, it is useful sometimes to define a cross ratio between points in a projective line, or dually between projective lines passing through a given point. 
For instance, if $p_1,p_2,p_3,p_4\in\rp$ are pairwise different points belonging to a given projective line, we let 
  $$C(p_1,p_2,p_3,p_4)\coloneqq C(l_1,p_2,p_3,l_4),$$ \noindent where $l_1$ (resp. $l_4$) is a projective line through $p_1$ (resp. $p_4$) and transverse to $p_2$ and $p_3$.
Dually, if $l_1,l_2,l_3,l_4\in\rpd$ are pairwise different lines passing through a given point in $\rp$, we let $$C(l_1,l_2,l_3,l_4)\coloneqq C(l_1,p_2,p_3,l_4),$$ \noindent where $p_2\in\rp$ (resp. $p_3\in\rp$) belongs to $l_2$ (resp. $l_3$) and is transverse to $l_1$ and $l_4$.
One may check that these are well defined. 
One of the useful symmetries alluded above is for instance $$C(l_1,l_2,l_3,l_4)=1-C(l_2,l_1,l_3,l_4).$$

The cross ratio is a complete invariant in the following sense: if $(p_1,p_2,p_3,p_4)$ and $(p_1',p_2',p_3',p_4')$ are two quadruples of pairwise different points belonging respectively to $l\in\rpd$ and $l'\in\rpd$, then these ordered quadruples are equivalent under the diagonal action of $\g$ if and only if $$C(p_1,p_2,p_3,p_4)=C(p_1',p_2',p_3',p_4')$$
\noindent (and a natural dual statement holds).
Moreover, we record the following properties that we will use throughout and whose proofs follow from definitions; they admit natural dual statements that we omit.

\begin{lem}[Properties of cross ratios]\label{lem: properties c.r.} 
Let $p_1,p_2,p_3,p_4\in\rp$ and $l_1,l_2,l_3,l_4\in\rpd$ be pairwise different with $p_i\notin l_j$
for $i,j=1,\dots,4$.
Then the following properties hold: 
\begin{enumerate} 
\item \tn{Cocycle:} $C(l_1,p_1,p_2,l_2)\cdot C(l_1,p_2,p_3,l_2)=C(l_1,p_1,p_3,l_2)$. 
\item \tn{Projection:} $C(l_1,p_1,p_2,l_2)=C(l_1,p_3,p_2,l_2)$ if and only if $p_3\in (l_1\cap l_2)\oplus p_1$.

\item \tn{Positivity:} $C(l_1,p_1,p_2,l_2)>0$ if and only if $p_1$ and $p_2$ belong to the same connected component of $\rp\setminus(l_1\cup l_2)$.

\item \tn{Cyclicality:} $C(l_1,p_1,p_2,l_2)>1$ if and only if there exists a cyclic order on the projective line $p_1\oplus p_2$ for which one has $$
    l_1\cap (p_1\oplus p_2)< p_1<p_2< l_2\cap (p_1\oplus p_2)  <l_1\cap (p_1\oplus p_2).$$

\item \tn{Nesting:} assume $C(l_1,p_1,p_2,l_2)> 1$ and $C(l_1,p_1,l_3\cap(p_1\oplus p_2),l_2)<0$.
Then $$\min\{C(l_3,p_1,p_2,l_2),C(l_1,p_1,p_2,l_3)\}>1.$$ \noindent In particular, $$C(l_1,p_1,p_2,l_2)>\max\{C(l_3,p_1,p_2,l_2),C(l_1,p_1,p_2,l_3)\}.$$

\end{enumerate}
\end{lem}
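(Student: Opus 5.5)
We plan to prove all five items by fixing coordinates: pick vectors $w_i\in p_i$ and covectors $\theta_j\in l_j$, so that $C(l_1,p_1,p_2,l_2)=\frac{\theta_1(w_2)}{\theta_1(w_1)}\cdot\frac{\theta_2(w_1)}{\theta_2(w_2)}$. Everything is then elementary linear algebra.

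\emph{Cocycle} is immediate: in the product $C(l_1,p_1,p_2,l_2)\cdot C(l_1,p_2,p_3,l_2)$ the factors involving $w_2$ cancel, leaving $C(l_1,p_1,p_3,l_2)$.

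\emph{Projection}: equality of the two cross ratios is equivalent to
\[
\frac{\theta_2(w_1)}{\theta_1(w_1)}=\frac{\theta_2(w_3)}{\theta_1(w_3)}.
\]
This says that the linear form $\theta_2-\lambda\theta_1$ vanishes on both $w_1$ and $w_3$ for a common $\lambda$. The kernels of the forms $\theta_2-\lambda\theta_1$ are exactly the lines through $l_1\cap l_2$, so the condition holds if and only if $p_3$ lies on the line through $l_1\cap l_2$ and $p_1$.

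\emph{Positivity}: the two connected components of $\rp\setminus(l_1\cup l_2)$ are the projectivizations of $\{\theta_1\theta_2>0\}$ and $\{\theta_1\theta_2<0\}$. Since $C(l_1,p_1,p_2,l_2)=\frac{(\theta_1\theta_2)(w_1)\,\ldots}{\ldots}$ has sign equal to $\mathrm{sign}\big(\theta_1\theta_2(w_1)\big)\cdot\mathrm{sign}\big(\theta_1\theta_2(w_2)\big)$, it is positive exactly when $p_1,p_2$ lie in the same component. This sign is independent of the chosen representatives.

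\emph{Cyclicality}: restrict to the line $p_1\oplus p_2$, where $C(l_1,p_1,p_2,l_2)$ becomes the classical cross ratio $C(a,p_1,p_2,b)$ with $a=l_1\cap(p_1\oplus p_2)$ and $b=l_2\cap(p_1\oplus p_2)$. Use an affine coordinate sending $a\mapsto\infty$, $p_1\mapsto 0$, $b\mapsto 1$. Then the cross ratio is a Möbius function of $x=p_2$, of the form $1/(1-x)$ up to the paper's normalization. The condition $C>1$ then corresponds to $x\in(0,1)$, which is the stated cyclic order $a<p_1<p_2<b$.

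\emph{Nesting}, the only step needing care: apply the cocycle relation with $q=l_3\cap(p_1\oplus p_2)$ on the line. By the dual projection property we may compute all three cross ratios on the line $p_1\oplus p_2$, with $a,b,q$ as before. The hypothesis $C(a,p_1,q,b)<0$ means that $q$ is separated from $p_1$ by $\{a,b\}$, i.e.\ $q$ lies in the arc $(b,a)$ outside $[a,b]$. Cyclicality gives the order $a<p_1<p_2<b<a$. Combining the two, the orders $q<p_1<p_2<b$ and $a<p_1<p_2<q$ both hold cyclically. Applying cyclicality again shows that both $C(l_3,p_1,p_2,l_2)$ and $C(l_1,p_1,p_2,l_3)$ exceed $1$.

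For the final inequality, write $C(a,p_1,p_2,b)=C(a,p_1,p_2,q)\cdot C(q,p_1,p_2,b)$ using the point-cocycle identity on the line (the same cancellation as in the first item, applied in the outer slots). Each factor is $>1$, so $C(a,p_1,p_2,b)$ exceeds each factor individually, which gives the claimed inequality.

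The main obstacle is keeping sign and orientation conventions consistent in the cyclicality and nesting arguments. I would verify these directly in the affine coordinate $a=\infty$, $p_1=0$, $b=1$.
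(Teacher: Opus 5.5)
Your proposal is correct and is the direct verification from the definitions that the paper intends; the paper gives no further argument beyond saying the properties follow from the definitions. The one case to add is in Cyclicality (and hence implicitly Nesting) when $l_1\cap l_2\in p_1\oplus p_2$, i.e.\ $a=b$: your normalization $a\mapsto\infty$, $b\mapsto 1$ is then unavailable, but the cross ratio equals $1$ by the Projection Property and no strict cyclic order $a<p_1<p_2<b<a$ exists, so the equivalence still holds.
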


\subsection{Convex sets}\label{subsec: convex sets}

An \textit{affine chart} is a set of the form $\bba\coloneqq \rp\setminus l_\bba$ for some $l_\bba\in\rpd$. 
We fix it from now on, and we also fix a non-zero co--vector $\theta_\bba\in l_\bba$ which induces the identification \begin{equation}\label{eq: affine chart}
    \bba\to\{\theta_\bba=1\}: p\mapsto w_p,
\end{equation} \noindent where $w_p\in p$ is the unique vector that satisfies $\theta_\bba(w_p)=1$.
This allows us to select an orientation on $\bba$ which we also fix from now on.

Given two different points $p$, $p'\in\bba$, we denote by $[p,p']$ the closure of the unique connected component of $p\oplus p'\setminus\{p,p'\}$ that is contained in $\bba$. 
Note that $[p,p']$ depends on the choice of $\bba$, but we do not emphasize this in the notation in the hope that it won't create much confusion. 
The segment $[p,p']\subset\bba$ is the inverse image of the affine segment between $w_p$ and $w_{p'}$ under the map (\ref{eq: affine chart}).
We denote $(p,p')\coloneqq [p,p']\setminus\{p,p'\}$, and define $[p,p')$ and $(p,p']$ in a similar way. 
The following useful remark will be used throughout.

\begin{rem}\label{rem: segment ciclicality and nesting}
Let $p$ and $p'$ be different points in $\bba$. 
Lemma \ref{lem: properties c.r.} allows us to describe $[p,p']$ using appropriate cross ratios. 
For instance, we have $$(p,p')=\{q\in (p\oplus p')\setminus\{p,p'\}: C(q_-,p,q,p')>1\}$$ \noindent for any given point $q_-\in(p\oplus p')\setminus[p,p']$ (e.g. $q_-=l_\bba\cap(p\oplus p')$).
Moreover, if $q_+\in(p\oplus p')\setminus [p,p']$ is so that $C(q_-,p,p',q_+)>1$, then $$ (p,p')=\{q\in (p\oplus p')\setminus\{p,p'\}: 1<C(q_-,p,q,q_+)<C(q_-,p,p',q_+)\}.$$  

From the Nesting Property follows: if $C(p,q_1,q_2,p')>1$ for $q_1,q_2 \in [p,p']$,  then $$C(p,q_1,q_2,p')>\max\{c,c'\}\geq \min\{c,c'\}>1,$$ \noindent where $c\coloneqq C(q_-,q_1,q_2,p')$ and $c'\coloneqq C(p,q_1,q_2,q_-)$.
\end{rem}

A set $C\subset \bba$ is \textit{convex} if for every $p\neq p'$ in $ C$ one has $[ p, p']\subset C$.
If also the closure of $ C$ is contained in $\bba$ we will say that $C$ is \textit{properly convex}.
Equivalently, $C\subset\bba$ is properly convex if and only if its image under (\ref{eq: affine chart}) is convex (in the affine space $\{\theta_\bba=1\}$) and bounded.

\subsection{Polygons}\label{subsec: polygons}

A special family of properly convex sets is given by \textit{polygons}.
By definition these are inverse images $\Pol\subset\bba$, under (\ref{eq: affine chart}), of Euclidean polygons in $\{\theta_\bba=1\}$.
We will always assume that its topological boundary is decomposed into a minimal number of projective segments which will be called the \textit{sides} or \textit{edges} of $\Pol$. 
Two different sides of $\Pol$ are either disjoint or \textit{consecutive}, i.e. they meet in a single point, called a \textit{vertex} of $\Pol$.
Dually, two vertices are \textit{consecutive} if they can be joined by a side of $\Pol$.
An \textit{ideal polygon} is a polygon with its vertices removed.

Special cases of polygons are quadrilaterals and triangles.
A \textit{quadrilateral} is a polygon with four sides. 
These are ``rigid pieces" in projective geometry: the action of $\g$ on the space of quadrilaterals is simply transitive (c.f. (\ref{eq: rigid configuration}) below).
A \textit{triangle} is a polygon with three sides. 
Here is a collection of elementary facts that we will use throughout.

\begin{rem}\label{rem: triangles and subtriangles}
Three points $p_1$, $p_2$ and $p_3$ not aligned determine four triangles in $\rp$, the closures of the connected components of $$\rp\setminus((p_1\oplus p_2)\cup(p_1\oplus p_3)\cup(p_2\oplus p_3)).$$ 
\noindent To specify a triangle is enough to pick a line not passing through these points (to consistently select a side between vertices).
In particular, if $p_1$, $p_2$ and $p_3$ belong to $\bba$ then they canonically determine a triangle $\T\subset\bba$: it is the unique one whose sides do not intersect $l_\bba$.
If moreover $p_1$, $p_2$ and $p_3$ belong to a polygon $\Pol\subset\bba$, then $\T\subset\Pol$.
We note that for every $i=1,2,3$ we have $$\T=\displaystyle\bigcup_{q'\in[p_{i+1},p_{i+2}]}[p_i,q']$$ \noindent (in all what follows the arithmetic in the subscripts is taken modulo $3$).
By Remark \ref{rem: segment ciclicality and nesting}, we may then describe $\T$ as the closure of the set of points $q\in\rp$ that satisfy $$C(l_\bba\cap (p_i\oplus q'),p_i,q,q')>1$$ \noindent for some $q'\in p_{i+1}\oplus p_{i+2}$ satisfying $C(l_\bba\cap (p_{i+1}\oplus p_{i+2}),p_{i+1},q',p_{i+2})>1$.

\end{rem}

\begin{rem}\label{rem: intersections triangle}
    Let $\T\subset\bba$ be a triangle and $q_1$, $q_2$, $q_3$ and $q_4$ be points belonging to their sides.    
    Then in either of the following two situations, the set $\T\setminus[q_1,q_2]$ has two connected components and the projective segments $[q_1,q_2]\subset\bba$ and $[q_3,q_4]\subset\bba$ intersect:
    \begin{itemize}
        \item either $q_1$ and $q_2$ are not vertices of $\T$ and belong to different sides of $\T$ that meet at $q_3$, and $q_4$ belongs to the side not containing $q_3$, or
        \item the point $q_1$ is a vertex of $\T$ and $q_2$ is not, but belongs to the unique side not adjacent to $q_1$, and $q_3\neq q_1$ is a vertex of $\T$ and $q_4$ is any point on the side not adjacent to $q_3$.
    \end{itemize}
Moreover, in the first situation the projective segment $[q_1,q_2]$ does not intersect the side containing $q_4$.
\end{rem}

\subsection{Hilbert geometry}\label{subsec: hilbert geometry}

Let $C\subset\rp$ be an open properly convex set.
We let $d_C(\cdot,\cdot)$ be the \textit{Hilbert distance} on $C$, which is defined as follows.
Let $q\neq q'$ in $C$ and consider the projective line $q\oplus q'$, which intersects the topological boundary of $C$ in two points $x$ and $y$.
By Lemma \ref{lem: properties c.r.} we may assume moreover that $x$ and $y$ are named in such a way that $C(x,q,q',y)> 1$.
We define $$d_C(q,q')\coloneqq\log C(x,q,q',y).$$ \noindent This is a well defined distance on $C$, invariant under the action of the group $$\aut(C)\coloneqq\{g\in\g: g\cdot C=C\}.$$

A \textit{Hilbert geodesic} is a curve $\alpha: \rr\to C$ which is moreover an isometric embedding. 
Using the Cocycle Property one may show that a point $q\in C$ and a non--zero $v\in T_qC$ canonically determine a Hilbert geodesic $\alpha_{q,v}:\rr\to C$ whose image is contained in a projective line, and so that $\alpha_{q,v}(0)=q$ and $\left.\frac{\od}{\od t}\right\vert_{t=0} \alpha_{q,v}(t)=v$.
In general, there might be several Hilbert geodesics passing through $q$ with speed $v$ at time $t=0$, but still we call $\alpha_{q,v}$ the \textit{Hilbert geodesic directed by $v$}.

The Hilbert distance is the path metric induced by a symmetric Finsler norm $\Vert\cdot\Vert_C$ on $C$ called the \textit{Hilbert norm}.
Namely, there exists a continuous family of symmetric norms $$q\mapsto \Vert\cdot\Vert_{q,C}=\Vert\cdot\Vert_C:T_qC\to\rr_{\geq 0}$$ \noindent by means of which one can define a \textit{Hilbert length} $\ell_C(\alpha)$ of piecewise smooth curves $\alpha\subset C$. 
The Hilbert distance $d_C(q,q')$ is the minimal value of all such lengths, for curves connecting $q$ with $q'$.
If $v\in T_qC$ we then have \begin{equation}\label{eq: finsler norm and derivative of distance}
\Vert v \Vert_C=\od_q (q'\mapsto d_C(q,q'))(v).
\end{equation}
\noindent A more explicit description is as follows (c.f. \cite[Subsection 3.2.1]{BenoistDivI} for details).
Using (\ref{eq: affine chart}) we may identify the tangent space of $C$ with $C\times \{\theta_\bba=0\}$.
Given such a tangent vector $(q,v)$ we let $\lambda^\pm=\lambda^{\pm}(q,v)>0$ be defined by the equalities $$v=\lambda^+(w_y-w_q) \tn{ and } v=\lambda^-(w_q-w_x), $$ \noindent where $x$ (resp. $y$) is the endpoint of the Hilbert geodesic directed by $-v$ (resp. $v$).
One has $\Vert v\Vert_C=\lambda^++\lambda^-$.

We record the following infinitesimal version of the Nesting Property.

\begin{rem}\label{rem: nested finsler norms}
    Suppose that $C'\subset\bba$ is another open properly convex subset of $\rp$, and assume moreover that $C'$ contains $C$.
    Then $\Vert v\Vert_{C'}\leq \Vert v\Vert_C$ for all $q\in C$ and $v\in T_qC\cong T_qC'$.
\end{rem}

Here is an important example that we will use in Section \ref{sec: stretch map}.

\begin{eg}\label{ex: finsler norm ball for polygons}
 Let $\Pol\subset\bba$ be a polygon with vertices $p_1,\dots,p_n$ and fix some $q$ in the interior of $\Pol$.
 For each $i=1,\dots,n$ let $v_i\in T_q\Pol$ be a unit vector so that $\alpha_{q,v_i}(\infty)=p_i$.
 Then the unit ball $\{v\in T_q\Pol: \Vert v\Vert_\Pol\leq 1\}$ coincides with the convex hull of $\{v_1,\dots,v_n,-v_1,\dots,-v_n\}$.
\end{eg}

\subsection{Flags and triple ratio}\label{subsec: triple ratio}

A \textit{flag} of $\rr^3$ is a pair $(p,l)\in\rp\times \rpd$ with $p\in l$.
The space of flags $\flags$ is $\g$-homogeneous.

Two flags $F_1=(p_1,l_1)$ and $F_2=(p_2,l_2)$ are said to be \textit{transverse} if $p_1$ is transverse to $l_2$ and $p_2$ is transverse to $l_1$.
The $\g$--action on the space of ordered pairs of transverse flags is transitive. 
If we consider triples instead, the following invariant appears.
Let $(F_1,F_2,F_3)$ be an ordered triple of pairwise transverse flags, with $F_3=(p_3,l_3)$.
The \textit{triple ratio} of $(F_1,F_2,F_3)$ is $$T(F_1,F_2,F_3)\coloneqq\frac{l_1(p_2)}{l_1(p_3)}\cdot\frac{l_2(p_3)}{l_3(p_2)}\cdot\frac{l_3(p_1)}{l_2(p_1)}\in\rr_{\neq 0}.$$
\noindent It is invariant under cyclic permutations of the triple and $$T(F_2,F_1,F_3)=T(F_1,F_2,F_3)^{-1}.$$

\begin{eg}
The special triples with $(l_1\cap l_2)\in l_3$ or $p_3\in p_1\oplus p_2$ satisfy $T(F_1,F_2,F_3)=-1$.
As we will be interested exclusively in the case of positive triple ratios, we assume from now on that our triples $(F_1,F_2,F_3)$ are so that $(l_1\cap l_2)\notin l_3$ and $p_3\notin p_1\oplus p_2$.
We call these \textit{generic triples}.
\end{eg}

We finish this section with a well known interpretation of the triple ratio in terms of appropriate cross ratios. 
In the process we introduce some notations that will be used throughout (c.f. Figure \ref{fig: distinguished points triple} below).
For a generic triple $(F_1,F_2,F_3)$ and different $i,j,k\in\{1,2,3\}$ we consider the (pairwise different) points $p_{ij}\coloneqq l_i\cap l_j$.
Note that $p_i\notin \{ p_{12},p_{23},p_{13}\}$.
Also, we let $s_i\coloneqq(p_i\oplus p_{jk})\cap( p_j\oplus p_k)$, $$t_i\coloneqq(p_j\oplus p_{ik})\cap (p_k\oplus p_{ij}), \tn{ and }r_i\coloneqq(t_i \oplus p_{jk})\cap (p_j\oplus p_k).$$
\noindent These are called the \textit{distinguished points} of the triple.
They do not belong to $l_1\cup l_2\cup l_3$.
\begin{figure}[h] 
\scalebox{1.5}{%
\begin{overpic}[scale=1, width=0.65\textwidth, tics=2]{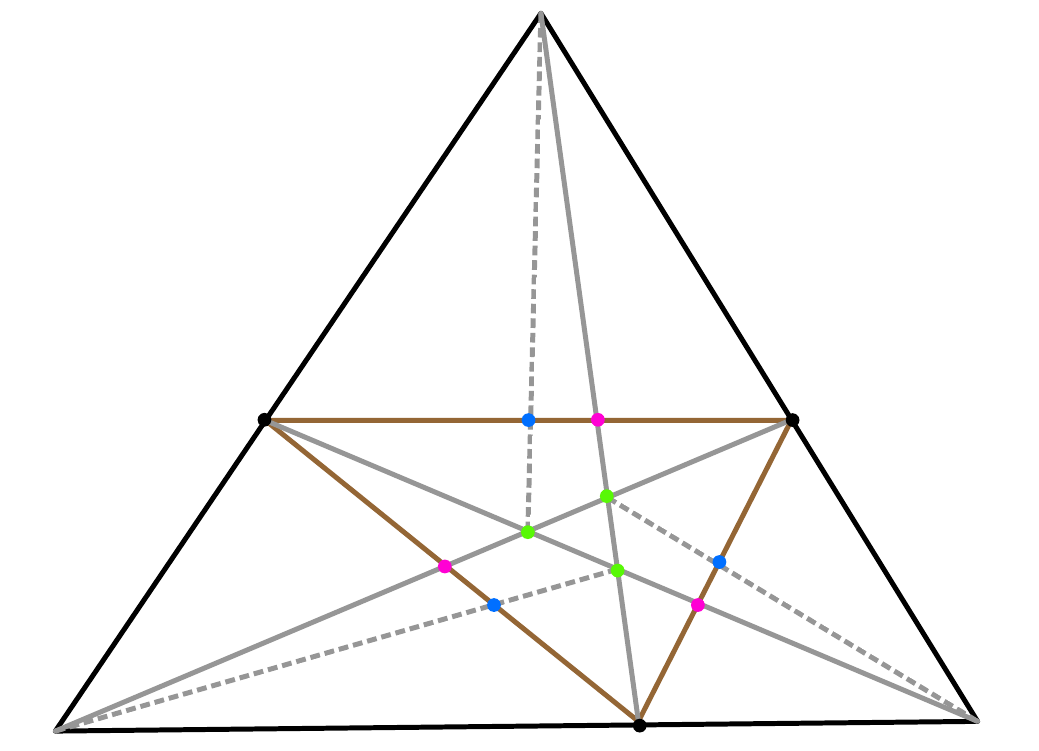}

    \put (21,32)  {\footnotesize$p_1$}
    
    \put (28,42)  {\footnotesize$l_1$}

    \put (78,32)  {\footnotesize$p_3$}
    
    \put (71,42)  {\footnotesize$l_3$}
    
    \put (60,-1)  {\footnotesize$p_2$}
    \put (40,-1)  {\footnotesize$l_2$}

    \put (50,73)  {\footnotesize$p_{13}$}

    \put (95,1)  {\footnotesize$p_{23}$}

    \put (-1,1)  {\footnotesize$p_{12}$}

    \put (47,33)  {\footnotesize\tc{bluepic}{$r_2$}}    
    \put (58,33)  {\footnotesize\tc{magentapic}{$s_2$}}    
    
    \put (45,11)  {\footnotesize\tc{bluepic}{$r_3$}}    
    \put (37.5,17.5)  {\footnotesize\tc{magentapic}{$s_3$}}
    
    \put (71,18)  {\footnotesize\tc{bluepic}{$r_1$}}    
    \put (67,11)  {\footnotesize\tc{magentapic}{$s_1$}}

    \put (47.5,23)  {\footnotesize\tc{greenpic}{$t_{2}$}}  
    \put (56.5,14)  {\footnotesize\tc{greenpic}{$t_{3}$}}
    \put (58.5,26.5)  {\footnotesize\tc{greenpic}{$t_{1}$}}

    \put (53,11)  {\tiny\tc{gray}{$\T_{231}$}}
    \put (37,29)  {\tiny\tc{gray}{$\T_{123}$}}
    \put (63,23)  {\tiny\tc{gray}{$\T_{312}$}}
    
    \put (53.5,20)  {\tiny\tc{greenpic}{$\Teq$}}

   \end{overpic}}	
   \caption{An example with $T(F_1,F_2,F_3)>1$.
   The triangle $\Text$ (resp. $\Ti$) is determined by the black (resp. brown) projective segments.}
   \label{fig: distinguished points triple}
   \end{figure}

\begin{lem}[c.f. Fock--Goncharov {\cite[Section 2.3]{FGProjective}}]\label{lem: t.r. with c.r.}
    Let $(F_1,F_2,F_3)$ be a generic triple of flags.
    Then $$C(l_1,r_2,s_2,l_3)=C(l_3,r_1,s_1,l_2)=C(l_2,r_3,s_3,l_1)=T(F_1,F_2,F_3).$$
    \noindent We also have $$C(l_1,t_2,t_1,l_3)=C(l_3,t_1,t_3,l_2)=C(l_2,t_3,t_2,l_1)=T(F_1,F_2,F_3),$$ \noindent and $$C(l_1,t_2,t_3,l_3)=C(l_3,t_1,t_2,l_2)=C(l_2,t_3,t_1,l_1)=T(F_1,F_2,F_3).$$ 
\end{lem}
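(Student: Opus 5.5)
The plan is to reduce everything to an explicit computation in a normalized frame. By $\g$--invariance of both cross ratios and triple ratios, and since generic triples of flags are classified up to $\g$ by the triple ratio (two transverse flags can be normalized, and the residual stabilizer, the diagonal torus, acts on the remaining flag leaving exactly one parameter), we may fix coordinates. We take $p_1=[1:0:0]$, $p_2=[0:1:0]$, $p_3=[0:0:1]$. The lines are then $l_1=\{a_2x_2+a_3x_3=0\}$, $l_2=\{b_1x_1+b_3x_3=0\}$, $l_3=\{c_1x_1+c_2x_2=0\}$, and genericity makes all these coefficients nonzero. Using the torus we may further set, say, $a_2=a_3=b_1=b_3=c_1=1$ and $c_2=\tau$. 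The defining formula then gives $T(F_1,F_2,F_3)$ as an explicit rational function of $\tau$, up to sign conventions; a direct evaluation gives something like $\tau^{\pm1}$.

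Next I will compute the distinguished points in these coordinates. The $p_{ij}$ are obtained as cross products of the covectors. Then $s_i=(p_i\oplus p_{jk})\cap(p_j\oplus p_k)$, $t_i=(p_j\oplus p_{ik})\cap(p_k\oplus p_{ij})$ and $r_i=(t_i\oplus p_{jk})\cap(p_j\oplus p_k)$ are all intersections of lines spanned by known points, so each is again a cross product.

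With these in hand I will evaluate $C(l_1,r_2,s_2,l_3)=\frac{l_1(s_2)}{l_1(r_2)}\cdot\frac{l_3(r_2)}{l_3(s_2)}$ and the corresponding expressions involving the $t_i$, checking that each equals the triple ratio. To avoid doing nine computations, I will use the cyclic symmetry. The cyclic permutation $\sigma:p_1\mapsto p_2\mapsto p_3\mapsto p_1$ of the coordinate frame, composed with a suitable torus element, maps the normalized triple $(F_1,F_2,F_3)$ to $(F_2,F_3,F_1)$. This is legitimate because the triple ratio is cyclically invariant and the triple ratio classifies generic triples. Such a map sends $s_i,t_i,r_i$ to $s_{i+1},t_{i+1},r_{i+1}$, so each row of identities reduces to its first entry. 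That leaves only three cross ratio computations: $C(l_1,r_2,s_2,l_3)$, $C(l_1,t_2,t_1,l_3)$ and $C(l_1,t_2,t_3,l_3)$.

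A more conceptual route for the $r,s$ identity is also available. The points $r_2,s_2$ lie on $p_1\oplus p_3$, so by the projection property (Lemma \ref{lem: properties c.r.}(2)) the cross ratio can be computed by projecting from $p_{13}=l_1\cap l_3$. Since $r_2$ is the projection of $t_2$ from $p_{13}$, this gives $C(l_1,r_2,s_2,l_3)=C(l_1,t_2,s_2',l_3)$ for any $s_2'$ on $p_{13}\oplus s_2=p_{13}\oplus p_2$. This links the first row to the $t$ identities. Similarly, the cocycle property relates $C(l_1,t_2,t_1,l_3)$ and $C(l_1,t_2,t_3,l_3)$, and their equality is equivalent to $C(l_1,t_1,t_3,l_3)=1$. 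By projection, that holds iff $t_3\in p_{13}\oplus t_1$, a collinearity that can be checked directly in coordinates.

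The main obstacle is bookkeeping rather than ideas. The signs and the orientation conventions (whether one gets $T$ or $T^{-1}$) must match the figure, and the normalization must be one in which the torus really acts compatibly with the cyclic permutation. I would first carry out the explicit coordinate computation of $C(l_1,r_2,s_2,l_3)$, and then verify the collinearity of $t_1$, $t_3$, $p_{13}$ together with the value of $C(l_1,t_2,t_1,l_3)$.
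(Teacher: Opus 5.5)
Your proposal is correct. The paper gives no proof of this lemma; it only cites Fock--Goncharov. So there is no argument in the text to compare with, and your normal-form computation plus cyclic symmetry is a legitimate self-contained proof.

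\textbf{The symmetry step.} It is sound. The triple $(F_2,F_3,F_1)$ is again generic and has the same triple ratio, so your normal form provides $g\in\g$ with $g\cdot(F_1,F_2,F_3)=(F_2,F_3,F_1)$. This $g$ shifts the indices of all $p_{ij}$, $s_i$, $t_i$, $r_i$, and so carries the first entry of each row to the other two.

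\textbf{The computation.} It closes with the orientation you were worried about. With $l_1=(0,1,1)$, $l_2=(1,0,1)$, $l_3=(1,\tau,0)$ one finds $T(F_1,F_2,F_3)=1/\tau$ and
$p_{13}=[-\tau:1:-1]$, $s_2=[\tau:0:1]$, $r_2=[1:0:1]$, $t_1=[\tau:\tau:1]$, $t_2=[1:1:\tau]$, $t_3=[\tau^2:1:\tau]$.
Each of $C(l_1,r_2,s_2,l_3)$, $C(l_1,t_2,t_1,l_3)$ and $C(l_1,t_2,t_3,l_3)$ equals $1/\tau$ after cancelling factors of $1+\tau$.

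\textbf{Where the hypotheses enter.} The coefficients $a_i,b_i,c_i$ are nonzero because the flags are pairwise transverse, not because the triple is generic. Genericity is used in two other places. First, it lets you put $p_1,p_2,p_3$ at the coordinate points. Second, it gives $\tau\neq -1$, which is non-concurrency of the $l_i$ (equivalently $T\neq-1$). This is what keeps the $t_i$ off the lines $l_j$ and makes $r_i$ well defined.

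\textbf{A simplification.} The collinearity of $p_{13}$, $t_1$, $t_3$ needs no coordinates. By definition $t_1=(p_2\oplus p_{13})\cap(p_3\oplus p_{12})$ and $t_3=(p_1\oplus p_{23})\cap(p_2\oplus p_{13})$ both lie on $p_2\oplus p_{13}$, and that line also contains $s_2$. Your projection argument from $p_{13}$ then gives directly
$C(l_1,r_2,s_2,l_3)=C(l_1,t_2,t_1,l_3)=C(l_1,t_2,t_3,l_3)=C(l_1,r_2,p_2,l_3)$.
So the whole lemma reduces to the single evaluation $C(l_1,r_2,p_2,l_3)=\frac{l_1(p_2)}{l_1(r_2)}\cdot\frac{l_3(r_2)}{l_3(p_2)}=1/\tau$, followed by your cyclic-symmetry step.
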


A nice consequence of Lemma \ref{lem: t.r. with c.r.} is the following.
Note that given two generic triples $(F_1,F_2,F_3)$ and $(F_1',F_2',F_3')$ there is a unique $g\in\g$ so that 
\begin{equation}\label{eq: rigid configuration}
g\cdot F_1=F_1', \tn{ } g\cdot F_3=F_3', \tn{ and } g\cdot l_2=l_2',
\end{equation}
\noindent where $F_2'=(p_2',l_2')$.
In particular, $g\cdot r_2=r_2'$ (where $r_2'$ is defined for the triple $(F_1',F_2',F_3')$ in the obvious way).
It follows from Lemma \ref{lem: t.r. with c.r.} that the triple ratio is the unique invariant of the $\g$--action on the space of pairwise transverse, ordered, generic triples in $\flags$.
Moreover, the action on each orbit is simply transitive.

\section{Positive tuples of flags}\label{sec: positive tuples}

We now recall well known facts about \textit{positive} $n$--tuples of flags. 
All the material covered in this section is standard (see e.g. Fock--Goncharov \cite[Section 2]{FGProjective}), but we will fix some notations and terminology that will be used in the rest of the paper.

\subsection{Positive triples}\label{subsec: positive triples of flags}

We start with the case $n=3$.
A triple $(F_1,F_2,F_3)$ of pairwise transverse flags is \textit{positive} if $T(F_1,F_2,F_3)>0$.
We let $\flags_3^+$ be the space of positive ordered triples of flags. 
By Lemma \ref{lem: t.r. with c.r.} these triples determine several special triangles that we now analyze, justifying Figure \ref{fig: distinguished points triple} more in detail.

For $(F_1,F_2,F_3)\in\flags_3^+$ the points $p_{12}$, $p_{23}$ and $p_{13}$ are not aligned, because $(F_1,F_2,F_3)$ is generic.
By Remark \ref{rem: triangles and subtriangles} the lines $l_1$, $l_2$ and $l_3$ determine four triangles  of vertices $p_{12}$, $p_{23}$ and $p_{13}$.
Dually, the points $p_1$, $p_2$ and $p_3$ are pairwise different and not aligned. Hence, they also determine four triangles in $\rp$.
We can naturally single out one of these triangles in each case. 
Namely, let $\Text=\Text_{(F_1,F_2,F_3)}$ be the triangle determined by $l_1$, $l_2$ and $l_3$ that contains $t_2$ (in its interior).
Similarly, we let $\Ti=\Ti_{(F_1,F_2,F_3)}$ be the ideal triangle with vertices $p_1$, $p_2$ and $p_3$ that contains $t_2$ (in its interior).

\begin{lem}[c.f. {\cite[Theorem 2.2]{FGProjective}}]\label{lem: nested triangles}
 Let $(F_1,F_2,F_3)\in\flags_3^+$.
    Then $\Ti=\Ti_{(F_1,F_2,F_3)}$ is contained in the interior of $\Text=\Text_{(F_1,F_2,F_3)}$.
    In particular, if $\Text\subset\bba$ then the triangle $\Ti$ is  the unique triangle with vertices $p_1$, $p_2$ and $p_3$ that is contained in $\bba$. 
    As a consequence, for different $i$, $j$ and $k$ in $\{1,2,3\}$ the side $[p_i,p_j]$ of $\Ti$ is the unique projective segment connecting these two points that is disjoint from $l_k$.
    Moreover, the points $t_1$, $t_2$ and $t_3$ belong to the interior of $\Ti$ and $r_1,s_1,r_2,s_2,r_3,s_3$ belong to its sides. 
    Finally, $\Text$ and $\Ti$ are the unique triangles determined by $l_1$, $l_2$, $l_3$, and $p_1$, $p_2$, $p_3$ respectively that satisfy the above inclusion.
    
\end{lem}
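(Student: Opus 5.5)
Since the triple ratio is the only $\g$--invariant of generic triples and $\g$ acts simply transitively on each orbit (see (\ref{eq: rigid configuration})), I would normalize the triple and compute in coordinates. Concretely, choose $g\in\g$ sending $l_1,l_2,l_3$ to the coordinate lines $\{x=0\},\{y=0\},\{z=0\}$ in homogeneous coordinates $[x:y:z]$. Then $p_{12}=[0:0:1]$, $p_{23}=[1:0:0]$ and $p_{13}=[0:1:0]$. Each flag condition $p_i\in l_i$ leaves a point with one zero coordinate: $p_1=[0:a:1]$, $p_2=[1:0:b]$, $p_3=[c:1:0]$, where genericity and transversality force $a,b,c\neq 0$. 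The defining formula for $T$ then gives, up to the sign convention, $T(F_1,F_2,F_3)=abc$ (or its inverse). Positivity $T>0$ means $abc>0$. Applying the diagonal matrices in $\g$ together with sign changes of coordinates (which preserve the coordinate lines), I may further assume $a,b,c>0$.

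With $a,b,c>0$, take $\bba$ to be the complement of the line $\{x+y+z=0\}$, or more simply work in the positive orthant chart. The four triangles cut out by the three coordinate lines are the sign classes $[\pm:\pm:+]$ modulo $\pm1$. The open triangle $\{x,y,z>0\}$ contains the interior of the triangle with vertices $p_1,p_2,p_3$ spanned by positive combinations of $(0,a,1)$, $(1,0,b)$ and $(c,1,0)$. Those positive combinations have all coordinates strictly positive, so they lie strictly inside $\{x,y,z>0\}$. The vertices themselves lie on the sides of the closed orthant but not at its corners, since $p_i\notin\{p_{12},p_{23},p_{13}\}$. I would then check that $t_2$ lies in this positive convex hull. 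Computing $t_2=(p_1\oplus p_{23})\cap(p_3\oplus p_{12})$ explicitly gives $t_2=[c:1:1/a]$ up to scaling, which has positive entries; similarly for $t_1$ and $t_3$. This identifies $\Text$ with the closed positive orthant triangle and $\Ti$ with the positive hull, and gives the first claim. One small subtlety: the statement says $\Ti$ lies in the interior of $\Text$, but the vertices $p_i$ lie on $l_i$. This is consistent because $\Ti$ is an ideal triangle with its vertices removed, so it suffices that open sides and interior have positive coordinates, which they do.

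The remaining claims follow from this picture.
\begin{itemize}
\item If $\Text\subset\bba$, then since $\Ti\subset\Text$, Remark \ref{rem: triangles and subtriangles} shows that $\Ti$ is the canonical triangle in $\bba$ on $p_1,p_2,p_3$.
\item The side $[p_i,p_j]$ is a positive combination of $p_i$ and $p_j$, and so avoids $l_k$. For example, $p_1+p_2=(1,a,1+b)$ has $x\neq0$. The other segment joining the two points crosses $l_k$, because the coordinate $k$ changes sign along it.
\item The points $r_i$ and $s_i$ are intersections of lines through positive points with $p_j\oplus p_k$. Computing coordinates, for instance $s_1=(p_1\oplus p_{23})\cap(p_2\oplus p_3)$, shows that they are nonnegative combinations of $p_j$ and $p_k$. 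This is routine, with Lemma \ref{lem: t.r. with c.r.} available as a cross-check via cross ratios greater than $1$.
\end{itemize}

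For uniqueness, I would argue as follows. Any other choice of triangle on $p_1,p_2,p_3$ contains an open side crossing some $l_k$. The same holds for a choice of triangle among the four cut out by $l_1,l_2,l_3$: any choice other than the orthant fails to contain the positive hull. So the inclusion $\Ti\subset\operatorname{int}\Text$ forces exactly these two choices.

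The main obstacle I expect is sign bookkeeping: confirming that $T>0$ corresponds exactly to being able to take $a,b,c>0$ simultaneously, and that the triangle distinguished by containing $t_2$ is the positive one rather than another sign class. I would settle this first by the explicit computation of $T$ and of $t_2$ in the normal form.
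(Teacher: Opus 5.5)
Your coordinate argument works, and it takes a genuinely different route from the paper. The paper gives no details beyond saying the lemma follows from Lemma~\ref{lem: properties c.r.} and Lemma~\ref{lem: t.r. with c.r.}. The intended argument there is synthetic: $T>0$ is turned into positivity of cross ratios such as $C(l_1,r_2,s_2,l_3)$ and $C(l_1,t_2,t_1,l_3)$, and the Positivity and Cyclicality properties then locate the distinguished points in components of the complements of $l_1\cup l_2\cup l_3$ and of the lines $p_i\oplus p_j$.

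Your normalization checks out. With $l_i$ the coordinate lines and $p_1=[0:a:1]$, $p_2=[1:0:b]$, $p_3=[c:1:0]$, one gets $T(F_1,F_2,F_3)=(abc)^{-1}$. The diagonal stabilizer acts by $(a,b,c)\mapsto(\mu a/\nu,\ \nu b/\lambda,\ \lambda c/\mu)$, so $T>0$ is exactly the condition that allows $a,b,c>0$. After that, every assertion reduces to a sign check, including both uniqueness statements and your correct observation that only the ideal triangle (vertices removed) lies in the open orthant. Your route makes the lemma completely explicit. The paper's route stays coordinate-free and inside the cross-ratio toolkit used throughout the rest of the paper.

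One inference must be replaced. Positive standard coordinates of $t_2=[ac:a:1]$ only place $t_2$ in the open orthant. That identifies $\Text$ but not $\Ti$, because a point of the orthant need not be a positive combination of $P_1=(0,a,1)$, $P_2=(1,0,b)$, $P_3=(c,1,0)$. For example, with $a=b=c=1$ the point $(1,\tfrac{1}{100},\tfrac{1}{100})$ is not. What you need are barycentric coordinates, and these are indeed positive:
\[
(1+abc)(ac,a,1)=P_1+ac\,P_2+a^2bc\,P_3,
\]
\[
(1+abc)(c,1,bc)=bc\,P_1+abc^2\,P_2+P_3,
\]
\[
(1+abc)(1,ab,b)=ab^2c\,P_1+P_2+ab\,P_3.
\]
The first identity shows that $t_2$ avoids the lines $p_i\oplus p_j$ and lies in the open positive hull. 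Hence $\Ti$ is well defined and equals that hull. The other two identities give $t_1,t_3\in\operatorname{int}\Ti$.

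Likewise, the computation you call ``routine'' for the sides yields, up to scale, positive combinations:
\[
s_1=P_2+ab\,P_3,\quad r_1=c\,P_2+P_3,\quad s_2=bc\,P_1+P_3,
\]
\[
r_2=P_1+a\,P_3,\quad s_3=P_1+ac\,P_2,\quad r_3=b\,P_1+P_2.
\]
So these points lie in the open sides. With this fix the proof is complete.
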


Lemma \ref{lem: nested triangles} follows from Lemmas \ref{lem: properties c.r.} and \ref{lem: t.r. with c.r.}. 

From now on we assume $\Text\subset\bba$.
By Lemma \ref{lem: t.r. with c.r.}, if $T(F_1,F_2,F_3)\neq 1$ then the points $t_1$, $t_2$ and $t_3$ are pairwise different and  not aligned. 
By Remark \ref{rem: triangles and subtriangles} they determine a triangle $\Teq=\Teq_{(F_1,F_2,F_3)}\subset\bba$ which is contained in the interior of $\Ti$.
We have the following. 

\begin{lem}[c.f. W.--Zhang {\cite[Corollary 2.13]{wienahrdzhang}}]\label{lem: Teq with cross ratios}
 Let $(F_1,F_2,F_3)$ be a positive triple of flags with $\ttt\coloneqq T(F_1,F_2,F_3)> 1$.
 Then $\Teq$ is equilateral with respect to the Hilbert distance $d_{\T}(\cdot,\cdot)$, with side length equal to $\log (\ttt)$.
 Moreover, one has $$\Teq=\{q\in\rp: \hspace{0,3cm}  1\leq C(l_i,t_{i+1},q,l_{i+1})\leq \ttt \tn{ for all } i=1,2,3\}.$$
\end{lem}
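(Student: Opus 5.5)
The plan is to reduce everything to Lemma \ref{lem: t.r. with c.r.} together with the cross-ratio description of Hilbert distance. Here $d_\T$ is the Hilbert distance of $\Text$. Fix $i$ and look at the pencil of lines through $p_{i,i+1}=l_i\cap l_{i+1}$. For $q$ in the interior of $\Text$, the line $p_{i,i+1}\oplus q$ meets $\partial\Text$ at $p_{i,i+1}$ and at one point of the opposite side $l_{i+2}$. By the Projection property, the number $C(l_i,t_{i+1},q,l_{i+1})$ depends only on the line $t_{i+1}\oplus q$, more precisely on its intersection with suitable lines. It is the \emph{horofunction-like} coordinate attached to the vertex $p_{i,i+1}$ of $\Text$.

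\textbf{Step 1: side lengths.} By Lemma \ref{lem: t.r. with c.r.} we have $C(l_1,t_2,t_1,l_3)=\ttt$. The points $t_1,t_2$ span a line meeting $l_1$ and $l_3$ inside the closure of $\Text$. I will need to check, using Cyclicality and Lemma \ref{lem: nested triangles}, that the two endpoints $x=l_1\cap(t_1\oplus t_2)$ and $y=l_3\cap(t_1\oplus t_2)$ are the actual boundary points of $\Text$ on that line. In other words, the segment exits through sides $l_1$ and $l_3$ and not through $l_2$. This holds because $t_1\oplus t_2$ passes through $p_{13}$'s opposite region; concretely, $t_2\in p_1\oplus p_{13}$... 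Granting it, the definition of $d_\T$ gives
\[
d_\T(t_2,t_1)=\log C(x,t_2,t_1,y)=\log C(l_1,t_2,t_1,l_3)=\log\ttt,
\]
since the cross ratio of the line pair equals the cross ratio of the intersection points. Cyclic symmetry of the triple ratio and of the construction gives the other two sides. I expect identifying which sides of $\Text$ the lines $t_i\oplus t_j$ hit to be the main obstacle. The approach is to show that $t_1\oplus t_2$ passes through $p_{13}$'s complementary configuration by computing in explicit normalized coordinates: use (\ref{eq: rigid configuration}) to place $F_1,F_3,l_2$ in standard position with a single parameter $\ttt$. There all $t_i$ and $p_{ij}$ are explicit, and the sign checks become polynomial inequalities in $\ttt>1$.

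\textbf{Step 2: the cross-ratio description.} In the same normalized coordinates, each function $f_i(q)=C(l_i,t_{i+1},q,l_{i+1})$ is a ratio of two affine functions. Its level sets are lines through $p_{i,i+1}$, so $\{1\le f_i\le\ttt\}$ is a wedge at $p_{i,i+1}$. Its boundary rays are $f_i=1$ (the line $p_{i,i+1}\oplus t_{i+1}$) and $f_i=\ttt$. By the second and third identities of Lemma \ref{lem: t.r. with c.r.} and Projection, the latter is the line through $p_{i,i+1}$ and another $t_j$. So each wedge is bounded by two lines through the vertex $p_{i,i+1}$, each containing a vertex of $\Teq$. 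The next task is to verify that the three lines $p_{i,i+1}\oplus t_j$ used are exactly the extensions of the sides of $\Teq$. The Nesting property implies that $t_j$, $t_k$ and $p_{i,i+1}$ are aligned for appropriate indices, as in Figure \ref{fig: distinguished points triple}, where $t_i\in p_j\oplus p_{ik}$. Once this is established, the intersection of the three wedges is the triangle cut out by the three side lines of $\Teq$ on the correct side. By Remark \ref{rem: triangles and subtriangles}, with the side chosen via Positivity/Cyclicality, this is $\Teq$.
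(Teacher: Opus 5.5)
Your overall strategy is the paper's: get the side lengths from Lemma~\ref{lem: t.r. with c.r.}, then identify the set using the pencils of lines through the vertices $p_{i,i+1}$ of $\Text$. Step~1 is fine, and its ``main obstacle'' is not really one. By the definition of the distinguished points, $t_i,t_{i+1}\in p_{i+2}\oplus p_{i,i+1}$. So $t_1\oplus t_2=p_3\oplus p_{12}$ meets $\partial\Text$ exactly at $p_{12}\in l_1$ and $p_3\in l_3$, and
\[
d_{\T}(t_2,t_1)=\log C(p_{12},t_2,t_1,p_3)=\log C(l_1,t_2,t_1,l_3)=\log\ttt,
\]
with no coordinates needed. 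Your aside $t_2\in p_1\oplus p_{13}$ is false, since $p_1\oplus p_{13}=l_1$.

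The gap is in Step~2. You correctly find that $\{f_i=1\}=p_{i,i+1}\oplus t_{i+1}$ and $\{f_i=\ttt\}=p_{i,i+1}\oplus t_{i+2}$. You then propose to verify that the lines $p_{i,i+1}\oplus t_j$ through ``another $t_j$'' are side lines of $\Teq$, and that fails. For example, $p_{12}\oplus t_3$ contains neither $t_1$ nor $t_2$; otherwise $t_1,t_2,t_3$ would be collinear, which is excluded for $\ttt\neq 1$. The side lines are the level-$1$ lines $p_{i,i+1}\oplus p_{i+2}\ni t_i,t_{i+1}$, and this collinearity holds by definition, not by the Nesting Property. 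Each level-$\ttt$ line touches $\Teq$ only at the vertex $t_{i+2}$.

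Three wedges based at the three vertices of $\Text$ generically intersect in a hexagon, so your final sentence needs the argument you have skipped:
\begin{itemize}
\item The three conditions $f_i\ge 1$ cut out $\Teq$, on the correct side because $f_i(t_{i+2})=\ttt>1$.
\item The conditions $f_i\le\ttt$ are redundant. Inside $\Text$, $f_i$ is constant on lines through $p_{i,i+1}$ and monotone across that pencil, so $f_i(\Teq)$ is the interval spanned by the vertex values $1,1,\ttt$.
\end{itemize}

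Finally, the set in the statement is defined in all of $\rp$, so each $\{1\le f_i\le\ttt\}$ is a double wedge, and you must discard the opposite sectors. Positivity does this: $f_i>0$ for $i=1,2,3$ forces $q\in\mathrm{int}(\Text)$. This containment in $\Text$ is exactly how the paper's proof begins, before it compares boundaries via Remark~\ref{rem: segment ciclicality and nesting}.
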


\begin{proof}
The first claim follows directly from Lemma \ref{lem: t.r. with c.r.}, let us then prove that $\Teq$ coincides with $$\T\coloneqq \{q\in\rp: \hspace{0,3cm}  1\leq C(l_i,t_{i+1},q,l_{i+1})\leq \ttt \tn{ for all } i=1,2,3\}.$$

Indeed, by Lemmas \ref{lem: properties c.r.} and \ref{lem: t.r. with c.r.} we have that $\T$ is contained in $\Text$ and is convex.
Applying also Remark \ref{rem: segment ciclicality and nesting} we deduce that the topological boundaries of $\T$ and $\Teq$ coincide.
The lemma follows.
\end{proof}

\begin{rem}\label{rem: Teq when t leq 1}
We have a similar description of $\Teq$ when $\ttt<1$:
$$\Teq=\{q\in\rp: \hspace{0,3cm}  \ttt\leq C(l_i,t_i,q,l_{i+1})\leq 1 \tn{ for all } i=1,2,3\}.$$
\noindent Also, to avoid discussing the degenerate case $T(F_1,F_2,F_3)=1$ separately each time, in that case we will denote $\Teq\coloneqq\{t_1\}=\{t_2\}=\{t_3\}$ and consider it as a ``degenerate'' equilateral triangle of side length equal to zero.

\end{rem}

We introduce one more notation. 
By Remark \ref{rem: triangles and subtriangles} we may consider, for $i=1,2,3$, the triangle $\T_{i(i+1)(i+2)}\subset\Ti$ using the vertices $p_i$, $t_{i+1}$ and $p_{i+2}$ (c.f. Figure \ref{fig: distinguished points triple}).
We have \begin{equation}\label{eq: Tid decompositon}
\Ti=\Teq\cup \T_{123} \cup \T_{231}\cup \T_{312}.
\end{equation}
 
\begin{rem}\label{rem: Tijk intrinsic}
    The triangle $\T_{i(i+1)(i+2)}$ only depends on the flags $F_i$ and $F_{i+2}$, and the hyperplane $l_{i+1}$ (it does not depend on $p_{i+1}$).
    Indeed, this piece of information determines the point $t_{i+1}$ and by Lemma \ref{lem: nested triangles} one may characterize $\T_{i(i+1)(i+2)}$ as the unique triangle of vertices $p_i$, $t_{i+1}$ and $p_{i+2}$ whose sides do not intersect $l_{i+1}$.
    In particular, these triangles are all projectively equivalent to each other in a canonical way (c.f. (\ref{eq: rigid configuration})), and thanks to Lemma \ref{lem: t.r. with c.r.} we may view the triple ratio as a ``gluing parameter" between these pieces (c.f. \cite[Remark 2.14]{wienahrdzhang}).  
\end{rem}

\subsection{Positive quadruples of flags}\label{subsec: positive quadruples}

We now turn our attention to ordered quadruples $(F_1,F_2,F_3,F_4)$ of pairwise transverse flags.
The space of such quadruples has several connected components, one of which corresponds to \textit{positive quadruples} \cite{FG, FGProjective}. 
Positive quadruples are the basic building blocks that allows us to define \textit{positive $n$--tuples} for arbitrary $n\geq 3$ (Subsection \ref{subsec: positive tuples} below).

To define positive quadruples, we restrict ourselves to quadruples $(F_1,F_2,F_3,F_4)$ having generic sub--triples, but there are still several connected components of these. 
Indeed, as $p_2\notin p_1\oplus p_3$ and $p_4\notin p_1\oplus p_3$ we can associate the \textit{Fock-Goncharov-like parameter} in $(\rr_{\neq 0})^2\times (\rr_{\neq 0})^2$ given by\footnote{This parametrization is slightly different to the one used in Fock--Goncharov \cite{FGProjective}, but is better adapted for our purposes, c.f. Lemma \ref{lem: c.r. distinguished points quadruple} below.} \begin{equation}\label{eq: parametrization quadruples}(T(F_1,F_2,F_3),T(F_1,F_3,F_4),C(l_1,p_2,p_4,p_1\oplus p_3),C(l_3,p_4,p_2,p_1\oplus p_3)).
\end{equation}
\noindent The first two parameters are the \textit{triangle invariants} of $(F_1,F_2,F_3,F_4)$, and the remaining two are the \textit{edge invariants}.
Conversely, every $(\ttt_1,\ttt_2,\ttc_1,\ttc_2)\in (\mathbb{R}_{\neq 0})^2\times(\mathbb{R}_{\neq 0})^2$ uniquely determines a $\g$--orbit of quadruples as follows. 
Fix a triple $(F_1,F_2,F_3)$ with $T(F_1,F_2,F_3)=\ttt_1$, this choice is unique up to $\g$--action.
Then, use $\ttc_1$ (resp. $\ttc_2$) to determine a line through $p_1$ (resp. $p_3$), and let $p_4$ be the intersection between these two lines. 
Finally, use $\ttt_2$ to single out a line $l_4$ within the pencil of lines passing through $p_4$, c.f. \cite[Theorem 2.2]{FGProjective}.

A quadruple of flags $(F_1,F_2,F_3,F_4)$ with generic sub--triples is said to be \textit{positive} if its Fock-Goncharov-like parameter belongs to $(\mathbb{R}_{>0})^2\times(\mathbb{R}_{<0})^2$.
We let $\flags_4^+$ be the space of ordered positive quadruples of flags.

The reader may wonder why one selects the specific connected component of parameters $(\mathbb{R}_{>0})^2\times(\mathbb{R}_{<0})^2$ to define positive quadruples, instead of choosing a different one. 
One reason is that, in analogy with Lemma \ref{lem: nested triangles}, these quadruples determine a natural pair of nested quadrilaterals.
The following lemma is well known:

\begin{lem}[c.f. {\cite[Theorem 2.2]{FGProjective}}]\label{lem: nested quadrilaterals}
    Let $(F_1,F_2,F_3,F_4)\in\flags_4^+$.
    Then $\Ti_{(F_1,F_2,F_3)}$ and $\Ti_{(F_1,F_3,F_4)}$ share a common side $[p_1,p_3]$ of extremities $p_1$ and $p_3$, and $\Ti_{(F_1,F_3,F_4)}\setminus[p_1,p_3]$ is contained in the connected component of $\Text_{(F_1,F_2,F_3)}\setminus(p_1\oplus p_3)$ that does not intersect $\Ti_{(F_1,F_2,F_3)}$.
    In particular, $\Ti_{(F_1,F_2,F_3)}\cup\Ti_{(F_1,F_3,F_4)}$ is an ideal quadrilateral.
\end{lem}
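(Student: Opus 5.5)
The plan is to normalize the configuration and reduce both claims to sign conditions on cross ratios, using Lemma \ref{lem: nested triangles} for each of the two positive triples separately. First note that $(F_1,F_2,F_3)$ and $(F_1,F_3,F_4)$ are positive triples, since their triple ratios are the first two Fock--Goncharov-like parameters. So Lemma \ref{lem: nested triangles} applies to each. Fix an affine chart $\bba$ with $\Text_{(F_1,F_2,F_3)}\subset\bba$. By Lemma \ref{lem: nested triangles}, the side $[p_1,p_3]$ of $\Ti_{(F_1,F_2,F_3)}$ is the unique segment joining $p_1,p_3$ that is disjoint from $l_2$. Likewise, the corresponding side of $\Ti_{(F_1,F_3,F_4)}$ is the unique segment joining $p_1,p_3$ disjoint from $l_4$.

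The first claim is therefore that these two segments coincide. Equivalently, $l_2$ and $l_4$ meet the line $p_1\oplus p_3$ in the same component of $(p_1\oplus p_3)\setminus\{p_1,p_3\}$. I would show this from the negativity of the edge invariants:
\begin{itemize}
\item $C(l_1,p_2,p_4,p_1\oplus p_3)<0$ says, by the Positivity property of Lemma \ref{lem: properties c.r.}, that $p_2$ and $p_4$ lie in different components of $\rp\setminus(l_1\cup(p_1\oplus p_3))$;
\item the second edge invariant gives the analogous statement with respect to $l_3$.
\end{itemize}
Combining these (working inside $\Text_{(F_1,F_2,F_3)}$, where $l_1,l_3$ cut off the region around the side), $p_4$ lies on the opposite side of $p_1\oplus p_3$ from $p_2$, within the sector at $p_1$ bounded by $l_1$ and within the sector at $p_3$ bounded by $l_3$. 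In other words, $p_4$ lies in the open triangle bounded by $l_1$, $l_3$ and $p_1\oplus p_3$ on the far side of $p_1\oplus p_3$. That triangle is exactly the connected component of $\Text_{(F_1,F_2,F_3)}\setminus(p_1\oplus p_3)$ not meeting $\Ti_{(F_1,F_2,F_3)}$; call it $\Delta$.

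Next I would handle the side and the containment. The segment from $p_1$ to $p_3$ bounding $\Delta$ is the side $[p_1,p_3]$ of $\Ti_{(F_1,F_2,F_3)}$. Since $p_4\in\Delta$ and $\Delta$ is convex, the triangle with vertices $p_1,p_3,p_4$ contained in $\Delta\cup[p_1,p_3]$ is the triangle determined by these points in the chart $\bba$ (Remark \ref{rem: triangles and subtriangles}). By Lemma \ref{lem: nested triangles}, $\Ti_{(F_1,F_3,F_4)}$ is the unique triangle on $p_1,p_3,p_4$ contained in an affine chart containing $\Text_{(F_1,F_3,F_4)}$.

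The main obstacle is to show that $\Text_{(F_1,F_3,F_4)}$ also lies in $\bba$, or at least that $\Ti_{(F_1,F_3,F_4)}$ is this triangle rather than one of the other three. My first attempt would be a direct check in a normalized frame. Take $p_1,p_3,p_{13}$ as the standard points via \eqref{eq: rigid configuration}, and write $l_4$ through $p_4$ in coordinates. Then verify that positivity of $T(F_1,F_3,F_4)$ forces $l_4$ to avoid the open segment bounding $\Delta$. That would identify the side of $\Ti_{(F_1,F_3,F_4)}$ as this $[p_1,p_3]$, by the uniqueness statement in Lemma \ref{lem: nested triangles}.

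As a fallback, use a connectedness argument. The set $\flags_4^+$ is connected, being parametrized by a product of half-lines. The required inclusion holds at one explicit quadruple, for instance the symmetric one. Moreover, the property cannot fail without some vertex or line passing through a degenerate position, which the genericity and nonvanishing of the parameters rule out.

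Finally, $\Ti_{(F_1,F_2,F_3)}\cup\Ti_{(F_1,F_3,F_4)}$ is the union of two triangles lying on opposite sides of the common side $[p_1,p_3]$, both inside the convex set $\Text_{(F_1,F_2,F_3)}\cap\bba$. Hence it is a convex quadrilateral with vertices $p_1,p_2,p_3,p_4$, and removing the vertices gives the ideal quadrilateral.
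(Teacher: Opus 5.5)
There is a genuine gap: you never identify $\Ti_{(F_1,F_3,F_4)}$, and that identification is the whole content of the lemma. (The paper gives no proof of its own here; it cites Fock--Goncharov, so I am judging your argument on its own terms.)

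What you do prove is correct. By the chart-free description of sides in Lemma \ref{lem: nested triangles}, the common-side claim is equivalent to $l_2$ and $l_4$ meeting $p_1\oplus p_3$ in the same component of $(p_1\oplus p_3)\setminus\{p_1,p_3\}$. The two negative edge invariants, via the Positivity Property, do put $p_4$ in the open triangle $\Delta$ cut off from $\Text_{(F_1,F_2,F_3)}$ by $p_1\oplus p_3$.

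The edge invariants constrain only the point $p_4$, not the line $l_4$. So, contrary to what you announce, they cannot give the common-side claim. Knowing $p_4\in\Delta$ alone does not tell you which of the four triangles with vertices $p_1,p_3,p_4$ is $\Ti_{(F_1,F_3,F_4)}$. You flag this as the main obstacle but leave it as an announced coordinate check or connectedness argument, neither carried out. The first subgoal you name, $\Text_{(F_1,F_3,F_4)}\subset\bba$, is in fact never true, and it is not needed. The line $l_4$ crosses the lune between $l_1$ and $l_3$ inside $\Delta$. Hence $\Text_{(F_1,F_3,F_4)}$ contains the part of that lune beyond $l_2$, which every line missing $\Text_{(F_1,F_2,F_3)}$ must cross.

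The missing step is short if you apply the same chart-free characterization to the other two sides of $\Ti_{(F_1,F_3,F_4)}$.

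\begin{enumerate}
\item By Lemma \ref{lem: nested triangles} for the positive triple $(F_1,F_3,F_4)$, the side $[p_3,p_4]$ is the arc of $p_3\oplus p_4$ between $p_3$ and $p_4$ that misses $l_1$. Likewise, $[p_1,p_4]$ is the arc of $p_1\oplus p_4$ that misses $l_3$.
\item The point $p_4$ is interior to $\Delta$. The closure $\overline{\Delta}$ meets $l_1$ only in its side $[p_1,p_{13}]$ and meets $l_3$ only in $[p_3,p_{13}]$.
\item So the segments from $p_4$ to $p_3$ and from $p_4$ to $p_1$ inside $\overline{\Delta}$ miss $l_1$ and $l_3$ respectively. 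Their only boundary points are $p_3\notin l_1$ and $p_1\notin l_3$. Hence they are exactly these two sides.
\item The four triangles with vertices $p_1,p_3,p_4$ correspond bijectively to the four sectors at $p_4$. So a triangle is determined by its two sides at $p_4$.
\item Therefore $\Ti_{(F_1,F_3,F_4)}$ is the triangle with vertices $p_1,p_3,p_4$ contained in the convex set $\overline{\Delta}$. Its third side is the side of $\Delta$ on $p_1\oplus p_3$, which is the side $[p_1,p_3]$ of $\Ti_{(F_1,F_2,F_3)}$.
\end{enumerate}

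This gives the common side and the containment at once, and shows a posteriori that $l_4$ misses that arc. Positivity of $T(F_1,F_3,F_4)$ enters only through Lemma \ref{lem: nested triangles}.

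Your last paragraph also needs one more sentence. Two triangles in a convex set sharing a side can form a non-convex quadrilateral. Convexity holds here because $p_1\in l_1$ and $p_3\in l_3$ lie on the boundary of $\Text_{(F_1,F_2,F_3)}$. So near $p_1$ and $p_3$ the union lies on one side of $l_1$ and $l_3$, and these lines contain neither $p_2$ nor $p_4$.
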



Fix some $(F_1,F_2,F_3,F_4)\in\flags_4^+$.
The line $p_1\oplus p_3$ plays a special role in the parametrization (\ref{eq: parametrization quadruples}).
Along this line, we have several distinguished points by Subsection \ref{subsec: positive triples of flags}.
Indeed, the triple $(F_1,F_2,F_3)$ determines the points $r_2,s_2\in p_1\oplus p_3$. 
Similarly, $(F_1,F_3,F_4)$ determines distinguished points in $ p_1\oplus p_3$ that we will denote by $r_4$ and $s_4$, see Figure \ref{fig: product c.r.}.
Lemmas \ref{lem: properties c.r.} and \ref{lem: t.r. with c.r.} imply the following.

\begin{figure}[htbp] 
\scalebox{1}{%
\begin{overpic}[scale=1, width=0.65\textwidth, tics=2]{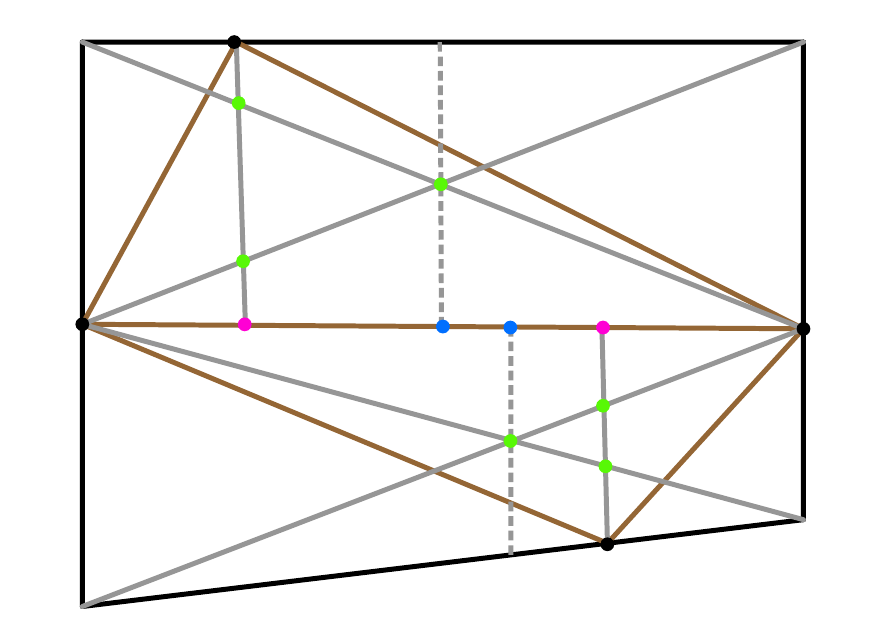}

	\put (4,35)  {$p_1$}
    \put (93,35)  {$p_3$}
    \put (24,70)  {$p_4$}   
    \put (67,7)  {$p_2$}

    
    \put (5,20)  {$l_1$}
    \put (92,50)  {$l_3$}
    \put (69,69)  {$l_4$}   
    \put (22,1)  {$l_2$}

    \put (25,33)  {\tc{magentapic}{$s_4$}}
    \put (66,37.5)  {\tc{magentapic}{$s_2$}}

    \put (48,32)  {\tc{bluepic}{$r_4$}}
    \put (55.5,37.5)  {\tc{bluepic}{$r_2$}}

    \put (31,39)  {\tiny\tc{brownpic}{$\Ti_{(F_1,F_3,F_4)}$}}
    \put (39,30)  {\tiny\tc{brownpic}{$\Ti_{(F_1,F_2,F_3)}$}}
                  
\end{overpic}}	
   \caption{Distinguished points on $p_1\oplus p_3$ associated to a positive quadruple $(F_1,F_2,F_3,F_4)$ (vertical lines pass through $p_{13}=l_1\cap l_3$).}
   \label{fig: product c.r.}
\end{figure}

\begin{lem}\label{lem: c.r. distinguished points quadruple}
    Let $(F_1,F_2,F_3,F_4)$ be a positive quadruple of flags with Fock-Goncharov-like parameter $(\ttt_1,\ttt_2,\ttc_1,\ttc_2)$. 
    Then $C(l_1,s_2,s_4,l_3)=\ttc_1\ttc_2$.
    Moreover, $$C(l_1,r_2,s_4,l_3)=\ttt_1\ttc_1\ttc_2, \hspace{0,3cm} C(l_1,s_2,r_4,l_3)=\ttt_2\ttc_1\ttc_2,  \hspace{0,3cm} \tn{ and } C(l_1,r_2,r_4,l_3)=\ttt_1\ttt_2\ttc_1\ttc_2.$$
    \noindent In particular, $$ \displaystyle\max_{\sigma,\sigma'=r,s}C(l_1,\sigma_2,\sigma'_4,l_3)=\left\lbrace\begin{array}{l}
         \ttc_1\ttc_2 \hspace{1,75cm}\tn{ if } \ttt_1\leq 1 \tn{ and } \ttt_2\leq 1\\
        \ttt_2\ttc_1\ttc_2 \hspace{1,4cm}  \tn{ if } \ttt_1\leq 1 \leq \ttt_2\\
        \ttt_1\ttc_1\ttc_2 \hspace{1,4cm}  \tn{ if } \ttt_2\leq 1 \leq  \ttt_1\\
        \ttt_1\ttt_2\ttc_1\ttc_2 \hspace{1,05cm}  \tn{ if } \ttt_1\geq 1 \tn{ and } \ttt_2\geq 1
    \end{array}\right..$$
\end{lem}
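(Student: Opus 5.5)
The plan is to reduce everything to the Cocycle Property on the line $p_1\oplus p_3$, with the lines $l_1$ and $l_3$ fixed as the two "reference" lines; all four points $r_2,s_2,r_4,s_4$ lie on this line and are transverse to $l_1,l_3$ by Lemma \ref{lem: nested triangles}. Concretely, the Cocycle Property gives
\begin{align*}
C(l_1,r_2,s_4,l_3)&=C(l_1,r_2,s_2,l_3)\cdot C(l_1,s_2,s_4,l_3),\\
C(l_1,s_2,r_4,l_3)&=C(l_1,s_2,s_4,l_3)\cdot C(l_1,s_4,r_4,l_3),\\
C(l_1,r_2,r_4,l_3)&=C(l_1,r_2,s_2,l_3)\cdot C(l_1,s_2,s_4,l_3)\cdot C(l_1,s_4,r_4,l_3).
\end{align*}
By Lemma \ref{lem: t.r. with c.r.} applied to $(F_1,F_2,F_3)$ we have $C(l_1,r_2,s_2,l_3)=\ttt_1$. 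For the triple $(F_1,F_3,F_4)$, the point playing the role of the "middle" index on $p_1\oplus p_3$ is the one opposite $p_4$; applying Lemma \ref{lem: t.r. with c.r.} to the cyclically reordered triple $(F_3,F_4,F_1)$ (triple ratio is cyclically invariant, so it equals $\ttt_2$) gives $C(l_3,r_4,s_4,l_1)=\ttt_2$, and the symmetry $C(l,p,p',l')=C(l',p',p,l)$, immediate from the defining formula, converts this into $C(l_1,s_4,r_4,l_3)=\ttt_2$. Here one must check carefully that the labelling of $r_4,s_4$ matches the definitions $s_i=(p_i\oplus p_{jk})\cap(p_j\oplus p_k)$, $r_i=(t_i\oplus p_{jk})\cap(p_j\oplus p_k)$ with $p_{jk}=p_{13}$; this is the bookkeeping I expect to be the only delicate point.

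So everything reduces to the first claim $C(l_1,s_2,s_4,l_3)=\ttc_1\ttc_2$. Since $s_2$ lies on $p_2\oplus p_{13}$ and $s_4$ on $p_4\oplus p_{13}$, I will use the Projection Property (in the line-pair version, with $l_1\cap l_3=p_{13}$) to move points along lines through $p_{13}$. Write $m=p_1\oplus p_3$. By the definition of the dual cross ratio, $\ttc_1=C(l_1,p_2,p_4,m)$ is the cross ratio of the four lines $l_1,\ p_1\oplus p_2,\ p_1\oplus p_4,\ m$ through $p_1$, and similarly for $\ttc_2$ at $p_3$. The natural route is to take the auxiliary point $x=(p_1\oplus p_4)\cap(p_3\oplus p_2)$ or to compute directly in coordinates: normalize $p_1=[1:0:0]$, $p_3=[0:0:1]$, $p_{13}=[0:1:0]$ (so $l_1=\{z=0\}$... adjusted so that $l_1\ni p_1,p_{13}$ and $l_3\ni p_3,p_{13}$), and $p_2=[a:1:b]$, $p_4=[c:1:d]$. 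Then $s_2=[a:0:b]$, $s_4=[c:0:d]$, $C(l_1,s_2,s_4,l_3)=\frac{d}{b}\cdot\frac{a}{c}$, while $\ttc_1$ and $\ttc_2$ computed from their definitions (choosing covectors for $l_1$, $l_3$, $m=\{y=0\}$) give $\frac{d}{b}\cdot(\pm1)$ and $\frac{a}{c}\cdot(\pm1)$ respectively; the sign conventions must combine to the product, which is consistent since both $\ttc_i<0$ while the left side is positive. I would do this coordinate computation rather than a synthetic one, as it is short and sign-transparent.

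Finally, the "in particular" follows by comparing the four values $\ttc_1\ttc_2$, $\ttt_1\ttc_1\ttc_2$, $\ttt_2\ttc_1\ttc_2$, $\ttt_1\ttt_2\ttc_1\ttc_2$: since $\ttc_1\ttc_2>0$, the maximum is obtained by multiplying by $\ttt_i$ exactly when $\ttt_i\ge1$, giving the stated four cases.
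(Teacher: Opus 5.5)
Your proposal is correct and is essentially the paper's argument: the paper simply invokes the cross ratio properties (Lemma on properties of cross ratios) together with Lemma \ref{lem: t.r. with c.r.} applied to $(F_1,F_2,F_3)$ and $(F_1,F_3,F_4)$, and your bookkeeping giving $C(l_1,s_4,r_4,l_3)=\ttt_2$ checks out. The only difference is that you verify $C(l_1,s_2,s_4,l_3)=\ttc_1\ttc_2$ in coordinates, where there is in fact no sign ambiguity (your normalization gives exactly $\ttc_1=d/b$ and $\ttc_2=a/c$), whereas synthetically it follows from $\ttc_1\ttc_2=C(l_1,p_2,p_4,p_1\oplus p_3)\,C(p_1\oplus p_3,p_2,p_4,l_3)=C(l_1,p_2,p_4,l_3)$ together with the Projection Property from $p_{13}=l_1\cap l_3$.
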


\begin{rem}\label{rem: gluing parameter quadrilateral}
    As in Remark \ref{rem: Tijk intrinsic}, thanks to Lemma \ref{lem: c.r. distinguished points quadruple} we may view the parameters $\ttc_1$, $\ttc_2$ as ``gluing parameters" between the pieces $\Ti_{(F_1,F_2,F_3)}$ and $\Ti_{(F_1,F_3,F_4)}$.
\end{rem}

\subsection{Positive tuples}\label{subsec: positive tuples}

There is an analogue of (\ref{eq: parametrization quadruples}) for arbitrary $n$--tuples, but its statement is more involved as it requires fixing a combinatorial choice (that was made implicitly in the previous section, c.f. Remark \ref{rem: quadruples with triangulated polygons} below).

Let $n\geq 3$ and fix a reference polygon $P$ with vertices $a_1,\dots,a_n$.
The orientation of $\bba$ induces a cyclic ordering on the topological boundary of $P$ and we assume that $$a_1<a_2<\dots<a_n<a_1.$$
We fix moreover a decomposition of $P$ into $n-2$ different triangles whose vertices are vertices of $P$, and so that two of these triangles are either disjoint, they meet in a vertex (of $P$), or they meet along an edge (whose interior is contained in the interior of $P$). 
Let $\calt_P$ be the set of these $n-2$  triangles . 
We also let $\cale_P$ be the set of sides of elements in $\calt_P$ which are not sides of $P$. Note that $\cale_P$ has $n-3$ elements. 
Let $\caleor_P$ be the set of elements of $\cale_P$ equipped moreover with an orientation.
A polygon $P$ equipped with these choices is called a \textit{triangulated polygon}, and we fix it for the rest of the paper.

For $E\in\caleor_P$ we let $i_E\in\{1,\dots,n\}$ (resp. $k_E\in\{1,\dots,n\}$) be so that $a_{i_E}$ (resp. $a_{k_E}$) is the starting (resp. ending) point of $E$.
Let $T_E\in\calt_P$ be the triangle whose vertices are $a_{i_E}$, $a_{j_E}$ and $a_{k_E}$ and such that $$a_{i_E}<a_{j_E}<a_{k_E}<a_{i_E}.$$
\noindent Let $\overline{E}\in\caleor_P$ be the edge with the same support as $E$, but equipped with the opposite orientation.
We have $i_{\overline{E}}=k_E$ and $k_{\overline{E}}=i_E$.
\noindent Note that $T_E\neq T_{\overline{E}}$ are the two triangles of $\calt_P$ meeting at $E$.
We have moreover $$a_{i_E}<a_{j_E}<a_{k_E}<a_{j_{\overline{E}}}<a_{i_E},$$
\noindent see Figure \ref{fig: triangulated polygon}. 
\begin{figure}[h] 
\scalebox{1}{%
\begin{overpic}[scale=1, width=0.65\textwidth, tics=2]{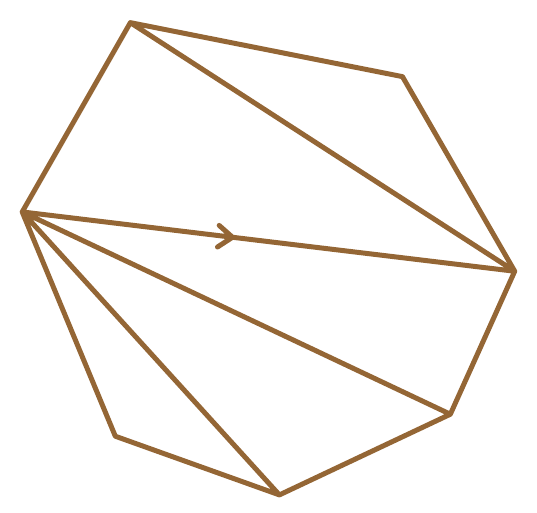}

	\put (-2.5,56)  {\tc{brownpic}{$a_{i_E}$}}
    \put (85,15)  {\tc{brownpic}{$a_{j_E}$}}
    \put (97,44)  {\tc{brownpic}{$a_{k_E}$}}
    \put (21,94)  {\tc{brownpic}{$a_{ j_{\overline{E}}  }$}}

    \put (40,55)  {\tc{brownpic}{$E$}}
    \put (55,40)  {\tc{brownpic}{$T_E$}}
    \put (30,70)  {\tc{brownpic}{$T_{ \overline{E} }$}}
	
    \end{overpic}}
   \caption{A triangulated polygon with seven vertices. 
   The cyclic ordering on the set of vertices is counter clock--wise.
   The oriented edge $E\in\caleor_P$ determines two triangles $T_E$ and $T_{\overline{E}}$, together with a cyclically ordered sequence $a_{i_E}<a_{j_E}<a_{k_E}<a_{j_{\overline{E}}}<a_{i_E}$.}
   \label{fig: triangulated polygon}
\end{figure}

We are now ready to define the analogue of (\ref{eq: parametrization quadruples}).
Let $(F_1,\dots,F_n)$ be an $n$--tuple of pairwise transverse flags. 
Suppose moreover that all its sub--triples are generic. 
We associate to $(F_1,\dots,F_n)$ a \textit{Fock-Goncharov-like parameter} in $$(\rr_{\neq 0})^{\calt_P}\times (\rr_{\neq 0})^{\caleor_P}$$ \noindent as follows.
On the one hand, take $T\in\calt_P$ with vertices $a_i$, $a_j$ and $a_k$ ordered so that $$a_i<a_j<a_k<a_i.$$ \noindent We define the corresponding \textit{triangle invariant} as $\ttt_T\coloneqq T(F_i,F_j,F_k)$.
As the triple ratio is invariant under cyclic permutations, this is well defined. 
For $E\in\caleor_P$, we will usually denote $\ttt_E\coloneqq \ttt_{T_E}$.
On the other hand, we can define an \textit{edge invariant} $\ttc_{E}$ for an element $E\in\caleor_P$ by letting $$\ttc_{E}\coloneqq C(l_{i_E},p_{j_E},p_{j_{\overline{E}}},p_{i_E}\oplus p_{k_E}).$$

As in Subsection \ref{subsec: positive quadruples} and by an inductive argument, a parameter in $(\rr_{\neq 0})^{\calt_P}\times (\rr_{\neq 0})^{\caleor_P}$ uniquely determines an $n$--tuple $(F_1,\dots,F_n)$ up to the action of a well defined $g\in\g$.
The $n$--tuple of flags is said to be \textit{positive} if its triangle invariants are positive and its edge invariants are negative.
If we let $\flags_n^+$ be the space of ordered positive $n$--tuples of flags, we then have the \textit{Fock-Goncharov-like parametrization} 
\begin{equation}\label{eq: parametrization n tuples}
   \flags_n^+/\g\to  (\rr_{>0})^{\calt_P}\times (\rr_{<0})^{\caleor_P}.
\end{equation}

\begin{rem}\label{rem: quadruples with triangulated polygons}
    When $n=4$ then (\ref{eq: parametrization quadruples}) is a special case of (\ref{eq: parametrization n tuples}) as it corresponds to triangulate the quadrilateral $P$ using an edge between $a_1$ and $a_3$.
    If $E\in\caleor_P$ is that edge oriented from $a_1$ to $a_3$, then (\ref{eq: parametrization quadruples}) becomes $$(F_1,F_2,F_3,F_4)\mapsto (\ttt_E,\ttt_{\overline{E}},\ttc_{E},\ttc_{\overline{E}}).$$ \noindent Notice that if we consider the same quadrilateral but triangulated using $a_2$ and $a_4$ instead, we get a different parametrization of $\flags_4^+/\g$ (the change of coordinates can be computed explicitly). 
\end{rem}

The following is analogous to Lemmas \ref{lem: nested triangles} and \ref{lem: nested quadrilaterals}, its proof follows by induction.
Fix  $(F_1,\dots,F_n)\in\flags_n^+$.
By Lemma \ref{lem: nested triangles} each $T\in\calt_P$ with vertices $a_i<a_j<a_k<a_i$ canonically determines a pair of triangles\footnote{Note that these triangles do not depend on the specific ordering $a_i<a_j<a_k<a_i$.}
\begin{equation}\label{eq: ideal triangles for n tuples}
  \Ti_T=\Ti_{T,(F_1,\dots,F_n)}\coloneqq \Ti_{(F_i,F_j,F_k)}\subset\Text_T=\Text_{T,(F_1,\dots,F_n)}\coloneqq\Text_{(F_i,F_j,F_k)}.  
\end{equation}

\begin{lem}[c.f. {\cite[Theorem 2.2]{FGProjective}}]\label{lem: nested polygons}
    Let $(F_1,\dots,F_n)\in\flags_n^+$.
    Then $$ \Poli=\Poli_{(F_1,\dots,F_n)}\coloneqq \bigcup_{T\in\calt_P}\Ti_T$$ \noindent is an ideal polygon contained in the interior of $\bigcap_{T\in\calt_P}\Text_T$ and naturally triangulated in the same way as $P$.
     Moreover the vertices of $\Poli$ are $p_1,\dots,p_n$ and there exists a cyclic ordering on the topological boundary of $\Poli$ so that $$p_1<p_2<\dots<p_{n-1}<p_n<p_1.$$
\end{lem}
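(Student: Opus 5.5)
We argue by induction on $n\geq 3$, following the inductive reconstruction of positive $n$--tuples from their Fock-Goncharov-like parameters. The base case $n=3$ is Lemma \ref{lem: nested triangles}: $\Poli=\Ti_{(F_1,F_2,F_3)}$ lies in the interior of $\Text$, its vertices are $p_1,p_2,p_3$, and the cyclic ordering is induced from the orientation of $\bba$. The case $n=4$ is Lemma \ref{lem: nested quadrilaterals}, which already contains the essential gluing mechanism. For the inductive step, we choose an \emph{ear} of the triangulated polygon $P$: a triangle $T_0\in\calt_P$ with two sides on $\partial P$. Such a triangle exists for $n\geq 4$ because $\calt_P$ has $n-2$ triangles and its dual tree has a leaf. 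Relabeling cyclically, we may assume $T_0$ has vertices $a_{n-1},a_n,a_1$ and meets the rest along the edge $E=[a_1,a_{n-1}]$. Then $(F_1,\dots,F_{n-1})$ is a positive $(n-1)$--tuple for the induced triangulation of $P'=P\setminus T_0$, since its invariants are a subset of those of the original tuple. By induction, $\Poli'=\Poli_{(F_1,\dots,F_{n-1})}$ is an ideal polygon with vertices $p_1<\dots<p_{n-1}$, contained in the interior of $\bigcap_{T\neq T_0}\Text_T$.

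\textbf{Step 1 (attaching the ear).} Let $T_E\in\calt_{P'}$ be the triangle adjacent to $E$, with third vertex $a_j$. The quadruple $(F_1,F_j,F_{n-1},F_n)$ has Fock-Goncharov-like parameter $(\ttt_{T_E},\ttt_{T_0},\ttc_E,\ttc_{\overline E})$, which is positive, so Lemma \ref{lem: nested quadrilaterals} applies. It shows that $\Ti_{T_0}$ shares the side $[p_1,p_{n-1}]$ with $\Ti_{T_E}$, and that $\Ti_{T_0}\setminus[p_1,p_{n-1}]$ lies in the component of $\Text_{T_E}\setminus(p_1\oplus p_{n-1})$ not meeting $\Ti_{T_E}$.

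\textbf{Step 2 (the two pieces do not overlap).} We must show that $\Poli'\cup\Ti_{T_0}$ is still an ideal polygon. Since $[p_1,p_{n-1}]$ is a side of the convex set $\Poli'$, the whole of $\Poli'$ lies on the $\Ti_{T_E}$ side of the line $p_1\oplus p_{n-1}$ within the affine chart. By Step 1, $\Ti_{T_0}$ lies on the opposite side. Hence the union is a polygon in which $p_n$ is inserted between $p_{n-1}$ and $p_1$, giving the cyclic ordering $p_1<\dots<p_n<p_1$. Convexity at $p_1$ and $p_{n-1}$ follows from the inclusion $\Ti_{T_0}\subset\Text_{T_E}$. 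Indeed, the lines $l_1$ and $l_{n-1}$ are supporting lines of every $\Ti_T$ having $p_1$ (respectively $p_{n-1}$) as a vertex, by the characterization of sides in Lemma \ref{lem: nested triangles}. So the union stays on one side of $l_1$ and of $l_{n-1}$ near these vertices, which rules out reflex angles there.

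\textbf{Step 3 (containment in all external triangles).} This is the main obstacle. We must show that $\Ti_{T_0}\subset\operatorname{int}\Text_T$ for every $T\neq T_0$, and that $\Poli'\subset\operatorname{int}\Text_{T_0}$. For the first inclusion, we use that each side line of $\Text_T$ is some $l_i$ with $i\leq n-1$. The plan is to show that $l_i$ does not meet the closed polygon $\Poli$ except possibly at $p_i$, and touches it only there as a supporting line. Granting this, $\Poli$ lies in a single component of the complement of the three lines of $\Text_T$. This component must be $\operatorname{int}\Text_T$ at the relevant vertices, because it contains $\Ti_T$.

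To prove that each $l_i$ is a supporting line of the whole polygon at $p_i$, we argue again inductively. We first check it for the quadruple $(F_1,F_j,F_{n-1},F_n)$ using the positivity and nesting properties of Lemma \ref{lem: properties c.r.}, together with the sign conditions on the edge invariants. We then propagate it edge by edge along the dual tree of the triangulation. In the propagation, each flip across an edge is controlled by a quadruple, and Lemma \ref{lem: c.r. distinguished points quadruple} together with the negativity of the edge invariants forces the relevant cross ratios to have the right sign. Once every $l_i$ is known to be supporting at $p_i$, the bounded polygon avoids all the lines $l_i$. It then lies in the intersection of the triangles they bound, and this intersection is exactly $\bigcap_T\Text_T$ by the uniqueness statement in Lemma \ref{lem: nested triangles}.
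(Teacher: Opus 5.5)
Your overall strategy is the right one: induction by attaching an ear, which is what the paper's one-line ``follows by induction'' refers to. Steps 1 and 2 are essentially sound. (For convexity at $p_1,p_{n-1}$, the clean justification is $\overline{\Poli}\subset\Text_{T_E}$, from the inductive hypothesis plus Lemma \ref{lem: nested quadrilaterals}. Knowing that each $\Ti_T$ lies on \emph{some} side of $l_1$ does not by itself put them on the same side.)

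The gap is Step 3, which you call the main obstacle but only sketch. You propose to ``propagate edge by edge along the dual tree'' using Lemma \ref{lem: c.r. distinguished points quadruple} and the negativity of edge invariants. That lemma only concerns the relative position of $r_E,s_E,r_{\overline{E}},s_{\overline{E}}$ on the diagonal $p_{i_E}\oplus p_{k_E}$. It says nothing about whether a line $l_i$ through a vertex far from $T_0$ crosses the new triangle $\Ti_{T_0}$. Positivity of sub-quadruples only gives $\Ti_{T'}\subset\Text_T$ for \emph{adjacent} $T,T'$, and there is no evident way to chain such inclusions along the tree. As written, neither $\Ti_{T_0}\subset\operatorname{int}\Text_T$ for non-adjacent $T$ nor $\Poli'\subset\operatorname{int}\Text_{T_0}$ is established.

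The missing idea is that once Step 2 gives convexity of $\overline{\Poli}$, being a supporting line is a \emph{local} condition at a vertex. A line through a vertex $p_i$ of a convex polygon meets it only at $p_i$ if and only if the two neighbouring vertices lie strictly on the same side of it, because the polygon lies in the angular sector at $p_i$. Check this at each vertex:
\begin{itemize}
\item For $2\le i\le n-2$, the neighbours $p_{i-1},p_{i+1}$ are the same in $\Poli'$ and $\Poli$. The inductive hypothesis, via $\Poli'\subset\operatorname{int}\Text_T$ for some $T\in\calt_{P'}$ with vertex $a_i$, gives $l_i\cap\overline{\Poli'}=\{p_i\}$, so these neighbours already lie strictly on one side of $l_i$.
\item For $i=1$ and $i=n-1$, use $\overline{\Poli}\subset\Text_{T_E}$, the fact that $l_1$ and $l_{n-1}$ are side lines of $\Text_{T_E}$, and transversality.
\item For $i=n$, the common side $[p_1,p_{n-1}]$ is disjoint from $l_n$ by Lemma \ref{lem: nested triangles}.
\end{itemize}
Hence $\Poli$ avoids every $l_i$. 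Being connected and containing each $\Ti_T$, it lies in $\operatorname{int}\Text_T$ for every $T\in\calt_P$. This closes the induction with no further cross-ratio computations.
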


We finish this subsection with one last piece of notation.
Let $(F_1,\dots,F_n)\in\flags_n^+$.
For $E\in\caleor_P$ we may consider the positive quadruple $(F_{i_E},F_{j_E},F_{k_E},F_{j_{\overline{E}}})$ which, as in Remark \ref{rem: quadruples with triangulated polygons}, has Fock-Goncharov-like parameter \begin{equation}\label{eq: subparameter}
  (\ttt_E,\ttt_{\overline{E}},\ttc_{E},\ttc_{\overline{E}}).  
\end{equation}
\noindent We observe that, strictly speaking, $(F_{i_E},F_{j_E},F_{k_E},F_{j_{\overline{E}}})$ is not a sub--quadruple of $(F_1,\dots,F_n)$ in the sense that we cannot ensure that $$1\leq i_E<j_E<k_E<j_{\overline{E}}\leq n$$ \noindent (this only holds up to a cyclic permutation of $ (i_E, j_E, k_E, j_{\overline{E}})$). 
We will however ignore this subtlety and call $(F_{i_E},F_{j_E},F_{k_E},F_{j_{\overline{E}}})$ the \textit{sub--quadruple} of $(F_1,\dots,F_n)$ associated with $E\in\caleor_P$, and call  (\ref{eq: subparameter}) the corresponding \textit{sub--parameter}.
Observe that the sub--quadruple associated to $\overline{E}$ is $(F_{k_E},F_{j_{\overline{E}}},F_{i_E},F_{j_E})$ and the corresponding sub--parameter is \begin{equation}\label{eq: subparameter of overline E}
    (\ttt_{\overline{E}},\ttt_E,\ttc_{\overline{E}},\ttc_{E}).
\end{equation}

Finally, for each oriented edge $E$ we have the distinguished points
\begin{equation}\label{eq: distinguished points for E in caleor}
    r_E, s_E\in[p_{i_E},p_{k_E}]\subset\bba,
\end{equation} \noindent coming from the triple $(F_{i_E},F_{j_E},F_{k_E})$.
We emphasize that $r_E$ and $s_E$ depend not only on the support of $E$ but also on its orientation.

\section{Triangular distance}\label{sec: triangular distance}

For the rest of the paper we fix a triangulated polygon $P$ with $n\geq 3$ vertices as in Subsection \ref{subsec: positive tuples}, and consider the induced Fock-Goncharov-like parametrization (\ref{eq: parametrization n tuples}) of $\flags_n^+/\g$.
Our goal now is to introduce a natural distance on the ideal polygon determined by a given $(F_1,\dots,F_n)\in\flags^+_n$, and to explore some of its basic properties.

\subsection{Definition and first properties}\label{subsec: triangular metric tuples}

Recall the notations from Lemma \ref{lem: nested polygons}.
Roughly speaking, the triangular distance is defined by gluing together the Hilbert distances on $\Ti_T$ induced by the properly convex sets $\Text_T$ (recall Subsection \ref{subsec: hilbert geometry}).

More precisely, let $(F_1,\dots,F_n)\in\flags_n^+$ and $\Poli=\Poli_{(F_1,\dots,F_n)}$ be the corresponding ideal polygon.
Recall that this is the union of the ideal triangles $\Ti_T$, for $T\in\calt_P$, and so that each $\Ti_T$ is contained in a unique $\Text_T$.
Let $\alpha\subset\Poli$ be a piecewise smooth curve. 
We may decompose it as a concatenation $$\alpha=\alpha_1\star\dots\star\alpha_M$$ \noindent where, for each $m=1,\dots,M$, the curve $\alpha_m$ is entirely contained in $\Ti_{T_m}$ for some $T_m\in\calt_P$.
We define the \textit{triangular length} of $\alpha$ as $$\ell_{\T}(\alpha)\coloneqq \displaystyle\sum_{m=1}^M\ell_{\Text_{T_m}}(\alpha_m),$$ \noindent where $\ell_{\Text_{T_m}}(\cdot)$ denotes the length function associated to the Hilbert norm of $\Text_{T_m}$.

The triangular length is well defined. 
Indeed, note first that if some $\alpha_m$ is contained in some $\Ti_T$ with $T\neq T_m$, then $\alpha_m$ is contained in the common projective segment $\Ti_T\cap\Ti_{T_m}$.
As the Hilbert norms $\Vert\cdot\Vert_{\Text_T}$ and $\Vert\cdot\Vert_{\Text_{T_m}}$ agree on vectors tangent to this projective segment, we have $\ell_{\Text_{T_m}}(\alpha_m)=\ell_{\Text_T}(\alpha_m)$ in that case.
Moreover, it is also clear that $\ell_{\T}(\alpha)$ does not depend on the chosen decomposition of $\alpha$ as a concatenation of curves.

The \textit{triangular distance} $d_{\T}(\cdot,\cdot)$ is the path metric induced by $\ell_{\T}(\cdot)$.
It is non-negative, symmetric, and satisfies the triangular inequality. 
The following shows that it is a genuine distance on $\Poli$ extending each $d_{\Text_T}(\cdot,\cdot)$.

\begin{lem}\label{lem: triangular distance separates and extends}
    Let $(F_1,\dots,F_n)\in\flags^+_n$ and $T\in\calt_P$. 
    Then for every $q$ and $q'$ in $\Ti_T$ one has $$d_{\T}(q,q')=d_{\Text_T}(q,q').$$
    \noindent In particular, $d_{\T}(\cdot,\cdot)$ separates points. 
\end{lem}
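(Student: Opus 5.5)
The inequality $d_{\T}(q,q')\leq d_{\Text_T}(q,q')$ is immediate. The projective segment $[q,q']$ lies in $\Ti_T$ because $\Ti_T$ is convex, so its triangular length is $\ell_{\Text_T}([q,q'])=d_{\Text_T}(q,q')$: the straight segment is a Hilbert geodesic of $\Text_T$.

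For the reverse inequality, take a piecewise smooth curve $\alpha\subset\Poli$ from $q$ to $q'$ and decompose it as $\alpha_1\star\dots\star\alpha_M$ with $\alpha_m\subset\Ti_{T_m}$. The aim is to show $\ell_{\T}(\alpha)\geq d_{\Text_T}(q,q')$. The plan is to compare each Hilbert norm $\Vert\cdot\Vert_{\Text_{T_m}}$ on $\Ti_{T_m}$ with the norm of one fixed convex set containing all of $\Poli$. The natural candidate is $C\coloneqq\bigcap_{T'\in\calt_P}\Text_{T'}$. By Lemma \ref{lem: nested polygons}, $\Poli$ lies in the interior of $C$, so $C$ is open and properly convex there. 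Since $C\subset\Text_{T_m}$, Remark \ref{rem: nested finsler norms} goes the wrong way: it gives $\Vert v\Vert_{\Text_{T_m}}\leq\Vert v\Vert_C$. So the intersection does not directly work, and a different comparison is needed.

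Instead I would use a projection argument adapted to $T$. Let $x,y\in\partial\Text_T$ be the endpoints of the line $q\oplus q'$, so that $d_{\Text_T}(q,q')=\log C(x,q,q',y)$. Let $L_x,L_y$ be the sides (supporting lines) of $\Text_T$ through $x$ and $y$; these are among the lines $l_i,l_j,l_k$ of the flags of $T$. Consider the function $f(z)\coloneqq\log C(L_x,q,z,L_y)$, a Busemann-type function (dually, $\log$ of a ratio of two linear forms). The key claim is that $f$ is $1$-Lipschitz on each piece $(\Ti_{T_m},\Vert\cdot\Vert_{\Text_{T_m}})$, that is, $|\od f(v)|\leq\Vert v\Vert_{\Text_{T_m}}$. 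Granting this, $\ell_{\T}(\alpha)\geq f(q')-f(q)=d_{\Text_T}(q,q')$, and we are done.

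For $m$ with $T_m=T$, the claim is the standard fact that the half-plane cross ratio is dominated by the Hilbert norm of $\Text_T$: the convex set bounded by $L_x$ and $L_y$ contains $\Text_T$, so Remark \ref{rem: nested finsler norms} together with (\ref{eq: finsler norm and derivative of distance}) gives it. For $T_m\neq T$, I would show that $\Text_{T_m}$ is contained in the same region $\{L_x,L_y$-sector containing $\Text_T\}$, or at least in a convex set whose Hilbert norm dominates $|\od f|$. The route is the nesting in Lemmas \ref{lem: nested quadrilaterals} and \ref{lem: nested polygons}: each line $l_i$ of a flag is a supporting line of $\Poli$ at $p_i$, and the lines of the flags bound all $\Text_{T'}$. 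Concretely, I expect that the open region between $L_x$ and $L_y$ containing $\Poli$ (the intersection of the two half-planes containing $\Poli$) is a properly convex sector containing every $\Text_{T_m}$ whose ideal triangle meets $\alpha$. This containment is the main obstacle and would be proved by induction over the triangulation using Lemma \ref{lem: nested quadrilaterals} and Remark \ref{rem: intersections triangle}.

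If that containment fails in general, the fallback is to work with a one-sided version. We can replace $f$ by $\log$ of a ratio of the linear forms $L_x$ and $L_y$ evaluated against the fixed point $q$, which only requires that $L_x$ and $L_y$ not cross the interior of any $\Text_{T_m}$. That weaker property follows because each $l_i$ supports $\Poli$ and bounds every $\Text_{T'}$ having $p_i$ as a vertex. The final assertion, that $d_{\T}$ separates points, then follows: two points lie in a common $\Ti_T$ or are separated by some $f$ built from a single triangle in the same way.
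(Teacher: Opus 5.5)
\paragraph{The gap.} The problem is exactly the step you flag, and your suggested routes do not close it. The claim that $f=\log C(L_x,q,\cdot,L_y)$ satisfies $|\od f(v)|\leq\Vert v\Vert_{\Text_{T_m}}$ on $\Ti_{T_m}$ for $T_m\neq T$ is false in general.

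First, the containment you hope for fails whenever an adjacent triangle is involved. If $T=\{a_i,a_j,a_k\}$ and $T'=\{a_i,a_k,a_{j'}\}$, Lemma \ref{lem: nested quadrilaterals} puts $p_j$ in the interior of $\Text_{T'}$, so $l_j$ always crosses $\Text_{T'}$. For suitable $q,q'$, $l_j$ is one of $L_x,L_y$. Your fallback condition is the same containment. The justification you give ($l_i$ bounds $\Text_{T'}$) only covers triangles having $a_i$ as a vertex.

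Second, the pointwise inequality itself fails, not just this sufficient condition for it. In an affine chart take
\[
p_1=(0,0),\ l_1=\{x=0\};\quad p_2=(\tfrac12,-1),\ l_2=\{y=-1\};\quad p_3=(1,0),\ l_3=\{x=1\};\quad p_4=(\tfrac12,b),\ l_4=\{y=b\},
\]
with $0<b<\tfrac13$. This quadruple is positive: both triple ratios equal $1$, and the edge invariants are $-1/b$ and $-b$. For $T'=\{1,2,3\}$ and $T=\{1,3,4\}$ one has $\Text_{T'}=\{0<x<1,\ y>-1\}$ and $\Text_{T}=\{0<x<1,\ y<b\}$. Take two points $q,q'\in\Ti_T$ on the line of slope $1$ through $(\tfrac12,\tfrac b2)$; this line leaves $\Text_T$ through $l_1$ and $l_4$, so $L_x=l_1$ and $L_y=l_4$. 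At $z=(\tfrac12,-b)\in\Ti_{T'}$:
\begin{itemize}
\item $|\od f(\partial_y)|=\tfrac1{2b}$;
\item $\Vert\partial_y\Vert_{\Text_{T'}}=\tfrac1{1-b}$, since the vertical chord runs from $l_2$ to the vertex $l_1\cap l_3$ at infinity.
\end{itemize}
Since $b<\tfrac13$, the first exceeds the second. So $\ell_{\T}(\alpha)\geq f(q')-f(q)$ cannot be obtained from a pointwise bound on the pieces. Your sketch of separation of points relies on the same property and fails with it.

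\paragraph{What is missing.} The missing ingredient is combinatorial, not a global calibrating function.
\begin{enumerate}
\item Straighten each piece $\alpha_m$ to a projective segment and merge consecutive pieces lying in a common triangle. The triangles visited then form a closed walk at $T$ in the dual tree of $\calt_P$ with consecutive triangles distinct.
\item Unless the curve already lies in $\Ti_T$, this walk backtracks: $T_{m-1}=T_{m+1}$ for some $m$. Then $\alpha_m$ is a segment in $\Ti_{T_m}$ with both endpoints on the common edge with $T_{m-1}$, so it lies on that edge.
\item On that edge $\Vert\cdot\Vert_{\Text_{T_m}}$ and $\Vert\cdot\Vert_{\Text_{T_{m-1}}}$ agree. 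Hence $\alpha_{m-1}\star\alpha_m\star\alpha_{m+1}\subset\Ti_{T_{m-1}}$ can be replaced by a single segment without increasing $\ell_{\T}$.
\item Inducting, you reach a curve inside $\Ti_T$, whose length is at least $d_{\Text_T}(q,q')$.
\end{enumerate}
This is the paper's argument; your function $f$ is only valid, and only needed, on $\Ti_T$.

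Separation is proved the same way. If $q,q'$ lie in no common triangle, any curve between them must first reach an edge, not containing $q$, of a triangle containing $q$. That edge is at positive $d_{\Text}$-distance from $q$, so $d_{\T}(q,q')>0$.
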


\begin{proof}

We clearly have $ d_{\T}(q,q') \leq d_{\Text_T}(q,q') $.
To prove the equality, we will show that for every piecewise smooth curve $\alpha\subset\Poli$ joining $q$ with $q'$, there exists a piecewise smooth curve $\widehat{\alpha}\subset\Ti_T$ joining $q$ with $q'$ such that \begin{equation}\label{eq: triangular distance separates and extends}
    \ell_{\Text_T}(\widehat{\alpha})\leq \ell_{\T}(\alpha).
\end{equation}

Fix then some $\alpha$, decomposed as a concatenation $\alpha=\alpha_1\star\dots\star\alpha_M$ as in the definition of the triangular length.
As projective lines are geodesics for the Hilbert distance we may assume that $\alpha_m$ is contained in a projective segment for $m=1,\dots,M$.
Note that each time that there are some $1\leq m_0<m_0'\leq M$ with $\alpha_{m_0}\star\dots\star\alpha_{m_0'}$ contained in some $\Ti_{T'}$, then we may replace this sub--polygonal by a projective segment inside $\Ti_{T'}$ without increasing the total triangular length.
Moreover, unless $\alpha\subset\Ti_T$ (in which case we are done with the proof of (\ref{eq: triangular distance separates and extends})), the above sub--polygonal must exist. 
By induction we conclude that (\ref{eq: triangular distance separates and extends}) holds.

To finish, let us prove that if arbitrary points $q$ and $q'$ in $\Poli$ satisfy $d_{\T}(q,q')=0$, then we necessarily have $q=q'$.
We will show that in fact $q$ and $q'$ belong to some $\Ti_T$, and this will finish the proof as $d_{\Text_T}(\cdot,\cdot)$ does separates points.

Suppose for a contradiction that $q$ and $q'$ do not belong to the same ideal triangle. 
As above, for every $\varepsilon>0$ we may find a polygonal $\alpha=\alpha_1\star\dots\star\alpha_M$ connecting $q$ with $q'$, with $M\geq 2$, such that the curves $\alpha_m$ and $\alpha_{m+1}$ are not contained in a single ideal triangle for $m=1,\dots,M-1$, and $\ell_{\T}(\alpha)<\varepsilon$.
In particular, $\alpha_1\subset\Ti_{T_1}$ is a projective segment connecting $q$ with a point in some edge $E\subset\Ti_{T_1}$ not containing $q$ and $$\ell_{\Text_{T_1}}(\alpha_1)=\ell_{\T}(\alpha_1)<\varepsilon.$$
\noindent As $\varepsilon$ is arbitrary, this would show that the $d_{\Text_{T_1}}(\cdot,\cdot)$--distance between $q$ and some edge $E$ not containing $q$ is zero. 
As $d_{\Text_{T_1}}(\cdot,\cdot)$ separates points and satisfies the triangular inequality, this is a contradiction. 
\end{proof}

We record for future use the following consequence of the proof of Lemma \ref{lem: triangular distance separates and extends}.

\begin{rem}\label{rem: minimizing triangular curves}
    Let $q,q'\in\Poli$ and $T_1,\dots,T_M\in\calt_P$ be so that $ \Ti_{T_1}\cup\dots\cup \Ti_{T_M}$ is an ideal polygon containing $q$ and $q'$.
    Then the triangular distance between $q$ and $q'$ may be computed by only looking at polygonal curves $\alpha\subset \Ti_{T_1}\cup\dots\cup \Ti_{T_M}$ which in restriction to each $\Ti_{T_i}$ are given by a projective segment.
\end{rem}

Here is a useful lemma about the triangular distance on a single triangle that we will use repeatedly, recall the decomposition (\ref{eq: Tid decompositon}) and see Figure \ref{fig: bound triangle}.

\begin{figure}[htbp] 
\scalebox{1}{%
\begin{overpic}[scale=1, width=0.65\textwidth, tics=2]{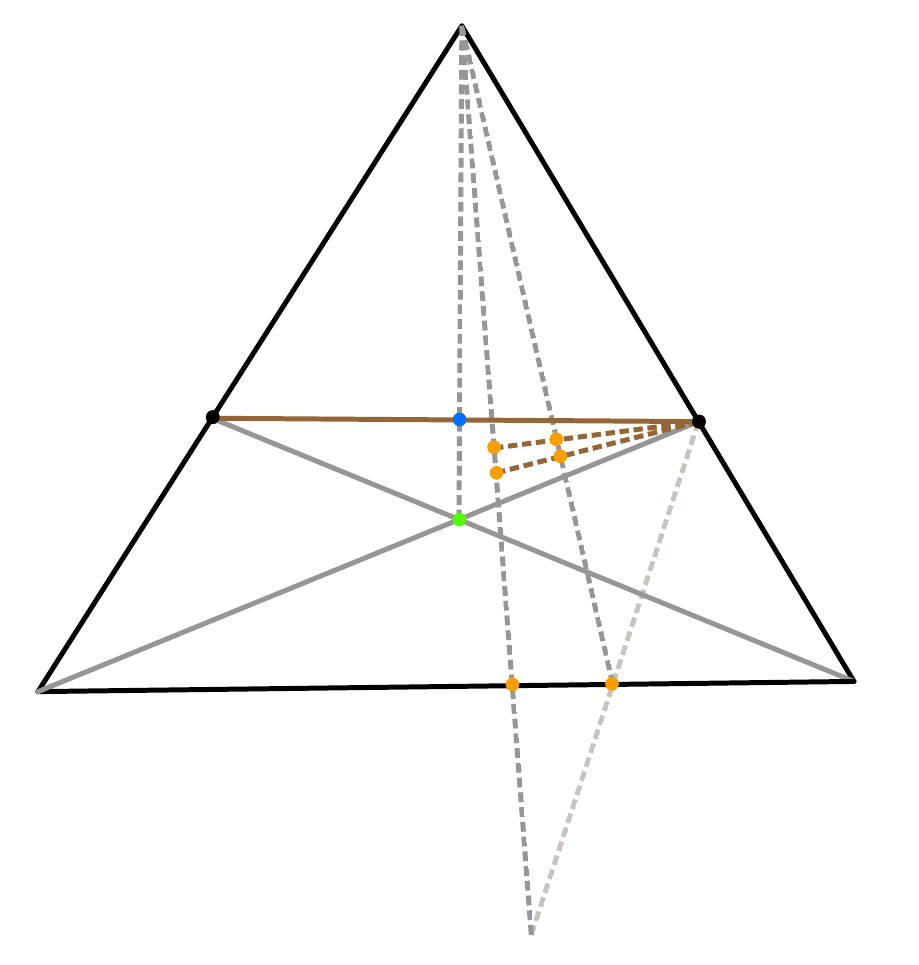}

	\put (17,56)  {$p_1$}
    \put (75,56)  {$p_3$}
    \put (46,99)  {$p_{13}$}
    \put (-2,28)  {$p_{12}$}
    \put (90,28)  {$p_{23}$}
    
    \put (49,25)  {\tc{orangepic}{$\widehat{q}_1$}}
    \put (64,25)  {\tc{orangepic}{$\widehat{q}_2$}}
    \put (53.5,-0.5)  {\tc{orangepic}{$\widetilde{q}_1$}}

    \put (29,75)  {$l_1$}
    \put (64,75)  {$l_3$}
    \put (75,25)  {$l_2$}
   
    \put (46,42)  {\tc{greenpic}{$t_2$}}
    \put (43,58.5)  {\tc{bluepic}{$r_2$}}

    \put (48,49)  {\tc{orangepic}{$q_1'$}}
    \put (47.5,53)  {\tc{orangepic}{$q_1$}}
    \put (59.5,48.5)  {\tc{orangepic}{$q_2'$}}
    \put (57,58)  {\tc{orangepic}{$q_2$}}

    \end{overpic}}
   \caption{The proof of Lemma \ref{lem: bound triangular pencil} using the Projection and Nesting Properties of the cross ratio.}
   \label{fig: bound triangle}

\end{figure}

\begin{lem}\label{lem: bound triangular pencil}
For $(F_1,F_2,F_3)\in\flags_3^+$ let $q_1$ and $q_2$ in $\T_{123}$ with $q_2\in q_1\oplus p_3$ and $$1\leq C(l_1,r_2,q_1,l_3)<C(l_1,r_2,q_2,l_3).$$ \noindent Moreover, for $i=1,2$ let $q_i'\in \T_{123}$ be so that $q_i'\in p_{13}\oplus q_i$ and $q_2'\in q_1'\oplus p_3$.
Then  $$d_{\T}(q_2,q_2')<d_{\T}(q_1,q_1').$$
\noindent In particular, there exists a constant $D>0$ so that for every $(F_1,F_2,F_3)\in\flags_3^+$ and every $q,q'\in\T_{123}$ satisfying $q'\in q\oplus p_{13}$ one has $d_{\T}(q,q')\leq  D$.
\end{lem}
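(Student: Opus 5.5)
The plan is to reduce everything to a single perspectivity centred at $p_3$ together with the Nesting Property of Lemma \ref{lem: properties c.r.}.

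First, by Lemma \ref{lem: triangular distance separates and extends}, since $\T_{123}\subset\Ti$, we have $d_{\T}=d_{\Text}$ on $\T_{123}$. For $i=1,2$ the points $q_i,q_i'$ lie on the projective line $p_{13}\oplus q_i$. This line leaves $\Text$ at the vertex $p_{13}$ and at the point $\widetilde q_i\coloneqq (p_{13}\oplus q_i)\cap l_2$. Hence
$$d_{\T}(q_i,q_i')=\bigl|\log C(p_{13},q_i,q_i',\widetilde q_i)\bigr|.$$
Up to exchanging the names of $q_i$ and $q_i'$, which is consistent for $i=1,2$ because the perspectivity below preserves the order along the lines, we may assume the points appear in the order $p_{13},q_i,q_i',\widetilde q_i$. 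Then $d_{\T}(q_i,q_i')=\log C(p_{13},q_i,q_i',\widetilde q_i)$.

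Next we project the line $p_{13}\oplus q_1$ onto the line $p_{13}\oplus q_2$ from the centre $p_3$. This projective map fixes $p_{13}$, sends $q_1\mapsto q_2$ (because $q_2\in q_1\oplus p_3$) and sends $q_1'\mapsto q_2'$ (by hypothesis). It sends $\widetilde q_1$ to $\widehat q\coloneqq (p_3\oplus\widetilde q_1)\cap(p_{13}\oplus q_2)$. Cross ratios are preserved, so
$$d_{\T}(q_1,q_1')=\log C(p_{13},q_2,q_2',\widehat q).$$
It therefore suffices to show that $\widehat q$ lies strictly between $q_2'$ and $\widetilde q_2$ on the segment $[p_{13},\widetilde q_2]\subset\Text$. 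Once this is known, the Nesting Property (in the form of Remark \ref{rem: segment ciclicality and nesting}) gives $C(p_{13},q_2,q_2',\widetilde q_2)<C(p_{13},q_2,q_2',\widehat q)$, which is the claimed inequality.

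This localisation of $\widehat q$ is the main obstacle. The point $\widehat q$ lies on the segment $[p_3,\widetilde q_1]$. That segment lies in $\Text$ because $p_3\in l_3$ and $\widetilde q_1\in l_2$, so $\widehat q\in\Text$, i.e. it lies on $[p_{13},\widetilde q_2]$. Moreover, $\widehat q\neq\widetilde q_2$, since the segment $[p_3,\widetilde q_1]$ meets $l_2$ only at $\widetilde q_1$, and $\widetilde q_1\neq\widetilde q_2$ because $q_1\neq q_2$. To see that $\widehat q$ is past $q_2'$, I would use the hypothesis $1\le C(l_1,r_2,q_1,l_3)<C(l_1,r_2,q_2,l_3)$. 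It says that the line through $q_2$ lies strictly between the line through $q_1$ and $p_3$ in the pencil at $p_{13}$. Hence the perspectivity from $p_3$ moves points towards $p_3$ and preserves the cyclic order $p_{13}<q_1<q_1'<\widetilde q_1$. With the help of the intersection statements in Remark \ref{rem: intersections triangle}, this gives the order $p_{13}<q_2<q_2'<\widehat q<\widetilde q_2$.

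For the uniform bound, let $q,q'\in\T_{123}$ with $q'\in q\oplus p_{13}$. We may enlarge $[q,q']$ to the full chord $(p_{13}\oplus q)\cap\T_{123}$ without decreasing the distance. If $C(l_1,r_2,q,l_3)\ge 1$, we apply the first part (by continuity, in its non-strict limiting form) with $q_1,q_1'$ the endpoints of the chord through $r_2$. That chord is $[r_2,t_2]$, because $r_2=(t_2\oplus p_{13})\cap(p_1\oplus p_3)$. This gives $d_{\T}(q,q')\le d_{\Text}(r_2,t_2)$. The case $C(l_1,r_2,q,l_3)\le1$ is symmetric: we exchange the roles of $p_1$ and $p_3$ and project from $p_1$ instead. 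Finally, by Remark \ref{rem: Tijk intrinsic} and (\ref{eq: rigid configuration}), all configurations $(F_1,F_3,l_2)$ are projectively equivalent. Hence $D\coloneqq d_{\Text}(r_2,t_2)$ does not depend on $(F_1,F_2,F_3)$, which gives the uniform constant.
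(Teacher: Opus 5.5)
Your proposal is correct and is essentially the paper's argument. The paper also uses the perspectivity from $p_3$ between the two chords through $p_{13}$ (Projection Property) followed by the Nesting Property, except that it projects the second chord back onto $p_{13}\oplus q_1$ and shows the image of $l_2\cap(p_{13}\oplus q_2)$ falls outside $\Text$, which is the mirror image of your showing that $\widehat q$ lands in $(q_2',\widetilde q_2)$; the uniform bound is likewise obtained from $d_{\T}(r_2,t_2)$ and (\ref{eq: rigid configuration}), with your projection from $p_1$ for the side $C(l_1,r_2,q,l_3)\le 1$ spelling out a case the paper leaves implicit.
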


\begin{proof} 
For $i=1,2$ we let $\widehat{q}_i\coloneqq l_2\cap (q_i\oplus q_i')$ and note that by Remark \ref{rem: triangles and subtriangles} we have $\widehat{q}_i\in\Text$.
Moreover, up to exchanging the roles of $q_1$ and $q_1'$ if necessary, we may assume without loss of generality that $C(\widehat{q}_1,q_1',q_1,p_{13})>1$.
Hence $$d_{\T}(q_1,q_1')=\log C(\widehat{q}_1,q_1',q_1,p_{13}).$$
It suffices then to show the following inequalities
$$C(\widehat{q}_1,q_1',q_1,p_{13})>C(\widehat{q}_2,q_2',q_2,p_{13})>1.$$
\noindent This will follow from Lemma \ref{lem: properties c.r.} (c.f. Figure \ref{fig: bound triangle}).

Indeed, letting $\widetilde{q}_1\coloneqq (p_3\oplus \widehat{q}_2)\cap(q_1\oplus q_1')$, by the Projection Property this is equivalent to showing $$C(\widehat{q}_1,q_1',q_1,p_{13})>C(\widetilde{q}_1,q_1',q_1,p_{13})>1.$$ \noindent By Remark \ref{rem: segment ciclicality and nesting} we only need to show that $\widetilde{q}_1\notin[p_{13},\widehat{q}_1]\subset\bba$.
Now, by the Projection Property our hypothesis on $q_1$ and $q_2$ implies that $C(p_{12},\widehat{q}_1,\widehat{q}_2,p_{23})>1$ and so $\widehat{q}_2$ belongs to $(\widehat{q}_1,p_{23})$ (c.f. Remark \ref{rem: segment ciclicality and nesting}).
As $p_3\in(p_{13},p_{23})$, by Remark \ref{rem: intersections triangle} the projective segments $[p_3,\widehat{q}_2]$ and $[p_{13},\widehat{q}_1]$ do not intersect, thus showing $\widetilde{q}_1\notin[p_{13},\widehat{q}_1]$ as desired.

Finally, if in particular $q_1,q_1'\in[r_2,t_2]$, then $$d_{\T}(q_2,q_2')<d_{\T}(q_1,q_1')\leq d_{\T}(r_2,t_2).$$
\noindent But (\ref{eq: rigid configuration}) implies that $d_{\T}(r_2,t_2)$ is a universal constant, thus finishing the proof.   
\end{proof}

\subsection{Comparing triangular distances}

We explicitly state two useful properties comparing triangular distances associated to different $n$--tuples for future use. 
We recall the notation (\ref{eq: ideal triangles for n tuples}) for the following, whose proof is direct (c.f. Remark \ref{rem: minimizing triangular curves}). 

\begin{lem}\label{lem: sufficient condtion for lipshictz triangular}
  Fix two positive $n$--tuples of flags $(F_1,\dots,F_n)$ and $(F_1',\dots,F_n')$ and let $(\Poli)'\coloneqq \Poli_{(F_1',\dots,F_n')}$.
  For $T\in\calt_P$, let  $(\Ti_T)'\coloneqq \Ti_{T,(F_1',\dots,F_n')}$.
  Let $$\psi:\Poli\to(\Poli)'$$ \noindent be a homeomorphism so that $\psi(\Ti_T)=(\Ti_T)'$ for all $T\in\calt_P$.
Suppose that there is some $K>0$ so that for every $T\in\calt_P$ and every projective segment $\alpha\subset\Ti_T$ the curve $\psi\circ\alpha$ is piecewise smooth and satisfies $$\ell_{\T}(\psi\circ\alpha)\leq K\cdot \ell_{\T}(\alpha).$$
\noindent Then $\psi$ is $K$--Lipschitz for the respective triangular distances.   
\end{lem}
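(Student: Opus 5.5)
To prove Lemma \ref{lem: sufficient condtion for lipshictz triangular}, I would reduce the Lipschitz inequality $d_{\T}(\psi(q),\psi(q'))\leq K\cdot d_{\T}(q,q')$ to a length comparison for polygonal curves, using Remark \ref{rem: minimizing triangular curves}.

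Fix $q,q'\in\Poli$ and $\varepsilon>0$. By Remark \ref{rem: minimizing triangular curves} applied with all triangles of $\calt_P$, I choose a polygonal curve $\alpha=\alpha_1\star\dots\star\alpha_M$ from $q$ to $q'$ such that:
\begin{itemize}
\item each $\alpha_m$ is a projective segment contained in some $\Ti_{T_m}$;
\item $\ell_{\T}(\alpha)\leq d_{\T}(q,q')+\varepsilon$.
\end{itemize}
Then $\psi\circ\alpha=(\psi\circ\alpha_1)\star\dots\star(\psi\circ\alpha_M)$ is a continuous curve from $\psi(q)$ to $\psi(q')$. By hypothesis, each piece $\psi\circ\alpha_m$ is piecewise smooth, so the concatenation is piecewise smooth too. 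Moreover, $\psi\circ\alpha_m\subset\psi(\Ti_{T_m})=(\Ti_{T_m})'$.

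Since the triangular length is additive under concatenation and independent of the chosen decomposition, as noted after its definition, I get
$$d_{\T}(\psi(q),\psi(q'))\leq \ell_{\T}(\psi\circ\alpha)=\sum_{m}\ell_{\T}(\psi\circ\alpha_m)\leq K\sum_m\ell_{\T}(\alpha_m)=K\,\ell_{\T}(\alpha)\leq K\bigl(d_{\T}(q,q')+\varepsilon\bigr).$$
Letting $\varepsilon\to 0$ concludes the proof.

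The only subtle points are checking that the triangular length on the target is computed triangle by triangle, which holds because each image piece lies in a single $(\Ti_{T_m})'$, and that the infimum defining $d_{\T}$ may be taken over projective polygonal curves. The latter is exactly the content of Remark \ref{rem: minimizing triangular curves}, which comes from the straightening argument in the proof of Lemma \ref{lem: triangular distance separates and extends}. I expect no serious obstacle.
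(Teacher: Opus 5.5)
Your proposal is correct and follows essentially the argument the paper has in mind. The paper only says the proof is direct via Remark \ref{rem: minimizing triangular curves}: approximate $d_{\T}(q,q')$ by polygonal curves made of projective segments, each inside a single $\Ti_T$, apply the hypothesis piece by piece, and use additivity of $\ell_{\T}$.
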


The following result states that the triangular distance does not depend on the full information of the $n$--tuple $(F_1,\dots,F_n)$: it only depends on the individual triangle invariants and on the products of the edge invariants.
Hence, each positive $n$--tuple determines an $(n-3)$--dimensional family of positive $n$--tuples with  the same triangular distance, c.f. Remarks \ref{rem: Tijk intrinsic} and \ref{rem: gluing parameter quadrilateral}.
We keep the notations from Lemma \ref{lem: sufficient condtion for lipshictz triangular}.

\begin{lem}\label{lem: isometric triangular metrics}
    Fix two positive $n$--tuples of flags $(F_1,\dots,F_n)$ and $(F_1',\dots,F_n')$.
    Suppose moreover that:
    \begin{enumerate}
        \item for each $T\in\calt_P$, the corresponding triangle invariants for $(F_1,\dots,F_n)$ and $(F_1',\dots,F_n')$ coincide, and
        \item for each $E\in\cale_P$, the product of the associated edge invariants for $(F_1,\dots,F_n)$ coincides with the product of the associated edge invariants for $(F_1',\dots,F_n')$. 
    \end{enumerate}
    \noindent Then there exists a piecewise projective homeomorphism $$\psi:\Poli\to(\Poli)'$$ \noindent preserving the respective triangulations. 
    More precisely, for all $T\in\calt_P$ one has $\psi(\Ti_T)=(\Ti_T)'$ and the restriction $\psi\vert_{\Ti_T}$ extends to an element of $\g$.
    In particular, $\psi$ is an isometry for the respective triangular distances. 
\end{lem}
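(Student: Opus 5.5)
We build $\psi$ one triangle at a time, following the adjacency graph of $\calt_P$ (a tree, since $P$ is a triangulated polygon), so the argument is an induction on the number of triangles. Fix a triangle $T_0\in\calt_P$ with vertices $a_i<a_j<a_k<a_i$. Since $T(F_i,F_j,F_k)=T(F_i',F_j',F_k')$, the discussion after Lemma \ref{lem: t.r. with c.r.} (the rigidity statement (\ref{eq: rigid configuration})) gives a unique $g_{T_0}\in\g$ with $g_{T_0}\cdot(F_i,F_j,F_k)=(F_i',F_j',F_k')$. We set $\psi\vert_{\Ti_{T_0}}\coloneqq g_{T_0}\vert_{\Ti_{T_0}}$. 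By Lemma \ref{lem: nested triangles}, $g_{T_0}$ maps $\Ti_{T_0}$ onto $(\Ti_{T_0})'$ and $\Text_{T_0}$ onto $(\Text_{T_0})'$, since these triangles are canonically determined by the flags.

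For the inductive step, suppose $\psi$ is defined on the union of some triangles and $T'\in\calt_P$ is adjacent to an already treated $T$ along an edge $E$. Orient $E$ so that $T=T_E$ and $T'=T_{\overline E}$, and consider the sub-quadruples $(F_{i_E},F_{j_E},F_{k_E},F_{j_{\overline E}})$ and their primed versions, with sub-parameters $(\ttt_E,\ttt_{\overline E},\ttc_E,\ttc_{\overline E})$ and $(\ttt_E,\ttt_{\overline E},\ttc_E',\ttc_{\overline E}')$, where $\ttc_E\ttc_{\overline E}=\ttc_E'\ttc_{\overline E}'$.

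The key step is to find $g_{T'}\in\g$ with two properties:
\begin{itemize}
\item[(a)] $g_{T'}$ maps the triple $(F_{i_E},F_{k_E},F_{j_{\overline E}})$ to its primed counterpart;
\item[(b)] $g_{T'}$ agrees with $g_T$ on the common side $[p_{i_E},p_{k_E}]$.
\end{itemize}
The element $g_{T'}$ is given by (a) and rigidity, because $\ttt_{\overline E}$ is shared. For (b), both $g_T$ and $g_{T'}$ send the flags $F_{i_E},F_{k_E}$ to $F_{i_E}',F_{k_E}'$. Hence $h\coloneqq g_{T'}^{-1}g_T$ fixes $p_{i_E}$, $p_{k_E}$, $l_{i_E}$, $l_{k_E}$, and therefore also $p_{13}\coloneqq l_{i_E}\cap l_{k_E}$. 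Restricted to the line $p_{i_E}\oplus p_{k_E}$, $h$ is then a projective map fixing $p_{i_E}$ and $p_{k_E}$, i.e. a dilation, determined by where it sends one point such as $s_E$.

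Lemma \ref{lem: c.r. distinguished points quadruple} gives $C(l_{i_E},s_E,s_{\overline E},l_{k_E})=\ttc_E\ttc_{\overline E}$, and the same product holds in the primed picture. The points $s_E$ and $s_{\overline E}$ are built from the triples $(F_{i_E},F_{j_E},F_{k_E})$ and $(F_{i_E},F_{k_E},F_{j_{\overline E}})$ respectively, so $g_T(s_E)=s_E'$ and $g_{T'}(s_{\overline E})=s_{\overline E}'$. Combining these with projective invariance of cross ratios gives $C(l_{i_E}',s_E',g_{T'}(s_E),l_{k_E}')=\ttc_E\ttc_{\overline E}/\ttc_E'\ttc_{\overline E}'\cdot C(\dots)$ up to the matching, and this forces $g_{T'}(s_E)=s_E'=g_T(s_E)$. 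Here we use that a cross ratio with the other three entries fixed determines the fourth point. So $h$ fixes three points of the line and is the identity there, which proves (b). We then define $\psi\vert_{\Ti_{T'}}\coloneqq g_{T'}$.

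Since the dual graph is a tree, each triangle is reached exactly once and there are no compatibility cycles to check. This makes $\psi$ well defined and continuous. It is a bijection onto $(\Poli)'$, because the pieces $(\Ti_T)'$ tile $(\Poli)'$ by Lemma \ref{lem: nested polygons}.

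For the isometry claim, each $g_T$ maps $\Text_T$ to $(\Text_T)'$, so it preserves Hilbert norms. Hence $\ell_\T(\psi\circ\alpha)=\ell_\T(\alpha)$ for every piecewise smooth $\alpha$, and the same holds for $\psi^{-1}$. Lemma \ref{lem: sufficient condtion for lipshictz triangular} with $K=1$, applied in both directions, then shows that $\psi$ is an isometry.

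I expect the main obstacle to be the cross-ratio bookkeeping in (b): checking that only the product $\ttc_E\ttc_{\overline E}$, and not the individual edge invariants, controls the gluing along $[p_{i_E},p_{k_E}]$. That is exactly what the identity $C(l_{i_E},s_E,s_{\overline E},l_{k_E})=\ttc_E\ttc_{\overline E}$ from Lemma \ref{lem: c.r. distinguished points quadruple} supplies.
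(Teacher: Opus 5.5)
Your proposal is correct and follows essentially the argument the paper intends. The paper omits the details, citing exactly (\ref{eq: rigid configuration}) and Lemmas \ref{lem: t.r. with c.r.} and \ref{lem: c.r. distinguished points quadruple} in the spirit of Lemmas \ref{lem: gluing psii and psieq} and \ref{lem: gluning maps of triples}, and your construction matches this: one projective map per triangle, glued by matching a third point on each common side. The only thing to fix is the garbled identity in step (b), which should read $C(l'_{i_E},g_{T'}(s_E),s'_{\overline{E}},l'_{k_E})=C(l_{i_E},s_E,s_{\overline{E}},l_{k_E})=\ttc_E\ttc_{\overline{E}}=\ttc'_E\ttc'_{\overline{E}}=C(l'_{i_E},s'_E,s'_{\overline{E}},l'_{k_E})$; this gives $g_{T'}(s_E)=s'_E$ by injectivity of the cross ratio in its second slot.
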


Lemma \ref{lem: isometric triangular metrics} is a consequence of (\ref{eq: rigid configuration}) and Lemmas \ref{lem: t.r. with c.r.} and \ref{lem: c.r. distinguished points quadruple}.
We omit the details and refer the reader to Lemmas \ref{lem: gluing psii and psieq} and \ref{lem: gluning maps of triples} below where a closely related argument, that is easily adaptable to prove Lemma \ref{lem: isometric triangular metrics}, is done in full detail.

\section{Shifting tuples and estimates}\label{sec: estimates triangular}

We now prove an estimate (Proposition \ref{prop: additve estimate triangular distance} below) for the triangular distance of $n$--tuples satisfying a special condition that we call \textit{$\T$--shifting}.
This estimate is crucial for the proof of Theorem \ref{thm: degeneration triangular intro}.
Throughout this section we assume $n\geq 4$.

\subsection{Shifting tuples}\label{subsec: Tshifting tuples}

The notion of \textit{$\T$--shifting} parameters (and \textit{$\T$--shifting $n$--tuples} of flags) depends on our reference polygon $P$ and intuitively expresses the following.
Let $E\in\caleor_P$ be an oriented edge of the triangulation of $P$.
By (\ref{eq: subparameter}) this determines the sub--parameter $(\ttt_E, \ttt_{\overline{E}},\ttc_{E},\ttc_{\overline{E}})$ and the positive sub--quadruple $(F_{i_E},F_{j_E},F_{k_E},F_{j_{\overline{E}}})$.
Also, the projective line $p_{i_E}\oplus p_{k_E}$ contains the four distinguished points $r_E$, $s_E$, $r_{\overline{E}}$ and $s_{\overline{E}}$, recall (\ref{eq: distinguished points for E in caleor}). 
Depending on the specific $n$--tuple, one may have different relative positions between these four points. 
The $\T$--shifting condition forbids some specific relative positions for all oriented edges $E$ simultaneously in the sense that it implies that, as we travel along $[p_{i_E},p_{k_E}]$, starting from $p_{i_E}$ and heading towards $p_{k_E}$, we first see the points $r_{\overline{E}}$ and $s_{\overline{E}}$ (appearing in some order), and then the points $r_E$ and $s_E$ (appearing possibly in a different order), see Figure \ref{fig: Tshifting}.
Observe that this condition depends only on the support of $E$ and not on its specific orientation, but it is easier to express if we fix an auxiliary orientation for the underlying edge. 
Note as well that the above condition is stronger than just saying that the distinguished points $\{r_E,s_E\}$ and $\{r_{\overline{E}},s_{\overline{E}}\}$ are ``unlinked", as we require the same relative position between the sets $\{r_E,s_E\}$ and $\{r_{\overline{E}},s_{\overline{E}}\}$ for all edges $E\in\caleor_P$: we always require the first set appears after the second one as one travels following the orientation of $E$.

Let us be more precise.
We say that a positive $n$--tuple $(F_1,\dots,F_n)\in\flags_n^+$ (or its Fock-Goncharov-like parameter) is \textit{$\T$--shifting} if for every $E\in\caleor_P$ the \textit{shear invariant}  
\begin{equation}\label{eq:shear_invariant}\tts_E\coloneqq\min\left\{\left(\ttc_E\ttc_{\overline{E}}\right)^{-1},\tn{ } \left(\ttt_E\ttc_{E}\ttc_{\overline{E}}\right)^{-1},\tn{ } \left(\ttt_{\overline{E}}\ttc_{E}\ttc_{\overline{E}}\right)^{-1},\tn{ } \left(\ttt_E\ttt_{\overline{E}}\ttc_{E}\ttc_{\overline{E}}\right)^{-1}\right\}
\end{equation}
\noindent satisfies $\tts_E > 1$.
Note that thanks to (\ref{eq: subparameter of overline E}) we have $\tts_E=\tts_{\overline{E}}$.
Moreover, from Lemma \ref{lem: c.r. distinguished points quadruple} we have 
\begin{equation}\label{eq: shear quadruple}
\tts_E=\displaystyle\min_{\sigma,\sigma'=r,s}C(l_{i_E},\sigma'_{\overline{E}},\sigma_E,l_{k_E})=\left(\displaystyle\max_{\sigma,\sigma'=r,s}C(l_{i_E},\sigma_E,\sigma'_{\overline{E}},l_{k_E})\right)^{-1},
\end{equation} \noindent see Figure \ref{fig: Tshifting}.
\begin{figure}[htbp] 
\scalebox{1}{%
\begin{overpic}[scale=1, width=0.65\textwidth, tics=2]{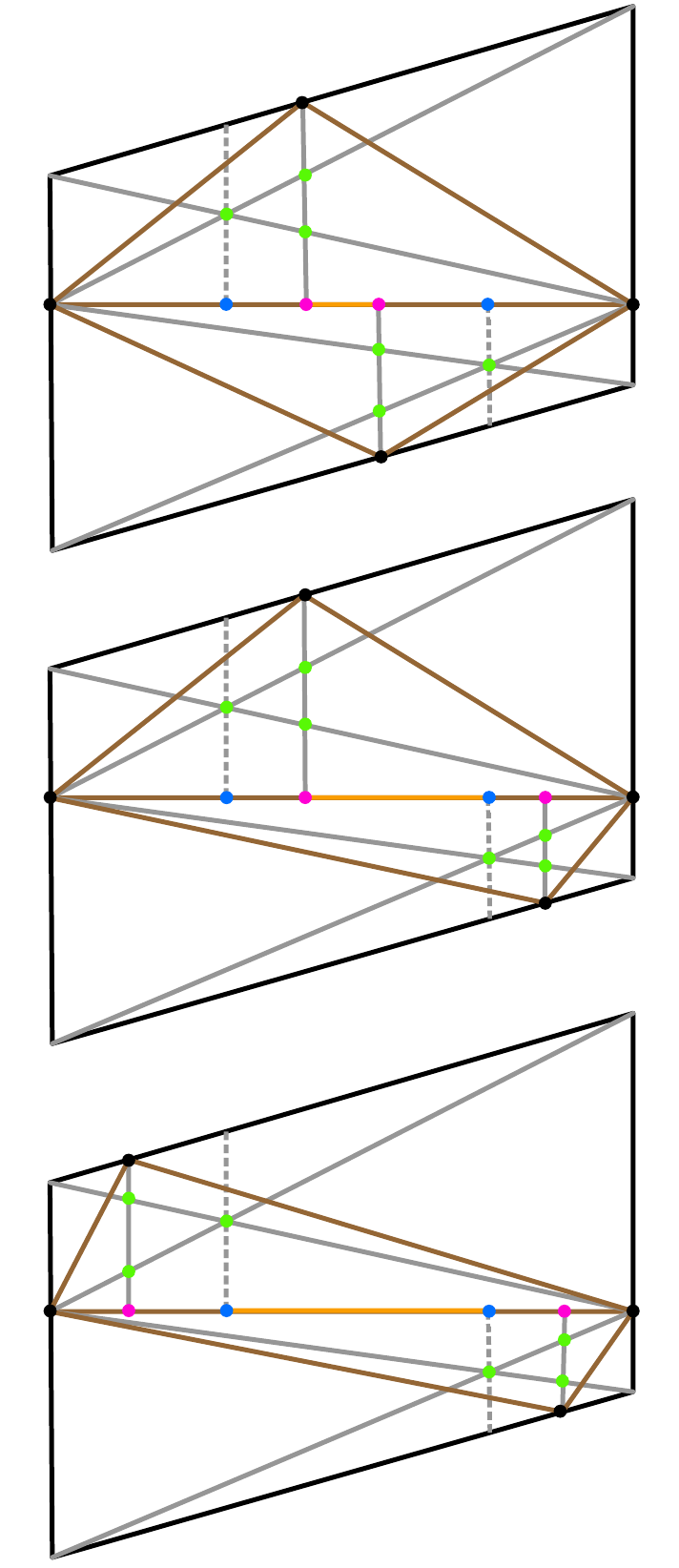}

    \put (0,16)  {$p_{i_E}$}
	\put (-1,49)  { $p_{i_E}$}
    \put (-1,80.5)  { $p_{i_E}$}

    \put (0,10)  { $l_{i_E}$}
	\put (0,42)  { $l_{i_E}$}
    \put (0,73)  { $l_{i_E}$}
   
    \put (41,16)  { $p_{k_E}$}
	\put (41,49)  { $p_{k_E}$}
    \put (41,80)  { $p_{k_E}$}

    \put (41,25)  { $l_{k_E}$}
	\put (41,57)  { $l_{k_E}$}
    \put (41,88)  { $l_{k_E}$}

	\put (34.5,8.5)  { $p_{j_E}$}
	\put (33.5,41)  { $p_{j_E}$}
    \put (23,69)  { $p_{j_E}$}

    \put (20,4)  { $l_{j_E}$}
	\put (20,36.5)  { $l_{j_E}$}
    \put (15,66.5)  { $l_{j_E}$}

	\put (6.5,27.5)  { $p_{j_{\overline{E}}}$}
	\put (17.5,63.5)  { $p_{j_{\overline{E}}}$}
    \put (17.5,95)  { $p_{j_{\overline{E}}}$}

    \put (28,33.5)  { $l_{j_{\overline{E}}}$}
	\put (28,66.5)  { $l_{j_{\overline{E}}}$}
    \put (28,98)  { $l_{j_{\overline{E}}}$}

    \put (30.5,81.5)  {\tc{bluepic}{$r_E$}}
    \put (12,81.5)  {\tc{bluepic}{$r_{\overline{E}}$}}

    \put (27.5,49.7)  {\tc{bluepic}{ $r_E=q_E$}}
    \put (12,50)  {\tc{bluepic}{$r_{\overline{E}}$}}

    \put (28.2,16.9)  {\tc{bluepic}{ $r_E=q_E$}}
    \put (13.8,15.15)  {\tc{bluepic}{$r_{\overline{E}}=q_{\overline{E}}$}}

    \put (23,81.4)  {\tc{magentapic}{ $s_E=q_E$}}
    \put (16.3,79.35)  {\tc{magentapic}{ $s_{\overline{E}}=q_{\overline{E}}$}}

    \put (35,48)  {\tc{magentapic}{$s_E$}}
    \put (16.4,47.8)  {\tc{magentapic}{$s_{\overline{E}}=q_{\overline{E}}$}}

    \put (33.5,15)  {\tc{magentapic}{ $s_E$}}
    \put (8.5,17.5)  {\tc{magentapic}{ $s_{\overline{E}}$}}


    \put (24,86)  {\tc{brownpic}{ $\Ti_{T_{\overline{E}}}$}}
    \put (16,76)  {\tc{brownpic}{$\Ti_{T_E}$}}

    \put (24,55)  {\tc{brownpic}{$\Ti_{T_{\overline{E}}}$}}
    \put (23.5,47)  {\tc{brownpic}{$\Ti_{T_E}$}}

    \put (18,18)  {\tc{brownpic}{$\Ti_{T_{\overline{E}}}$}}
    \put (24,14)  {\tc{brownpic}{$\Ti_{T_E}$}}
    
   \end{overpic}}	
   \caption{The $\T$--shifting condition along $E\in\caleor_P$. 
   From top to bottom, the cases $\ttt_E,\ttt_{\overline{E}}\leq 1$, $\ttt_{\overline{E}}\leq 1\leq \ttt_E$, and $1\leq \ttt_E,\ttt_{\overline{E}}$.
   The orange segments have  triangular length equal to $-\log(\ttc_E\ttc_{\overline{E}})$, $-\log (\ttt_E\ttc_E\ttc_{\overline{E}})$ and $-\log (\ttt_E\ttt_{\overline{E}}\ttc_E\ttc_{\overline{E}})$ respectively.}
   \label{fig: Tshifting}
\end{figure}
In this case we let $q_E\in\{r_E,s_E\}$ and $q_{\overline{E}}\in\{r_{\overline{E}},s_{\overline{E}}\}$ be the points realizing the $\min$ above, so that \begin{equation}\label{eq: triangular distance qE and overlineqE}
    d_{\T}(q_E,q_{\overline{E}})=\log(\tts_E)=\log C(l_{i_E},q_{\overline{E}},q_E,l_{k_E}) > 0.
\end{equation}

The $\T$--shifting condition is inspired by Parreau's \cite{ParreauInv} \textit{left-shifting} condition, see Subsection \ref{subsec: parreau} for detailed comparisons. 
We emphasize however that, in contrast to Parreau's notion, our $\T$--shifting condition does not depend on the full information of the $n$--tuple. 
More precisely, it is a property about the triangle invariants $\ttt_E$ and $ \ttt_{\overline{E}}$ and the product of the edge invariants (it does not involve the individual values $\ttc_E$ and $\ttc_{\overline{E}}$).
This gives us more flexibility to build, from a starting $\T$--shifting $n$--tuple, a large dimensional family of $\T$--shifting $n$--tuples. 
For example, we have the following.

\begin{eg}\label{ex: stretching Tshifting parameters}
Let $(F_1,\dots,F_n)$ and $(F_1',\dots, F_n')$ be positive $n$--tuples with respective parameters $(\ttt,\ttc)\in(\rr_{>0})^{\calt_P}\times(\rr_{<0})^{\caleor_P}$ and $(\ttt',\ttc')\in(\rr_{>0})^{\calt_P}\times(\rr_{<0})^{\caleor_P}$.
Fix some $K>0$ and suppose that for every $E\in\caleor_P$ one has $$\ttt'_E=\left(\ttt_E\right)^K \tn{ and } \ttc'_E\ttc'_{\overline{E}}=\left(\ttc_E\ttc_{\overline{E}} \right)^K.$$ \noindent Then $(F_1,\dots,F_n)$ is $\T$--shifting if and only if $(F_1',\dots, F_n')$ is.

\end{eg}

\subsection{Estimates in terms of the invariants}\label{subsec: estimate triangular}

We now show that for a $\T$--shifting $n$--tuple $(F_1,\dots,F_n)$ the triangular distance between distinguished points on $\Poli=\Poli_{(F_1\dots,F_n)}$ can be explicitly computed out of the triangle and shear invariants, up to an additive universal constant (Proposition \ref{prop: additve estimate triangular distance}).

To state the proposition precisely we need to fix some terminology.
Assume that $n\geq 5$ (we will also cover the case $n=4$ separately, see Remark \ref{rem: estimate n=4} below).
A sequence $E_1,E_2,\dots,E_M\in\cale_P$, with $2\leq M\leq n-3$, is called \textit{admissible} if \begin{itemize}
    \item the unoriented edges $E_1,\dots,E_M$ are pairwise different, 
    \item for each $m=1,\dots, M-1$ the edges $E_m$ and $E_{m+1}$ are sides of a triangle $T_m\in\calt_P$, and 
    \item the triangles $T_1,\dots, T_{M-1}$ are pairwise different.
\end{itemize}\noindent See Figure \ref{fig: additve estimate triangular distance}.
\begin{figure}[htbp] 
\scalebox{1}{%
\begin{overpic}[scale=1, width=0.65\textwidth, tics=2]{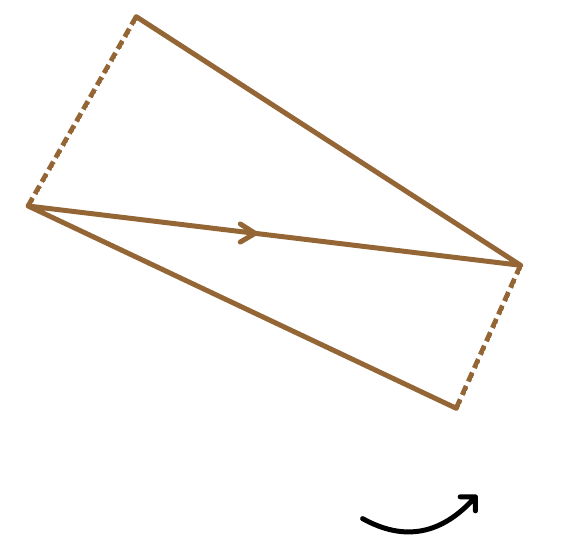}

    \put (30,70)  {\tc{brownpic}{$T_{m}$}}

    \put (60,40)  {\tc{brownpic}{$T_{m-1}$}}

    \put (30,40)  {\tc{brownpic}{$E_{m-1}$}}
    \put (46,80)  {\tc{brownpic}{$E_{m+1}$}}
    \put (46,56)  {\tc{brownpic}{$E_{m}$}}
    
   \end{overpic}}	
   \caption{The local picture of an admissible sequence $E_1,\dots,E_M$. 
   Each $m=1,\dots,M-1$ determines a triangle $T_m\in \calt_P$.
   The triangles $T_{m-1}$ and $T_{m}$ have common side $E_{m}$ but are different.
   The edge $E_{m}$ is naturally oriented (the curved arrow indicates the cyclic order on vertices of $P$).}
   \label{fig: additve estimate triangular distance}
\end{figure}
We may also consider the unique triangle $T_0\in\calt_P$ (resp. $T_M\in\calt_P$) which is different from $T_1$ (resp. $T_{M-1}$) and has edge $E_1$ (resp. $E_M$).
We note that for $m=1,\dots,M$ the edge $E_m$ has a natural orientation: it is the unique one so that $T_{m-1}=T_{E_m}$ and $T_m=T_{\overline{E}_m}$, c.f. Subsection \ref{subsec: positive tuples}.
We fix these orientations throughout.

Fix a $\T$--shifting $n$--tuple $(F_1,\dots,F_n)$ and an admissible sequence $E_1,\dots,E_M$.
For $m=1,\dots,M$ we let $\ttt_m>0$ be the triangle invariant (recall \eqref{eq:shear_invariant}) associated to $T_m$ and $\tts_m > 1$ be the shear invariant associated to $E_m$.
The orientation of $E_1$ and $E_M$ allows us to select the distinguished  points $q_{E_1}$ and $q_{\overline{E}_M}$ as in (\ref{eq: triangular distance qE and overlineqE}).
The reminder of this subsection is dedicated to the proof of the following.

\begin{prop}\label{prop: additve estimate triangular distance}
    There exists a constant $D>0$ so that for every $n\geq 5$ the following holds.
    Let $(F_1,\dots,F_n)$ be a $\T$--shifting $n$--tuple of flags and $d_{\T}(\cdot,\cdot)$ be the corresponding triangular distance on $\Poli=\Poli_{(F_1,\dots,F_n)}$.
    Then for every $2\leq M\leq n-3$ and every admissible sequence $E_1,\dots,E_M$ one has 
    $$\left\vert d_{\T}(q_{E_1},q_{\overline{E}_M}) - \displaystyle\sum_{m=1}^{M-1}\vert \log (\ttt_m)\vert - \displaystyle\sum_{m=1}^{M} \log (\tts_m) \right\vert <D\cdot M.$$ 
\end{prop}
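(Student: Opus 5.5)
The plan is to prove the upper and lower bounds separately. Both rest on the fact that, inside each triangle $\Ti_{T_m}$ with its Hilbert metric of $\Text_{T_m}$, everything is determined up to a universal constant by $\ttt_m$ alone. Indeed, by (\ref{eq: rigid configuration}) and Remark \ref{rem: Tijk intrinsic} the three pieces $\T_{i(i+1)(i+2)}$ are rigid, and $\Teq_{T_m}$ is equilateral of side $\vert\log\ttt_m\vert$ (Lemma \ref{lem: Teq with cross ratios}, Remark \ref{rem: Teq when t leq 1}).

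For the upper bound, I will build an explicit polygonal path from $q_{E_1}$ to $q_{\overline{E}_M}$. Its segments alternate between two kinds. The first kind are segments along $E_m$ joining $q_{\overline{E}_m}$ to $q_{E_m}$; by (\ref{eq: triangular distance qE and overlineqE}) each has length exactly $\log\tts_m$. The second kind are segments inside $\Ti_{T_m}$ joining the chosen point $q_{\overline{E}_m}\in E_m$ (a distinguished point $r$ or $s$ of the triple on the $T_m$ side) to the point $q_{E_{m+1}}\in E_{m+1}$. To bound the second kind, I will travel from $q_{\overline{E}_m}$ to the corresponding vertex $t$ of $\Teq_{T_m}$, then along $\Teq_{T_m}$ to the vertex $t'$ adjacent to $E_{m+1}$, then out to $q_{E_{m+1}}$. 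The outer legs $d_\T(r,t)$ and $d_\T(s,t)$ are universal constants by rigidity, as in the last step of Lemma \ref{lem: bound triangular pencil}, and the middle leg is at most $\vert\log\ttt_m\vert$. One bookkeeping point: the $T_m$-side distinguished points on $E_m$ and on $E_{m+1}$ come from triples with different orientations, so they are indexed by different vertices. I will handle this case by case according to the sign of $\log\ttt_m$, using Lemma \ref{lem: t.r. with c.r.}. Summing over $m$ gives the upper bound with error at most $D\cdot M$.

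The lower bound is the main obstacle. By Remark \ref{rem: minimizing triangular curves}, a near-minimizing curve can be taken polygonal, with one projective segment in each $\Ti_{T_m}$. Since the sequence is admissible, it must cross $E_2,\dots,E_{M-1}$ in order at points $x_2,\dots,x_{M-1}$; set $x_1=q_{E_1}$ and $x_M=q_{\overline{E}_M}$. The length then splits as $\sum_m d_{\Text_{T_m}}(x_m,x_{m+1})$. Each $x_m$ lies on $E_m$, which I parametrize by the cross ratio $C(l_{i_{E_m}},\cdot,\cdot,l_{k_{E_m}})$, a Hilbert coordinate valid from either side. I will then try to prove a local inequality of the form
$$d_{\Text_{T_m}}(x_m,x_{m+1})\geq \vert\log\ttt_m\vert+\phi_{m+1}(x_{m+1})-\psi_m(x_m)-D,$$
where $\phi_{m+1}$ and $\psi_m$ are signed coordinates measuring the displacement from $q_{E_{m+1}}$ and from $q_{\overline{E}_m}$ in the shifting direction. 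To get it, I will project onto the side of the triangle facing the vertex, using the Projection and Nesting Properties exactly as in Lemma \ref{lem: bound triangular pencil}. The key point is that, by the $\T$--shifting condition and Figure \ref{fig: Tshifting}, crossing $T_m$ from $E_m$ to $E_{m+1}$ always forces a passage across $\Teq_{T_m}$, up to a bounded error. The distance from the $\overline{E}_m$-side point to the $E_m$-side point along $E_m$ then contributes $\log\tts_m$, and telescoping the coordinates gives the lower bound.

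The delicate step is making the coordinates on both sides of $E_m$ compatible. The Hilbert norms of $\Text_{T_{m-1}}$ and $\Text_{T_m}$ agree along $E_m$, but the reference points differ, and it is exactly the shear $\log\tts_m$ that mediates between them. If the direct inequality fails, I will fall back on a case analysis over the four sign cases of $(\ttt_{m-1},\ttt_m)$ relative to $1$. Finally, I will check that all the constants come from finitely many rigid configurations, so that $D$ is independent of $n$.
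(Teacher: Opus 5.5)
Your upper bound is the paper's argument. (Only $d_{\T}(r,t)$ is a universal constant; the legs through $s$--points are merely bounded by Lemma \ref{lem: bound triangular pencil}, which suffices.)

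For the lower bound, telescoping a per-triangle inequality is a genuinely different organization from the paper. The paper instead inducts on $M$, splitting according to where a near-minimizer crosses $E_{M-1}$ relative to $q_{\overline{E}_{M-1}}$. As a reduction your scheme is sound once the signs are fixed. Put $u_m(x)\coloneqq\log C(l_{i_{E_m}},o_m,x,l_{k_{E_m}})$ on $E_m$, with $o_m\in E_m$ fixed. This is a signed Hilbert arclength on $E_m$ for both $\Text_{T_{m-1}}$ and $\Text_{T_m}$, since the line of $E_m$ leaves both through $p_{i_{E_m}}\in l_{i_{E_m}}$ and $p_{k_{E_m}}\in l_{k_{E_m}}$. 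So the compatibility issue you flag does not arise. What you need is
\[
d_{\T}(x_m,x_{m+1})\geq\vert\log\ttt_m\vert+\bigl(u_m(x_m)-u_m(q_{\overline{E}_m})\bigr)+\bigl(u_{m+1}(q_{E_{m+1}})-u_{m+1}(x_{m+1})\bigr)-D
\]
for all $x_m\in E_m$ and $x_{m+1}\in E_{m+1}$. Summing over $m$, the $x_m$ cancel and each $E_m$ contributes $u_m(q_{E_m})-u_m(q_{\overline{E}_m})=\log\tts_m$. So your displacements must be measured against the orientation of the edges; measuring both along it produces $-\log\tts_m$.

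The gap is that this per-triangle inequality is the entire content of the lower bound, and you only announce it. Your hint does not produce it. Projecting onto the side of $\Ti_{T_m}$ opposite the common vertex $v$ of $E_m$ and $E_{m+1}$ gives a coordinate that stays bounded near $v$. The right-hand side above, however, tends to $+\infty$ when one of $x_m,x_{m+1}$ tends to $v$ with the other fixed. Also, $\T$--shifting plays no role here: the inequality concerns a single triple and must hold for arbitrary crossing points.

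The missing ingredient is Lemma \ref{lem: additve estimate triangular distance refined case M2}. Use its normalization: $v=p_1$ and $\ttt_1\geq1$, so $q_{\overline{E}_1}=r_2$ and $q_{E_2}=r_3$. For $x_1\in[r_2,p_1]$ and $x_2\in[r_3,p_2]$, the segment $[x_1,x_2]$ crosses $[r_3,t_3]$, and $[x_1,t_3]$ crosses $[r_2,t_2]$. These are Claims \ref{cla: additve estimate triangular distance refined case M2} and \ref{cla: additve estimate triangular distance refined case M2 bis}, obtained from Remark \ref{rem: intersections triangle}. These ortho-geodesic segments have length at most $D$, and $d_{\T}(t_2,t_3)=\log\ttt_1$. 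Cutting at the crossing points therefore gives
\[
d_{\T}(x_1,x_2)\geq d_{\T}(x_1,r_2)+\log\ttt_1+d_{\T}(r_3,x_2)-4D.
\]
Your signed version for arbitrary crossing points then follows from the triangle inequality. Indeed, for $x_m$ on the wrong side of $q_{\overline{E}_m}$, the right-hand side equals its value at $q_{\overline{E}_m}$ minus $d_{\T}(x_m,q_{\overline{E}_m})$, and likewise for $x_{m+1}$.

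Alternatively, your projection idea works if you project onto $E_2$ itself rather than the opposite side.
\begin{itemize}
\item The function $x\mapsto\log C(l_2,r_3,x,l_1)$ is $1$--Lipschitz for $d_{\Text}$. This is because $d_{\Text}(x,y)\geq\log C(l_a,x,y,l_b)$ for any two sides $l_a\neq l_b$, by the Nesting Property.
\item It equals $u_2$ on $E_2$ (taking $o_2=r_3$).
\item On $E_1$ it exceeds $u_1+\log\ttt_1-D$ (taking $o_1=r_2$). Over $p_1\oplus p_3$, a covector of $l_2$ is a positive combination of covectors of $l_1$ and $l_3$, all normalized positive on $\Text$. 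The constant is then evaluated with (\ref{eq: rigid configuration}) and Lemma \ref{lem: t.r. with c.r.}.
\end{itemize}

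With either fix, your telescoping closes with error $D(M-1)$ and needs no induction. $\T$--shifting then enters only in the upper bound, to write $d_{\T}(q_{E_m},q_{\overline{E}_m})=\vert\log\tts_m\vert$ as $\log\tts_m$.
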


\begin{rem}\label{rem: estimate n=4}

We remark that we could have allowed the case $M=1$ (and also $n=4$) in the definition of an admissible sequence, and an analogue of Proposition \ref{prop: additve estimate triangular distance} holds for that case but no triangle invariants nor error terms are involved (the conclusion is precisely given by (\ref{eq: triangular distance qE and overlineqE})).
We have decided not to include this degenerate case in the statement of Proposition \ref{prop: additve estimate triangular distance} so that we get a cleaner conclusion.
\end{rem}

The proof of Proposition \ref{prop: additve estimate triangular distance} goes by induction on $M\geq 2$.
We now state a slightly more general form of this result which is convenient for implementing that proof (recall the notations from Subsection \ref{subsec: positive tuples} and (\ref{eq: triangular distance qE and overlineqE})).
We refer to Figure \ref{fig: additve estimate triangular distance case M2} for a picture in the case $M=2$.

\begin{prop}\label{prop: additve estimate triangular distance refined}
    There exists a constant $D>0$ so that for every $n\geq 5$ the following holds.
    Let $(F_1,\dots,F_n)$ be a $\T$--shifting $n$--tuple of flags and $d_{\T}(\cdot,\cdot)$ be the corresponding triangular distance on $\Poli=\Poli_{(F_1,\dots,F_n)}$.
    Let $2\leq M\leq n-3$ and fix some admissible sequence $E_1,\dots,E_M\in\cale_P$.
    Let $q_1\in p_{i_{E_1}}\oplus p_{k_{E_1}}$ and $\overline{q}_M\in p_{i_{E_M}}\oplus p_{k_{E_M}}$ be arbitrary points so that 
    \begin{equation}\label{eq: additve estimate triangular distance refined}
        C(l_{i_{E_1}},q_{\overline{E}_1},q_1,l_{k_{E_1}})\geq 1 \tn{ and } C(l_{i_{E_M}},\overline{q}_M,q_{E_M},l_{k_{E_M}})\geq 1.
    \end{equation}
    \noindent Moreover, write $\overline{q}_1\coloneqq q_{\overline{E}_1}$, $ q_M\coloneqq q_{E_M}$ and for $m=2,\dots,M-1$ let $$q_m\coloneqq q_{E_m} \tn{ and } \overline{q}_m\coloneqq q_{\overline{E}_m}.$$
    \noindent Then one has $$ \left\vert d_{\T}(q_1,\overline{q}_M) -\displaystyle\sum_{m=1}^{M-1}\vert \log (\ttt_m)\vert - \displaystyle\sum_{m=1}^{M} d_{\T}(q_m,\overline{q}_m) \right\vert < D\cdot M.$$

\end{prop}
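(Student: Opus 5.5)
We will argue by induction on $M\geq 2$. The upper bound is the easy half: concatenating projective segments $[q_1,\overline{q}_1]\subset E_1$, then a path inside $\Ti_{T_1}$ from $\overline{q}_1$ to $q_2$, then $[q_2,\overline{q}_2]\subset E_2$, and so on, gives
$$d_{\T}(q_1,\overline{q}_M)\leq d_{\T}(q_1,\overline{q}_1)+\sum_{m=1}^{M-1}d_{\T}(\overline{q}_m,q_{m+1})+\sum_{m=2}^{M}d_{\T}(q_m,\overline{q}_m).$$
By (\ref{eq: additve estimate triangular distance refined}) and the Cocycle Property, $d_{\T}(q_1,\overline{q}_1)$ is the corresponding term of the sum, up to how $q_1$ is placed. (Here we note that the statement's sum uses $q_1$ itself, so the first term matches exactly.) It therefore suffices to prove, for each $m$,
$$\bigl|d_{\T}(\overline{q}_m,q_{m+1})-|\log \ttt_m|\bigr|\leq D.$$
Both points lie on two distinct sides of the single ideal triangle $\Ti_{T_m}$, so by Lemma \ref{lem: triangular distance separates and extends} this is a Hilbert distance in $\Text_{T_m}$. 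Each of $\overline{q}_m,q_{m+1}$ is one of the distinguished points $r,s$ of the triple defining $T_m$. By (\ref{eq: rigid configuration}) and Lemma \ref{lem: t.r. with c.r.} the configuration depends only on $\ttt_m$. The distance from $r_i$ or $s_i$ to the nearest vertex $t_i$ of $\Teq$ is bounded (it is a universal constant, as in the proof of Lemma \ref{lem: bound triangular pencil}), and $\Teq$ is equilateral of side $|\log\ttt_m|$ by Lemma \ref{lem: Teq with cross ratios}. The triangle inequality then gives the two-sided estimate with a universal $D$.

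The lower bound is the main obstacle. We will use Remark \ref{rem: minimizing triangular curves}: a near-minimizing curve from $q_1$ to $\overline{q}_M$ can be taken to be polygonal, one projective segment per triangle, and it must cross each edge $E_2,\dots,E_{M-1}$ (admissibility ensures the triangles $T_m$ separate the polygon). Let $x_m\in E_m$ be the crossing points. Then
$$d_{\T}(q_1,\overline{q}_M)\geq\sum_m d_{\Text_{T_m}}(x_m,x_{m+1}).$$
We need to show that each piece, together with the displacement of $x_m$ along $E_m$, costs at least $|\log\ttt_m|+\log\tts_m-D$. For this we introduce the $1$-Lipschitz ``level'' function $f_m(x)=\log C(l_{i_{E_m}},q_{\overline{E}_m},\pi(x),l_{k_{E_m}})$, where $\pi$ is the projection from $p_{13}$-type points as in the proof of Lemma \ref{lem: bound triangular pencil}. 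Nesting gives that projecting from the vertex $l_i\cap l_k$ onto $E_m$ does not increase Hilbert distance, and by the Projection Property it turns crossing positions into cross-ratio coordinates along $E_m$.

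The $\T$-shifting condition (\ref{eq: shear quadruple}) guarantees that, in these coordinates, going from side $E_m$ to side $E_{m+1}$ inside $\Ti_{T_m}$ forces a jump of at least $|\log\ttt_m|-D$ past $\Teq$, with the shifts all in the same direction, so no cancellation occurs. We expect to prove this via Lemma \ref{lem: bound triangular pencil}: paths hugging a vertex gain nothing. Summing the contributions with the Cocycle Property over the edges telescopes to the desired lower bound. The induction on $M$ is organized by peeling off $T_{M-1}$, with the hypothesis (\ref{eq: additve estimate triangular distance refined}) allowing arbitrary endpoints. The delicate step is making the no-backtracking argument uniform, which is where most of the work will go.
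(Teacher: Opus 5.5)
\textbf{There is a genuine gap: the lower bound, which is the whole content of the proposition, is not proved.}

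Your upper bound is correct and is the paper's argument. Only $d_{\T}(\overline{q}_m,q_{m+1})\le|\log\ttt_m|+2D$ is needed there. The reverse inequality you also assert does not follow from the triangle inequality alone. It needs $\overline{q}_m$ and $q_{m+1}$ to be close to \emph{different} vertices of $\Teq$, and that depends on which of $r,s$ realizes the minimum in (\ref{eq: shear quadruple}). For instance, when $\ttt\ge 1$ the points $r_2$ and $s_3$ lie on different sides but are both close to $t_2$.

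\emph{The single-triangle estimate.} This is the paper's Lemma~\ref{lem: additve estimate triangular distance refined case M2}, and you only announce it. The way you phrase it cannot be right. No bound $d_{\Text_{T_m}}(x_m,x_{m+1})\ge|\log\ttt_m|-D$, uniform in $\ttt_m$, holds for arbitrary crossing points. Points of $E_m$ and $E_{m+1}$ near their common ideal vertex are at arbitrarily small distance, because that vertex lies in the interior of a side of $\Text_{T_m}$. A usable per-triangle estimate must therefore carry either one-sided hypotheses on the endpoints, or signed positional terms that cancel from one triangle to the next. Your sketch specifies neither.

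\emph{Backtracking.} You explicitly leave open what to do when a crossing point $x_m$ lies outside $[\overline{q}_m,q_m]$. ``Peeling off $T_{M-1}$'' does not settle this by itself. The crossing point $q\in E_{M-1}$ need not satisfy (\ref{eq: additve estimate triangular distance refined}) for the shorter sequence $E_1,\dots,E_{M-1}$.

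\emph{How the paper closes the gap.} It first treats $M=2$ under the one-sided hypotheses (\ref{eq: additve estimate triangular distance refined case M2}). For $\ttt_1\ge1$ it shows $\overline{q}_1=r_2$ and $q_2=r_3$. Claims~\ref{cla: additve estimate triangular distance refined case M2} and~\ref{cla: additve estimate triangular distance refined case M2 bis} then show that $[q_1,\overline{q}_2]$ meets $[r_3,t_3]$ and that $[q_1,t_3]$ meets $[r_2,t_2]$. Since segments are $d_{\T}$-geodesics, Lemmas~\ref{lem: bound triangular pencil} and~\ref{lem: Teq with cross ratios} split $d_{\T}(q_1,\overline{q}_2)$ as $d_{\T}(q_1,r_2)+\log\ttt_1+d_{\T}(r_3,\overline{q}_2)$ up to an error $4D$. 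In the induction it distinguishes whether $C(l_{i_{E_{M-1}}},q,\overline{q}_{M-1},l_{k_{E_{M-1}}})\ge1$ or not. It then uses the triangle inequality to reroute through $q_{M-1}$ in the first case and through $\overline{q}_{M-1}$ in the second. This way the induction hypothesis and the case $M=2$ are always applied to admissible endpoints.

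\emph{Completing your level-function idea.} It can work, but only in a signed form valid for all positions. Set $f_m(x)\coloneqq\log C(l_{i_{E_m}},\overline{q}_m,x,l_{k_{E_m}})$ on $E_m$. You would need to show, for all $x\in E_m$ and $y\in E_{m+1}$,
$d_{\Text_{T_m}}(x,y)\ge|\log\ttt_m|+f_m(x)+\log\tts_{m+1}-f_{m+1}(y)-D$.
A natural way is to evaluate at both $x$ and $y$ a single $1$-Lipschitz pencil coordinate of $\Text_{T_m}$. Center it at $l_{i_{E_{m+1}}}\cap l_{k_{E_{m+1}}}$ when $E_m$ and $E_{m+1}$ share their terminal vertex, and at $l_{i_{E_m}}\cap l_{k_{E_m}}$ when they share their initial vertex. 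A check in the normalized coordinates of a positive triple, in these configurations, suggests this works. The terms $f_m(x_m)$ would then telescope away and no case analysis would be needed. As it stands, neither argument is in your proposal.
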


Observe that Proposition \ref{prop: additve estimate triangular distance} follows from Proposition \ref{prop: additve estimate triangular distance refined} and (\ref{eq: triangular distance qE and overlineqE}) by taking $q_1=q_{E_1}$ and $\overline{q}_M=q_{\overline{E}_M}$ (which by the $\T$--shifting condition satisfy (\ref{eq: additve estimate triangular distance refined})).

\begin{proof}[Proof of Proposition \ref{prop: additve estimate triangular distance refined}]

On the one hand, by the triangle inequality we have $$d_{\T}(q_1,\overline{q}_M)\leq \displaystyle\sum_{i=1}^{M}d_{\T}(q_m,\overline{q}_m)+ \displaystyle\sum_{i=1}^{M-1}d_{\T}(\overline{q}_m,q_{m+1}).$$
\noindent Let $D$ be the constant provided by Lemma \ref{lem: bound triangular pencil}.
By Lemma \ref{lem: Teq with cross ratios} we have $$d_{\T}(\overline{q}_m,q_{m+1})\leq \vert\log(\ttt_m)\vert+2D$$
\noindent for all $m=1,\dots,M-1$.
Thus, the upper bound in the statement holds without the $\T$--shifting assumption. 

We now turn to the complementary lower bound, which goes by induction.
The key step is the case $M=2$, which we isolate for convenience (c.f. Figure \ref{fig: additve estimate triangular distance case M2}).

\begin{lem}\label{lem: additve estimate triangular distance refined case M2}
    There exists a constant $D>0$ so that for every $n\geq 5$ the following holds.
    Let $(F_1,\dots,F_n)$ be a $\T$--shifting $n$--tuple of flags and $d_{\T}(\cdot,\cdot)$ be the corresponding triangular distance on $\Poli=\Poli_{(F_1,\dots,F_n)}$.
    Fix some admissible sequence $E_1,E_2\in\cale_P$ and let $q_1\in p_{i_{E_1}}\oplus p_{k_{E_1}}$ and $\overline{q}_2\in p_{i_{E_2}}\oplus p_{k_{E_2}}$ be arbitrary points so that \begin{equation}\label{eq: additve estimate triangular distance refined case M2}
        C(l_{i_{E_1}},q_{\overline{E}_1},q_1,l_{k_{E_1}})\geq 1 \tn{ and } C(l_{i_{E_2}},\overline{q}_2,q_{E_2},l_{k_{E_2}})\geq 1.
    \end{equation}
    \noindent Moreover, write $\overline{q}_1\coloneqq q_{\overline{E}_1}$ and $ q_2\coloneqq q_{E_2}$.
    \noindent Then one has $$ d_{\T}(q_1,\overline{q}_2) -\vert \log (\ttt_1)\vert - d_{\T}(q_1,\overline{q}_1)- d_{\T}(q_2,\overline{q}_2)> -D.$$ 
\end{lem}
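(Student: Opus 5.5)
By Remark \ref{rem: minimizing triangular curves}, it suffices to bound from below the length of polygonal curves $\alpha$ joining $q_1$ to $\overline{q}_2$ inside the ideal polygon $\Ti_{T_0}\cup\Ti_{T_1}\cup\Ti_{T_2}$, where each piece of $\alpha$ is a projective segment in one triangle. Any such path must cross the edges $E_1$ and $E_2$, since $q_1\in E_1$ and $\overline{q}_2\in E_2$. We may therefore decompose $\alpha$ into three parts: a part along or near $E_1$, a part inside $\Ti_{T_1}$ running from a point $x_1\in E_1$ to a point $x_2\in E_2$, and a part near $E_2$. The plan is to show $\ell_\T(\alpha)\geq d_\T(q_1,\overline{q}_1)+|\log \ttt_1|+d_\T(q_2,\overline{q}_2)-D$ by working with ``coordinates'' on the edges rather than with the path itself.

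The first step is to define, for a point $x\in E_m$, a coordinate $u_m(x)\coloneqq\log C(l_{i_{E_m}},\overline{q}_m,x,l_{k_{E_m}})$. Along the edge $E_m$, the Hilbert metrics of both adjacent $\Text$ agree, and $|u_m(x)-u_m(y)|=d_\T(x,y)$.

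The second step is a Lipschitz-projection claim. For each triangle $T$ adjacent to $E_m$, the map $x\mapsto u_m(x)$ extended to $\Ti_T$ via the pencil of lines through $p_{i_Ek_E}=l_{i_E}\cap l_{k_E}$ (the Projection Property) is $1$-Lipschitz for $d_{\Text_T}$. This holds because, for the triangle $\Text_T$, the pencil through the vertex $l_i\cap l_k$ gives a Busemann-type function $\log C(l_i,\cdot,\cdot,l_k)$, and the cross ratio with the two sides $l_i,l_k$ of $\Text_T$ is dominated by the Hilbert distance by the Nesting Property (Lemma \ref{lem: properties c.r.} and Remark \ref{rem: nested finsler norms}). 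This yields the following lower bounds:
\[
\ell(\text{part of }\alpha\text{ before }x_1)\geq u_1(x_1)-u_1(q_1),\qquad \ell(\text{part after }x_2)\geq u_2(\overline{q}_2)-u_2(x_2),
\]
up to signs, using hypothesis \eqref{eq: additve estimate triangular distance refined case M2}.

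The third step handles the middle segment $[x_1,x_2]\subset\Ti_{T_1}$. We use the decomposition \eqref{eq: Tid decompositon} of $\Ti_{T_1}$, the description of $\Teq$ from Lemma \ref{lem: Teq with cross ratios}, and the rigidity of the pieces $\T_{i(i+1)(i+2)}$ (Remark \ref{rem: Tijk intrinsic}). The aim is to show
\[
d_{\Text_{T_1}}(x_1,x_2)\geq |\log\ttt_1| + (u_1(\overline{q}_1... q_1 \text{ side}) \text{ excess}) -D.
\]
More precisely, the target is $d(x_1,x_2)\geq |\log \ttt_1|+\big(u_1(q_1)\!-\!u_1(x_1)\big)^+ +\big(u_2(x_2)\!-\!u_2(\overline{q}_2)\big)^+ - D$, with the relevant reference points being $\overline{q}_1,q_2$ located at $r$ or $s$ points of $T_1$. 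Concretely, we measure the distance with the Busemann function $\log C(l_j,t_{j+1},\cdot,l_{j+1})$ of the pencil through the vertex of $\Text_{T_1}$ opposite to the common vertex of $E_1$ and $E_2$. Between the two edges this function varies by at least $|\log\ttt_1|$, by Lemma \ref{lem: t.r. with c.r.}, up to the bounded error controlled by Lemma \ref{lem: bound triangular pencil}. Here the $\T$--shifting condition is what ensures that the directions of travel agree, so the terms add rather than cancel. We then sum the three bounds.

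The main obstacle is this third step. The difficulty is to check that the displacements along $E_1$ and $E_2$ and the crossing cost $|\log\ttt_1|$ are measured by a single $1$-Lipschitz function, or at least by a maximum of two such functions, so that they add. I would handle this by the case analysis $\ttt_1\geq1$ versus $\ttt_1\leq1$, as in Figure \ref{fig: Tshifting}, using in each case the Busemann function at the vertex $p_{i_{E_1}}$-side of $\Text_{T_1}$ together with the $\T$--shifting ordering of $r,s$. The constant $D$ comes from the universal bound of Lemma \ref{lem: bound triangular pencil} applied twice.
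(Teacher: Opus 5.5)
There is a genuine gap. All the content of the lemma sits in your third step, which is misstated and not carried out.

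\emph{The reduction makes your third step the whole lemma, and your target is too weak.} Since $q_1$ and $\overline{q}_2$ both lie in $\Ti_{T_1}$, Lemma~\ref{lem: triangular distance separates and extends} gives $d_{\T}(q_1,\overline{q}_2)=d_{\Text_{T_1}}(q_1,\overline{q}_2)$. So you may take $\alpha=[q_1,\overline{q}_2]$. Then $x_1=q_1$ and $x_2=\overline{q}_2$, your outer pieces are empty, and your second step contributes nothing. The third step must therefore prove the full inequality $d_{\Text_{T_1}}(q_1,\overline{q}_2)\geq d_{\T}(q_1,\overline{q}_1)+\vert\log\ttt_1\vert+d_{\T}(q_2,\overline{q}_2)-D$, which is the lemma itself. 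Your stated target reduces in this case to $d_{\T}(q_1,\overline{q}_2)\geq\vert\log\ttt_1\vert-D$, because both ``excess'' terms vanish. For general $x_1$, the term $(u_1(q_1)-u_1(x_1))^+$ repeats the outer bound $\vert u_1(x_1)-u_1(q_1)\vert$ instead of supplying the missing $u_1(x_1)$.

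\emph{The function you propose cannot produce the edge terms.} Use the labelling of the paper's proof: $k_{E_1}=k_{E_2}$, $F_1=F_{k_{E_1}}$, $F_2=F_{j_{\overline{E}_1}}$, $F_3=F_{i_{E_1}}$. Then $E_1\subset p_3\oplus p_1$, $E_2\subset p_2\oplus p_1$, and the common vertex is $p_1$. The vertex of $\Text_{T_1}$ opposite $p_1$ (off the side $l_1\ni p_1$) is $p_{23}$. The function $x\mapsto\log C(l_2,t_1,x,l_3)$ stays bounded on $E_1$ near $p_1$, because $p_1\notin l_2\cup l_3$. But $q_1$ may tend to $p_1$, and then $d_{\T}(q_1,\overline{q}_1)\to\infty$. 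Likewise the pencil at $p_{13}$ is bounded on $E_2$ near $p_2$, where $\overline{q}_2$ is allowed to go. A maximum of two such functions does not help, since it bounds $d_{\T}$ below by a maximum, not a sum.

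\emph{How the paper closes it.} It works directly in the single triangle. For $\ttt_1\geq1$ one has $\overline{q}_1=r_2$, $q_2=r_3$, $q_1\in[r_2,p_1]$ and $\overline{q}_2\in[r_3,p_2]$. Remark~\ref{rem: intersections triangle} shows that $[q_1,\overline{q}_2]$ meets $[r_3,t_3]$ and that $[q_1,t_3]$ meets $[r_2,t_2]$. Both segments have length at most $D$ (Lemma~\ref{lem: bound triangular pencil}), and $d_{\T}(t_2,t_3)=\log\ttt_1$ (Lemma~\ref{lem: Teq with cross ratios}). Two triangle inequalities then give the bound with constant $4D$.

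\emph{Your one-function idea can be rescued.} The right function is your own $u_2$, extended through the pencil at $p_{12}=l_{i_{E_2}}\cap l_{k_{E_2}}$; in the other configuration, use $u_1$. It is $1$--Lipschitz on $\Text_{T_1}$ by the Nesting Property, and $u_2(\overline{q}_2)=0$. Since $r_3\in p_{12}\oplus t_3$, the Projection and Cocycle Properties give
\[
u_2(q_1)=d_{\T}(\overline{q}_2,r_3)+\log C(l_2,t_3,t_2,l_1)+\log C(l_2,t_2,r_2,l_1)+\log C(l_2,r_2,q_1,l_1).
\]
The terms on the right are controlled as follows.
\begin{itemize}
\item The second term equals $\log\ttt_1$ by Lemma~\ref{lem: t.r. with c.r.}.
\item The third is a universal constant by (\ref{eq: rigid configuration}).
\item The last equals $d_{\T}(r_2,q_1)-\log C(l_3,r_2,q_1,l_2)$, and $0\leq\log C(l_3,r_2,q_1,l_2)\leq\log C(l_3,r_2,p_1,l_2)$, which is again universal.
\end{itemize}
Hence $d_{\T}(q_1,\overline{q}_2)\geq u_2(q_1)$ yields the lemma. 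The case $\ttt_1<1$ is similar, using $s_3\in p_{12}\oplus t_2$.
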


\begin{figure}[htbp] 
\scalebox{1}{%
\begin{overpic}[scale=1, width=0.65\textwidth, tics=2]{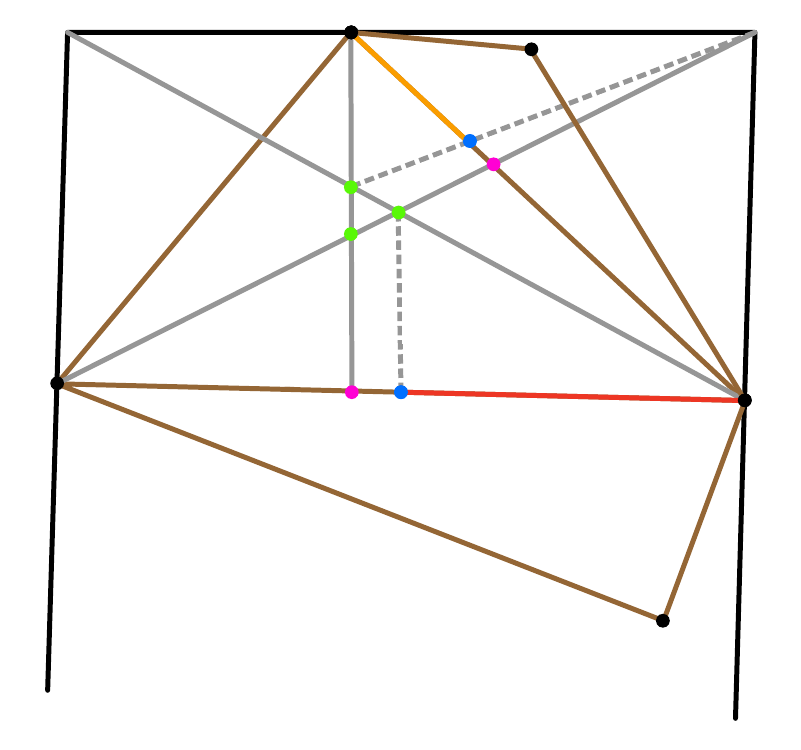}

    \put (70,30)  {\tc{brownpic}{$\Ti_{T_0}$}}
    \put (30,65)  {\tc{brownpic}{$\Ti_{T_1}$}}
    \put (71,65)  {\tc{brownpic}{$\Ti_{T_2}$}}

    \put (49,40.5)  {\tc{bluepic}{$\overline{q}_1$}}
    \put (57,78)  {\tc{bluepic}{$q_2$}}

    \put (-15,46)  {$p_{i_{E_1}}=p_{j_{E_2}}$}
    \put (-13,70)  {$l_{i_{E_1}}=l_{j_{E_2}}$}

    \put (79.5,13)  {$p_{j_{E_1}}$}

    \put (95,43.5)  {$p_{k_{E_1}}=p_{k_{E_2}}$}
    \put (95,70)  {$l_{k_{E_1}}=l_{k_{E_2}}$}

    \put (40,93)  {$p_{j_{\overline{E}_1}}=p_{i_{E_2}}$}
    \put (10,93)  {$l_{j_{\overline{E}_1}}=l_{i_{E_2}}$}

    \put (68,86.5)  {$p_{j_{\overline{E}_2}}$}
    
   \end{overpic}}	
   \caption{A picture for Lemma \ref{lem: additve estimate triangular distance refined case M2} in the case $\ttt_1>1$.
   The admissible sequence is $E_1\coloneqq T_0\cap T_1$, $E_2\coloneqq T_1\cap T_2$.
   These edges are naturally oriented and this defines labels on the relevant vertices. 
   The figure depicts the case $k_{E_1}=k_{E_2}$ (the complementary case would be $i_{E_1}=i_{E_2}$).
   By (\ref{eq: additve estimate triangular distance refined case M2}), $q_1$ is any point in the red segment, while $\overline{q}_2$ is any point in the orange segment.}
   \label{fig: additve estimate triangular distance case M2}
\end{figure}

We conclude the proof of Proposition \ref{prop: additve estimate triangular distance refined} assuming that Lemma \ref{lem: additve estimate triangular distance refined case M2} holds.  
Hence, we let $M>2$ and suppose by induction that the statement of Proposition \ref{prop: additve estimate triangular distance refined} holds for all admissible sequences $E_1,\dots,E_{M-1}$.

Let $\alpha\subset\Poli$ be a piecewise smooth curve connecting $q_1$ with $\overline{q}_M$ and so that $$\vert d_{\T}(q_1,\overline{q}_M)-\ell_{\T}(\alpha) \vert <1.$$
\noindent 
By Lemma \ref{lem: triangular distance separates and extends} and Remark \ref{rem: minimizing triangular curves} we may assume that $\alpha$ decomposes as $\alpha=\alpha_1\star\alpha_2$, where $\alpha_2\subset\Ti_{T_{M-1}}$ is a projective segment connecting some point $q\in \Ti_{T_{M-1}}\cap \Ti_{T_{M-2}}$ with $\overline{q}_M$ and $\alpha_1\subset \Ti_{T_1}\cup\dots\cup\Ti_{T_{M-2}}$ connects $q_1$ with $q$.
Hence we have \begin{equation}\label{eq: additive estimate intermediate decomposition}
       d_{\T}(q_1,\overline{q}_M)-d_{\T}(q_1,q)-d_{\T}(q,\overline{q}_M) >-1.  
    \end{equation}
\noindent The proof is divided in two cases, according to the relative position between $q$ and $\overline{q}_{M-1}$.

Suppose first that $C(l_{i_{E_{M-1}}},q,\overline{q}_{M-1},l_{k_{E_{M-1}}})\geq 1$.
By the Cocycle Property and (\ref{eq: triangular distance qE and overlineqE}) we then have $$C(l_{i_{E_{M-1}}},q,q_{M-1},l_{k_{E_{M-1}}})\geq \tts_{M-1},$$
\noindent which is bigger than $ 1$ by the $\T$--shifting hypothesis.
Hence, $q$ satisfies (\ref{eq: additve estimate triangular distance refined}) with $M$ replaced by $M-1$ and by the induction assumption together with (\ref{eq: additive estimate intermediate decomposition}) we deduce that $$   d_{\T}(q_1,\overline{q}_M) - \displaystyle\sum_{m=1}^{M-2}\vert \log (\ttt_m)\vert - \displaystyle\sum_{m=1}^{M-2} d_{\T}(q_m,\overline{q}_m) - d_{\T}(q_{M-1},q)  -d_{\T}(q,\overline{q}_M)  $$
\noindent is larger than $ -D\cdot (M-1)-1$.
By the triangle inequality, 
$$   d_{\T}(q_1,\overline{q}_M) - \displaystyle\sum_{m=1}^{M-2}\vert \log (\ttt_m)\vert - \displaystyle\sum_{m=1}^{M-2} d_{\T}(q_m,\overline{q}_m) - d_{\T}(q_{M-1},\overline{q}_M) > -D\cdot(M-1)-1. $$

On the other hand, by the $\T$--shifting condition we may apply Lemma \ref{lem: additve estimate triangular distance refined case M2} to the admissible sequence $E_{M-1},E_M$ and the points  $q_{M-1}$ and $\overline{q}_M$ and obtain $$ d_{\T}(q_{M-1},\overline{q}_M)- \vert \log(\ttt_{M-1})\vert -d_{\T}(q_{M-1},\overline{q}_{M-1})-d_{\T}(q_{M},\overline{q}_M)>-D.$$
\noindent Adding up the last two inequalities we obtain the conclusion in this case.

It remains to treat the complementary case.
Namely, we assume that $$C(l_{i_{E_{M-1}}},q,\overline{q}_{M-1},l_{k_{E_{M-1}}})< 1.$$
\noindent By the $\T$--shifting condition, we may apply Lemma \ref{lem: additve estimate triangular distance refined case M2} to the admissible sequence $E_{M-1},E_M$ and the points $q$ and $\overline{q}_M$.
We have $$d_{\T}(q,\overline{q}_M)-\vert\log(\ttt_{M-1})\vert-d_{\T}(q,\overline{q}_{M-1})-d_{\T}(q_M,\overline{q}_M)>-D.$$
\noindent Adding up (\ref{eq: additive estimate intermediate decomposition}) we deduce $$ d_{\T}(q_1,\overline{q}_M)-d_{\T}(q_1,q)- \vert\log(\ttt_{M-1})\vert-d_{\T}(q,\overline{q}_{M-1})-d_{\T}(q_M,\overline{q}_M) >-D-1.$$
\noindent Hence $$ d_{\T}(q_1,\overline{q}_M)- d_{\T}(q_1,\overline{q}_{M-1})-\vert\log(\ttt_{M-1})\vert -
d_{\T}(q_M,\overline{q}_M) >-D-1.$$
\noindent As $(F_1,\dots,F_n)$ is $\T$--shifting we may apply the induction hypothesis to the term $d_{\T}(q_1,\overline{q}_{M-1})$ to finish the proof.
\end{proof}

Let us now prove Lemma \ref{lem: additve estimate triangular distance refined case M2}.

\begin{proof}[Proof of Lemma \ref{lem: additve estimate triangular distance refined case M2}]

Recall that the edge $E_1$ is  oriented, so we get labels $a_{i_{E_1}}$ and $a_{k_{E_1}}$ for its vertices.
There are two possibilities, namely: 

\begin{itemize}
    \item either the edge $E_2$ connects $a_{j_{\overline{E}_1}}$ with $a_{k_{E_1}}$, or
    \item the edge $E_2$ connects $a_{i_{E_1}}$ with $a_{j_{\overline{E}_1}}$.
\end{itemize}
\noindent We only treat explicitly the first case, the second one follows similarly. 
Hence, we assume the $E_2$ connects $a_{j_{\overline{E}_1}}$ with $a_{k_{E_1}}$.
Using the natural orientation of $E_2$ we have $$k_{E_2}=k_{E_1}, \tn{ } i_{E_2}=j_{\overline{E}_1}, \tn{ and } j_{E_2}=i_{E_1}$$
\noindent (c.f. Figure \ref{fig: additve estimate triangular distance case M2}).
We deduce $$a_{i_{E_1}}=a_{j_{E_2}}<a_{j_{E_1}}<a_{k_{E_1}}=a_{k_{E_2}}<a_{j_{\overline{E}_2}}<a_{j_{\overline{E}_1}}=a_{i_{E_2}}<a_{i_{E_1}}.$$
\noindent We ease notations  let $$F_1\coloneqq F_{k_{E_1}}, \tn{ }F_2 \coloneqq F_{j_{\overline{E}_1}} \tn{ and }F_3 \coloneqq F_{i_{E_1}}.$$
\noindent We then have $q_1,\overline{q}_1\in p_3\oplus p_1\tn{ and } q_2,\overline{q}_2\in p_2 \oplus p_1$.

Now, by definition of the Fock-Goncharov-like parametrization (\ref{eq: parametrization n tuples}) we have $\ttt_1=T(F_1,F_2,F_3)$.
We assume that $\ttt_1\geq 1$, the other case is similar. 
This assumption allows us to compute $\overline{q}_1$ and $q_2$.
Indeed, recalling the notations from Subsection \ref{subsec: triple ratio}, (\ref{eq: shear quadruple}) and (\ref{eq: triangular distance qE and overlineqE}), we know that $\overline{q}_1\in\{r_2,s_2\}$ is the unique point in $p_1\oplus p_3$ satisfying $$C(l_3,\overline{q}_1,q_{E_1},l_1)=\tts_1=\displaystyle\min_{\sigma'=r,s} C(l_3,\sigma'_2,q_{E_1},l_1).$$
\noindent Now, as by the Cocycle Property and Lemma \ref{lem: t.r. with c.r.} we have $$C(l_3,s_2,q_{E_1},l_1)=\ttt_1\cdot C(l_3,r_2,q_{E_1},l_1)\geq C(l_3,r_2,q_{E_1},l_1)$$
\noindent we deduce $\overline{q}_1=r_2$.
Similarly, we deduce that $q_2=r_3$.
If we let $D$ be the constant provided by Lemma \ref{lem: bound triangular pencil}, it then suffices to show \begin{equation}\label{eq: lemm additive bound case M2}
    d_{\T}(q_1,\overline{q}_2) - \log (\ttt_1) - d_{\T}(q_1,r_2)- d_{\T}(r_3,\overline{q}_2)> -4D.
\end{equation}

Now, recall that we assume $\Text=\Text_{(F_1,F_2,F_3)}\subset\bba$, and by Lemma \ref{lem: nested triangles} we have $r_2\in[p_3,p_1]$ and $p_3\notin[r_2,p_1]$.
By Remark \ref{rem: segment ciclicality and nesting} and (\ref{eq: additve estimate triangular distance refined case M2}) we have $q_1\in[r_2,p_1]$ and similarly $  \overline{q}_2\in [r_3,p_2]$.

\begin{cla}\label{cla: additve estimate triangular distance refined case M2}
    
The intersection point $q\coloneqq(q_1\oplus \overline{q}_2) \cap (r_3\oplus t_3)$ satisfies $q\in[q_1,\overline{q}_2]\cap[r_3,t_3]$.
\end{cla}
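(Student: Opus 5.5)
The plan is to reduce the claim to a separation statement inside the ideal triangle $\Ti=\Ti_{(F_1,F_2,F_3)}$ and then apply Remark \ref{rem: intersections triangle} together with the cross ratio descriptions of segments in Remark \ref{rem: segment ciclicality and nesting}. We already know that $q_1\in[r_2,p_1]\subset[p_3,p_1]$ and $\overline{q}_2\in[r_3,p_2]\subset[p_1,p_2]$.

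\emph{Step 1 (position of $r_3$).} The two sides $[p_3,p_1]$ and $[p_1,p_2]$ containing $q_1$ and $\overline{q}_2$ meet at $p_1$. If $q_1\ne p_1$ and $\overline{q}_2\neq p_1$, then Remark \ref{rem: intersections triangle} shows that $\Ti\setminus[q_1,\overline{q}_2]$ has two components. One component $U$ contains $p_1$ and is contained in the triangle of vertices $p_1,q_1,\overline{q}_2$. The other component $V$ contains $p_2$, $p_3$ and the side $[p_2,p_3]$. The degenerate cases $q_1=p_1$ or $\overline{q}_2=p_1$ I will treat separately, or avoid by a limiting argument. Since $\overline{q}_2\in[r_3,p_2]$, the point $r_3$ lies on $[p_1,\overline{q}_2]$, so $r_3\in\overline{U}$.

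\emph{Step 2 (position of $t_3$).} This is the step I expect to be the main obstacle: showing that $t_3\in\overline{V}$, i.e. that $t_3$ does not lie in the triangle $p_1q_1\overline{q}_2$. Since $q_1\in[r_2,p_1]$ and $\overline{q}_2\in[r_3,p_2]$, that triangle is contained in the polygon $p_1 r_2 \,\cdot\, r_3$ cut off by the segment $[r_2,r_3]$ and in the triangle $p_1r_2p_2$. It therefore suffices to show that $t_3$ lies strictly on the side of a suitable line away from $p_1$; I would try the line $r_2\oplus p_2$ or the line $p_{13}\oplus r_2=p_{13}\oplus t_2$.

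For this I will use the standing assumption $\ttt_1\geq 1$ and Lemma \ref{lem: t.r. with c.r.}. The identity $C(l_1,t_2,t_3,l_3)=\ttt_1\geq1$ and the Cyclicality Property say that, along the pencil of lines through $p_{13}$, the line $p_{13}\oplus t_3$ comes after $p_{13}\oplus t_2=p_{13}\oplus r_2$ when going from $l_1$ towards $l_3$. By the Projection Property this transfers to the statement that $t_3$ and $p_3$ lie on the same side of $p_{13}\oplus r_2$, and $p_1$ lies on the other side. Combining this with $t_3\in p_2\oplus p_{13}$ and $t_3\in\Ti$ (Lemma \ref{lem: nested triangles}), I expect to conclude via Remark \ref{rem: triangles and subtriangles} that $t_3$ lies outside the triangle $p_1q_1\overline{q}_2$. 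The argument should mirror the one in Lemma \ref{lem: bound triangular pencil}, namely projecting from $p_{13}$ onto $l_2$ and comparing positions there. If $t_3$ lands on $[q_1,\overline{q}_2]$ itself, the claim holds trivially with $q=t_3$.

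\emph{Step 3 (conclusion).} The segment $[r_3,t_3]$ is a projective segment contained in $\Ti$ by convexity. It joins a point of $\overline{U}$ to a point of $\overline{V}$, so it must meet the separating segment $[q_1,\overline{q}_2]$. The lines $q_1\oplus\overline{q}_2$ and $r_3\oplus t_3$ are distinct: $r_3\in[p_1,p_2]$ while $q_1\notin p_1\oplus p_2$ unless $q_1=p_1$. Hence they meet in exactly one point, and that point is $q$. Therefore $q\in[q_1,\overline{q}_2]\cap[r_3,t_3]$. The case where $E_2$ connects $a_{i_{E_1}}$ with $a_{j_{\overline{E}_1}}$, and the case $\ttt_1<1$, are handled symmetrically. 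In the latter case one uses $C(l_i,t_i,q,l_{i+1})$ as in Remark \ref{rem: Teq when t leq 1}.
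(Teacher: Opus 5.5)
Steps 1 and 3 are a sound framework, and they package the argument differently from the paper. The paper applies Remark \ref{rem: intersections triangle} twice: first in the triangle $p_2s_2r_3$ to get $\widehat q\in[s_2,\overline{q}_2]\cap[r_3,t_3]$, then in the triangle $\overline{q}_2s_2p_1$. However, Step 2 carries all the content, and the route you develop for it does not work.

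\emph{The containment claim is false.} The triangle $p_1q_1\overline{q}_2$ is not contained in the region cut off by $[r_2,r_3]$. The condition $C(l_2,\overline{q}_2,r_3,l_1)\geq 1$ puts $\overline{q}_2$ between $r_3$ and $p_2$, i.e.\ beyond $r_3$ as seen from $p_1$.

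\emph{The line $p_{13}\oplus r_2$ does not separate.} When $\ttt_1>1$, we have $C(l_1,r_2,s_2,l_3)=\ttt_1>1$. So in the pencil through $p_{13}$ the line $p_{13}\oplus p_2=p_{13}\oplus s_2$ lies strictly between $p_{13}\oplus r_2$ and $l_3$. Hence $p_2$ is on the same side of $p_{13}\oplus r_2$ as $p_3$ and $t_3$. For $\overline{q}_2$ close to $p_2$, the triangle $p_1q_1\overline{q}_2$ then meets both sides of $p_{13}\oplus r_2$. Knowing that $t_3$ is on the $p_3$-side of this line therefore does not show that $t_3$ avoids the triangle.

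What is missing is the observation the paper relies on.
\begin{itemize}
\item Since $t_3\in p_2\oplus p_{13}$ lies in the interior of $\Ti$, one has $t_3\in[p_2,s_2]$.
\item Since $\ttt_1\geq 1$, Lemma \ref{lem: t.r. with c.r.} gives $r_2\in[p_1,s_2]$. Your pencil statement is exactly this, after the Projection Property.
\item Hence $t_3$ lies in the triangle $p_2r_2p_3$, which meets the triangle $p_1r_2p_2$ only along $[r_2,p_2]$.
\item The triangle $p_1q_1\overline{q}_2$ is contained in $p_1r_2p_2$ and meets $[r_2,p_2]$ at most in $r_2$ (since $\overline{q}_2\neq p_2$). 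Also $r_2\neq t_3$.
\end{itemize}
So $t_3$ lies outside the closed triangle $p_1q_1\overline{q}_2$, i.e.\ in $V$. This is the line $r_2\oplus p_2$ from your list, justified. The argument also covers $\ttt_1=1$, where $t_3\in[r_2,p_2]$ and your ``strictly'' fails.

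With this in place your Step 3 closes the proof. Your degenerate cases in Step 1 never arise, since the cross ratio conditions force $q_1\in[r_2,p_1)$ and $\overline{q}_2\in(p_2,r_3]$. Compared with the paper, your route replaces two nested applications of Remark \ref{rem: intersections triangle} with one connectivity argument. Both rest on the same two facts, $t_3\in[p_2,s_2]$ and $r_2\in[p_1,s_2]$.
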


We postpone the proof of Claim \ref{cla: additve estimate triangular distance refined case M2} until the end of the subsection and proceed assuming it holds. 
In particular, as projective segments inside $\Ti_{T_1}$ are geodesics for $d_{\T}(\cdot,\cdot)$ (c.f. Lemma \ref{lem: triangular distance separates and extends}), we have $$d_{\T}(q_1,\overline{q}_2)=d_{\T}(q_1,q)+d_{\T}(q,\overline{q}_2)$$ \noindent and we will bound separately both terms of the right hand side to obtain (\ref{eq: lemm additive bound case M2}).

Now, on the one hand we have $$d_{\T}(q,\overline{q}_2)\geq d_{\T}(\overline{q}_2,r_3)-d_{\T}(r_3,q).$$
\noindent As $q\in[r_3,t_3]$, by Lemma \ref{lem: bound triangular pencil} this is greater or equal than $d_{\T}(\overline{q}_2,r_3)-D$.

On the other hand, by a similar argument we have $$d_{\T}(q_1,q)\geq d_{\T}(q_1,t_3)-D.$$

\begin{cla}\label{cla: additve estimate triangular distance refined case M2 bis}
The intersection point $q'\coloneqq ( q_1\oplus t_3)\cap(r_2\oplus t_2)$ satisfies $q'\in [q_1,t_3]\cap[r_2, t_2]$.
\end{cla}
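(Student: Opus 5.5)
The plan is to realize both segments $[q_1,t_3]$ and $[r_2,t_2]$ as the two cevians of a single auxiliary triangle, and then invoke the second situation of Remark \ref{rem: intersections triangle}.

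First, I would record the collinearities coming from the definition of the distinguished points. We have $t_2=(p_3\oplus p_{12})\cap(p_1\oplus p_{23})$ and $t_3=(p_1\oplus p_{23})\cap(p_2\oplus p_{13})$. Hence $t_2$ and $t_3$ both lie on the line $p_1\oplus p_{23}$. Moreover $r_2=(t_2\oplus p_{13})\cap(p_1\oplus p_3)$, so $r_2\oplus t_2$ passes through $p_{13}$.

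Next, I would locate $t_2$ on the segment $[p_1,t_3]$. Lemma \ref{lem: t.r. with c.r.} gives $C(l_1,t_2,t_3,l_3)=\ttt_1\geq 1$. Restricting to the line $p_1\oplus p_{23}$, which meets $l_1$ at $p_1$, Remark \ref{rem: segment ciclicality and nesting} together with the Cyclicality Property gives $t_2\in[p_1,t_3]$. By Lemma \ref{lem: nested triangles}, $t_3$ lies in the interior of $\Ti$. If $\ttt_1=1$ then $t_2=t_3$, and the claim is trivial with $q'=t_3$. So I assume $\ttt_1>1$, in which case $t_2\in(p_1,t_3)$.

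I would then consider the triangle $\Delta\subset\Ti$ with vertices $p_1$, $r_2$ and $t_3$, given by Remark \ref{rem: triangles and subtriangles}. These three points are not aligned, because $t_3$ is interior to $\Ti$ while $p_1$ and $r_2$ lie on the side $[p_1,p_3]$. We already know $q_1\in[r_2,p_1]$, which is the side of $\Delta$ opposite $t_3$, and $t_2$ lies on $[p_1,t_3]$, the side opposite $r_2$. The second situation of Remark \ref{rem: intersections triangle} applies to $\Delta$: take vertex $r_2$ with $t_2$ on the opposite side, and vertex $t_3\neq r_2$ with $q_1$ on its opposite side. It says that $[r_2,t_2]$ and $[t_3,q_1]$ intersect. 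That intersection point is necessarily $q'$, which proves the claim.

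The only delicate step is the degenerate positions. If $q_1=r_2$, then $q'=r_2$ works directly. If $q_1=p_1$, we get $q'=t_2$, since $t_2\in p_1\oplus t_3$. The remark excludes $t_2$ being a vertex, and this is covered by the case $\ttt_1=1$ handled above. Getting the ordering on $p_1\oplus p_{23}$ right from the cross ratio sign conventions is the main point to check carefully. If that ordering were reversed, I would instead use the triangle with vertices $p_1$, $r_2$, $t_2$ and $t_3$ on its side.
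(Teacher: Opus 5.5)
Your argument is correct and is the same as the paper's: the paper also applies Remark \ref{rem: intersections triangle} to the triangle spanned by $t_3$, $r_2$ and $p_1$, after using $\ttt_1\geq 1$ and Lemma \ref{lem: t.r. with c.r.} to place $t_2$ on the side $[p_1,t_3]$. Your closing fallback is unnecessary, and it would not work: if $t_3$ lay between $p_1$ and $t_2$, the first situation of Remark \ref{rem: intersections triangle} would show that $[q_1,t_3]$ misses $[r_2,t_2]$; but since $C(l_1,t_2,t_3,l_3)=\ttt_1\geq 1$, the ordering you derived is the correct one.
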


See below for a proof of Claim \ref{cla: additve estimate triangular distance refined case M2 bis}.
Assuming it holds, we deduce that $d_{\T}(q_1,t_3)=d_{\T}(q_1,q')+d_{\T}(q',t_3)$ and this is greater or equal than $$ d_{\T}(q_1,r_2)-d_{\T}(r_2,q')+d_{\T}(t_3,t_2)-d_{\T}(t_2,q').$$ \noindent As $q'\in[r_2,t_2]$, the proof of (\ref{eq: lemm additive bound case M2}) finishes by applying Lemma \ref{lem: Teq with cross ratios} and Lemma \ref{lem: bound triangular pencil}.
\end{proof}

It still remains to prove Claims \ref{cla: additve estimate triangular distance refined case M2} and \ref{cla: additve estimate triangular distance refined case M2 bis}.

\begin{proof}[Proof of Claims \ref{cla: additve estimate triangular distance refined case M2} and \ref{cla: additve estimate triangular distance refined case M2 bis}]

We begin by proving Claim \ref{cla: additve estimate triangular distance refined case M2}, which is obvious if $\overline{q}_2=r_3$, so we assume that this is not the case. 
By Remark \ref{rem: triangles and subtriangles} we may consider the sub--triangle of $\Ti=\Ti_{(F_1,F_2,F_3)}$ spanned by the vertices $p_2$, $s_2$ and $r_3$.
The point $\overline{q}_2$ belongs then to its side $[p_2,r_3]$ and by Lemma \ref{lem: nested triangles} the point $t_3$ belongs to the side $[p_2,s_2]$.
By Remark \ref{rem: intersections triangle}, the projective segments $[s_2,\overline{q}_2]$ and $[r_3,t_3]$ intersect in a point $\widehat{q}$.
It then suffices to show that $[\widehat{q},r_3]$ and $[q_1,\overline{q_2}]$ intersect. 
Now, to show this we may work on the sub--triangle of $\Ti$ spanned by $\overline{q}_2$, $s_2$ and $p_1$.
It contains $\widehat{q}$ on one side and $r_3$ on other side, so by Remark \ref{rem: intersections triangle} it suffices to show that $q_1$ belongs to the side $[p_1,s_2]$. 
But we know that $q_1\in[r_2,p_1]$, and by Remark \ref{rem: segment ciclicality and nesting} and Lemma \ref{lem: t.r. with c.r.} we deduce $q_1\in[p_1,s_2]$ (recall that we are assuming $\ttt_1\geq 1$).

It remains to prove Claim \ref{cla: additve estimate triangular distance refined case M2 bis}, but the proof follows similarly by applying Remark \ref{rem: intersections triangle} to the triangle spanned by $t_3$, $ r_2$ and $p_1$ (note that, as $\ttt_1\geq 1$, by Remark \ref{rem: segment ciclicality and nesting} and Lemma \ref{lem: t.r. with c.r.} we have that $t_2$ lies on the side between $p_1$ and $t_3$).
\end{proof}

\section{Ortho--geodesic stretch map}\label{sec: stretch map}

In this section we fix two positive $n$--tuples $(F_1,\dots,F_n)$ and $(F_1',\dots,F_n')$ for some $n\geq 3$ with respective Fock-Goncharov-like parameters $(\ttt,\ttc)$ and $(\ttt',\ttc')$. 
For each $E\in\caleor_P$ let $(\ttt_E,\ttt_{\overline{E}},\ttc_E,\ttc_{\overline{E}})$ and $(\ttt_E',\ttt_{\overline{E}}',\ttc_E',\ttc_{\overline{E}}')$ be the corresponding sub--parameters, as in Subsection \ref{subsec: positive tuples}.
We will assume moreover that there exists some $K>1$ so that for all $E\in\caleor_P$ one has $$
    \ttt_E'=\ttt_E^K \tn{ and } \ttc_E'\ttc_{\overline{E}}'=(\ttc_E\ttc_{\overline{E}})^K.$$
\noindent In this situation we will say that $(F_1',\dots,F_n')$ is a \textit{$\T$--stretch of size $K$} of $(F_1,\dots,F_n)$.

If $(F_1',\dots,F_n')$ is a $\T$--stretch of size $K$ of $(F_1,\dots,F_n)$ we may consider the ideal polygons $\Poli=\Poli_{(F_1,\dots,F_n)}$ and $(\Poli)'\coloneqq\Poli_{(F_1',\dots,F_n')}$ of Lemma \ref{lem: nested polygons}.
The goal of this section is to construct a natural geometric homeomorphism $$
    \psi^{(\ttt,\ttc)}=\psi_K^{(\ttt,\ttc)}:\Poli\to(\Poli)',$$ \noindent which we call the associated \textit{ortho--geodesic stretch map}. 
We will show in Theorem \ref{thm: Klip} that it is $K$-Lipschitz for the underlying triangular distances.

\subsection{The case of positive triples}\label{subsec: stretch map on triples}

The construction is inductive, we begin with the case $n=3$.
We fix some $K>1$ and two positive triples with $$T(F_1',F_2',F_3')=T(F_1,F_2,F_3)^K.$$
\noindent Let $\Text\coloneqq \Text_{(F_1,F_2,F_3)}$ and $(\Text)'\coloneqq \Text_{(F_1',F_2',F_3')}$. Then let $\Vert\cdot\Vert_{\Text}$ and $\Vert\cdot\Vert_{(\Text)'}$ be their respective Hilbert norms as in Subsection \ref{subsec: hilbert geometry}. 
We also denote $$\Ti\coloneqq \Ti_{(F_1,F_2,F_3)} \tn{ and } (\Ti)'\coloneqq \Ti_{(F_1',F_2',F_3')}.$$
\noindent Each of these ideal triangles is further decomposed into four sub--triangles as in (\ref{eq: Tid decompositon}) (the decomposition associated to $(F_1',F_2',F_3')$ is denoted $(\Ti)'=(\Teq)'\cup\T_{123}'\cup\T_{231}'\cup\T_{312}'$).
Our map will take each of these sub--triangles for $(F_1,F_2,F_3)$ to the corresponding one for $(F_1',F_2',F_3')$.

\subsubsection{The rigid pieces}\label{subsub: homeo rigid piece}

We start by defining our homeomorphism in restriction to each $\T_{i(i+1)(i+2)}$, 
for $i=1,2,3$.
In the next subsection we explain how to define it on $\Teq$, and then explain how to glue these pieces together. 

Now, let $g_i\in\g$ be the unique element so that $g_i\cdot (F_i,F_{i+2},l_{i+1})=(F_i',F_{i+2}',l_{i+1}')$ (recall (\ref{eq: rigid configuration})).
By Remark \ref{rem: Tijk intrinsic} we know that $$g_i\cdot \T_{i(i+1)(i+2)}=\T_{i(i+1)(i+2)}'.$$ 
\noindent Hence, to define our homeomorphism $\T_{i(i+1)(i+2)}\to\T_{i(i+1)(i+2)}'$ it suffices to define a self--homeomorphism $\T_{i(i+1)(i+2)}\to\T_{i(i+1)(i+2)}$ and then compose it with $g_i$.

Let us then proceed with the definition of the self--homeomorphism alluded above. 
To simplify notations we denote $p\coloneqq p_i$, $t\coloneqq t_{i+1}$, $p'\coloneqq p_{i+2}$, $p_0\coloneqq l_i\cap l_{i+2}$ and $r\coloneqq r_{i+1}$.
We also set $l\coloneqq l_i$, $l'\coloneqq l_{i+2}$ and $l_0\coloneqq l_{i+1}$. 
Set $\T_0\coloneqq \T_{i(i+1)(i+2)}$, see Figure \ref{fig: foliations}.
\begin{figure}[htbp] 
\scalebox{1.5}{%
\begin{overpic}[scale=1, width=0.65\textwidth, tics=2]{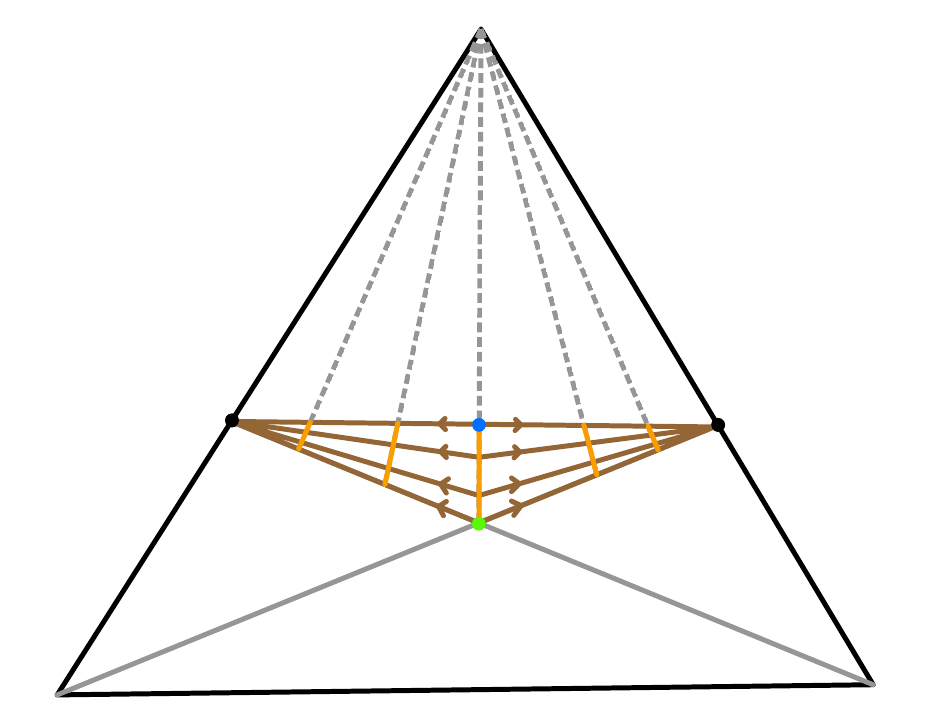}

    \put (21,32)  {\footnotesize$p$}
    
    \put (30,45)  {\footnotesize$l$}

    \put (78,31.5)  {\footnotesize$p'$}
    
    \put (69,45)  {\footnotesize$l'$}

    \put (50,0)  {\footnotesize$l_0$}

    \put (50,75.5)  {\footnotesize$p_0$}

    \put (49,32.5)  {\footnotesize\tc{bluepic}{$r$}}

    \put (50,17)  {\footnotesize\tc{greenpic}{$t$}}

   \end{overpic}}	
   \caption{The rigid piece $\T_0$. The leaves of the ortho--geodesic foliation $\folo$ in orange,  and the leaves of the cusp--geodesic foliation in brown.
   The piece of $\folc$ lying on the left (resp. right) of $\folo_r=[r,t]$ is $\folc^p$ (resp. $\folc^{p'}$).
   Arrows indicate the dynamics of $\psi^0$.}
   \label{fig: foliations}
   \end{figure}
The desired map $\psi^0=\psi^0_K:\T_0\to\T_0$ \noindent will be defined by stretching by a factor of $K$ a specific foliation of $\T_0$, along a specific transverse foliation.

More precisely, on the one hand we consider the \textit{ortho--geodesic foliation} $\folo$ on $\T_0$ induced by restriction of projective lines passing through $p_0$.
The leaf $\folo_q$ through a given $q\in\T_0$ is a projective segment connecting a point in $(p,p')\subset\T_0$ with a point in $(p,t]\cup[t,p')\subset\T_0$.

\begin{rem}
The terminology ``ortho--geodesic" is motivated by the fact that the leaves of $\folo$ are geodesics and the fibers of the nearest point projection (with respect to the triangular distance) onto the projective segment $(p,p')\subset\T_0$.
\end{rem}

On the other hand, for each $q_0\in[t,r]=\folo_r$ we may consider $$\folc_{q_0}\coloneqq (p,q_0]\cup[q_0,p')\subset\T_0.$$
\noindent The foliation $\folc\coloneqq \{\folc_{q_0}\}_{q_0\in[t,r]}$ is called the \textit{cusp--geodesic foliation}. 
The leaf $\folc_q$ through the point $q$ coincides with $\folc_{q_0}$ where $q_0=\folo_r\cap\folc_q$. For convenience, we denote $$\folc_q^{p}\coloneqq (p,q_0]\subset\folc_q \tn{ and } \folc_q^{p'}\coloneqq [q_0,p')\subset\folc_q .$$
By Remarks \ref{rem: segment ciclicality and nesting} and \ref{rem: triangles and subtriangles} we deduce the following useful characterization. 

\begin{rem}\label{rem: pieces of cusp geodesic leaves}
    For every $q\in\T_0$ one has $$\folc_{q}^{p}=\{q'\in\folc_q: C(l,q,q_0,l')\geq 1\} \tn{ and } \folc_{q}^{p'}=\{q'\in\folc_q: C(l,q_0,q,l')\geq 1\}.$$
\end{rem}

We can now define the \textit{ortho--geodesic stretch map} $\psi^0:\T_0\to\T_0$. 
Let $q\in\T_0$ and $q_0$ as above.
We may have either $q\in\folc_{q_0}^{p}$ or $q\in\folc_{q_0}^{p'}$. 
In the first case we define $\psi^0(q)$ to be the unique point in $\folc_{q_0}^{p}$ that satisfies $$d_{\T}(\psi^0(q),q_0)=K\cdot d_{\T}(q,q_0).$$
\noindent As $p\oplus q_0=q\oplus q_0$ intersects the topological boundary of $\Text$ in $p$ and a point of $[l_0\cap l',p_0]$ (c.f. Remark \ref{rem: triangles and subtriangles}), by Remark \ref{rem: pieces of cusp geodesic leaves} the above equality is equivalent to 
\begin{equation}\label{eq: def stretch map rigid}
   C(l,\psi^0(q),q_0,l')=C(l,q,q_0,l')^K.
\end{equation} 
\noindent On the other hand, if instead the point $q$ belongs to $\folc_{q_0}^{p'}$, then $\psi^0(q)$ is defined to be the unique point in $\folc_{q_0}^{p'}$ that satisfies $d_{\T}(\psi^0(q),q_0)=K\cdot d_{\T}(q,q_0)$.

\begin{rem}\label{rem: psi0 differentiable}
    The ortho--geodesic stretch map $\psi^0$ is smooth in the interior of $\T_0$, except possibly at points $q_0\in(r,t)$. 
    Indeed, let $\T_0^p\subset\T_0$ (resp. $\T_0^{p'}\subset\T_0$) be the triangle spanned by $p$ (resp. $p'$), $t$ and $r$. 
    Then $q\mapsto p\oplus q$ and $$ q\mapsto p_0\oplus \psi^0(q)$$ define smooth maps from the interior of $\T_0^p$ to $\rpd$ (note that thanks to (\ref{eq: def stretch map rigid}) the latter is given by $ x\mapsto x^K$ in an appropriate affine chart of the pencil of lines through $p_0$).
    As these two maps are point--wise transverse, then $$q\mapsto  (p\oplus q)\cap (p_0\oplus \psi^0(q))=\psi^0(q)$$ \noindent is also smooth.
    Similarly, $\psi^0$ is smooth in restriction to the interior of $\T_0^{p'}$.
\end{rem}

It is clear that $\psi^0$ preserves the leafs of $\folc$ and restricts to the identity on $\folo_r$.
Moreover we have the following useful properties.

\begin{lem}\label{lem: properties psi0}
    Let $K>1$ and $\psi^0=\psi^0_K:\T_0\to\T_0$ be the associated ortho--geodesic stretch map.  
    For $q,q'\in\T_0$ the following properties hold:
    
    \begin{enumerate}
\item One has $$ C(l, \psi^0(q),\psi^0(q'), l')=   C(l, q,q', l')^K.$$

\item The leaves of $\folc$ are \tn{stretched by} $K$ in the following sense.
Let $q_0\in\folo_r$ and suppose that either $q,q'\in\folc_{q_0}^{p}$ or $q,q'\in\folc_{q_0}^{p'}$.
Then one has $$d_{\T}(\psi^0(q),\psi^0(q'))=K\cdot d_{\T}(q,q').$$

\item The foliation $\folo$ is $\psi^0$--invariant and its leaves are \tn{contracted} by $\psi^0$, in the sense that if $q\notin\folo_r$ and $q'\in\folo_q$ then $$d_{\T}(\psi^0(q),\psi^0(q'))< d_{\T}(q,q').$$

    \end{enumerate}

In particular, if $q\notin\folo_r$ belongs to the interior of $\T_0$, $v\in T_q\folc_q$ and $v'\in T_q\folo_q$ then one has $$\Vert \od_q\psi^0(v)\Vert_{\Text}=K\cdot \Vert v\Vert_{\Text} \tn{ and }  \Vert \od_q\psi^0(v')\Vert_{\Text}\leq \Vert v'\Vert_{\Text}.$$
\end{lem}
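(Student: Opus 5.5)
The plan is to work in projective coordinates adapted to the rigid configuration $(F_i,F_{i+2},l_{i+1})$, so that $p_0=l\cap l'$ and the pencil of lines through $p_0$ become the main objects. The function $q\mapsto C(l,q,q_0,l')$ depends only on the line $p_0\oplus q$, by the Projection Property of Lemma \ref{lem: properties c.r.}. So I would introduce the coordinate $x(q)\coloneqq C(l,r,q,l')$ on $\T_0$. It is constant along the leaves of $\folo$, and by the Cocycle Property one has
\[
C(l,q,q',l')=x(q')/x(q).
\]

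The first step is to show that $\psi^0$ preserves each leaf of $\folo$ and acts on the coordinate by $x(\psi^0(q))=x(q)^K$. Indeed, $q_0\in\folo_r$ gives $x(q_0)=1$, and (\ref{eq: def stretch map rigid}) reads $x(q_0)/x(\psi^0(q))=(x(q_0)/x(q))^K$, hence $x(\psi^0(q))=x(q)^K$. The same holds on $\folc^{p'}$, using Remark \ref{rem: pieces of cusp geodesic leaves}. Since leaves of $\folo$ are exactly the level sets of $x$, the invariance of $\folo$ follows, and item (1) follows from $x(\psi^0(q'))/x(\psi^0(q))=(x(q')/x(q))^K$.

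For item (2), I would take $q,q'$ on the same half-leaf $\folc^p_{q_0}$, which is a projective segment inside $\Ti$. By Lemma \ref{lem: triangular distance separates and extends} the triangular distance is the Hilbert distance of $\Text$. The line $p\oplus q_0$ meets $\partial\Text$ at $p$ and at a point of $[l_0\cap l',p_0]$, so the Hilbert distance along it is computed by cross ratios with $l$ and $l'$. This gives $d_{\T}(q,q')=|\log C(l,q,q',l')|$, and item (1) then yields the factor $K$. The derivative statement for $v\in T_q\folc_q$ follows by differentiating, using (\ref{eq: finsler norm and derivative of distance}) and the smoothness from Remark \ref{rem: psi0 differentiable}.

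Item (3) is the main obstacle: along a leaf of $\folo$ the endpoints lie on $l_0$ and $p_0$, so the distances are not given by the $x$-coordinate. I would parametrize a leaf $\folo_q$ by the point $y=p\oplus q'\cap\folo_r$, or equivalently by the cusp-leaf parameter $q_0$. Then $\psi^0$ maps $\folo_q$ to $\folo_{\psi^0(q)}$ by sending $q'\in\folc_{q_0}$ to the point of $\folc_{q_0}$ on the new leaf. Thus $\psi^0$ restricted to a leaf is the projection from $p$ (or from $p'$) between two lines through $p_0$, the new one lying farther from $\folo_r$, since $x^K$ moves away from $1$ when $K>1$. Distances on $\folo$-leaves are cross ratios of the form $C(\widehat q,q_1',q_1,p_{13})$ with $\widehat q\in l_0$. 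I would then argue exactly as in Lemma \ref{lem: bound triangular pencil}: with the Projection Property, move the boundary point $\widehat q$ to an auxiliary point $\widetilde q$ on the old leaf, and use Remark \ref{rem: intersections triangle} to show that $\widetilde q$ lies outside $[p_0,\widehat q]$. The Nesting Property (Remark \ref{rem: segment ciclicality and nesting}) then gives a strict decrease. The configuration of Lemma \ref{lem: bound triangular pencil}, with $q_2$ obtained from $q_1$ by moving along lines through $p_3$ farther from $r_2$, is precisely this situation, so I expect item (3) to follow from that lemma, with the case split between $\folc^p$ and $\folc^{p'}$.

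The infinitesimal inequality for $v'\in T_q\folo_q$ is then the limit of item (3), again via (\ref{eq: finsler norm and derivative of distance}).
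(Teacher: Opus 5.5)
Your proposal is correct and follows essentially the paper's route: (1) from the Projection and Cocycle Properties together with (\ref{eq: def stretch map rigid}), (2) from computing Hilbert distances along cusp half-leaves as $\lvert\log C(l,\cdot,\cdot,l')\rvert$, and (3) as a direct application of Lemma \ref{lem: bound triangular pencil} with $q_1=q$, $q_2=\psi^0(q)$, $q_1'=q'$, $q_2'=\psi^0(q')$ (and its mirror version on the $p$ side). One small slip: in your first step, $\psi^0$ preserves each leaf of $\folc$ and maps leaves of $\folo$ to leaves of $\folo$; it does not preserve each leaf of $\folo$, since it sends $x$ to $x^K$.
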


\begin{proof}
Lemma \ref{lem: properties c.r.} and (\ref{eq: def stretch map rigid}) directly imply (1) and (2).
Also, for (3) note that by the Projection Property we have $$\folo_q=\{q'\in\T_0: C(l,q,q',l')=1\}.$$ \noindent From (1) we deduce that $\psi^0(q')\in\folo_{\psi^0(q)}$ for all $q'\in\folo_q$.
Moreover, as $K>1$ the contraction property is a direct consequence of Lemma \ref{lem: bound triangular pencil}.
Finally, the stretching and contraction properties of the derivative $\od_q\psi^0$ follow from (2), (3) and (\ref{eq: finsler norm and derivative of distance}).
\end{proof}

We emphasize that the contraction property in (3) breaks down if we do not assume $K>1$, so our construction is really asymmetric. 

\subsubsection{The equilateral piece}\label{subsub: homeo eq piece}
We let $\ttt\coloneqq T(F_1,F_2,F_3)$, so that we have $\ttt^K =T(F_1',F_2',F_3')$.
We now introduce a natural homeomorphism $$\psieq=\psieq_K: \Teq\to(\Teq)'.$$
\noindent For simplicity we assume $\ttt \geq 1$, so that the construction is crucially based on Lemma \ref{lem: Teq with cross ratios} (the case $\ttt<1$ is analogous, c.f. Remark \ref{rem: Teq when t leq 1}).

Let $t_1'$, $t_2'$ and $t_3'$ be the distinguished points for the triple $(F_1',F_2',F_3')$.
By Lemmas \ref{lem: properties c.r.} and \ref{lem: nested triangles}, for every $q\in\Teq$ there is a unique $\psieq(q)\in(\Text)'$ so that $$C(l_1',t_2',\psieq(q),l_2')=C(l_1,t_2,q,l_2)^K \tn{ and } C(l_2',t_3',\psieq(q),l_3')=C(l_2,t_3,q,l_3)^K.$$

\begin{rem}\label{rem: psieq differentiable}
    As in Remark \ref{rem: psi0 differentiable}, the map $\psieq$ is smooth in the interior of $\Teq$. 
\end{rem}

We also have the following.

\begin{cor}\label{cor: properties psieq}
 Let $(F_1',F_2',F_3')$ be a $\T$--stretch of size $K$ of $(F_1,F_2,F_3)$.
 For $q,q'\in\Teq$ the following properties hold:

\begin{enumerate}
    \item For all $i=1,2,3$ one has $$C(l_i',t_{i+1}',\psieq(q),l_{i+1}')=C(l_i,t_{i+1},q,l_{i+1})^K.$$ \noindent In particular, $\psieq(q)\in(\Teq)'$.
  \item For all $i=1,2,3$ one has $$
C(l_i',\psieq(q),\psieq(q'),l_{i+1}')=C(l_i,q,q', l_{i+1})^K.
$$ \noindent In particular, if $l\in\rpd$ is a projective line passing through $p_{i(i+1)}$ and intersecting $\Teq$, then $\psieq (l\cap \Teq)$ is a projective segment supported by a line passing through $p_{i(i+1)}'$.
\item Let $l\in\rpd$ be as in \tn{(2)}. If  $q,q'\in l\cap\Teq$ then $$d_{\T}(\psieq(q),\psieq(q'))=K\cdot d_{\T}(q,q').$$ 
\noindent As a consequence, if $q$ belongs to the interior of $\Teq$ and $v\in T_q l$, then $$\Vert \od_q\psieq(v)\Vert_{(\Text)'}=K\cdot\Vert v\Vert_{\Text}.$$
 
\end{enumerate}
 
\end{cor}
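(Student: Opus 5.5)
Property (1) carries everything: (2) follows from (1) via the Cocycle Property, and (3) follows from (2). I will assume $\ttt\geq 1$ throughout; the case $\ttt<1$ is identical using Remark \ref{rem: Teq when t leq 1}.

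For (1), $\psieq(q)$ is defined by prescribing the cross ratios with respect to the pairs $(l_1',l_2')$ and $(l_2',l_3')$. So only $i=3$ remains, i.e. the pair $(l_3',l_1')$ with base point $t_1'$. The plan is to find a multiplicative identity relating the three functions
$$x_i(q)\coloneqq C(l_i,t_{i+1},q,l_{i+1}).$$
Normalize coordinates so that $l_1,l_2,l_3$ are the coordinate lines $\{z_1=0\},\{z_2=0\},\{z_3=0\}$. Then $C(l_i,a,q,l_{i+1})$ is a ratio of the form $\frac{z_i(q)z_{i+1}(a)}{z_i(a)z_{i+1}(q)}$. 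Hence the product $x_1x_2x_3$ is a constant independent of $q$, namely
$$x_1(q)\,x_2(q)\,x_3(q)=\prod_i \frac{z_{i+1}(t_{i+1})}{z_i(t_{i+1})}.$$
This constant is expressible in terms of $\ttt$ via Lemma \ref{lem: t.r. with c.r.}. Evaluating at $q=t_2$, where $x_1=1$, $x_2=C(l_2,t_3,t_2,l_3)$ and $x_3=C(l_3,t_1,t_2,l_1)$, identifies it as $\ttt^{a}$ for some fixed integer $a$ (I expect $\ttt^{-1}$ after sign bookkeeping; with the ordering conventions of Lemma \ref{lem: Teq with cross ratios} it should come out as a pure power of $\ttt$). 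The same identity holds for the primed triple with $\ttt^K$. Raising to the $K$-th power then gives $x_3'(\psieq(q))=x_3(q)^K$.

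The containment $\psieq(q)\in(\Teq)'$ then follows from Lemma \ref{lem: Teq with cross ratios}, since $1\leq x_i\leq\ttt$ implies $1\leq x_i^K\leq\ttt^K$. The main obstacle is getting the constant correct, including orientation and sign conventions. Existence and uniqueness of $\psieq(q)$ inside $(\Text)'$ is already granted by the statement's construction.

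For (2), the Cocycle Property gives
$$C(l_i,q,q',l_{i+1})=\frac{x_i(q')}{x_i(q)},$$
and the primed version is the $K$-th power of this by (1). For the ``in particular'', note that by the Projection Property a line $l$ through $p_{i(i+1)}=l_i\cap l_{i+1}$ is exactly a level set $\{C(l_i,q_*,\cdot,l_{i+1})=c\}$ within $\Teq$. Its image is therefore contained in a level set of the same type for the primed triple, which is a line through $p'_{i(i+1)}$. Since $\psieq$ is continuous and injective (it is determined by the pair $(x_1,x_2)$), the image is a segment.

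For (3), take $q,q'\in l\cap\Teq$. The line $l$ passes through $p_{i(i+1)}$, a vertex of $\Text$, so it exits $\Text$ at $p_{i(i+1)}$ and at a point of $l_{i+2}$. I would first check that, in this situation, $d_{\Text}(q,q')=|\log C(l_i,q,q',l_{i+1})|$. Projecting from $p_{i(i+1)}$ only reparametrizes the pencil, so it suffices to compute the cross ratio of $(p_{i(i+1)},q,q',l\cap l_{i+2})$. By duality this equals the cross ratio of the four lines $l_{i+1}$, $p_{i+2 ,?}$, and so on. The direct check is to write $C(l_i,q,q',l_{i+1})$ via the Projection Property with $l_{i+1}$ replaced by any line through $l\cap l_{i+1}=p_{i(i+1)}$, and $l_i$ likewise. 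This gives $d_{\Text}(q,q')=\lvert\log C(l_i,q,q',l_{i+1})\rvert$ after a short computation on the line $l$, whose boundary points are $p_{i(i+1)}$ and $l\cap l_{i+2}$.

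Granting this formula, Lemma \ref{lem: triangular distance separates and extends} identifies $d_{\T}$ with $d_{\Text}$, and (2) gives $d_{\T}(\psieq(q),\psieq(q'))=K\,d_{\T}(q,q')$. The derivative statement follows from (\ref{eq: finsler norm and derivative of distance}) together with the smoothness of $\psieq$ (Remark \ref{rem: psieq differentiable}), letting $q'\to q$ along $l$.
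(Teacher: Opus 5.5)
Your plan for (1) and (2) follows the paper's outline: cross-ratio identities give (1), and the Cocycle Property gives (2). The step you rely on for (3), however, is false. For $q,q'$ on a line $l$ through $p_{i(i+1)}=l_i\cap l_{i+1}$, the cross ratio $C(l_i,q,q',l_{i+1})$ is identically $1$. This is the Projection Property, which you yourself use for the ``in particular'' of (2); in your coordinates, $z_i/z_{i+1}$ is constant on $l$. So your formula $d_{\Text}(q,q')=\lvert\log C(l_i,q,q',l_{i+1})\rvert$ would make $d_{\Text}$ vanish on $l$, and (2) with index $i$ only yields $1=1^K$, which carries no metric information.

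What you need is a pair of lines through the two endpoints of $l\cap\Text$, namely $p_{i(i+1)}$ and $y\coloneqq l\cap l_{i+2}$. Take $l_{i+1}$ through $p_{i(i+1)}$ and $l_{i+2}$ through $y$ in the definition of the cross ratio of four points on $l$. This gives $d_{\Text}(q,q')=\lvert\log C(l_{i+1},q,q',l_{i+2})\rvert$. By the ``in particular'' of (2), $\psieq(q)$ and $\psieq(q')$ lie on a line through $p'_{i(i+1)}$, so the same formula with primed lines computes $d_{(\Text)'}(\psieq(q),\psieq(q'))$. Now (2) with index $i+1$ (not $i$) produces the factor $K$. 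This is why (2) is stated for all three indices.

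There is also a smaller loose end in (1): the constant has to be computed, not guessed, since it is the only nontrivial input. At $q=t_2$ you have $x_2(t_2)=C(l_2,t_3,t_2,l_3)=1$, because $t_2,t_3\in p_1\oplus p_{23}$ and $p_{23}=l_2\cap l_3$. Meanwhile $x_3(t_2)=C(l_3,t_1,t_2,l_1)=C(l_1,t_2,t_1,l_3)=\ttt$, by the definition of $C$ and Lemma \ref{lem: t.r. with c.r.}. Hence $x_1x_2x_3\equiv\ttt$ (not $\ttt^{-1}$). By the $\T$--stretch hypothesis, $x_1'x_2'x_3'\equiv\ttt^K$, which is exactly what your division argument needs. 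With these two corrections your proof goes through.
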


\begin{proof}
By our assumption on triple ratios, (1) is a consequence of Lemmas \ref{lem: properties c.r.}, \ref{lem: t.r. with c.r.} and \ref{lem: Teq with cross ratios}.
Now, (2) follows from (1) and Lemma \ref{lem: properties c.r.}.
Finally, (3) follows from (2) and (\ref{eq: finsler norm and derivative of distance}).  
\end{proof}

\subsubsection{Assembling the pieces }\label{subsub: assembling homeo triples}

For each $i=1,2,3$ we let $g_i\in\g$ be the unique element so that $g_i\cdot \T_{i(i+1)(i+2)}=\T_{i(i+1)(i+2)}'$. 
We let $$\psi^i=\psi^i_K:\T_{i(i+1)(i+2)}\to\T_{i(i+1)(i+2)}': \psi^i(q)\coloneqq g_i\cdot \psi^0(q),$$ \noindent where $\psi^0$ is the homeomorphism of Subsection \ref{subsub: homeo rigid piece}.
Let $$\psi^\ttt=\psi^\ttt_K:\Ti\to(\Ti)': \psi^\ttt(q)\coloneqq \left\lbrace\begin{array}{l}
     \psi^1(q) \hspace{0,45cm}\tn{ if } q\in \T_{123}\\
     \psi^2(q) \hspace{0,45cm}\tn{ if } q\in \T_{231}\\
     \psi^3(q) \hspace{0,45cm}\tn{ if } q\in \T_{312}\\
     \psieq(q) \hspace{0,31cm}\tn{ if } q\in \Teq
\end{array},\right.$$ \noindent where $\psieq$ is as in Subsection \ref{subsub: homeo eq piece}.
We observe that $\psi^\ttt$ is well defined: 

\begin{lem}\label{lem: gluing psii and psieq}
    Let $i=1,2,3$.
    Then for all $q\in\T_{i(i+1)(i+2)}\cap \T_{(i+1)(i+2)i}$ and $q'\in\Teq\cap \T_{i(i+1)(i+2)}$ one has $$\psi^i(q)=\psi^{i+1}(q) \tn{ and } \psi^i(q')=\psieq(q').$$
\end{lem}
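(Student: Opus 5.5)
We will reduce both identities to cross-ratio computations that are invariant under the relevant projective transformations, so that checking them amounts to comparing a few cross ratios with their $K$-th powers. We assume $\ttt\geq 1$; the case $\ttt<1$ is identical, using Remark~\ref{rem: Teq when t leq 1}.

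\emph{The gluing $\psi^i=\psi^{i+1}$.} First identify the intersection: $\T_{i(i+1)(i+2)}\cap\T_{(i+1)(i+2)i}$ is the segment $[t_{i+1},p_{i+2}]$ when $\ttt=1$. When $\ttt>1$ it is just the point $p_{i+2}$ (or empty in $\Ti$), since $\Teq$ separates the pieces; the shared sides of two rigid pieces with $\Teq$ are handled in the second part. In the degenerate case the common side is $(t,p')$ of $\T_0$ for $\psi^i$, and it is also the corresponding side of the other piece. So the first step is to understand $\psi^0$ on the boundary side $[t,p')$. This side is a leaf piece $\folc^{p'}_t$ through $q_0=t$, and $\psi^0$ acts on it by multiplying the distance to $t$ by $K$, i.e.
\[
C(l,t,\psi^0(q),l')=C(l,t,q,l')^K .
\]
After applying $g_i$, the point $\psi^i(q)$ is therefore the unique point on $[t'_{i+1},p'_{i+2}]$ with $C(l'_i,t'_{i+1},\psi^i(q),l'_{i+2})=C(l_i,t_{i+1},q,l_{i+2})^K$, because $g_i$ carries the lines $l_i,l_{i+2}$ and the point $t_{i+1}$ to their primed versions. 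The same description holds for $\psi^{i+1}$ with respect to its own frame $(l_{i+1},l_i)$, after cyclic relabelling. The two characterizations then agree by the Projection Property, since the Hilbert distance along a segment is intrinsic: both maps scale the $d_{\T}$-distance from the common endpoint by $K$ along the same segment.

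\emph{The gluing $\psi^i=\psieq$.} For $q'\in\Teq\cap\T_{i(i+1)(i+2)}$, which is the side $[t_{i+1},t_{i+2}]$ or the appropriate side of $\Teq$, Lemma~\ref{lem: Teq with cross ratios} and Lemma~\ref{lem: t.r. with c.r.} show that this side lies on a cusp-geodesic leaf $\folc^{p'}_t$, namely the segment from $t_{i+1}$ toward $p_{i+2}$. Hence $\psi^i$ scales the distance from $t'_{i+1}$ by $K$:
\[
C(l'_i,t'_{i+1},\psi^i(q'),l'_{i+2})=C(l_i,t_{i+1},q',l_{i+2})^K .
\]
On the other hand, Corollary~\ref{cor: properties psieq}(1) gives the analogous identity for $\psieq$, with the pair $(l_i,l_{i+1})$. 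Both images lie on the side $[t'_{i+1},t'_{i+2}]$ of $(\Teq)'$, the first by construction and the second by Corollary~\ref{cor: properties psieq}(2). On that side the two cross-ratio parametrizations from $t'_{i+1}$ are related by the Projection Property, since the line through the side passes through the relevant $p'_{jk}$. Equivalently, both give $d_{\T}(t'_{i+1},\cdot)=K\,d_{\T}(t_{i+1},\cdot)$ along a Hilbert geodesic of $(\Text)'$. They therefore coincide.

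\emph{Main obstacle.} The hard part is the incidence bookkeeping: verifying that each shared side is indeed a single cusp-geodesic leaf issuing from $t_{i+1}$, rather than crossing the ortho-geodesic leaf $\folo_r$, and that $g_i$ maps $t_{i+1}$ to $t'_{i+1}$. The latter follows from Remark~\ref{rem: Tijk intrinsic}.
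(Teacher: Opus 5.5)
Your argument for the first identity rests on a false picture of the decomposition \eqref{eq: Tid decompositon}. The pieces $\T_{i(i+1)(i+2)}$ and $\T_{(i+1)(i+2)i}$ have vertices $p_i,t_{i+1},p_{i+2}$ and $p_{i+1},t_{i+2},p_i$. Their common vertex is $p_i$, not $p_{i+2}$, and the sides of both pieces issuing from $p_i$ lie on the line $p_i\oplus p_{(i+1)(i+2)}$, which contains both $t_{i+1}$ and $t_{i+2}$. Lemma~\ref{lem: t.r. with c.r.} gives $C(l_i,t_{i+1},t_{i+2},l_{i+2})=\ttt$. Hence for $\ttt\geq 1$ the order on this line is $p_i,t_{i+1},t_{i+2}$, and the two pieces meet along the nondegenerate segment $(p_i,t_{i+1}]$. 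This holds for every $\ttt\geq 1$, not only for $\ttt=1$. The equilateral piece does not separate the rigid pieces: for $\ttt>1$ it meets $\T_{i(i+1)(i+2)}$ only along $[t_i,t_{i+1}]$. So the case you dismiss as vacuous is the main case.

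On $(p_i,t_{i+1}]$ the two maps are $K$--dilations along a Hilbert geodesic, but with different anchors.
\begin{itemize}
\item $\psi^i$ sends $t_{i+1}$ to $t_{i+1}'$ and multiplies $d_{\T}(t_{i+1},\cdot)$ by $K$.
\item For $\T_{(i+1)(i+2)i}$, the segment is a proper sub--segment of the leaf $\folc^{p_i}_{t_{i+2}}=[t_{i+2},p_i)$. So $\psi^{i+1}$ sends $t_{i+2}$ to $t_{i+2}'$ and multiplies $d_{\T}(t_{i+2},\cdot)$ by $K$.
\end{itemize}
There is no common endpoint. The missing step is the paper's key computation $\psi^{i+1}(t_{i+1})=t_{i+1}'$. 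From $C(l_{i+1},t_{i+2},t_{i+1},l_i)=\ttt$ one gets
\[
d_{\T}(t_{i+2}',\psi^{i+1}(t_{i+1}))=K\log\ttt=\log T(F_1',F_2',F_3')=d_{\T}(t_{i+2}',t_{i+1}').
\]
Once the two dilations agree at $t_{i+1}$, they agree on the whole segment. Your proof of the first identity never uses the hypothesis $T(F_1',F_2',F_3')=\ttt^K$, yet the identity is false without it; that alone signals the gap.

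The second part is essentially right in its ``equivalently'' form, but the indices need fixing. The intersection is $\Teq\cap\T_{i(i+1)(i+2)}=[t_i,t_{i+1}]$, lying on $p_{i+2}\oplus p_{i(i+1)}$, not $[t_{i+1},t_{i+2}]$. On that line $C(l_i,t_{i+1},\cdot,l_{i+1})\equiv 1$, so the pair $(l_i,l_{i+1})$ gives no information. Use instead Corollary~\ref{cor: properties psieq}(3), together with $\psieq(t_{i+1})=t_{i+1}'$ from part (1).
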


\begin{proof}

We assume for simplicity that $\ttt \geq  1$ (the case $\ttt<1$ is analogous).
By Lemma \ref{lem: t.r. with c.r.} and Remark \ref{rem: pieces of cusp geodesic leaves} we have $$t_{i+1}\in \folc_{t_{i+2}}^{p_{i}} \subset t_{i+2}\oplus p_{i} \tn{ and }t_{i+1}'\in \folc_{t_{i+2}'}^{p_{i}'}  \subset t_{i+2}'\oplus p_{i}'.$$
\noindent Moreover, by definition $\psi^0(t_{i+1})$ is the unique point in $\folc_{t_{i+2}}^{p_{i}}$ so that $$C(l_{i+1},t_{i+2},\psi^0(t_{i+1}),l_{i})=C(l_{i+1},t_{i+2},t_{i+1},l_{i})^K=\ttt^K.$$
\noindent Thus applying $g_{i+1}$ we have $$C(l_{i+1}',t_{i+2}',\psi^{i+1}(t_{i+1}),l_{i}')=C(l_{i+1}',t_{i+2}',t_{i+1}',l_{i}').$$
\noindent As both $\psi^{i+1}(t_{i+1})$ and $t_{i+1}'$ belong to $t_{i+2}' \oplus p_{i}'$, we conclude that $$\psi^{i+1}(t_{i+1})=t_{i+1}'=\psi^i(t_{i+1}).$$
\noindent Moreover, as by Lemma \ref{lem: properties psi0} both $\psi^{i+1}$ and $\psi^{i}$ stretch $\T_{i(i+1)(i+2)}\cap \T_{(i+1)(i+1)i}$ by $K$, the same reasoning shows that these maps agree on this common part.

Analogously, applying Lemma \ref{lem: properties psi0} and Corollary \ref{cor: properties psieq} we see that $\psi^i$ and $\psieq$ coincide along $\T_{i(i+1)(i+2)}\cap \Teq$.
\end{proof}

\subsection{The case of positive tuples} \label{subsec: stretch map on tuples}

We now turn to the general case, namely we fix some $n\geq 3$ and a $\T$--stretch $(F_1',\dots,F_n')$ of size $K$ of $(F_1,\dots,F_n)\in\flags_n^+$.
For each $E\in\caleor_P$ we have the respective sub--quadruples $(F_{i_E},F_{j_E},F_{k_E},F_{j_{\overline{E}}})$ and $(F_{i_E}',F_{j_E}',F_{k_E}',F_{j_{\overline{E}}}')$ as in Subsection \ref{subsec: positive tuples}. Subsection \ref{subsec: stretch map on triples} yields maps $$\psi^E=\psi^E_K:\Ti_{(F_{i_E},F_{j_E},F_{k_E})}\to\Ti_{(F_{i_E}',F_{j_E}',F_{k_E}')}$$ \noindent and $$\psi^{\overline{E}}=\psi^{\overline{E}}_K:\Ti_{(F_{k_E},F_{j_{\overline{E}}},F_{i_E})}\to\Ti_{(F_{k_E}',F_{j_{\overline{E}}}',F_{i_E}')}.$$
\noindent As different ideal triangles of $\Poli$ can meet at most on a common edge (c.f Lemma \ref{lem: nested polygons}), the following shows that these maps glue together to produce the map $\psi^{(\ttt,\ttc)}$ that we were looking for. 

\begin{lem}\label{lem: gluning maps of triples}
For every $E\in\caleor_P$ the homeomorphisms $\psi^E$ and $\psi^{\overline{E}}$ agree on $$[p_{i_E},p_{k_E}]=\Ti_{(F_{i_E},F_{j_E},F_{k_E})}\cap \Ti_{(F_{k_E},F_{j_{\overline{E}}},F_{i_E})}.$$
\end{lem}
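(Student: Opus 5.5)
The plan is to describe both $\psi^E$ and $\psi^{\overline{E}}$ on the common edge purely by cross ratios with respect to the lines $l_{i_E}$, $l_{k_E}$ (resp. $l_{i_E}'$, $l_{k_E}'$). Once both are expressed in that form, comparing them reduces to a single identity between cross ratios of distinguished points, which Lemma \ref{lem: c.r. distinguished points quadruple} supplies.

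First I restrict $\psi^E$ to $[p_{i_E},p_{k_E}]$. Label the triple $(F_{i_E},F_{j_E},F_{k_E})$ as $(F_1,F_2,F_3)$. The edge is then $[p_1,p_3]$, a side of the rigid piece $\T_{123}$, with $r=r_2=r_E$ and $l=l_{i_E}$, $l'=l_{k_E}$. Two facts are needed about $\psi^0$ on this side.
\begin{itemize}
\item The side $(p,p')$ is the union of the two cusp-geodesic pieces $\folc^p_r$ and $\folc^{p'}_r$ through $q_0=r$, and $\psi^0$ fixes $r$.
\item By (\ref{eq: def stretch map rigid}) on both pieces, or directly by Lemma \ref{lem: properties psi0}(1), one has $C(l,\psi^0(q),r,l')=C(l,q,r,l')^K$ for $q$ in the side.
\end{itemize}
The element $g_1$ sends $(F_{i_E},F_{k_E},l_{j_E})$ to the primed data, so it sends $l_{i_E},l_{k_E}$ to $l_{i_E}',l_{k_E}'$ and $r_E$ to $r_E'$. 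Hence $\psi^E$ maps $[p_{i_E},p_{k_E}]$ onto $[p_{i_E}',p_{k_E}']$ and satisfies
$$C(l_{i_E}',\psi^E(q),r_E',l_{k_E}')=C(l_{i_E},q,r_E,l_{k_E})^K .$$
A point of the segment $(p_{i_E}',p_{k_E}')$ is uniquely determined by this cross ratio: the line $p_{i_E}'\oplus p_{k_E}'$ meets $l_{i_E}'$ and $l_{k_E}'$ in two fixed points, so this is a projective coordinate on the line (Remark \ref{rem: segment ciclicality and nesting}).

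The same argument applies to $\psi^{\overline{E}}$, using the triple $(F_{k_E},F_{j_{\overline{E}}},F_{i_E})$. It gives
$$C(l_{k_E}',\psi^{\overline{E}}(q),r_{\overline{E}}',l_{i_E}')=C(l_{k_E},q,r_{\overline{E}},l_{i_E})^K .$$
The symmetry $C(l,a,b,l')=C(l',b,a,l)$, which is immediate from the definition, rewrites this as
$$C(l_{i_E}',r_{\overline{E}}',\psi^{\overline{E}}(q),l_{k_E}')=C(l_{i_E},r_{\overline{E}},q,l_{k_E})^K .$$
By the Cocycle Property, $C(l_{i_E}',\psi^{\overline{E}}(q),r_E',l_{k_E}')$ equals
$$C(l_{i_E},q,r_{\overline{E}},l_{k_E})^{K}\cdot C(l_{i_E}',r_{\overline{E}}',r_E',l_{k_E}') .$$
Comparing with the formula for $\psi^E$, the two maps agree on the edge as soon as
$$C(l_{i_E}',r_{\overline{E}}',r_E',l_{k_E}')=C(l_{i_E},r_{\overline{E}},r_E,l_{k_E})^K .$$

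This identity is where the hypothesis enters, and it is the step I expect to need the most care with orientations and labelings. Apply Lemma \ref{lem: c.r. distinguished points quadruple} to the sub-quadruple $(F_{i_E},F_{j_E},F_{k_E},F_{j_{\overline{E}}})$ with sub-parameter $(\ttt_E,\ttt_{\overline{E}},\ttc_E,\ttc_{\overline{E}})$. It gives $C(l_{i_E},r_E,r_{\overline{E}},l_{k_E})=\ttt_E\ttt_{\overline{E}}\ttc_E\ttc_{\overline{E}}$, hence $C(l_{i_E},r_{\overline{E}},r_E,l_{k_E})=(\ttt_E\ttt_{\overline{E}}\ttc_E\ttc_{\overline{E}})^{-1}$. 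The primed version is the $K$-th power, because $(F_1',\dots,F_n')$ is a $\T$--stretch of size $K$. Finally, uniqueness of the point with a prescribed cross ratio on the segment gives $\psi^E=\psi^{\overline{E}}$ on the open edge. The only remaining checks are that the endpoints correspond and that both maps land in the same segment $[p_{i_E}',p_{k_E}']$, which holds by construction.
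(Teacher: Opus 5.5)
Your proof is correct and is essentially the argument the paper has in mind (the paper omits it as analogous to Lemma \ref{lem: gluing psii and psieq}). On the common edge each map is a $K$-th power map in the cross-ratio coordinate relative to $l_{i_E},l_{k_E}$, centred at $r_E$ (resp.\ $r_{\overline{E}}$) and sending it to $r_E'$ (resp.\ $r_{\overline{E}}'$), so agreement reduces to $\psi^E(r_{\overline{E}})=r_{\overline{E}}'$, i.e.\ to $C(l_{i_E}',r_{\overline{E}}',r_E',l_{k_E}')=C(l_{i_E},r_{\overline{E}},r_E,l_{k_E})^K$, which Lemma \ref{lem: c.r. distinguished points quadruple} together with the $\T$--stretch hypothesis supplies; the one implicit identification, that the point $r_4$ of that lemma is $r_{\overline{E}}$, holds because $t_i$ and $r_i$ are symmetric in the two remaining indices and so are unchanged by the cyclic relabeling $(F_{i_E},F_{k_E},F_{j_{\overline{E}}})\to(F_{k_E},F_{j_{\overline{E}}},F_{i_E})$.
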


The proof of Lemma \ref{lem: gluning maps of triples} is analogous to that of Lemma \ref{lem: gluing psii and psieq}, so we omit it. 
Thanks to Lemma \ref{lem: isometric triangular metrics}, the ortho--geodesic stretch map $\psi^{(\ttt,\ttc)}$  depends only on specific sub--data of $(\ttt,\ttc)$, namely, it depends on the full information of $\ttt$ and on the products $\ttc_E\ttc_{\overline{E}}$ for all $E\in\cale_P$.

\subsection{Stretch properties}

The goal of this section is to prove the following.

\begin{thm}\label{thm: Klip}
 Let $n\geq 3$ and $(F_1',\dots,F_n')$ be a $\T$--stretch of size $K$ of $(F_1,\dots,F_n)\in\flags_n^+$.
 Then the ortho--geodesic stretch map $$\psi^{(\ttt,\ttc)}=\psi^{(\ttt,\ttc)}_K:\Poli\to(\Poli)'$$ \noindent is a Lipschitz map with Lipschitz constant $K$ for the respective triangular distances. 
\end{thm}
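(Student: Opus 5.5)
By Lemma \ref{lem: sufficient condtion for lipshictz triangular} it suffices to work one triangle at a time. Fix $T\in\calt_P$ and a projective segment $\alpha\subset\Ti_T$; we must show $\ell_{\T}(\psi\circ\alpha)\leq K\,\ell_{\T}(\alpha)$, where $\psi=\psi^{(\ttt,\ttc)}$. Since $\psi(\Ti_T)=(\Ti_T)'$, Lemma \ref{lem: triangular distance separates and extends} reduces this to the Hilbert norms of $\Text_T$ and $(\Text_T)'$. On $\Ti_T$ the map $\psi$ coincides with $\psi^\ttt$ from Subsection \ref{subsub: assembling homeo triples}. We subdivide $\alpha$ by the decomposition (\ref{eq: Tid decompositon}) and by the leaf $\folo_r$ of each rigid piece. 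By Remarks \ref{rem: psi0 differentiable} and \ref{rem: psieq differentiable}, $\psi\circ\alpha$ is then piecewise smooth, since $\alpha$ meets the singular segments $(r,t)$ in finitely many points or runs along one of them. It therefore suffices to prove the infinitesimal bound $\Vert \od_q\psi(v)\Vert_{(\Text)'}\leq K\Vert v\Vert_{\Text}$ for $q$ in the interior of each piece and every $v\in T_q\Ti$.

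\emph{The equilateral piece.} Here we use Example \ref{ex: finsler norm ball for polygons}. The unit ball of $\Vert\cdot\Vert_{\Text}$ at $q$ is the convex hull of $\pm v_1,\pm v_2,\pm v_3$, where $v_i$ is the unit vector pointing to the vertex $p_{(i+1)(i+2)}$ (indices as appropriate). By Corollary \ref{cor: properties psieq}(3), $\od_q\psieq$ maps the line through $q$ and $p_{i(i+1)}$ to the line through $\psieq(q)$ and $p'_{i(i+1)}$. It also multiplies norms along such lines by exactly $K$, with orientation preserved by continuity. Hence $\od_q\psieq$ maps the unit ball at $q$ linearly onto $K$ times the unit ball at $\psieq(q)$, because a linear map preserves convex hulls. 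This gives the bound, in fact with equality of norms.

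\emph{The rigid pieces.} After composing with $g_i$, which is an isometry $\Text\to(\Text)'$ since it carries the defining lines to the defining lines, we only need $\psi^0$ on $\T_0$. Lemma \ref{lem: properties psi0} gives $\Vert\od\psi^0(v)\Vert=K\Vert v\Vert$ for $v$ tangent to $\folc$, and $\Vert\od\psi^0(v')\Vert\leq\Vert v'\Vert$ for $v'$ tangent to $\folo$. A general $v$ decomposes as $v=a+b$ with $a\in T_q\folc_q$ and $b\in T_q\folo_q$, so the triangle inequality alone only gives $K\Vert a\Vert+\Vert b\Vert$. That is not obviously at most $K\Vert v\Vert$, and this is the \textbf{main obstacle}.

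To handle it, I would again use Example \ref{ex: finsler norm ball for polygons} for the triangle $\Text$ with vertices $p_0,p_{i(i+1)},p_{(i+1)(i+2)}$. The vector $b$ points toward $p_0$, so it is a multiple of a vertex direction. At a point of $\T_0^p$, $a$ points toward $p$, which lies on the side $l$ joining $p_0$ and $p_{i(i+1)}$. The key structural fact is that $\od_q\psi^0$ fixes the $p_0$-direction and the $p$-direction (projectively) and is triangular with respect to them: it expands along $\folc$ by $K$ and contracts along $\folo$. I would show that the image of the unit ball is contained in $K$ times the image unit ball by checking the finitely many extreme points $\pm v_j$. Concretely, write each $v_j$ in the $(a,b)$ frame, and use the explicit cross-ratio form (\ref{eq: def stretch map rigid}): $\psi^0$ is $x\mapsto x^K$ in the pencil through $p_0$, and it is a $K$-dilation along the pencil through $p$. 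From this, compute $\od\psi^0(v_j)$ and compare its norm with $K$ via Lemma \ref{lem: properties c.r.}, using Nesting to control the contraction term. An alternative route, if the linear algebra is messy, is to prove the finite statement directly: for two points on a segment, join them by a path running first along a $\folo$ leaf and then along a $\folc$ leaf, and estimate the lengths with Lemma \ref{lem: bound triangular pencil}. I would first try the extreme-point computation.

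Finally, the gluing Lemmas \ref{lem: gluing psii and psieq} and \ref{lem: gluning maps of triples} ensure $\psi$ is a well-defined homeomorphism mapping each $\Ti_T$ to $(\Ti_T)'$. Integrating the infinitesimal bound along each smooth piece of $\alpha$ then gives $\ell_{\T}(\psi\circ\alpha)\leq K\ell_{\T}(\alpha)$, and Lemma \ref{lem: sufficient condtion for lipshictz triangular} concludes.
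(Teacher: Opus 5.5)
Your reductions match the paper's. You use Lemma \ref{lem: sufficient condtion for lipshictz triangular} to work on one triangle, reduce to an infinitesimal bound on the pieces $\Teq,\T_0^p,\T_0^{p'}$, and use Example \ref{ex: finsler norm ball for polygons} to reduce to the three vertex directions. Your treatment of $\Teq$ via Corollary \ref{cor: properties psieq} and the bound $\leq 1$ for the direction toward $p_0$ are also as in the paper.

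The gap is at the two remaining vertex directions on $\T_0^p$: the unit vectors $v,v'$ pointing at $l\cap l_0$ and $l'\cap l_0$. These carry the whole content of the theorem; they are exactly Proposition \ref{prop: stretching special vector psi0}. For them you only announce a computation ``using Nesting to control the contraction term'', without saying why it should come out $\leq K$. Your fallback, a path along an $\folo$ leaf followed by an $\folc$ leaf, reproduces exactly the triangle-inequality loss $K\Vert a\Vert+\Vert b\Vert$ you flagged. So it cannot give the constant $K$.

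The missing idea is the following. For $v$ and $v'$ the line through $q$ leaves $\Text$ through $l$ and $l'$. Their norm is therefore measured by the single function $o(q)=\log C(l,q,r,l')$, that is, $\Vert v\Vert_{\Text}=\vert\od_q o(v)\vert$. Lemma \ref{lem: properties psi0}(1) gives $o\circ\psi^0=K\cdot o$, hence $\vert\od o(\od_q\psi^0(v))\vert=K$. However, $\vert\od o(w)\vert\leq\Vert w\Vert_{\Text}$ for every $w$ (Nesting), so this yields only the lower bound $\Vert\od_q\psi^0(v)\Vert_{\Text}\geq K$.

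The upper bound needs the \emph{image} direction to still leave $\Text$ through $l$ and $l'$. This is Claim \ref{claim: stretch psi0}, proved by locating $\psi^0(\alpha(t))$ inside $[\widehat{\alpha}(t),\alpha(t)]$. It is a positional statement, not a bound on the contraction along $\folo$. It uses that $q\in\T_0^p$ lies on the $p_0$--side of the line $p\oplus t$, which passes through $l'\cap l_0$. Indeed, formally continue the rule defining $\psi^0$ on $\T_0^p$ (preserve lines through $p$, multiply $o$ by $K$) just across $(p,t)$. There the image of $v'$ has Hilbert norm strictly larger than $K$. So any extreme-point computation must use this position, and your sketch gives no indication of where it would enter.
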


To prove Theorem \ref{thm: Klip} we first make several reductions. 
Thanks to Lemma \ref{lem: sufficient condtion for lipshictz triangular} it is enough to assume that 
$n=3$.
Hence, from now on we work with a $\T$--stretch $(F_1',F_2',F_3')$ of size $K$ of some $(F_1,F_2,F_3)$ with triple ratio equal to $\ttt>0$.
We need to show that for a projective segment $\alpha\subset\Ti=\Ti_{(F_1,F_2,F_3)}$ the curve $\psi^\ttt\circ\alpha$ is piecewise smooth and satisfies $$\ell_{(\Text)'}(\psi^\ttt\circ\alpha)\leq K\cdot \ell_{\Text}(\alpha).$$

Lemma \ref{lem: properties psi0} readily controls the stretch of some  special projective segments inside $\Ti$.
Namely, if for some $i=1,2,3$ we have (recall Figure \ref{fig: distinguished points triple}): 
\begin{itemize}
    
    \item either $\alpha$ is contained in some side of $\T_{i(i+1)(i+2)}$, or
    \item the segment $\alpha$ is contained in some $[t_{i},r_i]$,
\end{itemize} \noindent then the desired inequality as satisfied. 
Hence, we assume from now on that $\alpha$ does not satisfy the conditions listed above. 
Remarks \ref{rem: psi0 differentiable} and \ref{rem: psieq differentiable} imply that $\psi^\ttt\circ\alpha$ is piecewise smooth. 
Since the $g_i$'s of Subsection \ref{subsub: homeo rigid piece} induce an isometry for the respective triangular distances,  it suffices to work out separately three cases: the case $\alpha\subset\Teq$, the cases $\alpha\subset\T_0^p$ and $\alpha\subset\T_0^{p'}$ (recall Remark \ref{rem: psi0 differentiable} and Figure \ref{fig: foliations}). 
Moreover, we assume, without loss of generality, that $\alpha$ is contained in the interior of that triangle.

We fix  a triangle $\T\in\{\Teq,\T_0^p,\T_0^{p'}\}$ and a point $q$ in the interior of $\T$. 
We will show that for every unit vector $v\in T_q\T\cong T_q \Ti$ one has $$\Vert \od_q\psi^\ttt(v)\Vert_{(\Text)'}\leq K.$$
\noindent It  suffices to check the above inequality on three specific vectors. 
Namely, for $i=1,2,3$ we let $v_i\in T_q\Ti$ be the unit vector so that the directed Hilbert geodesic $\alpha_{q,v_i}$ of $\Text$ satisfies $\alpha_{q,v_i}(\infty)=p_{(i+1)(i+2)}$.
We will show that \begin{equation}\label{eq: stretch special vectors}
    \Vert \od_q\psi^{\ttt}(v_i)\Vert_{(\Text)'}\leq K
\end{equation} \noindent holds for $i=1,2,3$.
As $\od_q\psi^\ttt$ is linear and the Hilbert norm is symmetric, this will prove the desired bound in view of Example \ref{ex: finsler norm ball for polygons}.

The rest of this subsection is dedicated to the proof of (\ref{eq: stretch special vectors}).
The simplest case is when $q$ is in the interior of $\Teq$, because thanks to Corollary \ref{cor: properties psieq} we have $$\Vert \od _q\psi^\ttt(v_i)\Vert_{(\Text)'}=\Vert \od _q\psieq(v_i)\Vert_{(\Text)'}=K$$ \noindent for all $i=1,2,3$ (in fact, in this case $\od_q\psi^\ttt$ stretches every vector by a factor of $K$).

Let us now turn our attention to the pieces $\T_0^p$ and $\T_0^{p'}$.
Assume that $q$ belongs to the interior of $\T_0^p$ (the other case is symmetric). 
We may restrict our analysis to the homeomorphism $\psi^0:\T_0\to\T_0$ of Subsection \ref{subsub: homeo rigid piece}. 
Recalling the notations introduced in that subsection, if we denote by $v_0$ the unit vector in $T_q\T_0^p\cong T_q\T_0$ pointing towards $p_0$, then thanks to Lemma \ref{lem: properties psi0} we have $$\Vert \od _q\psi^0(v_0)\Vert_{(\Text)'}\leq 1<K.$$
\noindent The proof of (\ref{eq: stretch special vectors}) (and hence of Theorem \ref{thm: Klip}) finishes with the following. 

\begin{prop}\label{prop: stretching special vector psi0}
Let $q$ be a point in the interior of $\T_0^p$, and $v$ and $v'$ be the unit vectors in $T_q\T_0^p\cong T_q\T_0$ so that the Hilbert geodesics $\alpha_{q,v}$ and $\alpha_{q,v'}$ satisfy $$\alpha_{q,v}(\infty)=l\cap l_0 \tn{ and } \alpha_{q,v'}(\infty)=l'\cap l_0.$$
\noindent Then $$\Vert \od _q\psi^0(v)\Vert_{\Text}= K \tn{ and } \Vert \od _q\psi^0(v')\Vert_{\Text}= K.$$
\end{prop}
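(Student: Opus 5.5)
The plan is to compute the Hilbert norm of $\Text$ in log-coordinates adapted to its three sides, in which both $\psi^0$ and the special directions $v,v'$ are transparent.

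\textbf{Step 1: the hexagonal formula.} Choose co-vectors for $l,l',l_0$, normalized so that they are positive on the interior of $\Text$. Put
$$a=\log l(\cdot),\qquad b=\log l'(\cdot),\qquad c=\log l_0(\cdot),$$
defined modulo the diagonal. Every cross ratio $C(l_\ast,q,q',l_{\ast\ast})$ with $l_\ast,l_{\ast\ast}\in\{l,l',l_0\}$ is then the exponential of a difference of these coordinates. Combining Example \ref{ex: finsler norm ball for polygons} with (\ref{eq: finsler norm and derivative of distance}), I will check that
$$\Vert w\Vert_{\Text}=\max\{da(w),db(w),dc(w)\}-\min\{da(w),db(w),dc(w)\}.$$
It is enough to check this on the three vertex directions, where one differential is nonzero and the other two vanish.

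\textbf{Step 2: the special vectors.} The vector $v$ points to the vertex $l\cap l_0$, so along $\alpha_{q,v}$ the ratio $l:l_0$ is constant and only $b$ moves. Thus $da(v)=dc(v)$ and $\Vert v\Vert_{\Text}=|db(v)-da(v)|=1$. Symmetrically, $v'$ satisfies $db(v')=dc(v')$ and only $a$ moves relative to the others.

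\textbf{Step 3: differentiate $\psi^0$.} Set $u\coloneqq a-b$, the log of $C(l,\cdot,\cdot,l')$ measured from the leaf $\folo_r$. By Lemma \ref{lem: properties psi0}(1) and (\ref{eq: def stretch map rigid}), $u\circ\psi^0=K\,u$ up to the normalisation fixing $\folo_r$. Hence $du(\od_q\psi^0(w))=K\,du(w)$ for every $w$, and in particular $|du(\od_q\psi^0(v))|=K$. Since $\max-\min\geq|da-db|$, this gives
$$\Vert\od_q\psi^0(v)\Vert_{\Text}\geq K.$$
The same argument gives $\Vert\od_q\psi^0(v')\Vert_{\Text}\geq K$.

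\textbf{Step 4: the upper bound.} It remains to show that, for $w=\od_q\psi^0(v)$, the number $dc(w)$ lies between $da(w)$ and $db(w)$; the max-minus-min is then exactly $K$. Here I use the other defining property of $\psi^0$ on $\T_0^p$: it preserves each line through $p$. Since $p\in l$ and $p\notin l',l_0$, these lines are the level sets of $\log\bigl(l'(\cdot)/l_0(\cdot)\bigr)$ restricted appropriately; more precisely, they are level sets of an explicit affine-in-exponential relation among $a,b,c$. The plan is to write $\psi^0$ in the chart $(u,\,b-c)$ as
$$(u,\,b-c)\mapsto\bigl(Ku,\ f(u,\,b-c)\bigr),$$
with $f$ determined by the pencil through $p$. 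I would then show that $\partial f/\partial u$ has the sign and size that places $dc(w)$ between $da(w)$ and $db(w)$. This should follow from the positions of $p$, $q_0$ and $l\cap l_0$ furnished by Remark \ref{rem: pieces of cusp geodesic leaves} and the Nesting Property. The case $v'$ is handled the same way with the roles of $a$ and $b$ exchanged.

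\textbf{Main obstacle.} I expect Step 4 to be the hard part, because the cusp-geodesic leaves are lines through $p$, which is not a vertex of $\Text$, so $f$ is not linear in log-coordinates. My first attempt will be to evaluate everything at $q$ using the identity $C(l,\cdot,\cdot,l_0)\cdot C(l_0,\cdot,\cdot,l')=C(l,\cdot,\cdot,l')$. This expresses $dc(w)$ as a convex combination of $da(w)$ and $db(w)$, with weights given by cross ratios that are positive by Lemma \ref{lem: properties c.r.}.
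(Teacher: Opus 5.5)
Your Steps 1--3 are sound. The hexagonal formula is the standard Hilbert norm of a triangle. The vectors are correctly characterized by $\od a(v)=\od c(v)$, $\od b(v)-\od a(v)=1$ and $\od b(v')=\od c(v')$, $\od a(v')-\od b(v')=1$. Lemma \ref{lem: properties psi0}(1) gives $u\circ\psi^0=Ku+\tn{const}$, so both norms are $\geq K$.

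The gap is Step 4. It carries the entire content of the proposition, and you leave it as a plan. The condition that $\od c(w)$ lies between $\od a(w)$ and $\od b(w)$ is the infinitesimal form of the paper's Claim \ref{claim: stretch psi0}: the line through $\psi^0(q)$ in direction $w$ leaves $\Text$ through $l$ and $l'$, not through $l_0$. This condition genuinely depends on two inputs: $K>1$, and the fact that on $\T_0^p$ the map pushes $q$ towards $p$ along $p\oplus q$, i.e.\ $C(l,q,r,l')\geq 1$. With the same construction and $0<K<1$ one gets $\Vert\od_q\psi^0(v)\Vert_{\Text}>K$. The device you propose cannot see either input. In your coordinates $C(l,\cdot,\cdot,l_0)\cdot C(l_0,\cdot,\cdot,l')=C(l,\cdot,\cdot,l')$ is the tautology $(a-c)+(c-b)=a-b$, valid for every vector, so it imposes no constraint on $w$. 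Two smaller points: in the chart $(u,b-c)$ both partial derivatives of $f$ enter, since $\od(b-c)(v)=1\neq 0$. Also, the lines through $p$ are not level sets of $\log(l'/l_0)$ (those pass through $l'\cap l_0$) but of $(e^{b}-\kappa e^{c})e^{-a}$ for some $\kappa>0$.

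The step closes quickly in coordinates adapted to both foliations. Use the paper's $g=\log C(l,r,\cdot,p\oplus t)$ and $o=\log C(l,\cdot,r,l')=-u+\tn{const}$, in which $\psi^0(g,o)=(g,Ko)$. Normalize via (\ref{eq: rigid configuration}) so that $l=\{x=0\}$, $l'=\{y=0\}$, $l_0=\{z=0\}$, $p=[0:1:1]$, $p'=[1:0:1]$. Then $t=[1:1:1]$, $r=[1:1:2]$, $e^g=x/(z-y)$ and $e^o=y/x$. Putting $\lambda\coloneqq e^{g+o}$, one computes
\[
\od(a-c)=\frac{\od g-\lambda\,\od o}{1+\lambda},\qquad \od(b-c)=\frac{\od g+\od o}{1+\lambda}.
\]
Thus $v=\lambda\,\partial_g+\partial_o$ and $v'=\partial_g-\partial_o$. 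The differential $\od_q\psi^0$ sends $\partial_g\mapsto\partial_g$ and $\partial_o\mapsto K\partial_o$. At $\psi^0(q)$ the parameter becomes $\lambda'=e^{g+Ko}\geq\lambda$, since $o\geq 0$ on $\T_0^p$.

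For $w=\od_q\psi^0(v)$ this gives $\od(a-c)(w)=(\lambda-K\lambda')/(1+\lambda')<0<(\lambda+K)/(1+\lambda')=\od(b-c)(w)$. For $w'=\od_q\psi^0(v')$ it gives $\od(b-c)(w')=(1-K)/(1+\lambda')<0<(1+K\lambda')/(1+\lambda')=\od(a-c)(w')$. In both cases the hexagonal norm is exactly $K$.

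The paper instead proves the finite version synthetically, showing $\psi^0(\alpha(t))\in[\widehat{\alpha}(t),\alpha(t)]$ with the Nesting Property. It then differentiates $o$ along the geodesic, which yields the equality directly without your separate lower bound.
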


\begin{proof}
Let $\tn{int}(\T_0^p)$ denote the interior of $\T_0^p$.
For $q\in\tn{int}(\T_0^p)$ let $$g(q)\coloneqq \log C(l,r,q,p\oplus t) \tn{ and } o(q)\coloneqq  \log C(l,q,r,l') .$$ \noindent Then $$(g,o): \tn{int}(\T_0^p)\to (0,\infty)\times (0,\infty)$$ \noindent is a smooth coordinate system on which $\psi^0$ is given by $(g,o)\mapsto\left(g,K\cdot o\right)$.

Now, let $\widetilde{v}$ be either $-v$ or $v'$ and $\widetilde{\alpha}$ be $\alpha_{q,-v}$ or $\alpha_{q,v'}$ accordingly. 
In either case, by (\ref{eq: finsler norm and derivative of distance}) and the Cocycle Property we have \begin{equation}\label{eq: derivative of o}
1=\Vert \widetilde{v}\Vert_{\Text}= \left.\frac{\od}{\od t} \right\vert_{t=0^+}d_{\Text}(q,\widetilde{\alpha}(t))= \left.\frac{\od}{\od t} \right\vert_{t=0^+}o(q)-o(\widetilde{\alpha}(t)).    
\end{equation}
\noindent We claim the following.

\begin{cla}\label{claim: stretch psi0}
For every $K>1$ and every $t>0$ sufficiently small one has $$d_{(\Text)'}(\psi^0(q),\psi^0(\widetilde{\alpha}(t)))=\log C(l,\psi^0(q),\psi^0(\widetilde{\alpha}(t)),l').$$
\end{cla}

Claim \ref{claim: stretch psi0} finishes the proof of the proposition, as we then have $$ d_{(\Text)'}(\psi^0(q),\psi^0(\widetilde{\alpha}(t)))=o(\psi^0(q))-o(\psi^0(\widetilde{\alpha}(t)))=o(\psi^0(q))-K\cdot o(\widetilde{\alpha}(t))$$
\noindent for every small enough $t>0$, and to conclude it suffices to apply (\ref{eq: finsler norm and derivative of distance}) and (\ref{eq: derivative of o}).
\end{proof}

\begin{proof}[Proof of Claim \ref{claim: stretch psi0}]

We only treat explicitly the case $\widetilde{v}=-v$.
We denote $\alpha\coloneqq \alpha_{q,-v}$. For $t>0$ small we have $$C(l,q,\alpha(t),l')=e^t>1.$$
\noindent By Remark \ref{rem: segment ciclicality and nesting} we deduce $q\in[l\cap l_0,\alpha(t)]$.
Let $$\widehat{\alpha}(t)\coloneqq (p\oplus \alpha(t))\cap (\psi^0(q)\oplus(l\cap l_0))\in\tn{int}(\T_0^p).$$
As $\psi^0(q)$ belongs to the sub--triangle of $\Text$ spanned by $\alpha(t)$, $p$ and $l\cap l_0$, then by Remark \ref{rem: triangles and subtriangles} we know $\widehat{\alpha}(t)\in[p,\alpha(t)]$. 
We will prove that \begin{equation} \label{eq: claim stretching psi0}
    \psi^0(\alpha(t))\in [\widehat{\alpha}(t),\alpha(t)],
\end{equation} \noindent and this will be enough to conclude (c.f. Figure \ref{fig: claim stretching psi0}).
\begin{figure}[htbp] 
\scalebox{1}{%
\begin{overpic}[scale=1, width=0.65\textwidth, tics=2]{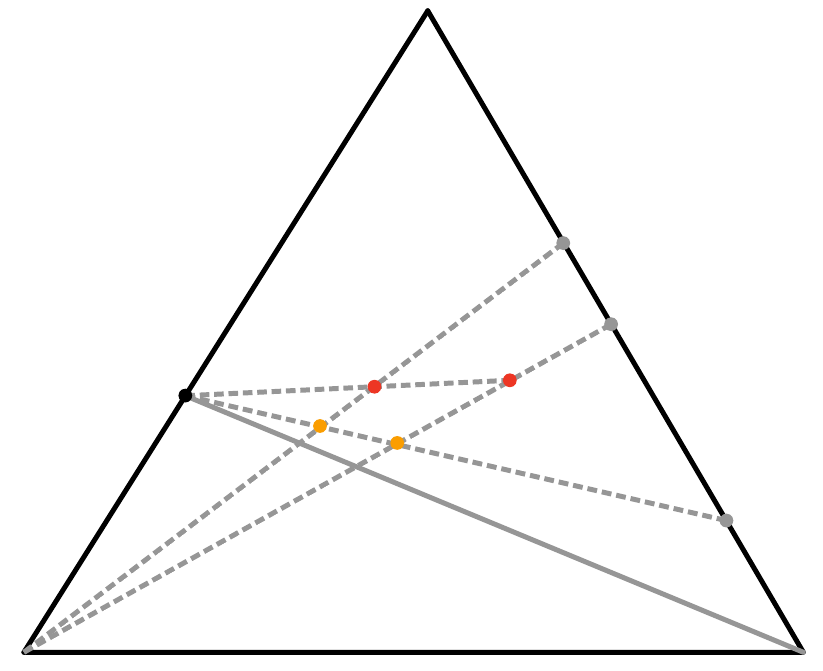}

    \put (19,33)  {$p$}
    \put (40,65)  {$l$}
    \put (62,65)  {$l'$}

    \put (50,-1.5)  {$l_0$}
    \put (50,81)  {$p_0$}

    \put (47,24)  {\tc{orangepic}{$q$}}
    \put (24,27)  {\tc{orangepic}{$\psi^0(q)$}}

    \put (55,36.5)  {\tc{red}{$\alpha(t)$}}
    \put (36.5,36)  {\tc{red}{$\widehat{\alpha}(t)$}}

    \put (69,53)  {\tc{gray}{$\widehat{q}'$}}
    \put (76,42)  {\tc{gray}{$\widehat{q}$}}
    \put (89.5,18)  {\tc{gray}{$\widetilde{q}$}}
   \end{overpic}}	
   \caption{The proof of Claim \ref{claim: stretch psi0}: for any point $q'$ in $[\widehat{\alpha}(t),\alpha(t)]$ the distance $d_{\Text}(\psi^0(q),q')$ is computed as $\log C(l,\psi^0(q),q',l')$.}
   \label{fig: claim stretching psi0}
   \end{figure}

Now, to show that (\ref{eq: claim stretching psi0}) holds observe that $\psi^0(\alpha(t))\in(p,\alpha(t))$ for all $t\geq 0$ (c.f. Remark \ref{rem: pieces of cusp geodesic leaves}).
Hence, by Remark \ref{rem: segment ciclicality and nesting} we need to show $$ C(l,\widehat{\alpha}(t),\psi^0(\alpha(t)),l')\geq 1.$$
\noindent Equivalently (c.f. Lemmas \ref{lem: properties c.r.} and \ref{lem: properties psi0}), $$ C(l,\psi^0(q),\widehat{\alpha}(t),l')\leq C(l,q,\alpha(t),l')^K.$$ 
\noindent Moreover, as $K>1$ it actually suffices to show $$C(l,\psi^0(q),\widehat{\alpha}(t),l')\leq C(l,q,\alpha(t),l')$$ \noindent and this follows from the Nesting Property. 
Indeed, let $$\widehat{q}\coloneqq (q\oplus \alpha(t))\cap l' \tn{ and } \widehat{q}'\coloneqq (\psi^0(q)\oplus \widehat{\alpha}(t))\cap l'. $$
\noindent By Remark \ref{rem: triangles and subtriangles} we have $\widehat{q},\widehat{q}'\in\Text$ and, as $\psi^0(q)\in(p,q)$, we also have $\widehat{q}'\in(p_0,\widehat{q})$.
Hence, Remark \ref{rem: intersections triangle} implies that $p\oplus \widehat{q}'$ does not intersect the segment $[l\cap l_0,\widehat{q}]$.
By the Projection Property and Remark \ref{rem: segment ciclicality and nesting} we deduce $$1<C(l,\psi^0(q),\widehat{\alpha}(t),l')<C(l,q,\alpha(t),l')$$ \noindent thus finishing the proof of (\ref{eq: claim stretching psi0}).
Observe that moreover we have $\widehat{\alpha}(t)\in[\psi^0(q),\widehat{q}']$.

Now that we know that (\ref{eq: claim stretching psi0}) holds we can finish the proof of the claim. 
Indeed, let $\widetilde{q}\coloneqq (p\oplus q)\cap l'$, which belongs to the side $[p_0,l'\cap l_0]$ of $\Text$.
As $q\in(\psi^0(q),\widetilde{q})$, Remark \ref{rem: triangles and subtriangles} applied to the triangle spanned by $l\cap l_0$, $\widehat{q}'$ and $\widetilde{q}$ implies that $\widehat{q}\in(\widehat{q}',\widetilde{q})$.
Let $\T\subset\Text$ be the triangle spanned by $\psi^0(q)$, $\widehat{q}'$ and $\widetilde{q}$.
Note that $\T$ contains $q$ and $\widehat{q}$, and therefore we also have $\alpha(t)\in \T$.
As $\widehat{\alpha}(t)\in[\psi^0(q),\widehat{q}']\subset\T$, by (\ref{eq: claim stretching psi0}) we deduce $ \psi^0(\alpha(t))\in\T.$
\noindent By Remark  \ref{rem: triangles and subtriangles} we conclude that $\psi^0(q)\oplus \psi^0(\alpha(t))$ intersects $[\widehat{q}',\widetilde{q}]$ and $$C(l,\psi^0(q),\psi^0(\alpha(t)),l')\geq 1.$$
Similarly, by (\ref{eq: claim stretching psi0}) we know that $\widehat{\alpha}(t)$ belongs to the sub--triangle of $\Text$ spanned by $\psi^0(\alpha(t))$, $p$ and $l\cap l_0$.
Hence, so does $\psi^0(q)$ and by Remark \ref{rem: triangles and subtriangles} we conclude that $\psi^0(q)\oplus\psi^0(\alpha(t))$ intersects $[l\cap l_0,p]$.
This shows that $\psi^0(q)\oplus\psi^0(\alpha(t))$ intersects the topological boundary of $\Text$ in $l$ and $l'$ and this finishes the proof. 
\end{proof}

\section{Applications: length estimates}\label{sec: applications}

We now apply our results to the study of convex projective surfaces. 
In particular, we prove Theorems \ref{thm: degeneration triangular intro} and \ref{cor: domination intro}, and Corollary \ref{cor: degeneration triangular and hilbert intro}.

\subsection{Surfaces and ideal triangulations}\label{subsec: surfaces and triangulations}

Throughout we fix a compact, connected, oriented surface $S$ with non-empty boundary and negative Euler characteristic, and let $\Gamma$ denote its fundamental group.

Denote by $S_0\subset S$ the interior of $S$ and by $\widetilde{S}$ (resp. $\widetilde{S}_0$) the universal cover of $S$ (resp. $S_0$).
We let $\calb_S$ (resp. $\calb_{\widetilde{S}}$) be the set of boundary components of $S$ (resp. $\widetilde{S}$).
The orientation of $S$ induces a cyclic ordering on $\calb_{\widetilde{S}}$.

Shrinking the boundary components of $S$ to punctures we obtain a punctured surface $S'$, whose universal cover is denoted by $\widetilde{S}'$.
We fix an ideal triangulation $\calt_S$ of $S'$, with vertices at the punctures. 
We can lift $\calt_{S}$ to an ideal triangulation $\calt_{\widetilde{S}}$ of $\widetilde{S}'$, note that the set of vertices of elements in $\calt_{\widetilde{S}}$
identifies naturally with $\calb_{\widetilde{S}}$.
The set of edges of elements in $\calt_S$ (resp. $\calt_{\widetilde{S}}$) is denoted by $\cale_S$ (resp. $\cale_{\widetilde{S}}$).
Let $\caleor_S$ (resp. $\caleor_{\widetilde{S}}$) be the set of elements in $\cale_S$ (resp. $\cale_{\widetilde{S}}$) which are moreover equipped with an orientation.

\subsection{Convex projective surfaces}\label{subsec: projective structures}

We now briefly recall basic facts about convex projective structures on $S$, for which we follow closely \cite{FGProjective} and \cite{CasellaTateTillmann} which contains more details and detailed proofs.

A (\textit{real}) \textit{projective structure} on $S$ is a maximal atlas of charts $U\subset S_0\to\rp$ with transition maps locally in $\g=\PGL$.
Such a structure induces a \textit{developing map} $$\dev:\widetilde{S}_0\to\rp$$ \noindent which is equivariant with respect to a representation $\rho:\Gamma\to\g$, the \emph{holonomy} of the structure.
The projective structure is said to be \textit{convex} if $\dev$ is an embedding onto an open convex subset $\Omega_\rho\subset\rp$ and extends to $\widetilde{S}$ sending boundary components to projective segments. 
The boundary component is said to be \textit{geodesic} if it develops to a non-trivial segment, and \textit{cuspidal} if it develops to a point.

The possible  holonomies  of  convex projective structures on $S$ form a subset $\cp$ of the space of conjugacy classes of faithful and discrete representations $\Gamma\to\g$. 
Isotopic convex projective structures have the same holonomy.

We now describe a parametrization of $\cp$ extending that of Subsection \ref{subsec: positive tuples}, which is very similar to the one introduced by  Fock--Goncharov \cite{FGProjective}. We refer for details on their work to the expository article \cite{CasellaTateTillmann}. 
To introduce this parametrization, we have to work with an extension of $\cp$, the space $\cpf$ of \emph{framed convex projective structures}.

If $\rho\in\cp$, for every boundary component $\gamma$ the image $\rho(\gamma)\in \g$ is either \textit{totally hyperbolic}, namely admits distinct positive eigenvalues, \textit{quasi hyperbolic}, namely it has an eigenvalue different from one of algebraic multiplicity 2 and geometric multiplicity one, or is \textit{totally unipotent}, namely is conjugated to a positive Jordan block of length 3 (see \cite[Section 3.1]{CasellaTateTillmann}). In each case the boundary holonomy fixes at least one full flag. 

A \textit{framing} for $\rho\in\cp$ is an equivariant choice of a full flag for each boundary component $\gamma$ of $S$, that is fixed by $\rho(\gamma)$. 
If this holonomy is totally unipotent, there is a unique choice for this flag,  if it is quasi hyperbolic there are three, and if it is totally hyperbolic there are six choices. 
One way of making such choice is given by equipping each non--totally unipotent boundary component with an orientation, and choosing the flag constituted by the unique attracting fixed point of $\rho(\gamma)$ in projective plane, and the projective line to which $\gamma$ develops as the two-dimensional piece of the flag, c.f. \cite{FGProjective}. 
In particular, the orientation of $S$ induces an embedding 
\begin{equation}\label{eq: cp to cpf}
\cp\to\cpf.
\end{equation} 
\noindent More generally, the space $\cpf$ naturally covers $\cp$, ramifying over structures having some boundary that is not totally hyperbolic.

\begin{thm}[Fock--Goncharov {\cite[Theorem 2.5]{FGProjective}}, {\cite[Theorem~15]{CasellaTateTillmann}}]\label{thm: FG}
    Fix an ideal triangulation $\calt_S$ as above. 
    Each $\rho\in\cpf$ induces a continuous $\rho$--equivariant \tn{flag map} $\calb_{\widetilde{S}}\to\flags$ sending cyclically ordered $n$--tuples in $\calb_{\widetilde{S}}$ to positive $n$--tuples of flags. 
    The flag map is unique up to post--composition with an element of $\g$ and induces a homeomorphism $$\cpf\to(\rr_{>0})^{\calt_S}\times(\rr_{<0})^{\caleor_S}.$$
    \noindent Also, the convex set $\dev(\widetilde{S}_0)=\Omega_\rho$ satisfies $$\Omega_\rho\subset\displaystyle\bigcup_{T\in\calt_{\widetilde{S}}} \Ti_T(\rho)\subset\displaystyle\bigcap_{T\in\calt_{\widetilde{S}}} \Text_T(\rho),$$
    \noindent where $\Ti_T(\rho)\subset\Text_T(\rho)$ denote the pair of nested triangles associated to the image of vertices of $T\in\calt_{\widetilde{S}}$ under the flag map. 
When we embed $\cp$ into $\cpf$ using the orientation of $S$, we further have that $$\Omega_\rho=\displaystyle\bigcup_{T\in\calt_{\widetilde{S}}} \Ti_T(\rho)\subset\displaystyle\bigcap_{T\in\calt_{\widetilde{S}}} \Text_T(\rho).$$
\end{thm}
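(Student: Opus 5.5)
Since this is a result of Fock--Goncharov, the plan is to reconstruct it from the finite-dimensional statements of Section~\ref{sec: positive tuples} by an equivariant limiting argument, rather than reprove it from scratch. \emph{First step (the flag map).} Given a framed structure $\rho\in\cpf$, the framing assigns to each $b\in\calb_{\widetilde{S}}$ a flag $\xi(b)=(p_b,l_b)$ fixed by $\rho(\gamma_b)$, where $\gamma_b$ is the peripheral element stabilizing $b$. Equivariance is built into the definition of a framing, and changing the developing map by $g\in\g$ changes $\xi$ by $g$, which gives uniqueness up to post-composition. Continuity is with respect to the discrete topology on $\calb_{\widetilde{S}}$, so it is automatic.

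\emph{Second step (positivity).} For the framing coming from \eqref{eq: cp to cpf}, $p_b$ is an attracting fixed point lying on $\partial\Omega_\rho$ and $l_b$ is a supporting line of $\Omega_\rho$ through $p_b$ (it contains the developed boundary segment, or is the tangent line at a cusp). For a cyclically ordered triple of boundary components, convexity of $\Omega_\rho$ then places $p_{b'},p_{b''}$ on the same side of $l_b$. I would compute the sign of the triple ratio from this and get $T>0$. For quadruples, I would use the same supporting-line configuration to check that $C(l_1,p_2,p_4,p_1\oplus p_3)<0$, since $p_2$ and $p_4$ are separated by the diagonal $p_1\oplus p_3$ but lie on the same side of $l_1$. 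For general framings, I would deform continuously inside $\cpf$ and use that genericity is never lost, so the signs cannot change. This sign analysis, especially in the non-totally-hyperbolic cases where the framing flag need not be a supporting line, is the step I expect to be the main obstacle. I would first try to reduce it to the oriented framing by this continuity argument.

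\emph{Third step (parametrization).} The invariants $\ttt_T,\ttc_E$ are $\Gamma$-invariant and therefore descend to $\calt_S$ and $\caleor_S$, which defines the map. For injectivity and surjectivity I would proceed as in Subsection~\ref{subsec: positive tuples}. Pick a base triangle, reconstruct flags outward along the dual tree, using \eqref{eq: rigid configuration} and the quadruple reconstruction following \eqref{eq: parametrization quadruples}. The $\Gamma$-invariance of the parameters makes the resulting flag map $\rho$-equivariant for a unique representation $\rho$. Lemma~\ref{lem: nested polygons}, applied to exhausting finite subpolygons, shows that the triangles $\Ti_T$ tile a convex set contained in every $\Text_T$. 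Its quotient is then a convex projective structure, and continuity of the inverse follows because the reconstruction formulas are rational in the parameters.

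\emph{Fourth step (the domain).} Finally, $\Omega_\rho\subset\bigcup_T\Ti_T$ holds because the developing map of an ideal triangulation covers $\widetilde{S}_0$ by developed triangles, each contained in the corresponding $\Ti_T$. Equality for the oriented framing holds because there the vertices $p_b$ lie in $\overline{\Omega_\rho}$, so convexity gives $\Ti_T\subset\Omega_\rho$.
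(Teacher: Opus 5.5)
The paper gives no proof of this theorem; it is quoted from Fock--Goncharov and Casella--Tate--Tillmann. Your sketch therefore has to stand on its own, and it has a genuine gap: everything beyond the oriented framing.

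\textbf{The deformation argument for general framings is circular.} Two framings of the same $\rho$ lie in the same fibre of $\cpf\to\cp$. A path joining them must cross the ramification locus, where some $\rho(\gamma_b)$ stops being totally hyperbolic and invariant flags collide. The claim that ``genericity is never lost'' along such a path is exactly the transversality and genericity statement you need for every framing. Moreover, the connectedness of $\cpf$ that you use implicitly is itself a consequence of the theorem.

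\textbf{The difficulty is also misplaced.} Take $\rho(\gamma_b)$ totally hyperbolic, with attracting, saddle and repelling fixed points $a_b,s_b,r_b$. Two of the six invariant flags are based at $s_b$, and $s_b$ lies outside $\overline{\Omega_\rho}$; in the Fuchsian case it is the pole of the boundary geodesic. So no supporting-line argument relative to $\Omega_\rho$ can produce their signs. You would have to argue relative to a larger convex set, for instance $\Omega_\rho$ with the triangles spanned by $a_b,s_b,r_b$ glued along the corresponding developed boundary segments. Even in the oriented case you still owe the transversality and genericity inputs. One is that $l_b\cap\overline{\Omega_\rho}$ is exactly the developed boundary component (or the cusp point). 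Another is that no three framing points are collinear and no three framing lines are concurrent.

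\textbf{Your fourth step fails as written.} For a framing using $s_b$, every $\Ti_T$ with ideal vertex $s_b$ contains points near $s_b$ that lie outside $\overline{\Omega_\rho}$. Hence $\bigcup_T\Ti_T\not\subset\overline{\Omega_\rho}$, which is consistent with the statement asserting only an inclusion in general. So there is no sense in which developed triangles of the ideal triangulation sit inside the $\Ti_T$. That triangulation of $\widetilde{S}_0$ is only topological, and nothing forces its developed images into the $\Ti_T$. What is actually needed is a straightening argument: for the oriented framing, the $\Ti_T$ tile exactly $\Omega_\rho$.
\begin{itemize}
\item One way is to show that $\bigcup_T\Ti_T$ is open and closed in $\Omega_\rho$. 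This uses that the fan of triangles at each vertex $p_b$ accumulates on the developed boundary segment, or collapses at a cusp.
\item You also need that no edge $[p_b,p_{b'}]$ lies in $\partial\Omega_\rho$. Without this, convexity only gives $\Ti_T\subset\overline{\Omega_\rho}$, not $\Ti_T\subset\Omega_\rho$.
\item The inclusion for other framings then requires a vertex-by-vertex comparison of fans.
\end{itemize}

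\textbf{The same fan analysis is missing from your third step.} Tiling a $\rho(\Gamma)$-invariant convex set by the $\Ti_T$ gives at best a convex structure on the open surface $S_0$. To land in $\cp$ you must use the dynamics of $\rho(\gamma_b)$ on the fan at each vertex. From it you must show that each end is cuspidal or closes up along a projective segment, so that the quotient is the interior of a convex projective structure on the compact surface $S$.
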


We emphasize that every ideal polygon with $n$ vertices made out from elements of $\calt_{\widetilde{S}}$ naturally determines a triangulated polygon in the sense of Subsection \ref{subsec: positive tuples}.
In this sense, the parametrization of Theorem \ref{thm: FG} extends the Fock-Goncharov-like parametrization of that subsection.
For $\rho\in\cpf$ and $E\in\caleor_S$ we let $\ttc_E(\rho)$ be the corresponding edge invariant of $\rho$.
Also, as in Subsection \ref{subsec: positive tuples} there is a natural triangle $T_E\in\calt_S$ associated to $E$. 
We let $\ttt_E(\rho)$ be the corresponding triangle invariant.

\subsection{Length functions}\label{subsec: length functions}
From now on we think of $\cp$ as a subspace of $\cpf$, embedded using the orientation on $S$ as in (\ref{eq: cp to cpf}). 
There are various length functions associated to a given $\rho\in\cp$ that we now discuss. 
Recall that for an isometry $g$ of a general metric space $(X,d)$ we may define the \textit{translation length} of $g$ by $$L(g)\coloneqq \displaystyle\inf_{x\in X}d(x,g\cdot x). $$

Let $\rho\in\cp$ and $\gamma\in\Gamma$.
The first length function to consider is the \textit{Hilbert length} $L_{\rho}^\Hil(\gamma)$, which is just the translation length of $\rho(\gamma)$ acting on the open convex set $\Omega_\rho$ equipped with its Hilbert distance (c.f. Subsection \ref{subsec: hilbert geometry}).

Second, we consider the \textit{triangular length}. 
Indeed, by Theorem \ref{thm: FG} we  regard $\Omega_\rho$ as a triangulated polygon with ``infinitely many sides". 
As in Section \ref{sec: triangular distance} we  endow $\Omega_\rho$ with the triangular distance and let $L_\rho^{\T}(\gamma)$ be the corresponding translation length for $\rho(\gamma)$.
Note that $\Omega_\rho$ is contained in the interior of $\Text_T(\rho)$ for each $T\in\calt_{\widetilde{S}}$ (c.f. Lemma \ref{lem: nested polygons}). 
Remark \ref{rem: nested finsler norms} then implies \begin{equation}\label{eq: domination triangular hilbert}
    L_\rho^{\T}(\cdot)\leq L_\rho^{\Hil}(\cdot).
\end{equation}

\noindent By Lemma \ref{lem: isometric triangular metrics} there exist large families of representations in $\cp$ with the same triangular length spectrum, hence this is ``more crude" than the Hilbert length (c.f. \cite[Corollary 8.3]{CDPWasy}).
Nevertheless, the triangular length is ``more local" as, at least infinitesimally, is  directly related to the Fock-Goncharov-like parameters, as opposed to the Hilbert length. 

As a consequence, computing the triangular length directly from the parameters  
is a difficult task.

We therefore introduce a third length function, easier to compute, that will help us understanding how triangular lengths degenerate (Proposition \ref{prop: degeneration triangular} below). 

We restrict to a special class of representations, the subset $\shif\subset\cp$ of representations whose Fock-Goncharov-like parameters are $\T$--shifting as in Subsection \ref{subsec: Tshifting tuples}. 
These are called \textit{$\T$--shifting representations} and have \textit{shear invariants} $\tts_E(\rho)=\tts_{\overline{E}}(\rho)> 1$ for each $E\in\caleor_S$.
To each $\rho\in\shif$ we  associate a metric graph $\omega_\rho$ as follows.
Consider the graph $\omega=\omega_{\calt_{\widetilde{S}}}$ \textit{dual} to $\calt_{\widetilde{S}}$, namely, the one with vertices given by elements of $\calt_{\widetilde{S}}$ and edges between adjacent triangles. 
It is a trivalent tree endowed with a free and co--compact $\Gamma$--action. 
Now, given $\rho\in\shif$ we may blow up the vertices of $\omega$ replacing them with the boundary of a triangle whenever the corresponding triangle invariant for $\rho$ is different from $1$.
This gives us a topological graph endowed with a free and co--compact $\Gamma$--action by automorphisms lifting the original $\Gamma$--action on $\omega$.
Moreover, thanks to the $\T$--shifting assumption we may turn this into a metric graph $\omega_\rho$, by declaring that the edges associated to $E\in\cale_{\widetilde{S}}$ have length equal to $\log(\tts_E(\rho))>0$, and the new edges (coming from the blown up triangles) have length given by the absolute value of the logarithm of the corresponding triangle invariant. 
We let $L^\Graph_\rho(\gamma)$ be the translation length of $\gamma$ for the action on $\omega_\rho$, and call it the \textit{$\T$--graph length}.

Using Theorem \ref{thm: FG} we may define an extension, to the context of representations, of the notion of a $\T$--stretch of $n$--tuples of flags (c.f. Section \ref{sec: stretch map}).
We can now prove Theorem \ref{thm: degeneration triangular intro}.

\begin{prop}\label{prop: degeneration triangular}
    Let $\rho\in\shif$ and for each $t> 0$ let $\rho_t\in\cp$ be a $\T$--stretch of $\rho$ of size $e^t$.
    Then $$\displaystyle\lim_{t\to\infty} \frac{e^{-t}\cdot L_{\rho_t}^{\T}(\cdot)}{L_\rho^\Graph(\cdot)}=1,$$ \noindent and this convergence is uniform in $\gamma\in\Gamma$.
\end{prop}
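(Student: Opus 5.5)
The plan is to prove a two-sided additive comparison between $L^{\T}_{\rho_t}$ and the graph length $L^{\Graph}_{\rho_t}$, with an error linear in $n_\gamma$. Here $n_\gamma$ is the translation length of $\gamma$ on the combinatorial dual tree $\omega$, and the constants will be independent of $t$.

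The first step is scaling. By definition of a $\T$--stretch of size $e^t$ we have $\ttt_T(\rho_t)=\ttt_T(\rho)^{e^t}$ and $\ttc_E\ttc_{\overline{E}}(\rho_t)=(\ttc_E\ttc_{\overline{E}}(\rho))^{e^t}$. By Example \ref{ex: stretching Tshifting parameters}, $\rho_t$ is again $\T$--shifting. Each quantity in the minimum (\ref{eq:shear_invariant}) is raised to the power $e^t$, so $\tts_E(\rho_t)=\tts_E(\rho)^{e^t}$. Hence every edge of $\omega_{\rho_t}$ is $e^t$ times the corresponding edge of $\omega_\rho$, and $L^\Graph_{\rho_t}=e^tL^\Graph_\rho$. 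It therefore suffices to show
\[
\bigl|L^{\T}_{\rho_t}(\gamma)-L^\Graph_{\rho_t}(\gamma)\bigr|\le D'\,n_\gamma .
\]
Granting this, note that $L^\Graph_\rho(\gamma)\ge c\,n_\gamma$ with $c>0$ the minimal $\log\tts_E(\rho)$; this holds because a geodesic in $\omega_\rho$ crosses at least $n_\gamma$ edges of type $E$. Dividing by $e^tL^\Graph_\rho(\gamma)$ then gives an error of at most $D'e^{-t}/c$, uniformly in $\gamma$.

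For the comparison, I fix $\gamma$ and its axis in the tree $\omega$. The axis crosses a bi-infinite, $\gamma$--periodic sequence of edges $E_m$, $m\in\zz$, of period $n_\gamma$, with consecutive edges sharing triangles $T_m$. Any finite window $E_a,\dots,E_b$ is an admissible sequence inside a finite triangulated polygon in $\Omega_{\rho_t}$. Such a window of $\omega_{\rho_t}$ has length $\sum|\log\ttt_m|+\sum\log\tts_m$ plus $O(1)$, where the $O(1)$ accounts for how the blown-up triangles are traversed: one or two sides of length $|\log\ttt_m|$, and I must check that the axis uses exactly one side, or else adjust accordingly.

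For the upper bound, I take the point $x=q_{E_0}$. The triangle-inequality argument at the start of the proof of Proposition \ref{prop: additve estimate triangular distance refined} (which needs no shifting) gives
\[
d_{\T}(x,\rho_t(\gamma)x)\le \sum_{\text{one period}}\bigl(|\log \ttt_m|+\log\tts_m\bigr)+2D\,n_\gamma ,
\]
since $\rho_t(\gamma)q_{E_0}=q_{E_{n_\gamma}}$.

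For the lower bound, I use $L^{\T}(\gamma)=\lim_k d_{\T}(x,\rho_t(\gamma^k)x)/k$. This identity holds because $d_{\T}$ is a path metric on which $\Gamma$ acts cocompactly on the convex hull picture; if needed I would replace the infimum by the stable length, which is $\le$ the translation length and is what we need. I then apply Proposition \ref{prop: additve estimate triangular distance} to the admissible sequence $E_0,\dots,E_{kn_\gamma}$, which gives
\[
d_{\T}(q_{E_0},q_{\overline{E}_{kn_\gamma}})\ge k\,L^\Graph_{\rho_t}(\gamma)-D(kn_\gamma+1).
\]
The points $q_{\overline{E}_{kn}}$ and $q_{E_{kn}}=\rho_t(\gamma^k)x$ lie on the same edge. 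The distance between them is $\log\tts$, which grows with $t$, but it contributes only once rather than $k$ times, so dividing by $k$ and letting $k\to\infty$ kills it. This yields $L^{\T}_{\rho_t}(\gamma)\ge L^\Graph_{\rho_t}(\gamma)-Dn_\gamma$.

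The main obstacle is the bookkeeping that identifies the window sums of Proposition \ref{prop: additve estimate triangular distance} with translation lengths on $\omega_{\rho_t}$. First, the axis of $\gamma$ in $\omega_{\rho_t}$ must pass through each blown-up triangle along exactly one side between the two relevant edge-vertices, so that the contribution is $|\log\ttt_m|$ exactly. Second, the translation length in $\omega_{\rho_t}$ must be realized along the same sequence of edges as in $\omega$, which holds since both are trees with compatible projections.
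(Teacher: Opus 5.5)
Your proposal is correct and follows essentially the same route as the paper. You rescale the graph by $e^t$, take the upper bound from the triangle-inequality half of the proof of Proposition \ref{prop: additve estimate triangular distance refined} at $q_{E_0}$, and take the lower bound from the additive estimate along $\gamma^k$ combined with stable length $\le$ translation length; the paper instead applies the refined Proposition \ref{prop: additve estimate triangular distance refined} with $\overline{q}_M=q_{E_M}$, which avoids your extra one-time $\log\tts$ correction, and your explicit bound $L^\Graph_\rho(\gamma)\geq c\,n_\gamma$ supplies the uniformity the paper leaves implicit (note $\omega_{\rho_t}$ is not a tree because of the blown-up triangles, but the edges dual to $\cale_{\widetilde{S}}$ are cut edges, so your axis argument still goes through).
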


\begin{proof}
Endow the tree $\omega=\omega_{\calt_{\widetilde{S}}}$ with the graph distance whose edges are isometric to unit intervals. 
Let $n_\gamma\in\zz_{>0}$ be the translation length of $\gamma\in\Gamma$ acting on $\omega$.

Now, recall that there is a natural $\Gamma$--equivariant projection $\omega_\rho\to\omega$ collapsing the triangles and restricting to an embedding on the remaining edges. 
The unique $\gamma$--invariant geodesic in $\omega$ lifts to a $\gamma$--invariant geodesic in $\omega_\rho$.
In particular, for any vertex $v$ on it we have $$L_\rho^\Graph(\gamma)=d_\Graph(v,\gamma\cdot v),$$ \noindent where $d_\Graph(\cdot,\cdot)$ is the graph distance on $\omega_\rho$.
Moreover, the choice of $v$ determines an admissible sequence $E_1,\dots,E_{n_\gamma},E_{n_\gamma+1}=\gamma\cdot E_1$ along which we have the shear and triangle invariants $\tts_1(\rho),\dots\tts_{n_\gamma+1}(\rho)$ and $\ttt_1(\rho),\dots,\ttt_{n_\gamma}(\rho)$ (recall the notations from Subsection \ref{subsec: estimate triangular}).
We then have $$L_\rho^\Graph(\gamma)=d_\Graph(v,\gamma\cdot v)=\displaystyle\sum_{i=1}^{n_\gamma}\log(\tts_i(\rho))+\displaystyle\sum_{i=1}^{n_\gamma}\vert\log(\ttt_i(\rho))\vert.$$ 
\noindent \noindent Also, recall from Example \ref{ex: stretching Tshifting parameters} that the representation $\rho_t$ is $\T$--shifting for every $t > 0$. 
Moreover, there is a $\Gamma$--equivariant graph isomorphism $\omega_\rho\to\omega_{\rho_t}$ multiplying all distances by $e^t$. 
In particular, for every $\gamma\in\Gamma$ we have \begin{equation}\label{eq: grpah lengths multiples}
L_{\rho_t}^\Graph(\gamma)=e^t\cdot\left( \displaystyle\sum_{i=1}^{n_\gamma}\log(\tts_i(\rho))+\displaystyle\sum_{i=1}^{n_\gamma}\vert\log(\ttt_i(\rho))\vert \right)=e^t\cdot L_\rho^\Graph(\gamma).
\end{equation}
\noindent As a consequence, we only need to show that $L_{\rho_t}^\T(\gamma)/ L_{\rho_t}^\Graph(\gamma)$ \noindent goes to $1$ uniformly, for which we apply Proposition \ref{prop: additve estimate triangular distance refined}.

Indeed, let $q=q_{E_1}$ be as in that proposition applied to $\Omega_{\rho_t}$, so that we have $\rho_t(\gamma)\cdot q=q_{E_{n_\gamma+1}}$.
By Proposition \ref{prop: additve estimate triangular distance refined} and (\ref{eq: grpah lengths multiples}) we deduce $$ L_{\rho_t}^{\T}(\gamma)\leq d_{\T}(q,\rho_t(\gamma)\cdot q)\leq  L_{\rho_t}^\Graph(\gamma) +D\cdot n_\gamma ,$$ \noindent for some universal constant $D>1$.
Complementary, note that $$L_{\rho_t}^{\T}(\gamma)\geq \displaystyle\lim_{k\to\infty} \frac{d_{\T}(q,\rho_t(\gamma^k)\cdot  q)}{k}.$$
\noindent By Proposition \ref{prop: additve estimate triangular distance refined} and (\ref{eq: grpah lengths multiples}) we deduce $L_{\rho_t}^{\T}(\gamma)\geq  L_{\rho_t}^\Graph(\gamma)- D\cdot n_\gamma$ and therefore $$1-\frac{D\cdot n_\gamma}{L_{\rho_t}^\Graph(\gamma)}\leq \frac{L_{\rho_t}^\T(\gamma)}{L_{\rho_t}^\Graph(\gamma)}\leq 1+\frac{D\cdot n_\gamma}{L_{\rho_t}^\Graph(\gamma)}$$
\noindent for every $t>0$.
But thanks to (\ref{eq: grpah lengths multiples}) the quantity $D\cdot n_\gamma / L_{\rho_t}^\Graph(\gamma)$ converges to $0$ uniformly on $\gamma$, and this completes the proof.\end{proof}

We can now deduce Theorem \ref{cor: domination intro}.

\begin{cor}\label{cor: domination}
    For every $\rho\in\shif$ and every $\gamma\in\Gamma$ one has $$L_\rho^\Graph(\gamma)\leq L_\rho^{\T}(\gamma)\leq L_\rho^{\Hil}(\gamma).$$
\end{cor}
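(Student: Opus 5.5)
The right-hand inequality $L_\rho^{\T}(\gamma)\leq L_\rho^{\Hil}(\gamma)$ is already recorded in (\ref{eq: domination triangular hilbert}): it follows from Remark \ref{rem: nested finsler norms}, because $\Omega_\rho$ lies in every $\Text_T(\rho)$. So only the left-hand inequality $L_\rho^\Graph(\gamma)\leq L_\rho^{\T}(\gamma)$ needs an argument. The plan is to combine the Lipschitz property of the ortho--geodesic stretch map (Theorem \ref{thm: Klip}) with the asymptotics of Proposition \ref{prop: degeneration triangular}.

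Fix $\rho\in\shif$ and, for $t>0$, choose a $\T$--stretch $\rho_t$ of $\rho$ of size $K=e^t$; for instance, keep the ratios $\ttc_E/\ttc_{\overline{E}}$ fixed and raise the triangle invariants and the products $\ttc_E\ttc_{\overline{E}}$ to the power $e^t$. Theorem \ref{thm: FG} then realizes $\rho_t$ in $\cp$.

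First I build the equivariant stretch map on $\Omega_\rho$. On each ideal triangle $\Ti_T(\rho)$, $T\in\calt_{\widetilde{S}}$, take the map $\psi^\ttt_K$ of Subsection \ref{subsub: assembling homeo triples}. These maps glue along edges by Lemma \ref{lem: gluning maps of triples}, giving a homeomorphism $\psi_t:\Omega_\rho\to\Omega_{\rho_t}$. It is $(\rho,\rho_t)$--equivariant, because the construction is canonical: it depends only on the flags at the vertices, and the flag maps are equivariant. Any two points of $\Omega_\rho$ lie in a finite triangulated ideal polygon, so Remark \ref{rem: minimizing triangular curves} shows that triangular distances can be computed inside finite polygons. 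Applying Theorem \ref{thm: Klip} (equivalently Lemma \ref{lem: sufficient condtion for lipshictz triangular} together with the $n=3$ case) then shows that $\psi_t$ is $e^t$--Lipschitz for $d_{\T}$.

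The one point to check in this step is that the triangular distance on $\Omega_\rho$, taken over infinitely many triangles, agrees with the distance computed inside a sufficiently large finite polygon. This is what Remark \ref{rem: minimizing triangular curves} provides, after noting that a near-minimizing curve meets only finitely many triangles.

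Equivariance and the Lipschitz bound give, for every $x\in\Omega_\rho$,
$$L_{\rho_t}^{\T}(\gamma)\leq d_{\T}(\psi_t(x),\rho_t(\gamma)\psi_t(x))=d_{\T}(\psi_t(x),\psi_t(\rho(\gamma)x))\leq e^t\, d_{\T}(x,\rho(\gamma)x).$$
Taking the infimum over $x$ yields $e^{-t}L_{\rho_t}^{\T}(\gamma)\leq L_\rho^{\T}(\gamma)$. Letting $t\to\infty$, Proposition \ref{prop: degeneration triangular} says the left-hand side converges to $L_\rho^\Graph(\gamma)$. Hence $L_\rho^\Graph(\gamma)\leq L_\rho^{\T}(\gamma)$, which completes the chain of inequalities in the statement.
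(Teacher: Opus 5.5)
Your proposal is correct and is exactly the paper's argument. The right-hand inequality is (\ref{eq: domination triangular hilbert}), and the left-hand one follows from $e^{-t}L^{\T}_{\rho_t}(\gamma)\leq L^{\T}_\rho(\gamma)$ (equivariance plus Theorem \ref{thm: Klip}) together with Proposition \ref{prop: degeneration triangular} as $t\to\infty$.

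One small caveat: Theorem \ref{thm: FG} only realizes your $\rho_t$ in $\cpf$, and with the ratios $\ttc_E/\ttc_{\overline{E}}$ frozen it is not clear that $\rho_t$ stays in the orientation-framed copy of $\cp$. It is therefore cleaner either to use the stretch path $\ttt_T\mapsto\ttt_T^{e^t}$, $\ttc_E\mapsto-(-\ttc_E)^{e^t}$ of Subsection \ref{subsec: parreau}, or to note that your argument only uses the $\rho_t$--invariant union of the triangles $\Ti_T(\rho_t)$.
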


\begin{proof}
By (\ref{eq: domination triangular hilbert}) we only need to prove the first inequality.
But this follows by considering a $\T$--stretch of $\rho$ of size $e^t$ and applying Proposition \ref{prop: degeneration triangular} and Theorem \ref{thm: Klip}.
\end{proof}

\subsection{Link with Parreau's work}\label{subsec: parreau}
Proposition \ref{prop: degeneration triangular} takes inspiration from Parreau \cite{ParreauInv}, where the author finds a similar statement (c.f. Theorem \ref{thm: parreau} below) but for the Hilbert length (and more generally the $\liea$--\textit{length}).
The goal now is to discuss the link with that work in detail and to prove Corollary \ref{cor: degeneration triangular and hilbert intro}.
We mention that Parreau's results apply in a setting more general than ours. 
Instead of discussing that work in that level of generality, we restrict to our setting of interest. 

To find the alluded degeneration Parreau needs, as we do, to impose certain conditions on the path $t\mapsto\rho_t$ that are called \textit{left--shifting} and \textit{edge--separation}.
We now recall what these notions are in our setting and, along the way, provide comparisons with our hypotheses.

In order to do that, we first need to relate Fock--Goncharov's Theorem \ref{thm: FG} with Parreau's choice of parameters, as there are small differences.
Concretely, for each triangle Parreau considers the same invariant as we do.
In contrast, the parameters associated to oriented edges are different. 
Namely, for each $\rho\in\cp$ and each $E\in\caleor_S$ Parreau associates a parameter that we will denote here by $ \parinv_E(\rho) $\footnote{This is denoted by $S_e$ in \cite[p. 2312]{ParreauInv}.}. 
A computation gives: $$\parinv_E(\rho)=\frac{-1}{(1+\ttt_E(\rho))\ttc_{\overline{E}}(\rho)}=((1+\ttt_E(\rho))\vert\ttc_{\overline{E}}(\rho)\vert)^{-1}.$$

Now, for arbitrary $\rho\in\cp$ consider the associated \textit{stretch path}, namely, for $t\geq 0$ we let $\rho_t\in\cp$ be the representation given by 
    $$\ttt_E(\rho_t)\coloneqq \ttt_E(\rho)^{e^t} \tn{ and } \ttc_E(\rho_t)\coloneqq  -(-\ttc_E(\rho))^{e^t}$$ \noindent for all $E\in\caleor_S$.
\noindent Note that, in particular, $\rho_t$ is a $\T$--stretch in the sense of Proposition \ref{prop: degeneration triangular}.
Moreover, associated to this deformation one may consider the following parameters ``at infinity": for $E\in\caleor_S$ let $$\ttt_E(\rho_\infty)\coloneqq \displaystyle\lim_{t\to\infty} \vert\ttt_E(\rho_t)\vert^{\frac{1}{e^t}}=\ttt_E(\rho)$$ \noindent and more importantly $$\parinv_E(\rho_\infty)\coloneqq 
\displaystyle\lim_{t\to\infty}\left\vert \parinv_E(\rho_t) \right\vert^{\frac{1}{e^t}}=\left\lbrace\begin{array}{l}
  \vert \ttc_{\overline{E}}(\rho)\vert^{-1}  \hspace{1,83cm}\tn{ if } \ttt_E(\rho)\leq 1  \\
   (\ttt_E(\rho)\vert \ttc_{\overline{E}}(\rho)\vert)^{-1} \hspace{0,7cm}\tn{ if } \ttt_E(\rho)\geq 1
\end{array}\right..$$
\noindent These are indeed the ``Fock-Goncharov-like parameters" of a representation $\rho_\infty:\Gamma\to\mathsf{PGL}_3(\mathbb{K})$ for an appropriate field $\mathbb{K}$ depending on the scaling power $e^t$, see \cite[Section 5]{ParreauInv}.

The representation $\rho$ is said to be \textit{left--shifting} if the associated stretch path satisfies \begin{equation}\label{eq: leftshifting I}
  \parinv_E(\rho_\infty)> \max\left\lbrace\displaystyle\lim_{t\to\infty}\left(1+ \ttt_{\overline{E}}(\rho_t)^{-1}\right)^{-\frac{1}{e^t}} ,  \displaystyle\lim_{t\to\infty}\left(1+ \ttt_E(\rho_t)\right)^{-\frac{1}{e^t}}\right\rbrace  
\end{equation}
\noindent and \begin{equation}\label{eq: leftshifting II}
  \parinv_{\overline{E}}(\rho_\infty)> \max\left\lbrace\displaystyle\lim_{t\to\infty}\left(1+ \ttt_{\overline{E}}(\rho_t)\right)^{-\frac{1}{e^t}} ,  \displaystyle\lim_{t\to\infty}\left(1+ \ttt_E(\rho_t)^{-1}\right)^{-\frac{1}{e^t}}\right\rbrace  
\end{equation} \noindent for every $E\in\caleor_S$, c.f. \cite[Definition 3.3]{ParreauInv}\footnote{We note that the involved limits are computed as $$ \displaystyle\lim_{K\to\infty} (1+\eta^K)^{1/K}=\max\{\eta,1\}$$ \noindent for every $\eta\in\rr_{>0}$.}.
\noindent Here is a first comparison with our work.

\begin{rem}\label{rem: Tshifting with parreau}
Let $\rho$ be an arbitrary convex projective structure on $S$ and $t\mapsto \rho_t$ be the associated stretch path. 
We may reinterpret the $\T$--shifting condition of Subsection \ref{subsec: length functions} for $\rho$ as a condition on products of Parreau's edge parameters at infinity. 
Indeed, by Lemma \ref{lem: c.r. distinguished points quadruple} and (\ref{eq: shear quadruple}) we have $$\parinv_E(\rho_\infty)\cdot \parinv_{\overline{E}}(\rho_\infty)=\tts_E(\rho)$$ \noindent  for every $E\in\caleor_S$.
In particular, $\rho$ is $\T$--shifting if and only if for every $E\in\caleor_S$ one has $$\parinv_E(\rho_\infty)\cdot \parinv_{\overline{E}}(\rho_\infty)> 1.$$
\noindent As a consequence, a given $\rho$ may be left--shifting without being $\T$--shifting (and conversely, a $\T$--shifting representation is not necessarily left--shifting). 
A simple situation where all conditions are fulfilled is when $\rho$ satisfies  $\parinv_E(\rho_\infty)>1$ for all $E\in\caleor_S$.
However, if $\rho$ is left--shifting and (say) $\parinv_E(\rho_\infty)\leq 1$, then we necessarily have $\parinv_{\overline{E}}(\rho_\infty)>1$ but their product could be either $> 1$ or $\leq 1$.
\end{rem}

The left--shifting hypothesis is one of the ingredients in Parreau's degeneracy Theorem \ref{thm: parreau}.  
It allows the construction of a metric $\Gamma$--space (similar to our $\omega_\rho$) whose associated length spectrum describes the asymptotic behavior of the re--scaled limits of the Hilbert length.
Roughly speaking, this space is a $2$--dimensional complex homotopically equivalent to $S$ which is moreover equipped with a geometric structure locally modeled on a Cartan subspace of $\g$. 
In particular, one may consider on it the Finsler norm induced from the Hilbert length functional. 
The associated (path) metric space will be called the \textit{Hilbert--Parreau complex}, see \cite[Section 3.3]{ParreauInv} and the proof of Proposition \ref{prop: hilbert parreau length spectrum} below for more details.
As the natural $\Gamma$--action preserves this distance, we have an associated \textit{Hilbert--Parreau length spectrum} which we will denote by $L_\rho^\HP(\cdot)$. 
The first link to our work is the following (see the end of this subsection for a proof).

\begin{prop}\label{prop: hilbert parreau length spectrum}
    Let $\rho$ be a $\T$--shifting and left--shifting representation. 
    Then the $\T$--graph length dominates the  Hilbert--Parreau length: $L_\rho^\HP(\gamma)\leq L_\rho^\Graph(\gamma)$ for every $\gamma\in\Gamma$.
\end{prop}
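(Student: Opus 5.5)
I will compare the two length spectra by building a $\Gamma$--equivariant $1$--Lipschitz map from the metric graph $\omega_\rho$ into the Hilbert--Parreau complex. A translation length cannot increase under such a map: if $\Phi:\omega_\rho\to X_\rho$ is equivariant and $1$--Lipschitz, then for any $x$ on the axis of $\gamma$ we get $L_\rho^\HP(\gamma)\leq d_{X_\rho}(\Phi(x),\gamma\Phi(x))\leq d_\Graph(x,\gamma x)=L_\rho^\Graph(\gamma)$. So the whole task is to construct $\Phi$ and check that it does not increase lengths.

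First I recall the structure of Parreau's complex $X_\rho$ in our setting. Its pieces are indexed by triangles and edges of $\calt_{\widetilde{S}}$, and it is determined by $\ttt_E(\rho_\infty)=\ttt_E(\rho)$ and $\parinv_E(\rho_\infty)$. Each triangle $T$ gives a flat piece modeled on $\liea$. When $\ttt_T(\rho)\neq 1$ this piece contains an equilateral triangle of side $|\log\ttt_T(\rho)|$ for the Hilbert Finsler norm on $\liea$, since the Hilbert length functional is $\lambda_1-\lambda_3$. Adjacent triangle pieces are joined by strips or segments along each edge $E$. The left--shifting conditions (\ref{eq: leftshifting I}) and (\ref{eq: leftshifting II}) guarantee that these gluing strips have positive width. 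The points corresponding to our distinguished points $r_E,s_E$ sit on the boundary lines of the strip. Their relative offsets are $\log\parinv_E(\rho_\infty)$ and $\log\parinv_{\overline{E}}(\rho_\infty)$, shifted by $\log\ttt$ terms exactly as in Lemma \ref{lem: c.r. distinguished points quadruple}.

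Next I define $\Phi$ on vertices and edges of $\omega_\rho$.
\begin{itemize}
\item Each vertex of the small triangle attached to $T$ and adjacent to $E$ is sent to the image of the vertex of the equilateral triangle of $T$'s flat piece facing $E$. This is the analogue of $q_E$ or $q_{\overline{E}}$, which belong to $\{r,s\}$.
\item Each blown-up triangle edge is mapped isometrically onto the corresponding side of the equilateral triangle, so it has length $|\log\ttt_T(\rho)|$.
\item Each edge of $\omega_\rho$ associated to $E$ is mapped to a Finsler geodesic segment in $X_\rho$ between the two chosen vertices on either side of the strip.
\end{itemize}

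The key computation, and the main obstacle, is to show that this segment has Hilbert--Parreau length at most $\log\tts_E(\rho)$. By Remark \ref{rem: Tshifting with parreau}, $\log\tts_E(\rho)=\log\parinv_E(\rho_\infty)+\log\parinv_{\overline{E}}(\rho_\infty)$. In the strip the two chosen points differ by a vector in $\liea$, and I expect the Hilbert norm $\lambda_1-\lambda_3$ of that vector to equal exactly this sum. The two contributions are additive precisely when the shifts $\log\parinv_E$ and $\log\parinv_{\overline{E}}$ point in compatible Weyl directions, and left--shifting should enforce this. I will verify it case by case according to the signs of $\log\ttt_E$ and $\log\ttt_{\overline{E}}$, mirroring the three pictures in Figure \ref{fig: Tshifting}. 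In each case I will choose the vertex in $\{r,s\}$ that realizes the minimum in (\ref{eq:shear_invariant}), and identify its image coordinate in Parreau's chart. If exact equality fails, the inequality ``at most'' is enough.

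Equivariance of $\Phi$ is automatic, because both the construction and Parreau's complex are natural in the triangulation. This completes the argument.
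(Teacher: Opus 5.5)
Your strategy is the paper's: a $\Gamma$--equivariant map from $\omega_\rho$ to the Hilbert--Parreau complex, isometric on the blown-up triangles (onto $\partial P^T$), sending the $E$--edge to a segment between the vertices of $P^{T_{\overline E}}$ and $P^{T_E}$, followed by the triangle inequality. The gap is the step you defer, and the heuristic you give for it is false.

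Left--shifting does not force $x_E\coloneqq\log\parinv_E(\rho_\infty)$ and $x_{\overline E}\coloneqq\log\parinv_{\overline E}(\rho_\infty)$ into a common Weyl chamber. Conditions (\ref{eq: leftshifting I}) and (\ref{eq: leftshifting II}) permit $x_E\le 0<x_{\overline E}$, which forces $\ttt_{\overline E}(\rho)<1<\ttt_E(\rho)$. In that case $P^E$ is the parallelogram with vertices $(0,0)$, $(0,x_E)$, $(x_{\overline E},0)$, $(x_{\overline E},x_E)$ in the coordinates $(\alpha_1,\alpha_2)$, and the two triangle vertices are $(0,0)$ and $(x_{\overline E},x_E)$. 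The Finsler norm induced by $\tn{H}$ is the Weyl--invariant norm $\max\{|\alpha_1|,|\alpha_2|,|\alpha_1+\alpha_2|\}$, that is, $\max_i\lambda_i-\min_i\lambda_i$. This agrees with the linear expression $\lambda_1-\lambda_3$ only on $\liea^+$. Since $x_{\overline E}+x_E=\log\tts_E(\rho)>0$ and $x_E\le 0$, the vector $(x_{\overline E},x_E)$ has length $x_{\overline E}=\log\tts_E(\rho)+|x_E|$. So the bound you need fails, and it fails in the wrong direction for your ``at most is enough'' fallback. The relevant case split is by the signs of $x_E$ and $x_{\overline E}$, not by the signs of $\log\ttt_E$ and $\log\ttt_{\overline E}$.

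Choosing a different segment does not help. In the local development, $P^{T_{\overline E}}\subset\{\alpha_1\le 0\}$ and $P^{T_E}\subset\{\alpha_1\ge x_{\overline E}\}$. Any path from one to the other must cross $P^E$, so it has length at least $x_{\overline E}>\log\tts_E(\rho)$. Hence no equivariant map sending each blown-up triangle into the corresponding $P^T$ can be $1$--Lipschitz on such an edge.

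Be aware that the paper's own proof handles exactly this case by taking the diagonal of the parallelogram and asserting that it is isometric to the edge of $\omega_\rho$. By the computation above it is not, so the paper's argument has the same hole. As it stands, your argument, like the paper's, proves the proposition only when $\parinv_E(\rho_\infty)>1$ for all $E\in\caleor_S$. There the vector lies in $\liea^+$ and its length is exactly $\log\tts_E(\rho)$.

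The remaining case needs a genuinely new idea. The only slack is the length $|x_E|$ of the glued sides, and it can be recovered inside an adjacent $P^T$ only for some turning patterns. Developing the complex along a curve that zigzags through such an edge even suggests that the inequality $L_\rho^\HP\le L_\rho^\Graph$ itself may fail there.
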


As outlined before, the Hilbert--Parreau length spectrum captures the degeneracy of re--scaled Hilbert lengths along stretch paths. 
The precise result requires in fact an extra assumption on $\rho$.
Namely, we say that $\rho$ is \textit{edge--separating} if for every triangle $T\in\calt_S$ and every pair $E_1,E_2$ of edges of $T$, oriented so that $T=T_{\overline{E}_1}$ and $T=T_{\overline{E}_2}$ (c.f. Subsection \ref{subsec: positive tuples}), one has $$\parinv_{E_1}(\rho_\infty)\cdot \parinv_{E_2}(\rho_\infty)> \displaystyle\lim_{t\to\infty}\left(1+ \ttt_{\overline{E}_1}(\rho_t)^{-1}\right)^{-\frac{1}{e^t}}= \displaystyle\lim_{t\to\infty}\left(1+ \ttt_{\overline{E}_2}(\rho_t)^{-1}\right)^{-\frac{1}{e^t}} $$ \noindent and $$\parinv_{\overline{E}_1}(\rho_\infty)\cdot \parinv_{\overline{E}_2}(\rho_\infty)> \displaystyle\lim_{t\to\infty}\left(1+ \ttt_{\overline{E}_1}(\rho_t)\right)^{-\frac{1}{e^t}}= \displaystyle\lim_{t\to\infty}\left(1+ \ttt_{\overline{E}_2}(\rho_t)\right)^{-\frac{1}{e^t}}, $$ \noindent c.f. \cite[Theorem 4.3]{ParreauInv}.

\begin{thm}[Parreau {\cite[Theorem 3]{ParreauInv}}]\label{thm: parreau}
    Let $\rho\in\cp$ be a left--shifting and edge--separating representation. 
    Let $t\mapsto\rho_t$ be the associated stretch path. 
    Then for every $\gamma\in\Gamma$ one has $$\displaystyle\lim_{t\to\infty} \frac{e^{-t}\cdot L_{\rho_t}^\Hil(\gamma)}{L_\rho^\HP(\gamma)}=1.$$
\end{thm}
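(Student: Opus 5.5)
Since this is Parreau's theorem, quoted from \cite{ParreauInv}, the natural plan is to follow her degeneration argument rather than the triangular-metric methods of this paper. The first step is to reduce the statement to eigenvalues. For $\rho_t\in\cp$ the Hilbert translation length is spectral: $L_{\rho_t}^\Hil(\gamma)=\log(\lambda_1/\lambda_3)(\rho_t(\gamma))$, where $\lambda_1\geq\lambda_2\geq\lambda_3$ are the moduli of the eigenvalues. So it suffices to understand the asymptotics of $e^{-t}\log\lambda_i(\rho_t(\gamma))$. The Fock--Goncharov-like parameters of $\rho_t$ are $e^t$-powers of those of $\rho$, so the matrices $\rho_t(\gamma)$, written in a basis adapted to the flag map, have entries that are Laurent-type expressions in $\ttt_E(\rho)^{e^t}$ and $(-\ttc_E(\rho))^{e^t}$. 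I would pass to a non-archimedean field $\mathbb{K}$, a Robinson field built from an ultrafilter with scaling $e^t$, as in \cite[Section 5]{ParreauInv}. There the family $\rho_t$ defines a limit representation $\rho_\infty:\Gamma\to\mathsf{PGL}_3(\mathbb{K})$ whose parameters are $\ttt_E(\rho_\infty)$ and $\parinv_E(\rho_\infty)$. Standard ultralimit arguments then give $\lim e^{-t}\log\lambda_i(\rho_t(\gamma))=\log|\lambda_i(\rho_\infty(\gamma))|_{\mathbb{K}}$, and pointwise convergence along the path follows because the limit turns out to be independent of the ultrafilter.

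This reduces the problem to computing the $\liea$-valued translation length of $\rho_\infty(\gamma)$ acting on the Euclidean building $\mathcal{B}$ of $\mathsf{PGL}_3(\mathbb{K})$, and then evaluating the Hilbert Finsler norm $\log(\lambda_1/\lambda_3)$ on it. Next I would build an explicit $\rho_\infty$-equivariant map from the Hilbert--Parreau complex into $\mathcal{B}$. Each ideal triangle goes to the convex hull of the three limit flags, which is a tripod or a flat triangle of side $|\log\ttt_T(\rho)|$ depending on the triangle invariant. Adjacent triangles are glued along flat strips whose widths are read off from $\parinv_E(\rho_\infty)$. By construction, the Finsler length in this complex of a $\gamma$-invariant path is $L_\rho^\HP(\gamma)$.

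The main obstacle is showing that this equivariant map is an isometric embedding onto a convex subset, so that translation lengths in the complex equal translation lengths in $\mathcal{B}$. My approach is local-to-global. The \emph{left--shifting} inequalities (\ref{eq: leftshifting I}) and (\ref{eq: leftshifting II}) say that at every edge the two adjacent pieces are glued with a genuine shift in a fixed direction, so consecutive pieces do not fold back in the Weyl chamber. The \emph{edge--separation} inequalities guarantee that, within each triangle, the strips attached to two of its edges are separated. Together these should give local convexity at every edge and every triangle of the image, as a union of Weyl-chamber pieces. In a CAT(0) Euclidean building a locally convex, connected, complete subcomplex is convex. Hence $\rho_\infty(\gamma)$ has an axis inside the image, and its Finsler translation length equals $L_\rho^\HP(\gamma)$.

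Combining the three steps gives $e^{-t}L_{\rho_t}^\Hil(\gamma)\to L_\rho^\HP(\gamma)$, which is the claimed limit. The delicate verification is the local convexity: checking case by case, according to whether $\ttt_E\lessgtr1$, that the limit angles between adjacent flats are at least $\pi$ in the building.
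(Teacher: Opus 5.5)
The paper does not prove this statement. It is quoted from \cite[Theorem 3]{ParreauInv} and used only as a black box in Corollary \ref{cor: degeneration triangular and hilbert}.

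Your first two steps are the right framework: the spectral formula for $L^\Hil$, the rescaled limit $\rho_\infty$ into $\mathsf{PGL}_3(\mathbb{K})$ over a Robinson field, and an equivariant map of the Hilbert--Parreau complex into the $A_2$-building. The step you flag as the main obstacle fails as proposed. The image of the complex is in general \emph{not} locally convex in the CAT(0) sense, so no case-by-case check can give angles $\geq\pi$, and the Cartan--Hadamard local-to-global principle is unavailable.

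Concretely, suppose $\parinv_E(\rho_\infty)>1$ and $\parinv_{\overline E}(\rho_\infty)>1$. Both hypotheses allow this, for example whenever all $\parinv_E(\rho_\infty)>1$. Then the piece $P^E$ is a segment, not a strip. It is attached at a vertex $v$ of $P^T$, which is a non-degenerate flat triangle with singular sides as soon as $\ttt_T(\rho)\neq 1$.
\begin{itemize}
\item The space of directions of the image at $v$ is a closed chamber $c$ of the link (an arc of length $\pi/3$) plus an isolated point $\xi$.
\item Take an apartment of the link containing $c$ and $\xi$; it is a circle of length $2\pi$, in which $\xi$ has a single antipode.
\item Hence some directions in $c$ make angle $<\pi$ with $\xi$.
\item Every path inside the image between the corresponding nearby points of $P^T$ and $P^E$ goes through $v$. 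So the CAT(0) geodesic joining them leaves the image.
\end{itemize}

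What you actually need is of a different nature. The $\tn{H}$-Finsler distance of the building must agree with the path distance of the complex, at least along a $\rho_\infty(\gamma)$-invariant broken line. Locally this only asks that consecutive directions lie in a common closed Weyl chamber. That condition does not propagate from local to global, even in a single flat. Take $d_1$ in the interior of a chamber $C_1$, $d_2$ on the wall $C_1\cap C_2$, and $d_3$ in the interior of $C_2$. The condition holds at each break point, yet $\Vert d_1+d_2+d_3\Vert<\Vert d_1\Vert+\Vert d_2\Vert+\Vert d_3\Vert$, because the hexagonal norm induced by $\tn{H}$ is linear exactly on closed chambers.

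So a genuinely global coherence argument is required. One way: choose a chamber at infinity, such as the attracting flag of $\rho_\infty(\gamma)$, whose retraction onto an apartment does not increase the Finsler distance. Then show this retraction maps each piece of the invariant broken line isometrically onto a segment whose direction lies in one fixed closed Weyl chamber. Finsler lengths then add up along the line, so the building's translation length equals $L^\HP_\rho(\gamma)$. Producing this uniform orientation is where the left-shifting and edge-separation inequalities have to be used, and it is the missing core of your argument.
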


We can now prove Corollary \ref{cor: degeneration triangular and hilbert intro}.

\begin{cor}\label{cor: degeneration triangular and hilbert}
    Let $\rho\in\cp$ be a $\T$--shifting, left--shifting and edge--separating representation. 
    Let $t\mapsto\rho_t$ be the associated stretch path. 
    Then for every $\gamma\in\Gamma$ we have $$\lim_{t\to\infty} \frac{L_{\rho_t}^{\Hil}(\gamma)}{ L_{\rho_t}^\T(\gamma)}=1.$$ 
    \noindent In particular, $L_\rho^\HP(\gamma)=L_\rho^\Graph(\gamma)$.
\end{cor}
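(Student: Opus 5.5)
We combine the three length comparisons available along the stretch path $t\mapsto\rho_t$.

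First, $\rho$ is $\T$--shifting, and the associated stretch path is a $\T$--stretch of size $e^t$. Hence Proposition \ref{prop: degeneration triangular} gives
$$e^{-t}L_{\rho_t}^\T(\gamma)\to L_\rho^\Graph(\gamma).$$
Second, $\rho$ is left--shifting and edge--separating, so Theorem \ref{thm: parreau} gives
$$e^{-t}L_{\rho_t}^\Hil(\gamma)\to L_\rho^\HP(\gamma).$$
Third, (\ref{eq: domination triangular hilbert}) gives $L_{\rho_t}^\T(\gamma)\leq L_{\rho_t}^\Hil(\gamma)$ for every $t$. Multiplying by $e^{-t}$ and passing to the limit yields $L_\rho^\Graph(\gamma)\leq L_\rho^\HP(\gamma)$.

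For the reverse inequality we invoke Proposition \ref{prop: hilbert parreau length spectrum}, which applies because $\rho$ is both $\T$--shifting and left--shifting. It gives $L_\rho^\HP(\gamma)\leq L_\rho^\Graph(\gamma)$. Combining the two inequalities, $L_\rho^\HP(\gamma)=L_\rho^\Graph(\gamma)$ for every $\gamma$.

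It remains to compute the ratio. Writing
$$\frac{L_{\rho_t}^\Hil(\gamma)}{L_{\rho_t}^\T(\gamma)}=\frac{e^{-t}L_{\rho_t}^\Hil(\gamma)}{e^{-t}L_{\rho_t}^\T(\gamma)},$$
the numerator tends to $L_\rho^\HP(\gamma)$ and the denominator to $L_\rho^\Graph(\gamma)$. These limits are equal, so the ratio tends to $1$. Division is legitimate for $\gamma\neq 1$: the $\T$--shifting condition makes every edge of $\omega_\rho$ of positive length and $n_\gamma\geq 1$, so $L_\rho^\Graph(\gamma)>0$. The trivial element is excluded implicitly.

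The only substantial input here is Proposition \ref{prop: hilbert parreau length spectrum}; its proof is deferred to later in the subsection and I expect it to be the main obstacle. To prove it, I would build a $\Gamma$--equivariant map from $\omega_\rho$ to the Hilbert--Parreau complex. It sends the blown-up triangles to the images of the equilateral pieces $\Teq$ and the shear edges to segments joining the points corresponding to $q_E$ and $q_{\overline{E}}$. I would then check that this map is $1$--Lipschitz for the Finsler metric induced by the Hilbert length functional. Granting that check, translation lengths can only decrease under the map, which gives $L_\rho^\HP\leq L_\rho^\Graph$.
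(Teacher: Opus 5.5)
Your argument is correct and uses exactly the ingredients of the paper's proof: the domination $L^{\T}_{\rho_t}\leq L^{\Hil}_{\rho_t}$, Proposition~\ref{prop: degeneration triangular}, Theorem~\ref{thm: parreau} and Proposition~\ref{prop: hilbert parreau length spectrum}. The paper combines them as $\liminf\geq 1$ together with $\limsup\leq L^{\HP}_\rho(\gamma)/L^{\Graph}_\rho(\gamma)\leq 1$, which gives the limit and the equality $L^{\HP}_\rho=L^{\Graph}_\rho$ at once; you first obtain the equality by passing to the limit in the domination and then take the quotient of limits, which is the same argument in a different order.
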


\begin{proof}
On the one hand, by (\ref{eq: domination triangular hilbert}) we have $$\displaystyle\liminf_{t\to\infty}\frac{L_{\rho_t}^{\Hil}(\gamma)}{ L_{\rho_t}^\T(\gamma)}\geq 1.$$
\noindent On the other hand, fix $\varepsilon>0$ and $\gamma\in\Gamma$.
By Theorem \ref{thm: parreau} we have $$e^{-t}\cdot L_{\rho_t}^\Hil(\gamma)\leq (1+\varepsilon)\cdot L_\rho^\HP(\gamma)$$ \noindent for every $t$ large enough. 
Moreover, by Proposition \ref{prop: degeneration triangular} we also have $$e^{-t}\cdot L_{\rho_t}^\T(\gamma)\geq (1-\varepsilon)\cdot L_\rho^\Graph(\gamma)$$ \noindent for every $t$ large enough. 
Hence, $$\displaystyle\limsup_{t\to\infty} \frac{L_{\rho_t}^\Hil(\gamma)}{L_{\rho_t}^\Graph(\gamma)}\leq \frac{L_\rho^\HP(\gamma)}{L_\rho^\Graph(\gamma)}$$ \noindent and by Proposition \ref{prop: hilbert parreau length spectrum} the proof is complete. \end{proof}

We finish this subsection with a proof of Proposition \ref{prop: hilbert parreau length spectrum}.

\begin{proof}[Proof of Proposition \ref{prop: hilbert parreau length spectrum}]

Let $d_{\HP}(\cdot,\cdot)$ be the Hilbert--Parreau distance. 
The proof goes by finding a natural $\Gamma$--equivariant embedding $\tau_\rho$ of $\omega_\rho$ into the Hilbert--Parreau complex which, restricted to edges of $\omega_\rho$, is isometric. 
We note that this is enough: for every $\gamma\in\Gamma$ we may pick a vertex $v\in\omega_\rho$ so that $ L_\rho^\Graph(\gamma)=d_\Graph(v,\gamma\cdot v)$ and therefore by the triangle inequality we have $$  L_\rho^\HP(\gamma)\leq d_\HP(\tau_\rho(v),\gamma\cdot \tau_\rho(v))\leq d_\Graph(v,\gamma\cdot v)=L_\rho^\Graph(\gamma)$$ \noindent as desired. 

Let us then prove the existence of $\tau_\rho$, for which we first need to setup some notations. 
Let $\liea$ be a Cartan subspace of $\g$ and $\liea^+\subset\liea$ be a closed Weyl chamber. 
Let $\alpha_1\in\liea^*$ and $\alpha_2\in\liea^*$ be the associated simple roots, which determine the isomorphism $\liea\to\rr^2$ given by $x\mapsto (\alpha_1(x),\alpha_2(x))$. 
In these coordinates, the Hilbert length functional $\tn{H}\in\liea^*$ reads as $\tn{H}(x_1,x_2) = x_1+x_2$.
Recall that a direction in $\liea$ is \textit{singular} if it is parallel to either $\ker(\alpha_1)$, $\ker(\alpha_2)$, or $\ker(\tn{H})$.

Now, the Hilbert--Parreau complex is roughly defined as follows (see \cite[Section 3.3]{ParreauInv} for more details).
On the one hand, for each $T\in\calt_{\widetilde{S}}$ we take an equilateral triangle $P^T\subset\liea$ with singular sides of length equal to the absolute value of the logarithm of the corresponding triangle invariant.
The piece of $\omega_\rho$ associated to $T$ admits then a natural embedding as the booundary of $P^T$.
The different triangles $P^T\subset\liea$ are glued together as follows.  
Let $E\in\cale_{\widetilde{S}}$ be an edge which is the intersection of two adjacent triangles $T,T'\in\calt_{\widetilde{S}}$. 
We orient $E$ in such a way that $T'=T_E$ and $T=T_{\overline{E}}$ (c.f. Subsection \ref{subsec: positive tuples}).
To situations may appear. 

First, if both parameters $\parinv_{E}(\rho_\infty)$ and $\parinv_{\overline{E}}(\rho_\infty)$ are bigger than $ 1$ then, by definition, the corresponding piece $P^E\subset\liea$ of the Hilbert--Parreau complex is the segment between $(0,0)$ and $$(x_{\overline{E}},x_E)\coloneqq (\log (\parinv_{\overline{E}}(\rho_\infty)),\log (\parinv_{E}(\rho_\infty))).$$
\noindent Observe that the Hilbert length of this segment is $$x_{\overline{E}}+x_E=\log(\parinv_{\overline{E}}(\rho_\infty))+\log (\parinv_{E}(\rho_\infty))=\log(\tts_E(\rho))$$ \noindent (c.f. Remark \ref{rem: Tshifting with parreau}), and therefore it is isometric to the edge of $\omega_\rho$ associated to $E$.

The second case is if one of the parameters at infinity, say $\parinv_E(\rho_\infty)$, is smaller than $1$ (the case 
$\parinv_{\overline{E}}(\rho_\infty)\leq 1$ is similar).
In this case,
by (\ref{eq: leftshifting I}), we have $$\ttt_{\overline{E}}(\rho) <1<\ttt_E(\rho)$$ \noindent and by (\ref{eq: leftshifting II}) we also have $ \parinv_{\overline{E}}(\rho_\infty)>1$.
The piece $P^E\subset\liea$ is defined in this case as the parallelogram of vertices $$(0,0), \tn{ }(0,x_E), \tn{ }(x_{\overline{E}},0) \tn{ and } (x_{\overline{E}},x_E).$$
\noindent Note that thanks to (\ref{eq: leftshifting I}) the side of $P^E$ between $(0,0)$ and $(0,x_E)$ (resp. $(x_{\overline{E}},x_E)$ and $(x_{\overline{E}},0)$) has length smaller than the side length of $P^{T_{\overline{E}}}$ (resp. $P^{T_E}$). 
Hence, the parallelogram $P^E$ may be glued to the triangles $P^{T_{\overline{E}}}$ and $P^{T_E}$ in such a way that the vertex $(0,0)$ (resp. $(x_{\overline{E}},x_E)$) is identified with a vertex of $P^{T_{\overline{E}}}$ (resp. $P^{T_E}$), see Figure \ref{fig: hilbertparreau and omegarho}. 
\begin{figure}[h] 
\scalebox{1}{%
\begin{overpic}[scale=1, width=1\textwidth, tics=2]{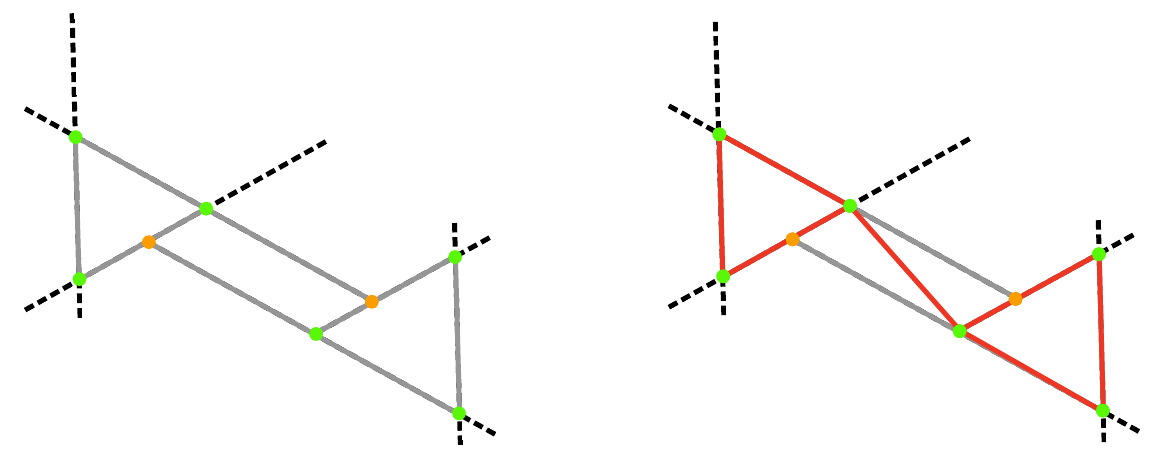}

    \put (8,21)  {\tc{gray}{$P^{T_{\overline{E}}}$}}
    \put (32,10)  {\tc{gray}{$P^{T_E}$}}
    \put (20,15.5)  {\tc{gray}{$P^E$}}

    \put (7,37)  {$\tn{H}>0$}
    \put (26,21)  {$\liea^+$}
    \put (28,27.5)  {$\alpha_1=0$}
    \put (-2,31)  {$\alpha_2=0$}

    \put (14.5,24)  {\tc{green}{$(0,0)$}}
    \put (19,7.5)  {\tc{green}{$(x_{\overline{E}},x_E)$}}
    
    \put (28,17)  {\tc{orangepic}{$(x_{\overline{E}},0)$}}
    \put (8.5,15)  {\tc{orangepic}{$(0,x_E)$}}
   \end{overpic}}	
   \caption{Left: three polygons $P^{T_E}$, $P^E$ and $P^{T_{\overline{E}}}$ glued along sides, giving the local development of the Hilbert--Parreau complex in the case $\parinv_E(\rho_\infty)\leq 1$.
   Right: in red, the local image of $\omega_\rho$ under $\tau_\rho$.}
   \label{fig: hilbertparreau and omegarho}
   \end{figure}
\noindent The embedding of the piece of $\omega_\rho$ corresponding to $E$ is given by choosing the diagonal between $(0,0)$ and $(x_{\overline{E}},x_E)$, and this finishes the proof.\end{proof}

\appendix

\section{Ortho-geodesic stretch maps in the hyperbolic plane}\label{appendix: maps H2}

In this appendix, we study two ortho-geodesic stretch maps in the hyperbolic plane $\mathbb{H}^2$ and we compute their precise Lipschitz constants. The first computation serves as a model for our construction of ortho-geodesic stretch maps on $\mathbb{P}(\mathbb{R}^3)$ in Section \ref{sec: stretch map}. The major difference between our proposal here with Thurston's original work \cite{ThurstonStretch} for hyperbolic surfaces is to replace his horocyclic foliation by some ortho-geodesic foliation. The ortho-geodesic foliation allow us to define, similar to Thurston's work, $K$-Lipschitz maps on the hyperbolic ideal triangle, and have easier generalizations in projective geometry.

We work in an affine chart of the Klein model of the hyperbolic plane $\mathbb{H}^2$. Consider an ideal triangle $\Ti$ in $\mathbb{H}^2$ with vertices $p_1, p_2, p_3$. Orthogonally project $p_1, p_2, p_3$ to the opposite sides and denote the projection points as $s_1, s_2, s_3$ respectively. The geodesic segments $[p_i,s_i]$ meet at the barycenter $t$ of the triangle $\Ti$.  Equivalently, we can define $s_i$ as in Subsetion \ref{subsec: triple ratio}. The point $p_i$ is associated with a plane $l_i$ and the pair $(p_i,l_i)$ yields a flag $F_i$. The intersections between $l_{i}$ and $l_{j}$ are denoted as $p_{ij}=l_i\cap l_j$ for $1\leq i<j\leq 3$. Then as in figure \ref{fig: HyperbolicF1} below,

$$s_i\coloneqq(p_i\oplus p_{jk})\cap( p_j\oplus p_k).$$

 For the Fuchsian case, the triple ratio $T(F_1,F_2,F_3)=1$. Therefore when $i\in\{1,2,3\}$,
$$t= t_i\coloneqq(p_j\oplus p_{ik})\cap (p_k\oplus p_{ij}).$$

We obtain three symmetric triangles in $\T_{p_1p_2p_3}$ divided by the geodesic $p_1\oplus t$, $p_2\oplus t$ and $p_3\oplus t$: these are $\T_{p_1p_2t}$, $\T_{p_2p_3t}$ and $\T_{p_3p_1t}$\footnote{They correspond to $\T_{123}, \T_{231}, \T_{312}$ in Subsection \ref{subsec: triple ratio} respectively.}. Moreover, in the Fuchsian case $\Teq$ (recall Subsection \ref{subsec: triple ratio}) degenerates to a point $t$.  
\begin{equation*}
\Ti=\T_{p_1p_2p_3}=\T_{p_1p_2t} \cup \T_{p_2p_3t}\cup \T_{p_3p_1t}.
\end{equation*}

We will define our maps on one of these three triangles, say triangle $\T_{p_1p_2t}$, and then extend the map to the whole ideal triangle $\Ti$ using symmetry.

\begin{figure}[!htb]
   \centering
    \includegraphics[width=.9\textwidth]{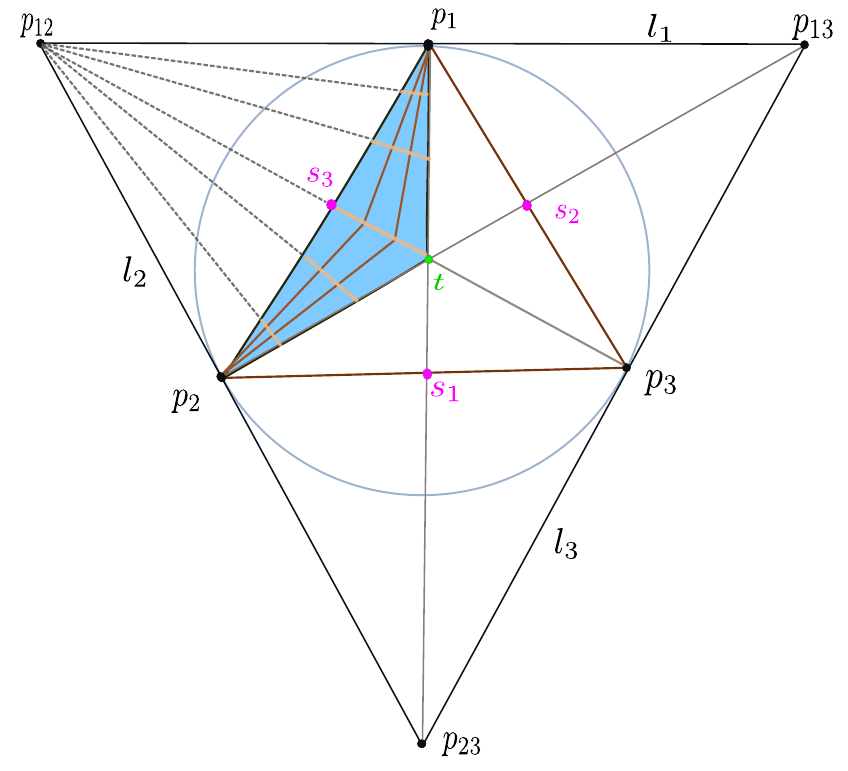}
    \caption{Geodesic foliations in the projective model of $\mathbb{H}^2$. The ortho-geodesic foliation $\folo$ for the $\T_{p_1p_2t}$ is drawn in gray dashed and yellow lines. The cusp geodesic foliations $\folc^{p_1}$ and $\folc^{p_2}$ are drawn in brown lines. They are expained in Subsection \ref{subsection, orthegeodesicFoliation}}. 
    \label{fig: HyperbolicF1}
\end{figure}

\subsection{Ortho--geodesic foliation} \label{subsection, orthegeodesicFoliation}
The triangle $\T_{p_1p_2t}$ is divided by the geodesic segment $[t,s_3]$ in the pieces $\T_{p_1}$ and $\T_{p_2}$ . For the explicit calculations, it is convenient to switch to the upper half plane model of $\mathbb{H}^2$. We denote the unique geodesic between $x$ and $y$ there by $\overline{xy}$. Up to a Mobius transformation, we assume $p_1=\infty$, $p_2=0$, $s_3=i$. These three points determine the coordinates of every point of $\T_{p_1p_2t}$.

 Let  $\{(x,y) \text{ }| \text{ } x\in\mathbb{R} \text{ and } y> 0 \}$ be the standard Euclidean system of the upper half plane model of $\mathbb{H}^2$. We additionally introduce the  coordinate system $(x,w)$ with
$$ w=\sqrt{x^2+y^2}.$$ 

The equation $w \equiv \text{constant}>0$ describes a half circle centered at 0 and orthogonal to the imaginary axis in the upper half plane. Therefore, we call this coordinate system the \emph{ortho-geodesic coordinate}. 

 Next we explain the counterparts in the upper half plane of the geodesic foliations 
 (see Figure \ref{fig: HyperbolicFigure2} and compare Subsection \ref{sec: stretch map}). 
 First, the \emph{ortho-geodesic foliation} $\folo$ is the foliation given by the geodesics orthogonal to the geodesic $\overline{p_1p_2}$. Equivalently in the ortho-geodesic coordinate $(x,w)$, these are ortho-geodeiscs given by the equation $w \equiv \text{constant}$. Then we have the \emph{cusp geodesic foliation}. They are divided into two cases: the cusp geodesic foliation through $p_1$, denoted as $\folc^{p_1}$, is the foliation given by all vertical geodesics which are all geodesics through $p_1=\infty$; the cusp geodesic foliations through $p_2$, denoted as $\folc^{p_2}$, is the foliation given by all geodesics through the endpoint $p_2$ (see Figure \ref{fig: HyperbolicFigure2}). If we draw these foliations in projective model, then we are back to Figure \ref{fig: HyperbolicF1}.

\begin{figure}[!htb]
   \centering
    \includegraphics[width=\textwidth]{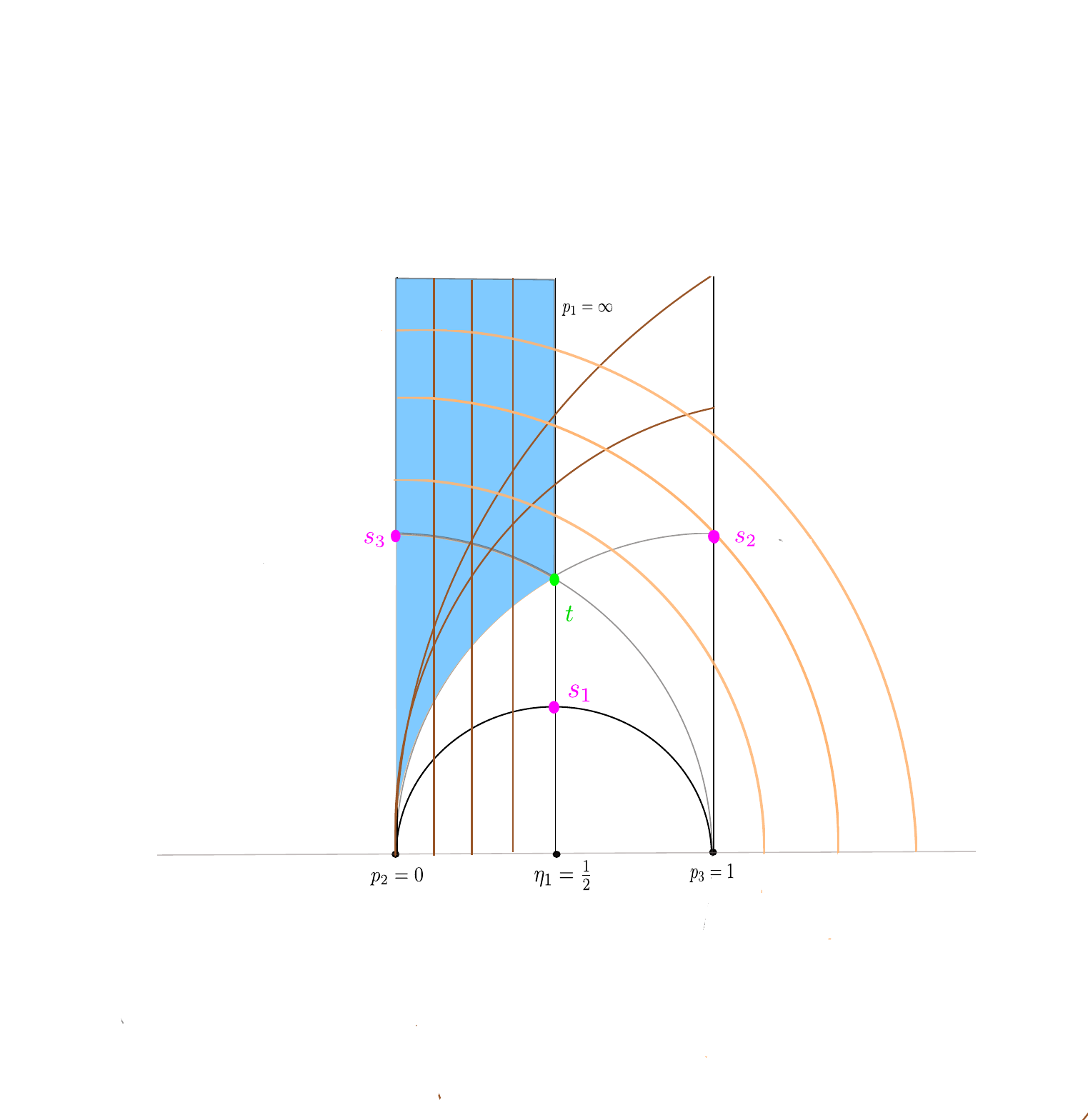}
    \caption{ The geodesic foliations in upper half plane model of $\mathbb{H}^2$. The ortho-geodesic foliation $\folo$ for the $\T_{p_1p_2t}$ is drawn in yellow lines. The cusp geodesic foliation $\folc^{p_1}$ and $\folc^{p_2}$ are drawn in brown lines. } 
    \label{fig: HyperbolicFigure2}
\end{figure}

\subsection{Two ortho-geodesic stretch maps} 

We introduce two stretch maps on the ideal triangle of $\mathbb{H}^2$ in Subsections \ref{subsection, firstmap} and \ref{subsection, secondmap}. Both maps satisfy the requirements stated at the beginning of Section \ref{sec: stretch map}. The first map, presented in Subsection \ref{subsection, firstmap},  generalizes to the ortho-geodesic stretch map $\psi^0=\psi^0_K$ of Subsection \ref{subsub: homeo rigid piece}.
Subsection \ref{subsection, secondmap} presents a second map, whose generalization to the projective setting is not discussed in the paper. We include it nevertheless, as the explicit  computation may be of independent interest.

\subsubsection{First map on the ideal triangle} \label{subsection, firstmap}

We define our map on the triangle $\T_{p_1}$, whose edges are given by the geodesics $\overline{p_1s_3}$, $\overline{s_3t}$ and $\overline{tp_1}$ (recall Figure \ref{fig: HyperbolicFigure2}). In the ortho-geodesic coordinate system of the upper half plane model of $\mathbb{H}^2$, the triangle $\T_{p_1}$ is 
$$\T_{p_1}=\{(x,w)\in \mathbb{H}^2 \text{ }| \text{ }0\leq x \leq \frac{1}{2}, w \geq 1\}.$$

Let $K>1$. We define a Lipschitz map $ \psi_K $ in $\T_{p_1}$ using the ortho-geodesic coordinate as follows,
  \begin{align*}
      \psi_K  : \text{ }& \T_{p_1} \text{  } \xrightarrow{\hspace*{0.6cm}} \text{  } \T_{p_1},\\
       &\psi_K(x,w)= (x, w^K).
   \end{align*}

The geometric features of this map is described in the following proposition.  Here, we denote by $\folo_{s_3}$ the leaf of $\folo$ passing through $s_3$ and by   $\folo_Z$ the leaf of $\folo$ passing through $Z\in \overline{p_1p_2}\cap \T_{p_1}$. We denote also by $\eta_1$ the intersection point of the geodesic $\overline{ts_1}$ with the boundary of the hyperbolic plane $\mathbb{H}^2$. 
\begin{prop}
 The map $\psi_K$ preserves the foliation $\folc^{p_1}$ leafwise and preserves the foliation $\folo$ (taking leaves to leaves). Furthermore 
\begin{equation} \label{eqtn:distanceEqtn1}
    d_{\mathbb{H}^2}(\folo_{s_3} ,\psi_K (\folo_{Z}))=K  d_{\mathbb{H}^2}(\folo_{s_3},\folo_Z).
\end{equation}

\end{prop}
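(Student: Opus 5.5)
Everything can be done in the ortho-geodesic coordinates $(x,w)$ on $\T_{p_1}=\{0\leq x\leq \frac{1}{2},\ w\geq 1\}$. In these coordinates the map is simply $\psi_K(x,w)=(x,w^K)$. We first identify the two foliations.

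\emph{The foliation $\folc^{p_1}$.} Its leaves are the vertical geodesics $\{x=\text{const}\}$. Since $\psi_K$ does not change the $x$-coordinate, it maps each vertical leaf into itself. It is also a bijection of each leaf onto itself: on $\T_{p_1}$ a vertical leaf is the ray $w\geq w_0(x)$, and $w\mapsto w^K$ must send this ray onto itself. That requires the lower boundary of $\T_{p_1}$ along the leaf to be fixed, which needs checking; the boundary lies on $\overline{s_3t}$, the half circle $w=1$. Because $1^K=1$, this boundary arc $\{w=1\}$ is fixed pointwise, and it plays the role of $\folo_r$ in the projective construction. Hence $\psi_K$ preserves $\folc^{p_1}$ leafwise.

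\emph{The foliation $\folo$.} Its leaves are the half circles $\{w=c\}$ centred at $0$, i.e.\ the geodesics orthogonal to $\overline{p_1p_2}$, the imaginary axis. The map sends $\{w=c\}$ to $\{w=c^K\}$, so leaves go to leaves. The leaf $\folo_{s_3}$ is $\{w=1\}$.

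\emph{The distance formula (\ref{eqtn:distanceEqtn1}).} Let $Z=(0,w_Z)$ on the imaginary axis, with $w_Z\geq 1$. The leaves $\folo_{s_3}$ and $\folo_Z$ are both orthogonal to the imaginary axis, which is their common perpendicular. So the distance between them is measured along that axis:
\[
d_{\mathbb{H}^2}(\folo_{s_3},\folo_Z)=d_{\mathbb{H}^2}(i,\,i w_Z)=\log w_Z .
\]
The image leaf is $\psi_K(\folo_Z)=\{w=w_Z^K\}$, so by the same reasoning
\[
d_{\mathbb{H}^2}(\folo_{s_3},\psi_K(\folo_Z))=\log w_Z^K = K\log w_Z ,
\]
which is (\ref{eqtn:distanceEqtn1}).

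The only point needing care is this distance computation. The distance between two disjoint geodesics is realised along their common perpendicular, so one has to confirm that the imaginary axis really is orthogonal to both half circles. This is immediate, since both are centred at $0$.
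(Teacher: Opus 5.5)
Your proposal is correct and follows the same route as the paper. Since $\psi_K$ does not change $x$, it preserves the vertical leaves of $\folc^{p_1}$; the rule $w\mapsto w^K$ sends each half circle $\{w=c\}$ to $\{w=c^K\}$; and the distance identity reduces to comparing $\log w_Z$ with $K\log w_Z$ along the common perpendicular, the imaginary axis. You carry out explicitly the step the paper calls ``a simple hyperbolic geometry computation,'' and you also check that the boundary arc $\{w=1\}$ is fixed, so that each vertical leaf is mapped onto itself.
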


\begin{proof}

 The fact $\psi_K$ preserves the geodesic foliation $\folc^{p_1}$ leafwise follows since $\psi_K$ does not change the $x$-coordinate. Since the equation $w \equiv \text{constant}> 0 $ parametrizes a half circle in the upper half plane, the map $\psi_K$ mapping $w$ to $w^K$ implies that $\psi_K$ preserves $\folo$ and takes one leaf to another. Lastly, since $\psi_K(0,w)= (0, w^K)$, a simple hyperbolic geometry computation yields Equation \ref{eqtn:distanceEqtn1}.
\end{proof}
We compute the Lipschitz constant of our Lipschitz map $\psi_K$. Throughout the appendix we fix $K>1$. 
\begin{prop} \label{prop,ortho-geodesicsExtreme}
 The map  $\psi_K: \T_{p_1}\text{  } \xrightarrow{\hspace*{0.6cm}} \text{  } \T_{p_1}$ 
   is a Lipschitz map with Lipschitz constant $\text{Lip}(\psi_K)=K$. 
\end{prop}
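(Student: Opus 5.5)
Since $\psi_K$ is smooth on $\T_{p_1}$, I will compute its differential in a frame adapted to the metric and bound the operator norm pointwise. The lower bound $\mathrm{Lip}(\psi_K)\geq K$ already follows from \eqref{eqtn:distanceEqtn1}. Concretely, along the imaginary axis $x=0$ one has $\psi_K(0,w)=(0,w^K)$, and $d_{\hh^2}(i,iw)=\log w$, so distances to $s_3=i$ are multiplied exactly by $K$. For the upper bound I will show $\Vert \od\psi_K(v)\Vert\leq K\Vert v\Vert$ for every tangent vector at every point of $\T_{p_1}$. The domain is geodesically convex, so integrating this bound along the geodesic segment between two points gives the Lipschitz estimate.

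To do this I will pass to polar-type coordinates $(w,\theta)$ with $x=w\cos\theta$, $y=w\sin\theta$. The hyperbolic metric then reads
\[
\frac{\od x^2+\od y^2}{y^2}=\frac{\od w^2+w^2\od\theta^2}{w^2\sin^2\theta}=\frac{1}{\sin^2\theta}\Bigl(\bigl(\od\log w\bigr)^2+\od\theta^2\Bigr).
\]
In these coordinates $\psi_K$ fixes $x$ and sends $w\mapsto w^K$. Hence $\cos\theta'=x/w^K$, so $\theta'$ depends on both $w$ and $\theta$. I will write $u=\log w\geq 0$ and $u'=Ku$. The target angle is then determined by $\cos\theta'=e^{(1-K)u}\cos\theta$, with $\theta,\theta'\in(0,\pi/2]$ and $\cos\theta\leq 1/(2w)$ in $\T_{p_1}$. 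The pulled-back metric is $\sin^{-2}\theta'\,(K^2\od u^2+\od\theta'^2)$. The goal becomes the quadratic form inequality
\[
\frac{K^2\od u^2+\od\theta'^2}{\sin^2\theta'}\leq K^2\,\frac{\od u^2+\od\theta^2}{\sin^2\theta}.
\]

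The key facts I intend to use are the following. Since $K>1$ and $u\geq 0$, we get $\cos\theta'\leq\cos\theta$, hence $\sin\theta'\geq\sin\theta$, so the conformal factor can only help. Differentiating $\cos\theta'=e^{(1-K)u}\cos\theta$ gives
\[
\sin\theta'\,\od\theta'=(K-1)\cos\theta'\,\od u+e^{(1-K)u}\sin\theta\,\od\theta .
\]
The inequality then reduces to an explicit $2\times 2$ positive-semidefiniteness check in the variables $(\od u,\od\theta)$. After multiplying by $\sin^2\theta\sin^2\theta'$, one must compare the two quadratic forms. I expect the main obstacle to be the cross term $(K-1)\cos\theta'\,\od u$. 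It is small precisely because $\cos\theta'\leq 1/(2w^K)$ is small on $\T_{p_1}$, but it is not obviously dominated at the corner $s_3$ where $w=1$.

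If the direct determinant computation becomes messy, I will try an alternative. I will use the geometric decomposition of Lemma \ref{lem: properties psi0}: the vertical leaves of $\folc^{p_1}$ are stretched exactly by $K$, and the circle leaves of $\folo$ are contracted. The remaining task is then to control non-orthogonality. The two foliations are orthogonal in $\hh^2$ only along $x=0$, so elsewhere I will bound the angle between them using $\cos\theta\leq 1/2$. Finally, I will extend the bound to all of $\Ti$ by the symmetries of the ideal triangle.
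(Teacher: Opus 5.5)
Your setup is right: the metric $(\od u^2+\od\theta^2)/\sin^2\theta$, the relation $\cos\theta'=e^{(1-K)u}\cos\theta$, the formula for $\sin\theta'\,\od\theta'$, and the lower bound along $x=0$ are all correct. The problem is that the quadratic-form inequality you reduce to is false, so the plan cannot be completed.

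The failure is not at $s_3$: there $\cos\theta'=0$ and the cross term disappears. It occurs along the rest of the arc $\{w=1\}$ from $s_3$ to $t$. On that arc $u=0$, $\theta'=\theta$ and $\od\theta'=(K-1)\cot\theta\,\od u+\od\theta$. For the radial vector $\od u=1$, $\od\theta=0$, your inequality therefore becomes
\[
\frac{K^2+(K-1)^2\cot^2\theta}{\sin^2\theta}\leq\frac{K^2}{\sin^2\theta},
\]
which fails as soon as $x=\cos\theta>0$.

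Concretely, a point $z_0=(x_0,y_0)$ with $x_0^2+y_0^2=1$ is fixed by $\psi_K$, and in Euclidean coordinates
\[
\od_{z_0}\psi_K=\begin{pmatrix}1&0\\ (K-1)x_0/y_0&K\end{pmatrix}.
\]
This sends the unit radial vector $(x_0,y_0)$ to a vector of length $\sqrt{K^2+(K-1)^2x_0^2/y_0^2}$. Because the point is fixed, the Euclidean and hyperbolic stretch factors coincide. For $0<x_0<1/2$ the radial direction points into $\T_{p_1}$, so $\mathrm{Lip}(\psi_K)\geq\sqrt{K^2+(K-1)^2/3}>K$, taking $x_0\to 1/2$. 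For example, for $K=2$ and $x_0=0.4$ the local stretch is about $2.047$. No estimate of the cross term can close the argument: the map $(x,w)\mapsto(x,w^K)$ is not $K$-Lipschitz on $\T_{p_1}$.

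Your fallback via the two foliations fails for the same reason. Lemma \ref{lem: properties psi0} concerns the projective map $\psi^0$ and the Hilbert metric of $\Text$, not this hyperbolic map. Here the vertical leaves are not stretched exactly by $K$: the infinitesimal factor along $x=x_0$ is $Kw^{2K-2}(w^2-x_0^2)/(w^{2K}-x_0^2)$, which is generally strictly less than $K$.

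More importantly, at $w=1$ the differential is $\mathrm{diag}(1,K)$ in $(x,w)$-coordinates. It fixes the circle direction and multiplies the vertical direction by $K$. A linear endomorphism of an inner product plane with eigenvalues $1$ and $K$ has operator norm exactly $K$ only if its eigendirections are orthogonal. Here that happens only at $x=0$, so bounding the angle between the foliations can never yield the constant $K$.

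The paper's own computation does not get around this either. The metric it displays in $(x,w)$-coordinates is not the hyperbolic metric; it is even singular along $x=0$. The correct form is $\frac{w^2(\od x^2+\od w^2)-2xw\,\od x\,\od w}{(w^2-x^2)^2}$. Its termwise ``mediant'' bound also treats the sign-indefinite cross term as a positive summand. With the correct metric, the same radial vectors on $\{w=1,\ x>0\}$ violate the claimed bound.

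What your argument does establish is $\mathrm{Lip}(\psi_K)\geq K$, with exact stretch $K$ along $x=0$. The asserted equality $\mathrm{Lip}(\psi_K)=K$ cannot be proved as stated.
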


\begin{proof}
We give a computation using the ortho-geodesic coordinates system $(x,w)$. Recall our map $\psi_K: \T_{p_1} \text{  } \xrightarrow{\hspace*{0.6cm}} \text{  } \T_{p_1}$ in the coordinate is
\begin{equation*}
    \psi_K(x,w)=(x, w^K).
\end{equation*}

The differential of $\psi_K$ is thus
\begin{equation*}
D\psi_K|_{(x,w)}=
\begin{pmatrix}
1 & 0 \\
0 & K w^{K-1}
\end{pmatrix}.
\end{equation*}

A computation shows that the hyperbolic metric in the ortho-geodesic coordinates system is

$$g_{\scriptscriptstyle\mathbb{H}^2}|_{(x,w)}=\frac{d x^2 }{w^2-x^2} + \frac{w^2 d w^2 }{x^2(w^2-x^2)^2}+\frac{2w}{x(w^2-x^2)} dw dx.$$

With this information, the norm of the differential at a point $Z$ with coefficient $(x,w)$ is computed as
\begin{align*}
    \norm{D\psi_K}_{Z}^2 &=\max_{V\in T_{\scriptscriptstyle Z}\mathbb{H}^2} \frac{\norm{D\psi_K(V)}^2_{g_{\scriptscriptstyle\mathbb{H}^2}}}{\norm{V}^2_{g_{\scriptscriptstyle\mathbb{H}^2}}}\\
    &=\max_{V\in T_{\scriptscriptstyle Z}\mathbb{H}^2}\frac{\frac{V_1^2}{w^{2K}-x^2}+\frac{K^2w^{4K-2}V_2^2}{(w^{2K}-x^2)^2}+\frac{2Kw^{2K-1}V1V2}{x(w^{2K}-x^2)}}{\frac{V_1^2}{w^{2}-x^2}+\frac{V^2_2w^2}{(w^{2}-x^2)^2}+ \frac{2w V_1V_2}{x(w^2-x^2)}}.
\end{align*}
Here $(V_1,V_2)$ is the ortho-geodesic coordinate of the tangent vector $V$. 
With the assumptions $w\geq 1$, $x\in[0,\frac{1}{2}]$ and $K>1$ the following estimates hold: 
\begin{equation}
    \frac{w^{2}-x^2}{w^{2K}-x^2}\leq 1,
\end{equation}
and
\begin{equation}
   \frac{K^2w^{4(K-1)}(w^2-x^2)^2}{(w^{2K}-x^2)^2}\leq K^2,
\end{equation}

and 
\begin{equation}
   \frac{K(w^{2K}-w^{2K-2}x^2)}{w^{2K}-x^2}\leq K,
\end{equation}

It is an easy fact that $
\frac{a_1+a_2+a_3}{b_1+b_2+b_3}
\leq
\max\limits_{1\leq i\leq 3}\big\{\frac{a_i}{b_i}\big\}
$, provided $b_1,b_2,b_3>0$.
Therefore, we conclude $ \norm{D\psi_K}_{Z} \leq K$ while equality $ \norm{D\psi_K}_{
Z} = K$ holds precisely when $x=0$ or $w=1$. This shows that $\psi_K$ is $K$-Lipschitz on $\T_{p_1p_2t}$.
\end{proof}

We first extend the map $\psi_K$ from $\T_{p_2}$ to $\T_{p_1p_2t}$ by symmetry, then from $\T_{p_1p_2t}$ to $\T_{p_2p_3t}$ and $\T_{p_1 p_3t}$ by symmetry. This yields the following corollary. 

\begin{cor}\label{cor,ExtremeLip}
  The map  $\psi_K: \T_{p_1p_2p_3} \text{  } \xrightarrow{\hspace*{0.6cm}} \text{  }\T_{p_1p_2p_3}$
   is a Lipschitz map with Lipschitz constant $\text{Lip}(\psi_K)=K$. 
\end{cor}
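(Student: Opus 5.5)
We extend $\psi_K$ from $\T_{p_1}$ to all of $\T_{p_1p_2p_3}$ by symmetry, in two stages. First we extend to $\T_{p_2}$ by the hyperbolic reflection $\sigma$ in the geodesic through $s_3$ and $t$; in the upper half plane model this is the inversion $z\mapsto 1/\bar z$. It exchanges $p_1=\infty$ and $p_2=0$, preserves the geodesic $\overline{p_1p_2}$ (the imaginary axis), and sends ortho-geodesics to ortho-geodesics and the cusp foliation $\folc^{p_1}$ to $\folc^{p_2}$. On $\T_{p_2}$ we set $\psi_K\coloneqq\sigma\circ\psi_K\circ\sigma$. Second, we extend from $\T_{p_1p_2t}$ to $\T_{p_2p_3t}$ and $\T_{p_3p_1t}$ by conjugating with the order-three rotation $R$ about $t$ that permutes $p_1\to p_2\to p_3$ cyclically. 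In both stages the extended map is an isometric conjugate of the original one, so Proposition \ref{prop,ortho-geodesicsExtreme} shows that on each of the six small triangles the differential has operator norm at most $K$, with equality attained (for instance along $w=1$). It remains to check that the pieces glue continuously and that the local bound gives a global one.

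For continuity, note that along the common side $[s_3,t]=\{w=1\}$ we have $\psi_K(x,1)=(x,1)$. So $\psi_K$ is the identity there, and both definitions agree since $\sigma$ fixes this side pointwise. Along $[t,p_1]$ (which is $x=1/2$ in $\T_{p_1}$) the map preserves the leaf $x=1/2$ and acts on it as $w\mapsto w^K$. We must check that this coincides with the map induced from the neighbouring triangle $\T_{p_3p_1t}$ via $R$. The side $[t,p_1]$ is shared by the $p_1$-pieces of two adjacent small triangles, and these pieces are exchanged by the reflection in $\overline{tp_1}$. Since $\psi_K$ is defined intrinsically from the data "cusp $p_1$, orthogonal geodesic, point $t$", the two definitions agree on that side provided the construction is equivariant under this reflection. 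This equivariance holds because the construction is defined by hyperbolic distances from $\folo_{s_3}$ along the cusp leaf through $p_1$, which is exactly the content of equation \eqref{eqtn:distanceEqtn1}. The point $t$ is fixed by every piece, so the result is a homeomorphism of the ideal triangle onto itself.

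For the Lipschitz bound, take any two points and join them by the hyperbolic geodesic segment between them, which lies in the ideal triangle by convexity. This segment meets the six pieces in finitely many subsegments. On each subsegment the map is smooth with $\|D\psi_K\|\le K$, so the image has length at most $K$ times the original length. Summing over the subsegments and using that the distance between the image points is at most the length of the image path gives $d(\psi_K(a),\psi_K(b))\le K\,d(a,b)$. Equality on the leaves $\{w=1\}$ and $\{x=0\}$ shows that the Lipschitz constant is exactly $K$.

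The main obstacle is the gluing on the sides $[t,p_i]$. Here we need to verify explicitly that the map induced by $R$ from $\T_{p_1p_2t}$ agrees with $\psi_K$ on $\overline{tp_1}$. We will do this by expressing both in terms of the signed distance to the point $t$ along the leaf, using the formula $\psi_K(1/2,w)=(1/2,w^K)$.
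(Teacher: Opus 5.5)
Your reduction is the same as the paper's: its whole proof is the two-step symmetric extension you describe (the reflection $\sigma$ in $\overline{s_3t}$, then the rotation $R$ about $t$). Your geodesic-subdivision step correctly turns a pointwise bound on $D\psi_K$ into a global Lipschitz bound.

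The gluing along $[t,p_1]$ is easier than you suggest. On the $p_1$-piece of $\T_{p_3p_1t}$ your definition is $R^{-1}\sigma\psi_K\sigma R=\tau\psi_K\tau$, where $\tau=\sigma R$ is the reflection in $\overline{tp_1}$. Since $\tau$ fixes $[t,p_1]$ pointwise and $\psi_K$ maps $\{x=\tfrac12\}$ into itself, the two definitions agree. Your stated justification is also inaccurate: $\psi_K$ rescales $\log w$, the distance between $\folo$-leaves, not distances along the cusp leaf.

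The genuine gap is the input you import. The pointwise bound $\Vert D\psi_K\Vert\le K$ on $\T_{p_1}$, "with equality along $w=1$", is false, and with it the corollary for the hyperbolic metric. In the coordinates $(x,w)$ one has $y^2=w^2-x^2$ and
\[
g_{\mathbb{H}^2}=\frac{w^2\,dx^2-2xw\,dx\,dw+w^2\,dw^2}{(w^2-x^2)^2}.
\]
At a point $(x,1)\in[s_3,t]$ the map fixes the point and $D\psi_K=\mathrm{diag}(1,K)$. Hence for $V=(a,1)$,
\[
\Vert D\psi_K V\Vert^2-K^2\Vert V\Vert^2=\frac{(K-1)\,a\,\bigl(2Kx-(K+1)a\bigr)}{(1-x^2)^2},
\]
which is positive whenever $x>0$ and $0<a<2Kx/(K+1)$.

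Geometrically, $D\psi_K$ has eigenvalue $1$ along the leaf $\folo_{s_3}$ and eigenvalue $K$ along the vertical leaf of $\folc^{p_1}$, and these lines meet at an angle of cosine $x$. A non-normal linear map with eigenvalues $1$ and $K$ has operator norm strictly larger than $K$. For instance, for $K=2$ at $t$ one gets $\Vert D\psi_2\Vert^2=(8+2\sqrt{7})/3\approx 4.43>4$.

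By continuity the same holds at interior points of $\T_{p_1}$ near $[s_3,t]$. So nearby pairs of points there are stretched by a factor greater than $K$, and no symmetric extension or gluing can repair this.

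The paper's one-line proof inherits this error from the preceding proposition. That computation uses an incorrect expression for the metric (its coefficients blow up at $x=0$). It also applies the mediant inequality to a sum containing the sign-indefinite cross term in $V_1V_2$.

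Theorem \ref{thm: Klip} avoids the problem only because the Hilbert norm of $\Text$ has hexagonal unit balls, so checking three vertex directions suffices (Example \ref{ex: finsler norm ball for polygons}). For the round norm of $\mathbb{H}^2$ you would need the two foliations to be orthogonal. Thurston's horocyclic foliation is orthogonal to the cusp geodesics; $\folo$ is not.
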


\subsubsection{Second map on the ideal triangle}  \label{subsection, secondmap}
We define our second map again on the triangle $\T_{p_1}$. The triangle $\T_{p_1}$ in the standard Euclidean coordinate is

 $$\T_{p_1}=\{(x,y)\in \mathbb{H}^2 \text{ }| \text{ }0\leq x \leq \frac{1}{2}, x^2+y^2 \geq 1\}.$$

It is more convenient to define and do compuations for this new map using polar coordinates $(R,\theta)$ given by 
\begin{align*}
    x&=R\cos\theta,\\
    y&=R\sin\theta.
\end{align*}
Again, we fix $K>1$. Our second map $\varphi=\varphi_K$ on $\T_{p_1}$ in polar coordinate is
\begin{align*}
&\varphi_K: \T_{p_1} \longrightarrow \T_{p_1}\\
    &\varphi_K(R,\theta)=(R^K, \alpha),
\end{align*}
where $\cos \alpha =2^{K-1}(\cos \theta)^K$. In terms of the Euclidean coordinate, suppose a point $Z$ in $\T_{p_1}$ has Euclidean coordinate $(x_Z,y_Z)$, then the angle equation is a scaling equation of the $x$-coordinate : $x_{\varphi_K(Z)}= 2^{K-1} x_{Z}^K$.

The geometric feature of this map is described in the following proposition. 
\begin{prop}
The map $\varphi_K$ preserves the foliations $\folo$, $\folc^{p_1}$ and $\folc^{p_2}$ (taking leaves to leaves). Moreover, the following hold. 
\begin{itemize} 
    \item Recall that $\folo_Z$ is the leaf of $\folo$  passing through the point $Z$. 
    \begin{equation} \label{eqtn:distanceEqtn2}
    d_{\mathbb{H}^2}(\folo_{s_3} ,\varphi_K(\folo_{Z}))=K  d_{\mathbb{H}^2}(\folo_{s_3},\folo_Z).
\end{equation}
\item In the Klein model of $\mathbb{H}^2$, the cross ratio of $Z$ and its image $\varphi_K(Z)$ satisfies
\begin{equation*}
    [p_2,\varphi_K(Z),\eta_1, p_1]=[p_2,Z,\eta_1, p_1]^K
\end{equation*}
where $\eta_1$ is the intersection point of the geodesic $\overline{ts_1}$ and the boundary of the hyperbolic plane $\mathbb{H}^2$.
\end{itemize}
\end{prop}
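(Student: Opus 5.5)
The plan is to verify each item directly in the upper half plane, using the explicit description of the foliations and of $\varphi_K$.

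\emph{Preservation of the foliations.} In polar coordinates $(R,\theta)$ the leaves of $\folo$ are the half circles $R\equiv \text{const}$. These are exactly the geodesics orthogonal to the imaginary axis $\overline{p_1p_2}$. Since $\varphi_K$ sends $R$ to $R^K$ independently of $\theta$, it takes leaves of $\folo$ to leaves of $\folo$.

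The leaves of $\folc^{p_1}$ are the vertical lines $x\equiv\text{const}$. The identity $x_{\varphi_K(Z)}=2^{K-1}x_Z^K$ shows that the image $x$--coordinate depends only on $x_Z$, so vertical lines go to vertical lines.

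The leaves of $\folc^{p_2}$ are the geodesics through $0$. These are the half circles $x^2+y^2=cx$, i.e. $R=c\cos\theta$, which we record as $R/\cos\theta\equiv\text{const}$. I will compute the image of $R/\cos\theta$:
$$R^K/\cos\alpha=R^K/(2^{K-1}\cos^K\theta)=2^{1-K}(R/\cos\theta)^K.$$
This depends only on $R/\cos\theta$, which gives the claim.

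Along the way I will also check that $\varphi_K$ maps $\T_{p_1}$ to itself. On the side $x=1/2$ one has $2^{K-1}(1/2)^K=1/2$. Since $x\le 1/2$, also $2^{K-1}x^K\le 1/2$. The conditions $R\ge1$ and $x\le 1/2\le R$ keep $\cos\alpha\le 1$, so $\alpha$ is well defined.

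\emph{The distance identity.} Here $\folo_{s_3}$ is the unit half circle, and $\folo_Z$ is the half circle of radius $R_Z$. Both are orthogonal to the imaginary axis, so their distance is realized along that axis and equals $\log R_Z$. Since $\varphi_K(\folo_Z)$ is the half circle of radius $R_Z^K$, its distance to $\folo_{s_3}$ is $K\log R_Z$, which is the identity.

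\emph{The cross ratio identity.} This is the step I expect to need the most care, because the cross ratio is defined in the Klein model. I will compute it instead as a boundary cross ratio of the cusp geodesic through $p_2$, using the Projection Property (Lemma \ref{lem: properties c.r.}).

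In the Klein model the points $p_2$, $Z$, $\eta_1$, $p_1$ are not aligned. I read $[p_2,Z,\eta_1,p_1]$ as the cross ratio of the pencil of lines through $p_2$ spanned by $p_2$ (tangent), $Z$, $\eta_1$ and $p_1$. Equivalently, take the $\folc^{p_2}$ leaf through $Z$ and the corresponding leaves through $\eta_1$ and $p_1$. Each leaf through $0$ is determined by its second endpoint $c=R/\cos\theta$ on $\rr$. The leaf to $p_1$ has $c=\infty$, and the tangent line corresponds to $c=0$.

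The cross ratio is then the cross ratio of the endpoints $0$, $c_Z$, $c_{\eta_1}$, $\infty$, that is $c_{\eta_1}/c_Z$ (or its inverse, depending on convention). I also need $c_{\eta_1}$. By the symmetry of the configuration with $t$ at angle $60^\circ$ on the unit circle, the geodesic $\overline{ts_1}$ through $p_2$ has endpoint $\eta_1=2$.

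The image satisfies
$$c_{\varphi_K(Z)}=2^{1-K}c_Z^K,$$
so
$$2/c_{\varphi_K(Z)}=2^K/c_Z^K=(2/c_Z)^K.$$
This is the claimed $K$-th power relation, and it also explains the normalizing constant $2^{K-1}$.

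The main point to double-check is which ordering convention of the cross ratio gives $c_{\eta_1}/c_Z$ rather than its inverse. Both choices are consistent with the $K$-th power law, so the conclusion is unaffected.
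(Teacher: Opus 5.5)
Your treatment of the three foliations and of the distance identity is correct and essentially matches the paper. Your invariant $R/\cos\theta\mapsto 2^{1-K}(R/\cos\theta)^K$ is the paper's relation $r_{\varphi_K(Z)}=r_Z^K$ for the radii of the $\folc^{p_2}$--leaves, since $R/\cos\theta=2r$.

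The gap is in the cross ratio item: you misidentify $\eta_1$. By definition $s_1$ is the foot of the perpendicular from $p_1$ to the opposite side $\overline{p_2p_3}$, with $p_3=1$ here. So $\overline{ts_1}$ is the geodesic through $p_1=\infty$, $t=\tfrac12+\tfrac{\sqrt3}{2}i$ and $s_1=\tfrac12+\tfrac12 i$. This is the vertical line $x=\tfrac12$, i.e. the side $\overline{tp_1}$ of $\T_{p_1}$ extended, and hence $\eta_1=\tfrac12$. The point $2$ you found is instead the endpoint of $\overline{ts_2}$, the geodesic from $p_2$ through $t$ and $s_2=1+i$.

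This error is not cosmetic. With the correct $\eta_1$, your cross ratio for the pencil at $p_2$ is $\eta_1/c_Z=1/(2c_Z)$ (or its inverse). Using $c_{\varphi_K(Z)}=2^{1-K}c_Z^K$ you get $1/(2c_{\varphi_K(Z)})=2^{K-2}c_Z^{-K}$, which differs from $(2c_Z)^{-K}$ unless $K=1$, and the inverse convention fails the same way. So you proved a true identity, but for a different reference point, and it merely restates your $\folc^{p_2}$ computation. It is not the stated identity.

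The fix is to read $[p_2,Z,\eta_1,p_1]$ through the pencil at $p_1$, that is, via the leaves of $\folc^{p_1}$. The four lines are $p_1\oplus p_2$, $p_1\oplus Z$, $p_1\oplus \eta_1$ and the tangent at $p_1$. In the upper half plane this is the boundary cross ratio of $0$, $x_Z$, $\tfrac12$, $\infty$, which equals $2x_Z$ up to inversion depending on convention. The identity then reads $2x_{\varphi_K(Z)}=(2x_Z)^K$, which is exactly the relation $x_{\varphi_K(Z)}=2^{K-1}x_Z^K$ you already derived in your $\folc^{p_1}$ paragraph.

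This is the paper's argument. It is also why the paper deduces preservation of $\folc^{p_1}$ from the cross ratio identity. In particular, the constant $2^{K-1}$ is the normalization that fixes the leaf $x=\tfrac12$ ending at $\eta_1$, not the $\folc^{p_2}$--leaf ending at $2$.
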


\begin{proof}
The fact that $\varphi_K$ preserves the foliation $\folo$, and $d_{\mathbb{H}^2}(\folo_{s_3} ,\varphi_K(\folo_{Z}))=K  d_{\mathbb{H}^2}(\folo_{s_3},\folo_Z)$ is clear from the map $R\to R^K$. 

We denote the $x$ coordinate of  $Z$ and  $\varphi_K(Z)$ respectively by $x_{Z}$ and $x_{\varphi_K(Z)}$. We denote similarly for $y$ coordinates. It is straightforward that the map $\theta\to \alpha$ with $\cos \alpha =2^{K-1}(\cos \theta)^K$ yields
\begin{equation*}
    x_{\varphi_K(Z)}= 2^{K-1} x_{Z}^K.
\end{equation*}

It then follows immediately in terms of cross ratio that 
$$
    [p_2,\varphi_K(Z),\eta_1, p_1]
    =[p_2,Z,\eta_1, p_1]^K.
$$
Morevoer, this implies that $\varphi_K$ preserves the foliation $\folc^{p_1}$, taking one leaf of $\folc^{p_1}$ to another.

Finally, let us prove that the map $\varphi_K$ preserves the foliation $\folc^{p_2}$. We notice that the half circle $C_Z$ passing through $p_2=0$ and a point $Z$ is of radius 
$$r_Z=\frac{x^2_Z+y^2_Z}{2x_Z}.$$

So its image $\varphi_K(Z)$ lies on the circle $C_{\varphi_K(Z)}$ passing through $p_2$ of radius
$$r_{\varphi_K(Z)}= \frac{x^2_{\varphi_K(Z)}+y^2_{\varphi_K(Z)}}{2x_{\varphi_K(Z)}}.$$
From the map $R\to R^K$, it is clear $x^2_{\varphi_K(Z)}+y^2_{\varphi_K(Z)} = (x^2_Z+y^2_Z)^K$. Together with the fact $x_{\varphi_K(Z)}= 2^{K-1} x_{Z}^K$, we conclude $r_{\varphi_K(Z)}=r_Z^{K}$. Since this holds for an arbitrary point $Z$ in the triangle  $\T_{p_1}$, the map $\varphi_K$ preserves $\folc^{p_2}$ as well.

\end{proof}

We extend our map $\varphi$ from the triangle  $\T_{p_1}$ by symmetry to $\T_{p_1p_2t}$.

\begin{prop} \label{prop,ortho-geodesicsExtreme}
  The map  $\varphi_K: \T_{p_1p_2t} \longrightarrow \T_{p_1p_2t}$ given by
  \begin{equation*}
    \varphi_K(R,\theta)=(R^K, \alpha),
\end{equation*}
 is a Lipschitz map with Lipschitz constant $\text{Lip}(\varphi_K)=K$. 
\end{prop}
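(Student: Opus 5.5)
The plan is to compute the operator norm of $D\varphi_K$ pointwise in coordinates adapted both to the hyperbolic metric and to $\varphi_K$. Then the symmetric extension to $\T_{p_1p_2t}$ is assembled with the same gluing argument as in Corollary \ref{cor,ExtremeLip}.

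\emph{Choice of coordinates.} On $\T_{p_1}$ set $u\coloneqq \log R$ and keep $\theta$. Since $x=R\cos\theta$ and $y=R\sin\theta$, the hyperbolic metric becomes
$$g_{\mathbb{H}^2}=\frac{dx^2+dy^2}{y^2}=\frac{du^2+d\theta^2}{\sin^2\theta}.$$
This is conformal to the Euclidean metric in $(u,\theta)$. In these coordinates $\varphi_K(u,\theta)=(Ku,\alpha(\theta))$ with $\cos\alpha=2^{K-1}\cos^K\theta$. Differentiating gives
$$\frac{d\alpha}{d\theta}=\frac{K\,2^{K-1}\cos^{K-1}\theta\,\sin\theta}{\sin\alpha}.$$
So $D\varphi_K$ is diagonal in $(u,\theta)$. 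Because the metric is conformal in these coordinates, $\partial_u$ and $\partial_\theta$ are $g_{\mathbb{H}^2}$-orthogonal at every point, and so are their images. Hence $\Vert D\varphi_K\Vert_Z$ is the maximum of the two stretch factors
$$\lambda_u=\frac{K\sin\theta}{\sin\alpha},\qquad \lambda_\theta=\frac{d\alpha}{d\theta}\cdot\frac{\sin\theta}{\sin\alpha}=\frac{K(2\cos\theta)^{K-1}\sin^2\theta}{\sin^2\alpha}.$$

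\emph{Bounding the factors.} On $\T_{p_1}$ we have $0\le x\le \tfrac12$ and $R\ge 1$, so $c\coloneqq\cos\theta\le\tfrac12$. Put $a\coloneqq(2c)^{K-1}\in[0,1]$. Then $\cos\alpha=c\,a\le\cos\theta$, so $\alpha\ge\theta$ with both angles in $[\pi/3,\pi/2]$. This gives $\sin\alpha\ge\sin\theta$ and hence $\lambda_u\le K$, with equality exactly when $\theta=\pi/2$, i.e.\ $x=0$.

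For $\lambda_\theta\le K$ I need $a(1-c^2)\le 1-c^2a^2$. This rearranges to $(1-a)(1-c^2a)\ge0$, which holds since $a\le1$ and $c^2a\le1$. Equality holds when $a=1$, i.e.\ along $x=R/2$. Thus $\Vert D\varphi_K\Vert\le K$ on $\T_{p_1}$, and the bound is attained (for instance on $\{x=0\}$).

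\emph{Domain and symmetry.} I will check that $\varphi_K$ preserves $\T_{p_1}$. Indeed $R^K\ge1$ and $x_{\varphi_K(Z)}=2^{K-1}x_Z^K\le \tfrac12$ when $x_Z\le\tfrac12$. Also $\varphi_K$ fixes the leaf $\{R=1\}=\folo_{s_3}$ setwise.

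The piece $\T_{p_2}$ is the image of $\T_{p_1}$ under the reflection $z\mapsto 1/\bar z$, which in polar coordinates is $(R,\theta)\mapsto(1/R,\theta)$. This reflection commutes with $(R,\theta)\mapsto(R^K,\alpha)$. So the extension is given by the same formula and agrees with $\varphi_K$ along $[s_3,t]$.

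The extended map is continuous and piecewise smooth with $\Vert D\varphi_K\Vert\le K$ off the gluing segment. For the global bound, take a geodesic between two points of $\T_{p_1p_2t}$. It stays in this geodesically convex set and crosses the segment $[s_3,t]$ at most once. Integrating the derivative bound along each of its smooth pieces then shows the map is $K$-Lipschitz. Since $K$ is attained, $\mathrm{Lip}(\varphi_K)=K$.

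The main obstacle is the inequality $\lambda_\theta\le K$. It only works because of the normalization constant $2^{K-1}$ together with the constraint $\cos\theta\le\tfrac12$ coming from $x\le\tfrac12$. The factorization $(1-a)(1-c^2a)\ge0$ is the step to verify carefully.
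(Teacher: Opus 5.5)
Your proposal is correct and follows essentially the same route as the paper: compute $D\varphi_K$ in (log-)polar coordinates, note that it is diagonal in a $g_{\mathbb{H}^2}$-orthogonal frame, and bound the radial and angular stretch factors by $K$ using $\alpha\ge\theta$. The paper writes the angular factor as $K\cdot\frac{\sin\theta}{\sin\alpha}\cdot\frac{f(\theta)}{K}$, with both ratios at most $1$, and leaves implicit the domain, symmetry and gluing checks that you supply. The one slip is in the step you flagged: $1-c^2a^2-a(1-c^2)=(1-a)(1+c^2a)$, not $(1-a)(1-c^2a)$, but the corrected factorization is still nonnegative for $a\le 1$, so $\lambda_\theta\le K$ stands.
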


\begin{proof}

The differential of $\varphi_K$ in the polar coordinate is
\begin{equation*}
D\varphi_K|_{(R,\theta)}=
\begin{pmatrix}
KR^{K-1} & 0 \\
0 & f(\theta) 
\end{pmatrix},
\end{equation*}

where 
\begin{align*}
    f(\theta)= \frac{\partial \arccos(2^{K-1}(\cos \theta)^K)}{\partial \theta}=\frac{2^{K-1}K(\cos \theta)^{K-1}\sin \theta}{\sqrt{1-2^{2(K-1)}(\cos \theta)^{2K}}}.
\end{align*}

Recall the hyperbolic metric in the polar coordinate is $g_{\scriptscriptstyle\mathbb{H}^2}|_{(R,\theta)}=\frac{d R^2 +R^2 d \theta^2}{(R\sin \theta)^2}$. With these information, the norm of the differential is computed as,
\begin{align*}
    \norm{D\varphi_K}_{Z} &=\max_{v\in T_{\scriptscriptstyle Z}\mathbb{H}^2} \frac{\norm{D\varphi_K(v)}_{g_{\scriptscriptstyle\mathbb{H}^2}}}{\norm{v}_{g_{\scriptscriptstyle\mathbb{H}^2}}}\\
    &=\max_{v\in T_z\mathbb{H}^2}\bigg\{\frac{K[{v_r}^2+(\frac{f(\theta)}{K} R v_{\theta})^2]^{\frac{1}{2}}}{({v_{r}}^2+R^2 {v_{\theta}}^2)^{\frac{1}{2}}} \cdot \frac{\sin \theta}{\sin \alpha}\bigg\}.
\end{align*}
Here $v=(v_r,v_{\theta})$ is the polar coordinate of the tangent vector $v$. 
Observe  $\frac{\pi}{2} \geq \alpha \geq \theta\geq 0$. So
\begin{equation}
    \frac{\sin \theta}{\sin\alpha}\leq 1 \text{  
 and }\frac{\cos \alpha}{\cos \theta} \leq 1,
\end{equation}
In addition,
\begin{equation}
    \left(\frac{f(\theta)}{K}\right)^{2}\leq 1 	\Longleftrightarrow \left(\frac{\cos \alpha}{\cos \theta}\right)^2\leq 1.\\
\end{equation}

We conclude $ \norm{D\varphi_K}_{Z} \leq K$ while equality $ \norm{D\varphi_K}_{Z} = K$ holds precisely when $\theta=\alpha$. This shows that $\varphi_K$ is $K$-Lipschitz on $\T_{p_1p_2t}$.
\end{proof}

By further extension using symmetry, we obtain 

\begin{cor}\label{cor,ExtremeLip}
  The map  $\varphi_K: \T_{p_1p_2p_3} \text{  } \xrightarrow{\hspace*{0.6cm}} \text{  }\T_{p_1p_2p_3}$
   is a Lipschitz map with Lipschitz constant $\text{Lip}(\varphi_K)=K$. 
\end{cor}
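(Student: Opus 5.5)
The plan is to obtain the global statement from Proposition~\ref{prop,ortho-geodesicsExtreme} (for $\varphi_K$ on $\T_{p_1p_2t}$), together with the symmetry group of the ideal triangle and a piecewise-Lipschitz gluing argument, as in Lemma~\ref{lem: sufficient condtion for lipshictz triangular}.

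First I make the extension explicit. The isometry group of $\T_{p_1p_2p_3}$ is the symmetric group generated by the three hyperbolic reflections $\sigma_i$ in the geodesics $p_i\oplus s_i$. These geodesics cut $\Ti$ into six congruent pieces. One of them is $\T_{p_1}=\{0\le x\le \tfrac12,\ x^2+y^2\ge 1\}$, bounded by $[p_1,s_3]$, $[s_3,t]$ and $[t,p_1]$. On each piece $\sigma\cdot\T_{p_1}$ I set $\varphi_K\coloneqq\sigma\circ\varphi_K\circ\sigma^{-1}$. For well-definedness I check that $\varphi_K$ maps each boundary edge of $\T_{p_1}$ to itself and commutes there with the reflection fixing that edge, which then reduces to checking that $\varphi_K$ sends each edge to itself.
\begin{itemize}
\item On $[s_3,t]$ (the arc $R=1$) we have $R^K=1$, and $x\mapsto 2^{K-1}x^K$ fixes $x=0$ and $x=\tfrac12$. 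So the arc goes to itself; it is pointwise fixed by the reflection $z\mapsto 1/\bar z$, so the two definitions agree.
\item On $[p_1,s_3]$ ($x=0$) and on $[t,p_1]$ ($x=\tfrac12$) the $x$-coordinate is preserved and $R\mapsto R^K$. Again each edge goes to itself and is pointwise fixed by the corresponding reflection.
\end{itemize}
Hence the six definitions glue to a homeomorphism $\varphi_K$ of $\T_{p_1p_2p_3}$.

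Next, for the upper bound, let $\alpha$ be a geodesic segment in $\T_{p_1p_2p_3}$. It meets the three walls $p_i\oplus s_i$ in finitely many points, unless it lies inside a wall, in which case it lies in a single piece. So $\alpha=\alpha_1\star\dots\star\alpha_M$ with each $\alpha_m$ contained in one piece $\sigma\cdot\T_{p_1}$. On that piece $\varphi_K$ is conjugate by an isometry to the map of Proposition~\ref{prop,ortho-geodesicsExtreme}. There the differential satisfies $\norm{D\varphi_K}\le K$ and the map is smooth in the interior, hence piecewise smooth along $\alpha_m$. Therefore
\[
\ell(\varphi_K\circ\alpha)=\sum_m \ell(\varphi_K\circ\alpha_m)\le K\sum_m\ell(\alpha_m)=K\,d_{\mathbb H^2}(\alpha(0),\alpha(1)),
\]
which gives $d(\varphi_K(q),\varphi_K(q'))\le K\,d(q,q')$.

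For sharpness I would use the leaf $x=0$, that is, $[p_1,s_3]$. On it $\varphi_K(0,y)=(0,y^K)$, so hyperbolic distances from $s_3$ are multiplied exactly by $K$. This is consistent with the equality case $\theta=\alpha$ in Proposition~\ref{prop,ortho-geodesicsExtreme}, and it gives $\mathrm{Lip}(\varphi_K)=K$.

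The main obstacle is the gluing step: checking that $\varphi_K$ preserves each boundary edge of $\T_{p_1}$. Of the three edges this is only nontrivial for $[s_3,t]$, where the angular rescaling $\cos\alpha=2^{K-1}\cos^K\theta$ must fix the endpoint $t$ at $\theta=\pi/3$. This holds precisely because of the normalising factor $2^{K-1}$.
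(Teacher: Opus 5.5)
Your proposal is correct and takes the same route as the paper: the paper also obtains the corollary from the estimate on $\T_{p_1p_2t}$ by extending $\varphi_K$ through the reflection symmetries of the ideal triangle, and you add the details it leaves implicit (well-definedness of the gluing, the piecewise length argument along geodesic segments, and sharpness along $x=0$). Two slips are harmless: $[p_1,s_3]$ lies on a side of $\T_{p_1p_2p_3}$ rather than on a wall, so no gluing is needed along it, and a segment contained in a wall may pass through $t$, so it must be split at $t$ rather than lying in a single piece.
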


Extending the two ortho-geodesic stretch maps contructed above from ideal triangles to the entire surface yield the same stretch path as defined in Thurston's work \cite{ThurstonStretch}.

\bibliography{ref}
\bibliographystyle{alpha}

\end{document}